\documentclass[12pt]{amsart}
\usepackage{mathrsfs}
\usepackage{cases}
\usepackage{epic}
\usepackage{amsfonts}
\usepackage{tikz}
\usepackage{graphicx}
\usepackage{amsmath}
\usepackage{amssymb, upgreek}
\usepackage{bm}
\usepackage{latexsym,todonotes}
\usepackage{pdflscape}
\usepackage[all]{xypic}
\usepackage[all]{xy}
\usepackage{color}
\usepackage{colordvi}
\usepackage{multicol}
\usepackage[normalem]{ulem}
\textwidth=14.5cm \oddsidemargin=1cm \evensidemargin=1cm
\setlength{\hoffset}{0pt}
\setlength{\voffset}{0pt}
\setlength{\topmargin}{0pt}
\setlength{\oddsidemargin}{0in}
\setlength{\evensidemargin}{0in}
\setlength{\textheight}{8.75in}
\setlength{\textwidth}{6.5in}
\pagestyle{headings}
\usepackage[linktocpage=true]{hyperref}
\hypersetup{colorlinks,linkcolor=blue,urlcolor=cyan,citecolor=blue}

\allowdisplaybreaks

\begin{document}
\input xy
\xyoption{all}
\newcommand{\iLa}{\Lambda^{\imath}}
\newcommand{\iadd}{\operatorname{iadd}\nolimits}

\renewcommand{\mod}{\operatorname{mod}\nolimits}
\newcommand{\fproj}{\operatorname{f.proj}\nolimits}
\newcommand{\Fac}{\operatorname{Fac}\nolimits}
\newcommand{\ci}{{\I}_{\btau}}
\newcommand{\proj}{\operatorname{proj}\nolimits}
\newcommand{\inj}{\operatorname{inj}\nolimits}
\newcommand{\rad}{\operatorname{rad}\nolimits}
\newcommand{\Span}{\operatorname{Span}\nolimits}
\newcommand{\soc}{\operatorname{soc}\nolimits}
\newcommand{\ind}{\operatorname{inj.dim}\nolimits}
\newcommand{\Ginj}{\operatorname{Ginj}\nolimits}
\newcommand{\res}{\operatorname{res}\nolimits}
\newcommand{\np}{\operatorname{np}\nolimits}
\newcommand{\Mor}{\operatorname{Mor}\nolimits}
\newcommand{\Mod}{\operatorname{Mod}\nolimits}
\newcommand{\End}{\operatorname{End}\nolimits}
\newcommand{\lf}{\operatorname{l.f.}\nolimits}
\newcommand{\Iso}{\operatorname{Iso}\nolimits}
\newcommand{\Aut}{\operatorname{Aut}\nolimits}
\newcommand{\Rep}{\operatorname{Rep}\nolimits}

\newcommand{\colim}{\operatorname{colim}\nolimits}
\newcommand{\gldim}{\operatorname{gl.dim}\nolimits}
\newcommand{\cone}{\operatorname{cone}\nolimits}
\newcommand{\rep}{\operatorname{rep}\nolimits}
\newcommand{\Ext}{\operatorname{Ext}\nolimits}
\newcommand{\Tor}{\operatorname{Tor}\nolimits}
\newcommand{\Hom}{\operatorname{Hom}\nolimits}
\newcommand{\Top}{\operatorname{top}\nolimits}
\newcommand{\Coker}{\operatorname{Coker}\nolimits}
\newcommand{\thick}{\operatorname{thick}\nolimits}
\newcommand{\rank}{\operatorname{rank}\nolimits}
\newcommand{\Gproj}{\operatorname{Gproj}\nolimits}
\newcommand{\Len}{\operatorname{Length}\nolimits}
\newcommand{\RHom}{\operatorname{RHom}\nolimits}
\renewcommand{\deg}{\operatorname{deg}\nolimits}
\renewcommand{\Im}{\operatorname{Im}\nolimits}
\newcommand{\Ker}{\operatorname{Ker}\nolimits}
\newcommand{\Coh}{\operatorname{Coh}\nolimits}
\newcommand{\Id}{\operatorname{Id}\nolimits}
\newcommand{\Qcoh}{\operatorname{Qch}\nolimits}
\newcommand{\CM}{\operatorname{CM}\nolimits}
\newcommand{\sgn}{\operatorname{sgn}\nolimits}
\newcommand{\utMH}{\operatorname{\cm\ch(\iLa)}\nolimits}

\newcommand{\bb}{\psi_*}
\newcommand{\bvs}{{\boldsymbol{\varsigma}}}
\newcommand{\vs}{\varsigma}

\newcommand{\e}{{\bf 1}}
\newcommand{\EE}{E^*}
\newcommand{\dbl}{\operatorname{dbl}\nolimits}
\newcommand{\ga}{\gamma}
\newcommand{\tM}{\cm\widetilde{\ch}}
\newcommand{\la}{\lambda}

\newcommand{\For}{\operatorname{{\bf F}or}\nolimits}
\newcommand{\coker}{\operatorname{Coker}\nolimits}
\newcommand{\rankv}{\operatorname{\underline{rank}}\nolimits}
\newcommand{\dimv}{{\operatorname{\underline{dim}\,}\nolimits}}
\newcommand{\diag}{{\operatorname{diag}\nolimits}}
\newcommand{\swa}{{\operatorname{swap}\nolimits}}

\renewcommand{\Vec}{{\operatorname{Vec}\nolimits}}
\newcommand{\pd}{\operatorname{proj.dim}\nolimits}
\newcommand{\gr}{\operatorname{gr}\nolimits}
\newcommand{\id}{\operatorname{id}\nolimits}
\newcommand{\aut}{\operatorname{Aut}\nolimits}
\newcommand{\Gr}{\operatorname{Gr}\nolimits}

\newcommand{\pdim}{\operatorname{proj.dim}\nolimits}
\newcommand{\idim}{\operatorname{inj.dim}\nolimits}
\newcommand{\Gd}{\operatorname{G.dim}\nolimits}
\newcommand{\Ind}{\operatorname{Ind}\nolimits}
\newcommand{\add}{\operatorname{add}\nolimits}
\newcommand{\pr}{\operatorname{pr}\nolimits}
\newcommand{\oR}{\operatorname{R}\nolimits}
\newcommand{\oL}{\operatorname{L}\nolimits}
\newcommand{\Perf}{{\mathfrak Perf}}
\newcommand{\cc}{{\mathcal C}}
\newcommand{\gc}{{\mathcal GC}}
\newcommand{\ce}{{\mathcal E}}
\newcommand{\calI}{{\mathcal I}}
\newcommand{\cs}{{\mathcal S}}
\newcommand{\cf}{{\mathcal F}}
\newcommand{\cx}{{\mathcal X}}
\newcommand{\cy}{{\mathcal Y}}
\newcommand{\ct}{{\mathcal T}}
\newcommand{\cu}{{\mathcal U}}
\newcommand{\cv}{{\mathcal V}}
\newcommand{\cn}{{\mathcal N}}
\newcommand{\mcr}{{\mathcal R}}
\newcommand{\ch}{{\mathcal H}}
\newcommand{\ca}{{\mathcal A}}
\newcommand{\cb}{{\mathcal B}}
\newcommand{\cj}{{\mathcal J}}
\newcommand{\cl}{{\mathcal L}}
\newcommand{\cm}{{\mathcal M}}
\newcommand{\cp}{{\mathcal P}}
\newcommand{\cg}{{\mathcal G}}
\newcommand{\cw}{{\mathcal W}}
\newcommand{\co}{{\mathcal O}}
\newcommand{\cq}{{\mathcal Q}}
\newcommand{\cd}{{\mathcal D}}
\newcommand{\ck}{{\mathcal K}}
\newcommand{\calr}{{\mathcal R}}
\newcommand{\cz}{{\mathcal Z}}
\newcommand{\ol}{\overline}
\newcommand{\ul}{\underline}
\newcommand{\st}{[1]}
\newcommand{\ow}{\widetilde}
\renewcommand{\P}{\mathbf{P}}
\newcommand{\pic}{\operatorname{Pic}\nolimits}
\newcommand{\Spec}{\operatorname{Spec}\nolimits}

\newtheorem{innercustomthm}{{\bf Theorem}}
\newenvironment{customthm}[1]
  {\renewcommand\theinnercustomthm{#1}\innercustomthm}
  {\endinnercustomthm}

  \newtheorem{innercustomcor}{{\bf Corollary}}
\newenvironment{customcor}[1]
  {\renewcommand\theinnercustomcor{#1}\innercustomcor}
  {\endinnercustomthm}

  \newtheorem{innercustomprop}{{\bf Proposition}}
\newenvironment{customprop}[1]
  {\renewcommand\theinnercustomprop{#1}\innercustomprop}
  {\endinnercustomthm}

\newtheorem{theorem}{Theorem}[section]
\newtheorem{acknowledgement}[theorem]{Acknowledgement}
\newtheorem{algorithm}[theorem]{Algorithm}
\newtheorem{axiom}[theorem]{Axiom}
\newtheorem{case}[theorem]{Case}
\newtheorem{claim}[theorem]{Claim}
\newtheorem{conclusion}[theorem]{Conclusion}
\newtheorem{condition}[theorem]{Condition}
\newtheorem{conjecture}[theorem]{Conjecture}
\newtheorem{construction}[theorem]{Construction}
\newtheorem{corollary}[theorem]{Corollary}
\newtheorem{criterion}[theorem]{Criterion}
\newtheorem{definition}[theorem]{Definition}
\newtheorem{example}[theorem]{Example}
\newtheorem{assumption}[theorem]{Assumption}
\newtheorem{lemma}[theorem]{Lemma}
\newtheorem{notation}[theorem]{Notation}
\newtheorem{problem}[theorem]{Problem}
\newtheorem{proposition}[theorem]{Proposition}
\newtheorem{solution}[theorem]{Solution}
\newtheorem{summary}[theorem]{Summary}
\newtheorem{hypothesis}[theorem]{Hypothesis}
\newtheorem*{thm}{Theorem}

\theoremstyle{remark}
\newtheorem{remark}[theorem]{Remark}

\newcommand{\tK}{\widetilde{K}}

\newcommand{\tk}{\widetilde{k}}
\newcommand{\tU}{\widetilde{{\mathbf U}}}
\newcommand{\Ui}{{\mathbf U}^\imath}
\newcommand{\tUi}{\widetilde{{\mathbf U}}^\imath}
\newcommand{\qbinom}[2]{\begin{bmatrix} #1\\#2 \end{bmatrix} }
\newcommand{\ov}{\overline}
\newcommand{\tMHg}{\operatorname{\widetilde{\ch}(Q,\btau)}\nolimits}
\newcommand{\tMHgop}{\operatorname{\widetilde{\ch}(Q^{op},\btau)}\nolimits}

\newcommand{\rMHg}{\operatorname{\ch_{\rm{red}}(Q,\btau)}\nolimits}
\newcommand{\dg}{\operatorname{dg}\nolimits}
\def \fu{{\mathfrak{u}}}
\def \fv{{\mathfrak{v}}}
\def \sqq{{\bf t}}
\def \bp{{\mathbf p}}
\def \bv{{\mathbf v}}
\def \bw{{\mathbf w}}
\def \bA{{\mathbf A}}
\def \bL{{\mathbf L}}
\def \bF{{\mathbf F}}
\def \bS{{\mathbf S}}
\def \bC{{\mathbf C}}
\def \bU{{\mathbf U}}
\def \U{{\mathbf U}}
\def \btau{\varpi}
\def \La{\Lambda}
\def \Res{\Delta}
\newcommand{\ev}{\bar{0}}
\newcommand{\odd}{\bar{1}}
\def \fk{\mathfrak{k}}
\def \ff{\mathfrak{f}}
\def \fp{{\mathfrak{P}}}
\def \fg{\mathfrak{g}}
\def \fn{\mathfrak{n}}
\def \gr{\mathfrak{gr}}
\def \Z{{\Bbb Z}}
\def \F{{\Bbb F}}
\def \D{{\Bbb D}}
\def \C{{\Bbb C}}
\def \N{{\Bbb N}}
\def \Q{{\Bbb Q}}
\def \G{{\Bbb G}}
\def \P{{\Bbb P}}
\def \K{{k}}
\def \E{{\Bbb E}}
\def \I{{\Bbb I}}

\def \BH{{\Bbb H}}
\def \btau{\varrho}
\def \cv{\varpi}

\def \tR{\widetilde{\bf R}}
\def \tRZ{\widetilde{\bf R}_\Z}
\def\tRi{\widetilde{\bf R}^\imath}
\def\tRiZ{\widetilde{\bf R}^\imath_\Z}

\def \bp{{\mathbf p}}
\def \ts{\texttt{S}}
\def \bq{{\bm q}}
\def \bvt{{v}}
\def \bs{{\bm s}}
\def \tt{{v}}

\newcommand{\browntext}[1]{\textcolor{brown}{#1}}
\newcommand{\greentext}[1]{\textcolor{green}{#1}}
\newcommand{\redtext}[1]{\textcolor{red}{#1}}
\newcommand{\bluetext}[1]{\textcolor{blue}{#1}}
\newcommand{\brown}[1]{\browntext{ #1}}
\newcommand{\green}[1]{\greentext{ #1}}
\newcommand{\red}[1]{\redtext{ #1}}
\newcommand{\blue}[1]{\bluetext{ #1}}
\numberwithin{equation}{section}
\renewcommand{\theequation}{\thesection.\arabic{equation}}

\newcommand{\wtodo}{\todo[inline,color=orange!20, caption={}]}
\newcommand{\lutodo}{\todo[inline,color=green!20, caption={}]}
\def \tT{\widetilde{\mathcal T}}

\def \tTL{\tT(\iLa)}

\title[Hall algebras and quantum symmetric pairs III]{Hall algebras and quantum symmetric pairs III: \\ Quiver varieties}

\author[Ming Lu]{Ming Lu}
\address{Department of Mathematics, Sichuan University, Chengdu 610064, P.R.China}
\email{luming@scu.edu.cn}

\author[Weiqiang Wang]{Weiqiang Wang}
\address{Department of Mathematics, University of Virginia, Charlottesville, VA 22904, USA}
\email{ww9c@virginia.edu}

\subjclass[2010]{Primary 17B37, 18E30.}
\keywords{Hall algebras, $\imath$quantum groups, quantum symmetric pairs, NKS categories and quiver varieties}

\begin{abstract}
The $\imath$quiver algebras were introduced recently by the authors to provide a Hall algebra realization of universal $\imath$quantum groups, which is a generalization of Bridgeland's Hall algebra construction for (Drinfeld doubles of) quantum groups; here an $\imath$quantum group and a corresponding Drinfeld-Jimbo quantum group form a quantum symmetric pair. In this paper, the Dynkin $\imath$quiver algebras are shown to arise as new examples of singular Nakajima-Keller-Scherotzke categories. Then we provide a geometric construction of the universal $\imath$quantum groups and their ``dual canonical bases" with positivity, via the quantum Grothendieck rings of Nakajima-Keller-Scherotzke quiver varieties, generalizing Qin's geometric realization of quantum groups of type ADE.
\end{abstract}

\maketitle
 \tableofcontents

\section{Introduction}

\subsection{Background (H)} 

Bridgeland \cite{Br13} provided a Hall algebra construction of the Drinfeld double $\tU$ of a quantum group $\U$; see Ringel \cite{Rin90}, Lusztig \cite{Lus90} and Green \cite{Gr95} on Hall algebra constructions of half a quantum group $\U^+$.  As further generalizations of Bridgeland's construction, Gorsky \cite{Gor13, Gor18} introduced {\em semi-derived Hall algebras} via $\Z/2$-graded complexes of an exact category while the first author and Peng \cite{LP21, Lu19} formulated the {\em semi-derived Ringel-Hall algebras} for 1-Gorenstein algebras.

An $\imath$quiver $(Q, \btau)$ by definition consists of an acyclic quiver $Q =(Q_0, Q_1)$ together with an involution $\btau$ on $Q$ (here $\btau =\id$ is allowed).
Associated to an $\imath$quiver $(Q, \btau)$, the authors  \cite{LW19} recently formulated a family of finite-dimensional 1-Gorenstein algebras, called $\imath$quiver algebras and denoted by $\iLa$. The semi-derived Ringel-Hall algebras for $\iLa$ (or simply the $\imath$Hall algebras) were then shown to provide a categorical realization of the quasi-split {\em universal} $\imath$quantum groups $\tUi$; the Drinfeld double is reproduced associated to the $\imath$quiver of diagonal type. The $\imath$Hall algebra approach also provides a conceptual construction of a braid group action on $\tUi$ \cite{LW19b}.

The universal $\imath$quantum group $\tUi =\langle B_i, \tk_i \mid i\in \I \rangle$ is by definition a subalgebra of $\tU$, and $(\tU, \tUi)$ forms a quantum symmetric pair; cf. \cite{LW19}. A novelty of $\tUi$ is that it admits a Cartan subalgebra generated by $\tk_i$ $(i\in \I =Q_0)$, which contains various central elements. A central reduction of $\tUi$ reproduces the $\imath$quantum group $\Ui =\Ui_\bvs$ with parameters $\bvs$ which arises from the construction of quantum symmetric pairs $(\U, \Ui)$ of G.~Letzter \cite{Let99}. Note $\tUi$ is naturally $\N \I$-graded while $\Ui$ (with its inhomogeneous $\imath$Serre relations) is not.

We view Letzter's $\imath$quantum groups and our universal $\imath$quantum groups as a vast generalization of the Drinfeld-Jimbo quantum groups. The $\imath$-program as outlined by  Bao and the second author \cite{BW18} aims at generalizing various fundamental constructions from quantum groups to $\imath$quantum groups.

\subsection{Background (G)} 

Via quantum Grothendieck rings of cyclic quiver varieties, Qin \cite{Qin} provided a geometric construction of the Drinfeld doubles of quantum groups of type ADE. A bonus of the geometric approach is the construction of a positive basis on $\tU$, which contains as subsets the (mildly rescaled) dual canonical bases of Lusztig for halves of the quantum groups \cite{Lus90}. Qin's work was built on the construction of Hernandez-Leclerc \cite{HL15} (who realized half a quantum group using graded Nakajima quiver varieties) as well as the concept of quantum Grothendieck rings introduced by Nakajima \cite{Na04} and Varagnolo-Vasserot \cite{VV}.

%

Motivated by Nakajima \cite{Na01}, Hernander-Leclerc \cite[\S9]{HL15} and Leclerc-Plamondon \cite{LeP13}, Keller and Scherotzke \cite{KS16} formulated the notion of regular/singular Nakajima categories $\mcr^{\gr}_C$ and $\cs^{\gr}_C$ as a variant of mesh category of the repetition category $\Z Q$ with a configuration $C$, for an acyclic quiver $Q$; a copy of the subset $C$ of vertices of $\Z Q$ plays the role of framing in Nakajima's construction of quiver varieties. The representation varieties of $\mcr^{\gr}_C$ and $\cs^{\gr}_C$ (with suitable choices of $C$) were then used to realize graded Nakajima quiver varieties and their stratifications. Scherotzke \cite{Sch18} further generalized the main results of \cite{KS16} to an orbit quotient setting by introducing $\mcr=\mcr_{C,F}:=\mcr^{\gr}_C/F$ and $\cs=\cs_{C,F}$ associated to an admissible pair $(F, C)$, where $F$ is an automorphism and $C$ is a configuration stable under $F$. Note that $\mcr, \cs$ were called the generalized Nakajima categories {\em loc. cit.}, and they are called Nakajima-Keller-Scherotzke (NKS) categories in this paper.

It is difficult to describe explicitly these NKS categories and the corresponding NKS (quiver) varieties in such a great generality; the NKS varieties are by definition (cf. Definition~\ref{def:NKS}) the representation varieties $\cm(\bv,\bw)$ and $\cm_0(\bw)$ of $\mcr$ and $\cs$. The known examples of NKS varieties are those when taking $F$ to be the AR translation $\tau$ and the square $\Sigma^2$ of the shift functor, and they reproduce classical and cyclic Nakajima quiver varieties, respectively  (cf. \cite{Sch18, SS16}; see also Proposition~\ref{prop:2-complexes}). Qin's construction \cite{Qin} of the Drinfeld double $\tU$ uses such cyclic quiver varieties; in this case, the singular NKS category $\cs$ (when viewed as an algebra) can be identified with $\La = kQ \otimes R_2$ in \eqref{eq:La}, i.e., the $\imath$quiver algebra used to formulate Bridgeland's Hall algebra; cf. \cite{LW19}.

\subsection{Goal}

The goal of this paper is to provide a geometric setting for constructing the quasi-split universal $\imath$quantum groups $\tUi$ of ADE type, using the quantum Grothendieck rings of Nakajima-Keller-Scherotzke quiver varieties (associated to some distinguished choices of admissible pairs arising from Dynkin $\imath$quivers). In addition, the geometric construction provides a favorable basis for $\tUi$ with positive integral structure constants, which should be thought as ``dual $\imath$canonical basis".

The construction in this paper should be viewed as a geometric counterpart of the $\imath$Hall algebra construction in \cite{LW19}, just as the construction of Qin \cite{Qin} (also see \cite{SS16}) is a geometric counterpart of Bridgeland's Hall algebra construction \cite{Br13}.

\subsection{Main results}

Recall the $\imath$quiver algebra $\iLa$ associated to an $\imath$quiver $(Q, \btau)$, whose $\imath$Hall algebra produces the universal $\imath$quantum group $\tUi$ \cite{LW19} .
Let $\widehat{\btau}$ be the triangulated equivalence of the derived category $\cd_Q$ of $kQ$-modules induced by the automorphism $\btau$ of $Q$. With motivation from representation theory, we single out below a distinguished class of NKS categories and NKS varieties; see Definition~\ref{def:RS for iQG}. 

\begin{customthm}{{\bf A}}  [Theorem~\ref{thm:iNKS}]
   \label{thm:A}
  Let $(Q,\btau)$ be a Dynkin $\imath$quiver. Let $\mcr^\imath$ and $\cs^\imath$ be the regular and singular $\imath$NKS categories  as in Definition~\ref{def:RS for iQG}. Then the category $\Gproj(\Lambda^\imath)$ of Gorenstein projective $\iLa$-modules is equivalent to $\proj(\mcr^\imath)$, $\proj(\Lambda^\imath)$ is Morita equivalent to $\cs^\imath$, and there are equivalences of Frobenius categories
$\Gproj(\Lambda^\imath)\simeq \Gproj(\cs^\imath)\simeq \proj(\mcr^\imath).$
\end{customthm}
In other words, $\cs^\imath$ viewed as an algebra is isomorphic to the $\imath$quiver algebra $\iLa$.

%

Following the formalism in \cite{Sch18}, we define in Definition~\ref{def:NKS} and \eqref{eq:M0}--\eqref{eq:pi} the $\imath$NKS varieties $\cm (\bv,\bw,\mcr^\imath)$ and $\cm_0(\bv,\bw,\mcr^\imath)$, together with a morphism $\pi: \cm (\bv,\bw,\mcr^\imath) \rightarrow \cm_0(\bv,\bw,\mcr^\imath)$, for given dimension vectors $\bv \in \N^{\mcr^\imath_0-\cs^\imath_0}$ and $\bw \in \N^{\cs^\imath_0}$. Thanks to the inclusion relations among $\cm_0(\bv,\bw,\mcr^\imath)$ when $\bv$ are comparable, we can form $\cm_0(\bw,\mcr^\imath) =\cup_{\bv} \cm_0(\bv,\bw,\mcr^\imath)$, which can be identified with the representation variety $\rep(\bw, \cs^\imath)$; cf. Lemma~\ref{lem:M=rep}. There is a stratification (see \eqref{eqn:stratification})
\[
\cm_0(\bw, \mcr^\imath)=\bigsqcup_{\bv:\sigma^*\bw-{\mathcal C}_q\bv\geq0} \cm_0^{\text{reg}}(\bv,\bw, \mcr^\imath).
\]

A transversal slice result for graded/cyclic quiver varieties \cite{Na01} (also cf. \cite{Na04}) has played a fundamental role in a decomposition of the perverse sheaf \eqref{eq:piPS}--\eqref{eq:decomp} into a direct summand of IC sheaves with trivial local systems; this decomposition is more precise than what the BBD decomposition theorem offers. We show that an analogous transversal slice result holds for $\imath$NKS varieties; see Proposition~\ref{prop:Translice1}. However, we have been unable to imitate Nakajima's argument completely (which relies on the indecomposability of standard modules of quantum loop algebras) for all $\imath$NKS varieties, and thus we impose a technical Hypothesis~\ref{hypothesis} throughout the paper. This Hypothesis is known to hold for a substantial subclass of $\imath$NKS varieties; see Table~\ref{table:values}. Accordingly, Theorem~\ref{thm:B}, Theorem~\ref{thm:C} and Proposition~\ref{prop:D} hold unconditionally in cases listed in Table~\ref{table:values} and in the remaining cases conditional on the validity of Hypothesis~\ref{hypothesis}.

We apply the general machinery \cite{Na04, VV} of quantum Grothendieck rings to the $\imath$NKS varieties. A coalgebra $K^{gr}(\mod(\cs^\imath))$ in \eqref{eq:Kgr}  is endowed with a twisted comultiplication (which involves a square root $\tt^{1/2}$ of $\tt$), and its dual gives rise to the quantum Grothendieck ring  $(\tRiZ, \cdot)$ over $\Z[\tt^{1/2}, \tt^{-1/2}]$; cf. \eqref{eq:KSi}--\eqref{eq:tw}. By construction, $\tRiZ$ affords a distinguished basis $\{L(\bv, \bw)\}$ dual to a basis in $K^{gr}(\mod(\cs^\imath))$ of perverse sheaves associated to the strata $\cm_0^{\text{reg}}(\bv,\bw, \mcr^\imath)\subseteq \cm_0(\bw, \mcr^\imath)$ with respect to the trivial local systems, cf. \eqref{eq:bases dual}.

We have a good knowledge of the size of $\tRiZ$ thanks to the following theorem. The parametrization of this basis for $\tRiZ$ is fairly explicit; cf. \eqref{eq:Ri0Z} and Proposition~\ref{prop:dominant++}. See \eqref{eq:vfi} for notations $\bv^i, \bw^i$.
\begin{customthm}{\bf B}   [Theorem~\ref{thm:RRR}, Proposition~\ref{prop:poly}]
  \label{thm:B}
The subring $\tR^{\imath0}_\Z$ of $\,\tRiZ$ generated by $L(\bv^{i},\bw^{i})$, for $i\in \I$, is a polynomial ring, and it has a basis
\[
\{ L(\bv^0,\bw^0) \mid (\bv^0,\bw^0)\in (V^0,W^0),\sigma^*\bw^0-{\mathcal C}_q \bv^0=0 \}.
\]
Moreover, as a left (or right) $\tR^{\imath0}_\Z$-module, $\tRiZ$ is free with basis
\[
\{L(\bv^+,\bw^+)\mid (\bv^+,\bw^+)\in (V^{+},W^{+})\text{ $l$-dominant} \}.
\]
\end{customthm}

Now we can state the main theorem of this paper, where $\tRi =\Q(\tt^{1/2}) \otimes \tRiZ$ and the field $\Q({\tt^{1/2}})$ is used to include $\tt^{1/2}$; note that $\tt^{1/2}$ arises in the twisting of (co)multiplication.
\begin{customthm}{{\bf C}}  [Theorem~\ref{thm:UiRi}]
  \label{thm:C}
Let $(Q, \btau)$ be a Dynkin $\imath$quiver. Then there exists an isomorphism of $\Q({\tt^{1/2}})$-algebras
$
\widetilde{\kappa}: \tUi \stackrel{\simeq}{\longrightarrow}  \tRi
$ 
which sends
\begin{align*}
  B_i\mapsto  \frac{\tt}{1-\tt^2} L(0,\e_{\sigma \ts_i}),\text{ if }i\in\I_\btau ,&\qquad B_i\mapsto \frac{\tt}{\tt^2-1}L(0,\e_{\sigma \ts_i}),\text{ if } i\in \I \backslash \I_\btau,
  \\
\tk_j\mapsto L(\bv^{j}, \bw^{j}),\text{ if } \btau j\neq j,&\qquad
\tk_j\mapsto-\frac{1}{\tt} L(\bv^{j},\bw^{j}),\text{ if } \btau j=j.
\end{align*}
\end{customthm}
It is instructive to compare the above theorem with the realization of $\U^+$ via a quantum Grothendieck ring $\tR^+$ by Hernandez-Leclerc \cite{HL15} and the realization of $\tU$ via a quantum Grothendieck ring $\tR$ by Qin \cite{Qin}; see Theorem~\ref{thm:iso HL-Q}. Theorem~\ref{thm:B} allows us to define a filtered algebra structure on $\tRi$ in Proposition~\ref{prop:filtered}, whose associated graded $\tR^{\imath,gr}$ naturally contains $\tR^+$ as a subalgebra; see Proposition~\ref{prop:R+Rigr}.

To show that $\widetilde{\kappa}$ defined in Theorem~\ref{thm:C} is a homomorphism, we need to verify the $\imath$Serre relations for the generators $B_i, \tk_i$ of $\tUi$ hold for their images in $\tRi$ under $\widetilde{\kappa}$. This requires several local rank 1 and 2 computations, which in turn are built on the determination of various $l$-dominant pairs $(\bv, \bw)$ which provide explicit information on the stratification of $\cm_0(\bw, \mcr^\imath)$. These computations are more involved than \cite{HL15, Qin} as we have more cases to handle, and will be carried out in Sections~\ref{sec:pairs}--\ref{sec:computation}. We then show in Section~\ref{sec:main} the elements in $\tRi$ as specified in Theorem~\ref{thm:C} indeed generate the algebra $\tRi$. Finally a filtered algebra argument reduces the injectivity of the homomorphism $\widetilde{\kappa}$ to the theorem of Hernandez-Leclerc which realizes geometrically half a quantum group.

%

The following positivity property follows from constructions, see \eqref{eq:positive}.

\begin{customprop}{{\bf D}}  [Positivity]
\label{prop:D}
The structure constants in the $\Z[\tt^{1/2}, \tt^{-1/2}]$-algebra $\tRiZ$ with respect to the basis $\{L(\bv,\bw)\}$ are in $\N[\tt^{1/2}, \tt^{-1/2}]$.
\end{customprop}
This positive basis for $\tRi$ is transferred to a basis for $\tUi$ via the isomorphism in Theorem~\ref{thm:C}, which
should be viewed as a {\em dual $\imath$canonical basis} for $\tUi$. For quasi-split $\tUi$ of type AIII, this dual $\imath$canonical basis in $\tUi$ contains as subsets of (rescaled) dual canonical basis of half a quantum group of type A \cite{Lus90, Lus93}.

\subsection{Perspectives}

As the Drinfeld doubles $\tU$ 
 arise naturally in the categorical and geometric constructions, we view $\tU$ as a basic concept in its own, not merely as a stepping stone to the construction of quantum groups $\U$.  In the same vein, the universal $\imath$quantum group $\tUi$ can be regarded as a fundamental object on its own; in addition, it helps to demystify the appearance of parameters $\bvs$ in Letzter's $\imath$quantum group $\Ui =\Ui_{\bvs}$.

It will be beneficial to relate the geometric realization of $\tUi$ in this paper and the categorical realization of $\tUi$ in \cite{LW19}. The construction of \cite{LW19} provides an intrinsic basis for $\tUi$, which we call a {\em dual PBW basis}. In a sequel to this paper, 
 we shall show the integral lattice for $\tUi$ spanned by the dual PBW basis coincides with the integral lattice spanned by the dual $\imath$canonical basis constructed in this paper; moreover, we shall show that the dual PBW basis has a {\em positive integral} expansion in terms of the dual $\imath$canonical basis; similar statements hold for $\tU$. In addition, we shall develop an algebraic approach of another ``dual canonical basis" for $\tU$ and $\tUi$, respectively. It is a challenging and important open question to find an algebraic characterization and algorithm for the positive dual canonical bases for $\tU$ and $\tUi$.

There has been a completely different geometric construction of the modified quantum groups $\dot\U$ (of type A) and their canonical bases with positivity \cite{BLM90, Lus93}, which is compatible with the Khovanov-Lauda-Rouquier (KLR)  categorification \cite{KL10, R08}. There has also been a geometric construction of the modified $\imath$quantum groups $\dot\U^\imath$ (of type AIII) and their $\imath$canonical bases with positivity \cite{BKLW, LiW18, BW18b}, which is again compatible with a KLR type categorification \cite{BSWW}. It is natural to ask if there is any connection between these constructions for modified quantum groups and canonical bases (respectively, for $\imath$quantum groups and $\imath$canonical bases) and the geometric constructions of the Drinfeld doubles and dual canonical bases in \cite{Qin, SS16} (respectively, in this paper).

A monoidal categorification of half a quantum group $\U^+$ of type ADE using finite-dimensional module categories of quantum loop algebras was given in \cite{HL15}, which gave rise to the geometric realization of $\U^+$ via quantum Grothendick rings of graded Nakajima quiver varieties. Further connections between the HL categorification and KLR categorication have been firmly established in \cite{KKK15, Fu17}. This provides ``half an answer" to the above question on the whole quantum groups.

It will be important to extend the constructions of this paper and \cite{LW19} to realize $\imath$quantum groups associated to Satake diagrams with black nodes (cf. \cite{Let99, BW18}). In another direction, this paper hopefully brings quantum symmetric pairs a step closer to quantum cluster algebras, as there have been various connections to cluster algebras in the works \cite{HL15, KS16, Qin} and references therein.


%
%
\subsection{The organization}
The paper is organized as follows.
In Section~\ref{sec:NKS}, the basic constructions of NKS categories/varieties are reviewed.
We then introduce and study in depth in Section~\ref{sec:iNKS} the $\imath$NKS categories/varieties associated to Dynkin $\imath$quiver varieties. 
Connections between $\imath$NKS varieties and the NKS varieties used to realize quantum groups are formulated. We establish Theorem~\ref{thm:A}, which relates the singular $\imath$NKS category to the Dynkin $\imath$quiver algebra $\iLa$. The quantum Grothendieck ring $\tRi$ is then formulated.

In Section~\ref{sec:pairs}, we determine explicitly the $l$-dominant pairs which are used  in the subsequent Section~\ref{sec:computation} for the rank 2 computations of $\imath$Serre relations in the algebra $\tRi$.

In Section~\ref{sec:main}, we construct a new basis (see Theorem~\ref{thm:B}) and an algebra filtration for the quantum Grothendieck ring $\tRi$, and finally establish the algebra isomorphism in Theorem~\ref{thm:C} which provides a geometric realization of the $\imath$quantum groups.

\subsection{Notations}
We list the notations which are often used throughout the paper.
\smallskip

$\triangleright$
$\N,\Z,\Q$, $\C$ -- sets of nonnegative integers, integers, rational  and complex numbers.

$\triangleright$
$k$ -- an algebraically closed field of characteristic zero,

$\triangleright$
$\mod (k)$ -- category of finite-dimensional $k$-linear spaces,

$\triangleright$
$D$ -- the standard duality functor $\Hom_k(-,k)$.

\vspace{2mm}

Let $\cc$ be a $k$-linear category. We denote

$\triangleright$ $\cc_0$-- the set of objects of $\cc$,

$\triangleright$
$\mod(\cc)$ -- category of pointwise finite-dimensional right $\cc$-modules, i.e., $k$-linear functors $\cc^{op}\rightarrow \mod (k)$,

$\triangleright$
$\rep(\bw,\cc)$ -- the variety of $\cc$-modules $M$ such that $M(u)=k^{{\bw}(u)}$ for each $u\in \cc_0$,

$\triangleright$
$\cc(u,v)$ -- Hom-space $\Hom_\cc(u,v)$ for any $u,v\in\cc_0$,

$\triangleright$
$\proj (\cc)$ --  full subcategory of projective modules in $\mod(\cc)$.

\vspace{2mm}

Let $Q =(Q_0, Q_1)$ be a quiver, and $i\in Q_0$. Denote

$\triangleright$
$\rep(Q)$ -- category of finite-dimensional representation of $Q$ over field $k$,

$\triangleright$
$\mod (kQ)$ -- category of finite-dimensional right $kQ$-modules; we identify $\mod(kQ)\simeq \rep(Q^{op})$,

$\triangleright$
$\proj(kQ)$ -- subcategory of projective $kQ$-modules,

$\triangleright$
$\inj(kQ)$ -- subcategory of  injective $kQ$-modules,

$\triangleright$
$\ts_i$, $P_i$, $I_i$ -- the simple, projective, injective $kQ$-modules, respectively,

$\triangleright$
$\cd_Q$ -- bounded derived category of finite-dimensional $kQ$-modules.

\vspace{2mm}
For a triangulated category $\ct$, denote by

$\triangleright$
$\Sigma$ -- the shift functor,

$\triangleright$
$\tau$ -- the Auslander-Reiten (AR) translation (if it exists).

\vspace{2mm}
$\triangleright$
$\mcr^\imath, \cs^\imath, \cp^\imath$ -- the singular/regular/preprojective $\imath$NKS categories,

$\triangleright$
$\cm (\bv, \bw, \mcr^\imath)$ -- $\imath$NKS quiver varieties.

\vspace{2mm}
$\triangleright$
$\tU$ -- Drinfeld double of a quantum group $\U$,

$\triangleright$
$\tUi$ -- universal $\imath$quantum group, whose central reductions produce $\imath$quantum groups $\Ui_\bvs$.

\vspace{2mm}
$\triangleright$
$\bv^i, \bw^i$ -- dimension vectors in  \eqref{eq:vfi},

$\triangleright$
$\bv^{ij}, \bw^{ij}$, $\bw^{ijk}$ -- dimension vectors in  \eqref{eq:vij2}  and \eqref{eq:wijk}.

\subsection*{Acknowledgments}
We thank Mark de Cataldo, Hiraku Nakajima, and Fan Qin for helpful discussions on quiver varieties. 
This project was initiated during a visit of M.L. to University of Virginia. We thank University of Virginia, Sichuan University, and East China Normal University for hospitality and support which has greatly facilitated this work. W.W. is partially supported by NSF grant DMS-1702254 and DMS-2001351. We thank two anonymous referees for very helpful suggestions and comments.

\section{NKS categories and NKS quiver varieties}
   \label{sec:NKS}

In this section, we review the general constructions of NKS categories and NKS quiver varieties, following \cite{KS16, Sch18}. 
\subsection{Graded NKS categories}
Let $Q=(Q_0=\I,Q_1)$ be a Dynkin quiver. Define the \emph{repetition quiver} $\Z Q$ of $Q$ as follows:

$\triangleright$ the set of vertices is $\{(i,p)\in Q_0\times \Z\}$;

$\triangleright$ an arrow $(\alpha,p):(i,p)\rightarrow (j,p)$ and an arrow $(\bar{\alpha},p):(j,p-1)\rightarrow (i,p)$ are given, for any arrow $\alpha:i\rightarrow j$ in $Q$ and any vertex $(i,p)$.

Define the automorphism $\tau$ of $\Z Q$ to be the shift by one unit to the left, i.e., $\tau(i,p)=(i,p-1)$ for all $(i,p)\in Q_0\times \Z$.



By a slight abuse of notation, associated to $\beta:y\rightarrow x$ in $\Z Q$, we denote by $\bar{\beta}$ the arrow that runs from $\tau x\rightarrow y$. Let $k(\Z Q)$ be the \emph{mesh category} of $\Z Q$, that is, the objects are given by the vertices of $\Z Q$ and the morphisms are $k$-linear combinations of paths modulo the ideal spanned by the mesh relations
$R_x:=\sum_{\alpha:y\rightarrow x}\alpha\bar{\alpha},$
where the sum runs through all arrows of $\Z Q$ ending at $x$.

By a theorem of Happel \cite{Ha2}, there is an equivalence
\begin{align}
	\label{eq:Happelfunctor}
H: k(\Z Q)\stackrel{\simeq}{\longrightarrow} \Ind \cd_Q,
\end{align}
where $\Ind \cd_Q$ denotes the category of indecomposable objects in $\cd_Q$.
Using this equivalence, we label once and for all the vertices of $\Z Q$ by the isoclasses of indecomposable objects of $\cd_Q$. 
Note that the action of $\tau$ on $\Z Q$ corresponds to the action of the AR-translation on $\cd_Q$, and this explains the notation $\tau$.

Let $F:\cd_Q\rightarrow \cd_Q$ be a triangulated isomorphism induced by an isomorphism $F$ of $\Z Q$; \eqref{eq:Happelfunctor}. Then $F$ induces a $k$-linear isomorphism of $k(\Z Q)$, which is also denoted by $F$. Let $C$ be a subset of vertices of $\Z Q$, which satisfies the following basic assumptions.
\begin{assumption}
\begin{itemize}
\item[(C1)] $C$ is an $F$-invariant subset, and $F^n$ is not the identity on objects for all $n\in\Z$;
\item[(C2)] The orbit category $\cd_Q/F$ is triangulated \`a la Keller \cite{Ke05} and the canonical projection
$\cd_Q\rightarrow \cd_Q/F$ is a triangulated functor;
\item[(C3)] For each vertex $x\in\Z Q$ there is a vertex $c\in C$ such that $k(\Z Q)(x,c)\neq0$.
\end{itemize}
\end{assumption}
 In this case, $C$ is called an \emph{(admissible) configuration}, and $(F,C)$ an \emph{admissible pair}. For example, the set of all vertices of $\Z Q$ is admissible. Such $(F,C)$ is also an admissible pair in the sense of \cite{Sch18}, that is, it satisfies \cite[Assumption 2.5]{Sch18} thanks to \cite[\S3.3]{KS16}.


Let $\Z Q_C$ be the quiver constructed from $\Z Q$ by adding to every vertex $c \in C$ a new object denoted by $\sigma c$ together with arrows $\tau c \rightarrow \sigma c$ and $\sigma c \rightarrow c$; we refer to $\sigma c$, for $c\in C$, as {\em frozen vertices}.

The \emph{graded NKS category} $\mcr^{\gr}_C$ is defined to be the $k$-linear category with
\begin{itemize}
\item[$\triangleright$] objects: the vertices of $\Z Q_C$;
\item[$\triangleright$] morphisms: $k$-linear combinations of paths in $\Z Q_C$ modulo the ideal spanned by the mesh relations
$\sum_{\alpha:y\rightarrow x}\alpha\bar{\alpha},$
where the sum runs through all arrows of $\Z Q_C$ ending at $x\in\Z Q$ (including the new arrow $\sigma x \rightarrow x$ if $x\in C$).
\end{itemize}
These categories were formulated in \cite{KS16, Sch18}; here and below NKS stands for Nakajima-Keller-Scherotzke. The work \cite{KS16} was in turn motivated by \cite{Na01,Na04,HL15,LeP13}; also cf. \cite{Qin}.

\subsection{NKS categories}
\label{subsec:NKS cat}
The isomorphism $F$ of $\Z Q$ can be uniquely lifted to an isomorphism of $\Z Q_C$  by setting $F(\sigma c)=\sigma(Fc)$ for any $c\in C$, and then the functor $F$ of the mesh category $k(\Z Q)$ can be uniquely lifted to $\mcr^{\gr}_C$, which is also denoted by $F$.
Let
\[
\mcr=\mcr_{C,F}:=\mcr^{\gr}_C/F,
\]
and let $\cs=\cs_{C,F}$ be the full subcategory of $\mcr$ formed by all $\sigma c$ ($c\in C$), following \cite{Sch18}. Then $\mcr$ and $\cs$ are called the \emph{regular NKS category} and the \emph{singular NKS category} of the admissible configuration $(F,C)$ in this paper. 
The categories studied in \cite{Na01,Na04,LeP13,HL15,Qin} are examples of (graded) NKS categories when $C$ is the set of all vertices of $\Z Q$.

Define $\Z Q_C/F$ to be the quiver with vertices the $F$-orbits on the set of vertices of $\Z Q_C$ and the number of arrows $x\rightarrow y$ between two fixed representatives $x$ and $y$ of $F$-orbits is given by the total number of arrows $x\rightarrow F^i y$ for all $i\in \Z$. The quotient category 
$$\cp:=\mcr/( \cs ),$$ which is equivalent to $k(\Z Q/F)$ is called the \emph{preprojective NKS category}.  By our assumption, $\cd_Q/F$ is a triangulated category and $\Ind \cd_Q/F\simeq \cp$. 

\begin{example}[Type $A_1$]
   \label{example sl2}
Let $Q$ be the quiver of type $A_1$. Let $C=\{\Sigma^j S\mid j\in\Z\}$ and $F=\Sigma^2$. Then $\Z Q_C/F$ is given by an oriented cycle in
the following:
\[
\xymatrix{S\ar[d]^{\alpha_1} & \sigma(S) \ar[l]_{\beta_1} \\
\sigma(\Sigma S) \ar[r]^{\beta_2}& \Sigma S\ar[u]^{\alpha_2} }
\]
The regular NKS category $\mcr$ is isomorphic to $k(\Z Q_C/F)/ ( \beta_2\alpha_1,\beta_1\alpha_2)$.
The singular NKS category $\cs$ is isomorphic to
$k(\xymatrix{\sigma (\ts)\ar@<0.5ex>[r]^{{\varepsilon}\,\,\,} &\sigma(\Sigma \ts)\ar@<0.5ex>[l]^{{\varepsilon'}\,\,\,}})\big/ ( \varepsilon'\varepsilon,\varepsilon\varepsilon' )$.
\end{example}

\begin{example}[Type $A_2$]
  \label{example sl3}
Let $Q$ be the quiver $(\xymatrix{1\ar[r]&2})$. Let $C=\{\Sigma^j \ts_i\mid i=1,2,j\in\Z\}$ and $F=\Sigma^2$. Then $\Z Q_C/F$ is given by \eqref{figure 1}.

\begin{center}
\begin{equation}
\label{figure 1}
\begin{tikzpicture}[scale=1.2]
(1.9,0) --(5,3.1) -- (5.1,3) -- (2,-0.1)--(1.9,0);
(3.9,0) --(7,3.1) -- (7.1,3) -- (4,-0.1)--(3.9,0);

(1.9,0) --(3,1.1)--(4.1,0)--(4,-0.1)--(3,0.9)--(2,-0.1)--(1.9,0);
 (7,3.1) --(7.1,3)--(6,1.9)--(4.9,3)--(5,3.1)--(6,2.1)--(7,3.1);

(4.9,3) --(5,3.1) -- (8.1,0) -- (8,-0.1)--(4.9,3);
(6.9,3) --(7,3.1) -- (10.1,0) -- (10,-0.1)--(6.9,3);

\node at (2,0) {\tiny$\sigma(\ts_1)$};
\node at (4,0) {\tiny$\sigma(\ts_2)$};
\node at (3,1) {\tiny$\ts_1$};
\node at (4,2) {\tiny$P_2$};
\node at (5,3) {\tiny$\sigma(\Sigma\ts_1)$};
\node at (5,1) {\tiny$\ts_2$};
\node at (6,2) {\tiny$\Sigma\ts_1$};
\node at (7,3) {\tiny$\sigma(\Sigma\ts_2)$};
\node at (7,1) {\tiny$\Sigma P_2$};
\node at (8,0) {\tiny$\sigma(\ts_1)$};
\node at (10,0) {\tiny$\sigma(\ts_2)$};
\node at (9,1) {\tiny$\ts_1$};
\node at (8,2) {\tiny$\Sigma\ts_2$};

\draw[->] (2.1,0.1) -- (2.9,0.9);
\draw[->] (3.1,0.9) -- (3.9,0.1);
\draw[->] (3.1,1.1) -- (3.9,1.9);
\draw[->] (4.1,1.9) -- (4.9,1.1);
\draw[->] (4.1,2.1) -- (4.9,2.9);
\draw[->] (5.1,2.9) -- (5.9,2.1);

\draw[->] (6.1,2.1) -- (6.9,2.9);
\draw[->] (5.1,1.1) -- (5.9,1.9);
\draw[->] (4.1,0.1) -- (4.9,0.9);

\draw[->] (8.1,0.1) -- (8.9,0.9);
\draw[->] (7.1,1.1) -- (7.9,1.9);
\draw[->] (8.1,1.9) -- (8.9,1.1);
\draw[->] (9.1,0.9) -- (9.9,0.1);
\draw[->] (7.1,2.9) -- (7.9,2.1);
\draw[->] (6.1,1.9) -- (6.9,1.1);
\draw[->] (7.1,0.9) -- (7.9,0.1);


\end{tikzpicture}
\end{equation}
\end{center}

The singular NKS category $\cs$ admits a description in terms of the bound quiver below:
\begin{center}\setlength{\unitlength}{0.7mm}
\vspace{-2cm}
\begin{equation*}
\begin{picture}(100,40)(0,20)
\put(40,8){\tiny $\sigma(\Sigma \ts_1)$}
\put(42,31){\tiny  $\sigma(\ts_1)$}
\put(72,8){\tiny  $\sigma(\Sigma \ts_2)$}
\put(72,31){\tiny  $\sigma(\ts_2)$}

\put(53,10){\vector(1,0){18.5}}
\put(53,32.5){\vector(1,0){18.5}}

\put(60,12.5){$_{\alpha'}$}
\put(60,35){$_\alpha$}
%
\put(50,13){\vector(0,1){17}}
\put(52,29.5){\vector(0,-1){17}}
\put(72,13){\vector(0,1){17}}
\put(74,29.5){\vector(0,-1){17}}

\put(45,20){\small $\varepsilon_1'$}
\put(53,20){\small $\varepsilon_1$}
\put(67,20){\small $\varepsilon_2'$}
\put(75,20){\small $\varepsilon_2$}
\end{picture}
\end{equation*}
\vspace{0.2cm}
\end{center}
%
\begin{eqnarray*}
&&\varepsilon_1\varepsilon_1', \,\,\,\,\varepsilon_1'\varepsilon_1, \,\,\,\,\varepsilon_2'\varepsilon_2, \,\,\,\,\varepsilon_2\varepsilon_2',\,\,\,\, \alpha' \varepsilon_1 -\varepsilon_2 \alpha,\,\,\,\, \alpha \varepsilon_1'- \varepsilon_2' \alpha'.
\end{eqnarray*}
\end{example}

\subsection{NKS quiver varieties}   \label{subsection:generalised quiver var}

In this subsection, we review the generalized quiver varieties introduced by Keller-Scherotzke \cite{KS16} and Scherotzke \cite{Sch18}; we shall call them Nakajima-Keller-Scherozke quiver varieties, or NKS (quiver) varieties for short, in this paper.

Let $\cs$ be a singular NKS category, and $\mcr$ its corresponding regular NKS category. An $\mcr$-module $M$ is \emph{stable} (resp. \emph{costable}) if the support of $\soc(M)$ (respectively,  $\Top(M)$) is contained in $\cs_0$. A module is \emph{bistable} if it is both stable and costable. Equivalently, $M$ does not contain any nonzero submodule supported only on non-frozen vertices (respectively, $M$ does not admit any nonzero quotient supported only on non-frozen vertices).

Let $\bv \in \N^{\mcr_0-\cs_0}$ and $\bw \in \N^{\cs_0}$ be dimension vectors (with finite supports). Denote by $\rep(\bv,\bw,\mcr)$ the variety of $\mcr$-modules of dimension vector $(\bv,\bw)$. Let $\e_x$ denote the characteristic function of $x\in\mcr_0$, which is also viewed as the unit vector supported at $x$. Let $G_\bv:=\prod_{x\in \mcr_0-\cs_0}GL({\bv(x)}, k)$.

\begin{definition}
 \label{def:NKS}
The {\em NKS quiver variety}, or simply {\em NKS variety}, $\cm(\bv,\bw)$ is the quotient $\cs t(\bv,\bw)/G_\bv$,  where $\cs t(\bv,\bw)$ is the subset of $\rep(\bv,\bw,\mcr)$ consisting of all stable $\mcr$-modules of dimension vector $(\bv,\bw)$. Define the affine variety
\begin{equation}
  \label{eq:M0}
\cm_0(\bv,\bw)=\cm_0(\bv,\bw, \mcr) :=\rep(\bv,\bw,\mcr)//G_\bv
\end{equation}
to be the categorical quotient, whose coordinate algebra is $k[\rep(\bv,\bw,\mcr)]^{G_\bv}$.
\end{definition}
According to \cite[Theorem 3.2]{Sch18} and its proof, $\cm(\bv,\bw)$ is a smooth quasi-projective variety, which is pure dimensional. However, we do not know if $\cm(\bv,\bw)$ is connected or not.

We define a partial order $\le$ on the set $\N^{\mcr_0}$ as follows:
 \begin{equation}
   \label{eq:leq}
 \text{  $\bv' \leq \bv
 \Leftrightarrow \bv'(x) \le \bv(x), \forall x \in \mcr_0$. Moreover, $\bv' < \bv \Leftrightarrow \bv'\leq \bv$ and $\bv' \neq \bv$.}
 \end{equation}
Regarding a dimension vector on $\mcr_0-\cs_0$ as a dimension vector on $\mcr_0$ (by extension of zero),
 we obtain by restriction a partial order $\le$ on the set $\N^{\mcr_0-\cs_0}$.

 For $\bv', \bv \in \N^{\mcr_0-\cs_0}$ with $\bv'\leq \bv$ and $\bw \in \N^{\cs_0}$,  there is an inclusion
\[
\rep(\bv',\bw,\mcr)\longrightarrow \rep(\bv,\bw,\mcr)
\]
by taking a direct sum with the semisimple module of dimension vector $\bv-\bv'$. This yields an inclusion
\[
\rep(\bv',\bw,\mcr)//G_{\bv'}\longrightarrow \rep(\bv,\bw,\mcr)//G_\bv.
\]
Define the affine variety
\[
\cm_0(\bw) =\cm_0(\bw, \mcr) :=\colim\limits_{\bv} \cm_0(\bv,\bw)
\]
to be the colimit of $\cm_0(\bv,\bw)$ along the inclusions. 
Then the projection map
\begin{equation}   \label{eq:pi}
\pi:\cm(\bv,\bw)\longrightarrow \cm_0(\bv,\bw),
\end{equation}
which sends the $G_\bv$-orbit of a stable $\mcr$-module $M$ to the unique closed $G_\bv$-orbit in the closure of $G_\bv M$, is proper; see \cite[Theorem 3.5]{Sch18}. 

Denote by $\cm^{\text{reg}}(\bv,\bw)\subset \cm(\bv,\bw)$ the open subset consisting of the union of closed $G_\bv$-orbits of stable modules, and then
\[
\cm_0^{\text{reg}}(\bv,\bw):=\pi(\cm^{\text{reg}}(\bv,\bw))
\]
is an open subset of $\cm_0(\bv,\bw)$. \cite[Lemma 3.4]{Sch18} shows that $\cm(\bv,\bw)$ vanishes on all but finitely many dimension vectors $\bv$. Then $\pi$ induces a stratification
\[
\cm_0(\bw)=\bigsqcup_\bv\cm_0^{\text{reg}}(\bv,\bw)
\]
into finitely many smooth locally closed strata $\cm_0^{\text{reg}}(\bv,\bw)$; see \cite[Theorem 3.5]{Sch18} and its proof.

Let $\res:\mod(\mcr)\rightarrow\mod(\cs)$ be the restriction functor. Then $\res$ induces a morphism of varieties
\[
\res:\cm_0(\bw)\longrightarrow \rep(\bw,\cs),\qquad
\res\circ\pi:\cm(\bv,\bw)\longrightarrow \rep(\bw,\cs).
\]





%
%
\subsection{The stratification functor}
\label{subsec: strat}

As the restriction functor $\res:\mod(\mcr) \rightarrow \mod(\cs)$ is a localization functor, it admits a left and a right adjoint functor, which are denoted by $K_L$ and $K_R$ respectively. The intermediate extension $K_{LR}:\mod(\cs) \rightarrow \mod(\mcr)$ is the image of the canonical map $K_L\rightarrow K_R$. Note that in this case $K_L$ is right exact and $K_R$ is left exact, $K_{LR}$ preserves monomorphisms and epimorphisms (however, it may not be exact in general).

Let $e$ be the idempotent in $\mcr$ associated to the objects of $\cs$. Then $e\mcr e=\cs$ and $K_L=\mcr e\otimes_\cs-$ and $K_R=\Hom_\cs(e\mcr,-)$.

For any $\cs$-module $M$, the modules $CK(M)$ and $KK(M)$ are defined to be
\begin{itemize}
\item $KK(M)=\ker (K_L(M)\rightarrow K_{LR}(M))$;
\item $CK(M)=\Coker(K_{LR}(M)\rightarrow K_{R}(M))$.
\end{itemize}
Both $CK$ and $KK$ are supported only in $\mcr_0-\cs_0$. As $ \cp=\mcr/(\cs )$, we can view $CK(M)$ and $KK(M)$ as $\cp$-modules. 

\begin{lemma} 
[\text{\cite[Lemma 2.12, Lemma 3.8]{Sch18}}] 
\label{lem:bistable}
An $\mcr$-module $M$ is bistable if and only if $K_{LR}(\res M)\cong M$. Furthermore, we have $N\in\cm_0^{\mathrm{reg}}(\bv,\bw)$ if and only if $\underline{\dim}K_{LR} (N)=(\bv,\bw)$.
\end{lemma}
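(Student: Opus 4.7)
The plan is to use the adjoint triple $(K_L,\res,K_R)$, where $K_{LR}(N)$ is defined as the image of the canonical morphism $K_L(N)\to K_R(N)$. For any $\mcr$-module $M$, this canonical morphism is realized as the composition $K_L\res M\xrightarrow{\varepsilon_M}M\xrightarrow{\eta_M}K_R\res M$ of the counit of $K_L\dashv\res$ and the unit of $\res\dashv K_R$. Two preliminary facts drive the argument: (i) $K_L(N)$ is generated by its restriction, so $\Top K_L(N)\subseteq\cs_0$, and dually $\soc K_R(N)\subseteq\cs_0$; (ii) by the triangular identities combined with the natural isomorphisms $\res K_L\cong\id\cong\res K_R$, both $\res\varepsilon_M$ and $\res\eta_M$ are isomorphisms, so $\ker$ and $\Coker$ of $\varepsilon_M$ and $\eta_M$ are supported entirely on $\mcr_0-\cs_0$.

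For the first equivalence, the direction ``$M\cong K_{LR}(\res M)$ implies $M$ bistable'' follows from (i): $M$ is simultaneously a quotient of $K_L\res M$ and a submodule of $K_R\res M$, forcing $\Top M,\soc M\subseteq\cs_0$. Conversely, if $M$ is bistable, then by (ii) $\Coker\varepsilon_M$ is a quotient of $M$ supported on non-frozen vertices, so costability forces $\Coker\varepsilon_M=0$; symmetrically, stability forces $\ker\eta_M=0$. Therefore the image of $\eta_M\circ\varepsilon_M$, which is $K_{LR}(\res M)$ by definition, coincides with $\eta_M(M)\cong M$.

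For the second statement, $\cm_0^{\text{reg}}(\bv,\bw)=\pi(\cm^{\text{reg}}(\bv,\bw))$ parametrizes closed $G_\bv$-orbits of stable $\mcr$-modules in $\rep(\bv,\bw,\mcr)$, and $\pi$ is bijective on this locus. A stable $\mcr$-module $M$ whose $G_\bv$-orbit is closed must in fact be bistable: if $M$ had a nonzero quotient supported on non-frozen vertices, a Hilbert-Mumford type degeneration scaling that quotient would produce a module in the orbit closure with strictly smaller orbit, contradicting closedness. Hence $\cm_0^{\text{reg}}(\bv,\bw)$ is in bijection with isomorphism classes of bistable $\mcr$-modules of dimension $(\bv,\bw)$. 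By the first equivalence these are exactly $K_{LR}(N)$ for $\cs$-modules $N$ with $\underline{\dim}\,K_{LR}(N)=(\bv,\bw)$; under the injective map $\res:\cm_0(\bw)\hookrightarrow\rep(\bw,\cs)$, whose injectivity follows from the uniqueness of bistable extensions established in the first part, this identifies $\cm_0^{\text{reg}}(\bv,\bw)$ with the claimed subset.

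The chief obstacle lies in the GIT step of the second part. In the classical Nakajima case this follows from Kempf-Ness applied to a moment map, but for general NKS categories one must argue directly on orbit closures in $\rep(\bv,\bw,\mcr)$. A clean route is to invoke the general theory of intermediate extensions for Serre localizations, which identifies bistable modules as the image of $K_{LR}$ and shows that their $G_\bv$-orbits are precisely the closed orbits among stable modules. Once this equivalence is in hand, the adjunction arguments of the first two paragraphs render the whole lemma essentially formal.
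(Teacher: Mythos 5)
The paper gives no proof of this lemma at all --- it is quoted directly from \cite{Sch18} --- so your attempt can only be measured against Scherotzke's original argument. Your proof of the first equivalence is correct and complete, and is essentially the argument in the cited source: the canonical map $K_L(\res M)\to K_R(\res M)$ does factor as $\eta_M\circ\varepsilon_M$; $K_L(N)=\mcr e\otimes_\cs N$ is costable and $K_R(N)=\Hom_\cs(e\mcr,N)$ is stable; and $\res K_L\cong\id\cong\res K_R$ forces $\ker\eta_M$ and $\Coker\varepsilon_M$ to be supported off $\cs_0$, so bistability kills them. Both directions of the first statement follow as you say.

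The gap is in the second statement, and you have located it honestly but not closed it. Your degeneration argument establishes only one implication: a stable module with closed $G_\bv$-orbit is bistable (correctly, since $M\cong M'\oplus M/M'$ with $M/M'\neq 0$ supported off $\cs_0$ would violate stability, so the limit leaves the orbit). That gives the ``only if'' direction of the second statement. For the ``if'' direction you must show that the bistable module $K_{LR}(N)$ of dimension $(\bv,\bw)$ actually has \emph{closed} $G_\bv$-orbit, i.e.\ defines a point of $\cm^{\mathrm{reg}}(\bv,\bw)$; your proposed remedy --- ``invoke the general theory of intermediate extensions for Serre localizations'' --- is circular, because that theory \emph{is} \cite[Lemma 3.8]{Sch18}, the very statement being proved. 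A non-circular fix is available: the strata $\cm_0^{\mathrm{reg}}(\bv',\bw)$ partition $\cm_0(\bw)$ (this is \eqref{eqn:stratification}, quoted from \cite[Theorem 3.5]{Sch18} independently of the present lemma), so $N$ lies in some $\cm_0^{\mathrm{reg}}(\bv'',\bw)$, and the direction you did prove gives $(\bv'',\bw)=\dimv K_{LR}(N)=(\bv,\bw)$. A smaller inaccuracy: injectivity of $\res:\cm_0(\bw)\to\rep(\bw,\cs)$ does \emph{not} follow from uniqueness of bistable extensions alone, since points of $\cm_0(\bw)$ are closed orbits of modules that need not be stable; it is the content of \cite[Theorem 3.12]{Sch18} and needs extra hypotheses (cf.\ Lemma~\ref{lem:M=rep}). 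It is also not needed here if one reads $K_{LR}(N)$ as $K_{LR}(\res N)$ for the module representing the closed orbit.
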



\begin{example}[Type $A_1$]
Keep the notation as in Example \ref{example sl2}. An $\mcr$-module is of the form
\[
\xymatrix{V(S)\ar[r]^{\beta_1} & W(\sigma(S)) \ar[d]_{\alpha_2} \\
W(\sigma(\Sigma S)) \ar[u]^{\alpha_1}& V(\Sigma S)\ar[l]^{\beta_2} }
\]
%
\end{example}

\begin{example}[Type $A_2$]
Keep the notation as in Example \ref{example sl3}. An $\mcr$-module is of the form
\begin{center}\setlength{\unitlength}{0.6mm}
\begin{equation}
\label{figure 2}
 \begin{picture}(120,65)(0,0)
 \put(-2,20){\tiny$V(\ts_1)$}
 \put(18,40){\tiny $V(P_2)$}
 \put(38,20){\tiny $V(\ts_2)$}
  \put(36,60){\tiny $W(\sigma(\Sigma \ts_1))$}

 \put(16,0){\tiny $W(\sigma(\ts_2))$}
  \put(58,40){\tiny $V(\Sigma \ts_1)$}
 \put(76,60){\tiny $W(\sigma(\Sigma \ts_2))$}
  \put(78,20){\tiny $V(\Sigma P_2)$}
 \put(96,0){\tiny $W(\sigma(\ts_1))$}
 \put(98,40){\tiny $V(\Sigma \ts_2)$}
 \put(118,20){\tiny $V(\ts_1)$}

\put(21,39){\vector(-1,-1){15}}
\put(21,4){\vector(-1,1){15}}

 \put(61,39){\vector(-1,-1){15}}
\put(101,4){\vector(-1,1){15}}
  \put(101,39){\vector(-1,-1){15}}
  \put(81,59){\vector(-1,-1){15}}
  \put(41,59){\vector(-1,-1){15}}
  \put(121,19){\vector(-1,-1){15}}
  \put(41,19){\vector(-1,-1){15}}

 \put(61,44){\vector(-1,1){15}}
 \put(41,24){\vector(-1,1){15}}
 \put(81,24){\vector(-1,1){15}}
 \put(121,24){\vector(-1,1){15}}
 \put(101,44){\vector(-1,1){15}}

\put(7,17){\begin{picture}(0,0)
\dashline{3}(6,4)(31,4)
\put(6,4){\vector(-1,0){1}}
\end{picture}
}

\put(47,17){\begin{picture}(0,0)
\dashline{3}(6,4)(31,4)
\put(6,4){\vector(-1,0){1}}
\end{picture}
}
\put(87,17){\begin{picture}(0,0)
\dashline{3}(10,4)(31,4)
\put(10,4){\vector(-1,0){1}}
\end{picture}
}

\put(67,37){\begin{picture}(0,0)
\dashline{3}(9,4)(31,4)
\put(9,4){\vector(-1,0){1}}
\end{picture}
}

\put(27,37){\begin{picture}(0,0)
\dashline{3}(6,4)(31,4)
\put(6,4){\vector(-1,0){1}}
\end{picture}
}
\end{picture}
\end{equation}
\vspace{3mm}
\end{center}
\end{example}

\begin{example}
	\label{ex:cyclic}
Let $Q$ be a Dynkin quiver. In Nakajima's original construction \cite{Na01,Na04}, the graded/cyclic quiver varieties are defined as fixed point sets of the (ungraded) quiver varieties with respect to a $\C^*$-action. These quiver varieties are special cases of NKS quiver varieties as explained in \cite[Remark 3.1]{Sch18}.
 Let $C$ be the set of all vertices of $\Z Q$. 

(a) For $F=\tau$, Nakajima's original quiver varieties are obtained as the NKS quiver varieties $\cm_0(\bw),\cm(\bv,\bw)$.

(b) For $F=\tau^n$, the ($n$-)cyclic quiver varieties are obtained as  the NKS quiver varieties $\cm_0(\bw),\cm(\bv,\bw)$.
\end{example}

Given $\bv \in \N^{\mcr_0-\cs_0}$,  we define (cf. \cite{Sch18})
\begin{align}
\label{def:Cq}
\begin{split}
{\mathcal C}_q \bv:  \mcr_0-\cs_0 & \longrightarrow\Z,
\\
({\mathcal C}_q\bv)(x)& =\bv(x)+\bv(\tau x) -\sum_{y\rightarrow x}\bv(y),
\quad \text{ for }x \in\mcr_0-\cs_0,
\end{split}
\end{align}
where the sum runs over all arrows $y\rightarrow x$ of $\mcr$ with $y\in\mcr_0-\cs_0$.
Given $\bw \in \N^{\cs_0}$, define a dimension vector
\[
\sigma^*\bw:\mcr_0-\cs_0\longrightarrow\N,
\qquad
x \mapsto
\begin{cases}
\bw(\sigma x), & \text{ if } x\in C,
\\
0, & \text{otherwise}.
\end{cases}
\]
Given $\bv \in \N^{\mcr_0-\cs_0}$, define the dimension vector
\[
\tau^*\bv: \mcr_0-\cs_0 \longrightarrow \N,
\qquad
x \mapsto \bv(\tau x).
\]

By \cite[Proposition 4.6]{Sch18}, if the fibre of $\pi:\cm(\bv,\bw)\rightarrow \cm_0(\bw)$ is non-empty, then $\sigma^*\bw-{\mathcal C}_q\bv \ge \bf 0$. So we obtain the following more precise form of a stratification of $\cm_0(\bw)$
\begin{equation}   \label{eqn:stratification}
\cm_0(\bw)=\bigsqcup_{\bv:\sigma^*\bw-{\mathcal C}_q\bv\geq0} \cm_0^{\text{reg}}(\bv,\bw).
\end{equation}

\begin{definition}   \label{def:pair}
A pair $(\bv,\bw)$ of dimension vectors $\bv \in \N^{\mcr_0-\cs_0}$ and $\bw \in \N^{\cs_0}$ is called $l$\emph{-dominant} if $\sigma^*\bw-{\mathcal C}_q\bv\geq0$, and $(\bv, \bw)$ is called {\em strongly $l$\emph-dominant} if it is $l$\emph-dominant and  $\cm_0^{\mathrm{reg}}(\bv,\bw) \neq \emptyset.$
\end{definition}

Recall $\sigma:C\rightarrow \cs_0$ is a bijection. Denote by $\sigma^{-1}:\cs_0\rightarrow C$ its inverse. Then $(\bv,\bw)$ is an $l$-dominant pair if and only if $\bw-\mathcal{C}_q\bv\sigma^{-1}\geq0$.
 
%
%

\section{NKS categories for $\imath$quivers}
   \label{sec:iNKS}

In this section, we formulate and study the $\imath$NKS categories and varieties associated to Dynkin $\imath$quivers and relate them to the $\imath$quiver algebras introduced in \cite{LW19}. We also recall the quantum Grothendieck rings for $\imath$NKS quiver varieties, following \cite{Na04, VV} (also cf. \cite{Qin, SS16}).

\subsection{The $\imath$quivers and doubles}
\label{subsec:iquiver}

We briefly review the notions of $\imath$quivers and $\imath$quiver algebras introduced by the authors in \cite{LW19}. (The notation $\uptau$ for quiver involution is changed to $\btau$ here, as $\uptau$ could be confused with the notation $\tau$ for the AR translation which is used often in this paper.)

Let $Q=(Q_0=\I,Q_1)$ be an acyclic quiver (i.e., a quiver without oriented cycle). An {\em involution} of $Q$ is defined to be an automorphism $\btau$ of the quiver $Q$ such that $\btau^2=\Id$. In particular, we allow the {\em trivial} involution $\Id:Q\rightarrow Q$. An involution $\btau$ of $Q$ induces an involution of the path algebra $\K Q$, again denoted by $\btau$.
A quiver together with a specified involution $\btau$, $(Q, \btau)$, will be called an {\em $\imath$quiver}.

Let $R_1$ denote the truncated polynomial algebra $\K[\varepsilon]/(\varepsilon^2)$.
Let $R_2$ denote the radical square zero of the path algebra of $\xymatrix{1 \ar[r]<0.5ex>^{{\varepsilon}} & 1' \ar[l]<0.5ex>^{{\varepsilon'}}}$, i.e., $\varepsilon' \varepsilon =0 =\varepsilon\varepsilon '$. Define a $\K$-algebra
\begin{equation}
  \label{eq:La}
\Lambda=\K Q\otimes_\K R_2.
\end{equation}

Associated to the quiver $Q$, the {\em double framed quiver} $Q^\sharp$
is the quiver such that
\begin{itemize}
\item the vertex set of $Q^{\sharp}$ consists of 2 copies of the vertex set $Q_0$, $\{i,i'\mid i\in Q_0\}$;
\item the arrow set of $Q^{\sharp}$ is
\[
\{\alpha: i\rightarrow j,\alpha': i'\rightarrow j'\mid(\alpha:i\rightarrow j)\in Q_1\}
\cup
\{ {{\varepsilon_i: i\rightarrow i' ,\varepsilon'_i: i'\rightarrow i} } \mid i\in Q_0 \}.
\]
\end{itemize}
Note $Q^\sharp$ contains 2 isomorphic copies of subquiver $Q\cong Q'$, and it admits a natural involution called $\swa$, which switches $Q$ and $Q'$.
The involution $\btau$ of a quiver $Q$ induces an involution ${\btau}^{\sharp}$ of $Q^{\sharp}$ which is basically the composition of $\swa$ and $\btau$ (on the two copies of subquivers $Q$ and $Q'$ of $Q^\sharp$).
The algebra $\La$ can be realized in terms of the quiver $Q^{\sharp}$ and a certain admissible ideal $I^{\sharp}$
so that $\Lambda\cong \K Q^{\sharp} \big/ I^{\sharp}$; see \cite[\S2.2]{LW19}.

%

By \cite[Lemma~2.4]{LW19}, ${\btau}^{\sharp}$ on $Q^\sharp$ preserves $I^\sharp$ and hence induces an involution ${\btau}^{\sharp}$ on the algebra $\Lambda$. By \cite[Definition 2.5]{LW19}, the {\rm $\imath$quiver algebra} of $(Q, \btau)$ is the fixed point subalgebra of $\Lambda$ under ${\btau}^{\sharp}$,
\begin{equation}
   \label{eq:iLa}
\iLa
= \{x\in \Lambda\mid {\btau}^{\sharp}(x) =x\}.
\end{equation}
By \cite[Proposition 2.6]{LW19}, the algebra $\iLa$ can be described in terms of a quiver $\ov Q$ and its admissible ideal $\ov{I}$ as
\begin{align}
\label{eqn:i-quiver}
\iLa \cong \K \ov{Q} \big/ \ov{I},
\end{align}
where

$\triangleright$ $\ov{Q}$ is constructed from $Q$ by adding a loop $\varepsilon_i$ at the vertex $i\in Q_0$ if $\btau i=i$, and adding an arrow $\varepsilon_i: i\rightarrow \btau i$ for each $i\in Q_0$ if $\btau i\neq i$;

$\triangleright$ $\ov{I}$ is generated by
\begin{itemize}
\item[(1)] (Nilpotent relations) $\varepsilon_{i}\varepsilon_{\btau i}$ for any $i\in\I$;
\item[(2)] (Commutation relations) $\varepsilon_i\alpha-\btau(\alpha)\varepsilon_j$ for any arrow $\alpha:j\rightarrow i$ in $Q_1$.
\end{itemize}
 Moreover, it follows by \cite[Proposition 3.5]{LW19} that $\Lambda^{\imath}$ is a $1$-Gorenstein algebra.

By \cite[Corollary 2.12]{LW19}, $\K Q$ is naturally a subalgebra and also a quotient algebra of $\Lambda^\imath$.
Viewing $\K Q$ as a subalgebra of $\Lambda^{\imath}$, we have a restriction functor
\[
\res: \mod (\Lambda^{\imath})\longrightarrow \mod (\K Q).
\]
Viewing $\K Q$ as a quotient algebra of $\Lambda^{\imath}$, we obtain a pullback functor
\begin{equation}\label{eqn:rigt adjoint}
\iota:\mod(\K Q)\longrightarrow\mod(\Lambda^{\imath}).
\end{equation}
Hence a simple module $\ts_i$ of $k Q$ associated to $i \in Q_0$ is naturally a simple $\iLa$-module.

For each $i\in Q_0$, define a $k$-algebra (which can be viewed as a subalgebra of $\iLa$)
\begin{align}\label{dfn:Hi}
\BH _i:=\left\{ \begin{array}{ll}  \K[\varepsilon_i]/(\varepsilon_i^2)
& \text{ if }\btau i=i, \; \begin{picture}(50,13)(0,0)
\put(0,-2){\tiny $i$}
%
\put(0,9){\small $\varepsilon_i$}
\qbezier(-1,1)(-3,3)(-2,5.5)
\qbezier(-2,5.5)(1,9)(4,5.5)
\qbezier(4,5.5)(5,3)(3,1)
\put(3.1,1.4){\vector(-1,-2){0.3}}
\end{picture}
 \\
\K(\xymatrix{
i\ar@<0.5ex>[r]^{{\varepsilon_i}} & \btau i\ar@<0.5ex>[l]^{{\varepsilon_{\btau i}}}
})/( \varepsilon_i\varepsilon_{\btau i},\varepsilon_{\btau i}\varepsilon_i)
&\text{ if } \btau i \neq i .\end{array}\right.
\end{align}
Note that $\BH _i=\BH _{\btau i}$ for any $i\in Q_0$. 
Choose one representative for each $\btau$-orbit on $\I$, and let
\begin{align}   \label{eq:ci}
\ci = \{ \text{the chosen representatives of $\btau$-orbits in $\I$} \}.
\end{align}

Define the following subalgebra of $\Lambda^{\imath}$:
\begin{equation}  \label{eq:H}
\BH =\bigoplus_{i\in \ci }\BH _i.
\end{equation}
Note that $\BH $ is a radical square zero selfinjective algebra. Denote by
\begin{align}
\res_\BH :\mod(\iLa)\longrightarrow \mod(\BH )
\end{align}
the natural restriction functor.
On the other hand, as $\BH $ is a quotient algebra of $\iLa$ (cf. \cite[proof of Proposition~ 2.15]{LW19}), every $\BH $-module can be viewed as a $\iLa$-module.

Recall the algebra $\BH _i$ for $i \in \ci$ from \eqref{dfn:Hi}. For $i\in Q_0 =\I$, define the indecomposable (right) module over $\BH _i$ (if $i\in \ci$) or over $\BH_{\btau i}$ (if $i\not \in \ci$)
\begin{align}
  \label{eq:E}
\E_i =\begin{cases}
k[{\varepsilon_i}]/(\varepsilon_i^2), & \text{ if }\btau i=i;
\\
\xymatrix{
\K\ar@<-0.5ex>[r]_0 & \K\ar@<-0.5ex>[l]_1
}
\text{ on the quiver }
\xymatrix{
i\ar@<0.5ex>[r]^{{\varepsilon_i}} & \btau i\ar@<0.5ex>[l]^{{\varepsilon_{\btau i}}}
}, & \text{ if } \btau i\neq i.
\end{cases}
\end{align}
Then $\E_i$, for $i\in Q_0$, can be viewed as a $\iLa$-module and is called a {\em generalized simple} $\iLa$-module.

\subsection{NKS categories for quantum groups}
 \label{subsec:NKS}

First we review briefly and set up notations for Gorenstein algebras and Gorenstein projective modules. Let $A$ be a finite-dimensional $\K$-algebra.
A complex
\[
P^\bullet:\cdots\longrightarrow P^{-1}\longrightarrow P^0\stackrel{d^0}{\longrightarrow} P^1\longrightarrow \cdots
\]
of finitely generated projective $A$-modules is said to be \emph{totally acyclic} provided it is acyclic and the Hom complex $\Hom_A(P^\bullet,A)$ is also acyclic. 
An $A$-module $M$ is said to be (finitely generated) \emph{Gorenstein projective} provided that there is a totally acyclic complex $P^\bullet$ of projective $A$-modules such that $M\cong \Ker d^0$ \cite{EJ}. We denote by $\Gproj(A)$ the full subcategory of $\mod(A)$ consisting of Gorenstein projective modules.


Let $A$ be a finite-dimensional algebra. The {\em singularity category} of $A$ is defined (cf. \cite{Ha3}) to be the Verdier localization $D_{sg}(A):=D^b(A)/K^b(\proj (A)).$ Buchweitz-Happel's Theorem states that $\Gproj(A)$ is a Frobenius category with projective modules as projective-injective objects. Moreover, $D_{sg}(A) \simeq \underline{\Gproj}(A)$ if $A$ is a Gorenstein algebra.

\begin{definition} [NKS regular/singular categories] 
 \label{def:RS for QG}
Let $Q$ be a Dynkin quiver. Denote by $\mcr$ and $\cs$ the regular and singular NKS categories associated to the admissible pair $(\Sigma^2, C)$, where
\[
C=\{\text{the vertices labelled by }\Sigma^j \ts_i, \text{ for all } i\in Q_0 \text{ and } j\in\Z \}.
\] 
\end{definition}
{\bf In the remainder of this paper, we shall reserve the notations $\mcr$ and $\cs$ for the regular and singular NKS categories in Definition~\ref{def:RS for QG}.} 

\begin{proposition}[cf. \cite{SS16}]
\label{prop:2-complexes}
Let $Q$ be a Dynkin quiver. Let $\mcr$ and $\cs$ be the regular and singular NKS categories as in Definition~\ref{def:RS for QG}. Then $\Gproj(\Lambda)$ is equivalent to $\proj(\mcr)$, $\proj(\Lambda)$ is Morita equivalent to $\cs$, and we have $\Gproj(\Lambda)\simeq \Gproj(\cs)\simeq \proj(\mcr)$
as Frobenius categories.
\end{proposition}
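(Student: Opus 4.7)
This proposition is the diagonal case of Theorem~\ref{thm:iNKS} and is essentially due to \cite{SS16}; the strategy is to compute the singular NKS category $\cs$ explicitly and identify its morphism algebra with $\Lambda$, and then invoke the general NKS framework of \cite{KS16, Sch18} to upgrade this Morita equivalence of algebras to the asserted equivalence of Frobenius categories involving $\proj(\mcr)$.

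First I would enumerate objects and morphisms of $\cs$. Since $F=\Sigma^2$ and $C=\{\Sigma^j\ts_i\}$, the $F$-orbits on the frozen vertices yield exactly $2|Q_0|$ objects $\{\sigma\ts_i,\ \sigma(\Sigma\ts_i)\}_{i\in Q_0}$. The $\Hom$-spaces in $\cs$ come from paths in $\Z Q_C/\Sigma^2$ connecting these frozen vertices through non-frozen interior vertices, modulo mesh relations. The mesh at each simple $\ts_i$ (whose only predecessor in $\Z Q$ is $\tau\ts_i$) produces, after orbit identification by $\Sigma^2$, an arrow $\varepsilon_i:\sigma\ts_i\to\sigma(\Sigma\ts_i)$ together with a reverse arrow $\varepsilon_i':\sigma(\Sigma\ts_i)\to\sigma\ts_i$, with mesh relations forcing $\varepsilon_i\varepsilon_i'=\varepsilon_i'\varepsilon_i=0$. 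For each arrow $\alpha:i\to j$ in $Q_1$, the appropriate short zig-zag in $\Z Q_C$ produces an arrow between $\sigma\ts_i$ and $\sigma\ts_j$ (direction determined by orientation), and analogously in the $\Sigma$-shifted copy; the mesh relations at the interior vertices adjacent to the frozen ones then impose the commutation relations between these arrows and the $\varepsilon$'s. Comparing this presentation with $\Lambda\cong kQ^\sharp/I^\sharp$ from \S\ref{subsec:iquiver} produces a canonical algebra isomorphism $\End_\cs\bigl(\bigoplus_{c\in\cs_0}c\bigr)\cong\Lambda$, hence the Morita equivalence $\proj(\Lambda)\simeq\cs$.

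Second, for the Frobenius-category equivalences: $\Lambda=kQ\otimes R_2$ is 1-Gorenstein (as in Bridgeland's setup \cite{Br13}), so $\Gproj(\Lambda)$ carries a canonical Frobenius structure whose projective-injectives are precisely $\proj(\Lambda)$; via the Morita equivalence this transports to $\Gproj(\cs)$. The general machinery of \cite{KS16, Sch18} then supplies an equivalence $\proj(\mcr)\xrightarrow{\sim}\Gproj(\cs)$ of Frobenius categories, implemented by the restriction functor $\res:\mod(\mcr)\to\mod(\cs)$: for frozen $c$ the projective $\mcr$-module $\mcr(-,\sigma c)$ restricts to the indecomposable projective $\cs$-module at $\sigma c$, while for non-frozen $x$ the projective $\mcr(-,x)$ restricts to a non-projective Gorenstein projective $\cs$-module; admissible conflations correspond on both sides to termwise projective short exact sequences. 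Concatenating yields $\Gproj(\Lambda)\simeq\Gproj(\cs)\simeq\proj(\mcr)$ as Frobenius categories.

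The main obstacle is the explicit combinatorial bookkeeping in the first step: one has to verify that the mesh relations in $\Z Q_C/\Sigma^2$ produce exactly the ideal $I^\sharp$---neither too many nor too few relations. The $\Sigma^2$-orbit collapse could a priori create additional composition morphisms between frozen vertices that are absent in $\Lambda$, and ruling these out requires the Dynkin hypothesis in an essential way, via the standard $\Hom$-bounds in $\cd_Q$ and the explicit shape of the AR-quiver $\Z Q$ (which forces any path from $\sigma\ts_i$ to $\sigma\ts_j$ passing through the interior to reduce modulo meshes to a single short path of the stated form).
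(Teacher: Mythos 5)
Your route inverts the paper's. The paper never computes a presentation of $\cs$: it obtains $\Gproj(\Lambda)\simeq\proj(\mcr)$ directly by composing the identification $\Gproj(\Lambda)\simeq\cc_{\Z/2}(\proj(kQ))$ (Bridgeland's category of $\Z/2$-graded complexes, cf.\ \cite[(3.2)]{LW19}) with \cite[Theorem 3.1]{SS16}, which gives $\cc_{\Z/2}(\proj(kQ))\simeq\proj(\mcr)$; it then uses \cite[Theorems 3.3, 3.5]{Sch17} to know that $\proj(\mcr)$ is Frobenius with projective-injectives exactly $\proj(\cs)$, and \emph{deduces} the Morita equivalence $\proj(\Lambda)\simeq\proj(\cs)$ by matching projective-injective objects on the two sides of $\Gproj(\Lambda)\simeq\proj(\mcr)$. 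In that logical order the equivalence $\Gproj(\cs)\simeq\proj(\mcr)$ is a corollary (transport $\Gproj$ along the Morita equivalence), not an input. You instead propose to establish the algebra isomorphism $\cs\cong\Lambda$ first, by writing down $\cs$ as a quiver with relations from the mesh category of $\Z Q_C/\Sigma^2$, and only then to assemble the Frobenius equivalences. That ordering is legitimate in principle, and it would have the virtue of making the identification $\cs\cong\Lambda$ explicit rather than abstract.

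However, there are two genuine gaps. First, the step you yourself flag as the main obstacle --- verifying that the $\Sigma^2$-orbit mesh category on the frozen vertices has exactly the relations of $I^\sharp$, with no extra morphisms created by the orbit collapse --- is the entire content of your first paragraph, and it is asserted rather than carried out. This is not routine bookkeeping: one must show that every path between frozen vertices through the interior of $\Z Q/\Sigma^2$ reduces modulo mesh relations to a $k$-linear combination of the generators $\varepsilon_i,\varepsilon_i',\alpha,\alpha'$, and that no further relations among these survive; this amounts to computing all Hom-spaces $\cs(\sigma c,\sigma c')$, which is essentially equivalent in difficulty to the statement being proved. Second, your appeal to ``the general machinery of \cite{KS16, Sch18}'' for an equivalence $\proj(\mcr)\xrightarrow{\sim}\Gproj(\cs)$ of Frobenius categories implemented by restriction is not something you can cite off the shelf in the orbit setting of Definition~\ref{def:RS for QG}: the results actually available are that $\proj(\mcr)$ is a Frobenius model for $\cd_Q/\Sigma^2$ with projective-injectives $\proj(\cs)$ (\cite{Sch17}), which says nothing yet about $\Gproj(\cs)$. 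Identifying the essential image of $\res|_{\proj(\mcr)}$ with $\Gproj(\cs)$ requires knowing $\cs$ is Gorenstein and an argument for full faithfulness; in this paper both facts are obtained \emph{after} the proposition, via the Morita equivalence with $\Lambda$. To repair your argument with minimal effort, replace that appeal by the paper's chain $\Gproj(\Lambda)\simeq\cc_{\Z/2}(\proj(kQ))\simeq\proj(\mcr)$ and then transport $\Gproj$ along your (still-to-be-proved) Morita equivalence.
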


\begin{proof}
Recall $\cp=\mcr/ ( \cs )$. Then $\proj(\cp)\simeq \cd_Q/\Sigma^2$ by definition.
From \cite[Theorems 3.3, 3.5]{Sch17}, $\proj(\mcr)$ is a Frobenius model for $\proj(\cp)$, i.e., $\proj(\mcr)$ is a Frobenius category with the objects in $\proj(\cs)$ (viewed as a subcategory of $\proj(\mcr)$) as its projective-injective objects, and $\underline{\proj}(\mcr)\simeq \proj(\cp)$.

Let $\cc_{\Z/2}(\proj(kQ))$ be the category of $\Z/2$-graded complexes over $\proj(kQ)$, see \cite[Section 3.1]{Br13}. Then $\Gproj(\Lambda)\simeq \cc_{\Z/2}(\proj(kQ))$ (see, e.g.,  \cite[(3.2)]{LW19}). From \cite[Theorem 3.1]{SS16}, we have $\cc_{\Z/2}(\proj(kQ))\simeq \proj(\mcr)$, and hence $\Gproj(\Lambda)\simeq \proj(\mcr)$.

By comparing the projective-injective objects in $\Gproj(\Lambda)\simeq \proj(\mcr)$, we have $\proj(\Lambda)\simeq \proj(\cs)$. The last assertion follows.
\end{proof}

In fact, by viewing $\cs$ as an algebra, we can identify $\cs$ with the algebra $\Lambda$ defined in \eqref{eq:La}, by noting $\Ind\proj(\Lambda)\simeq \cs$.
Also let $\cp=\mcr/( \cs )$ be the corresponding preprojective NKS category.

\begin{remark}
A geometric realization of the Drinfeld double quantum group $\tU$ via a variant of the NKS categories  in Definition~\ref{def:RS for QG} was given in \cite{Qin} (see Theorem~\ref{thm:iso HL-Q} below); also see \cite{SS16}. The original regular NKS category used in \cite{Qin} differs from the one in Definition~\ref{def:RS for QG} in that the admissible configuration $(F,C)$ consists of $F=\Sigma^2$ and $C =\{\text{all the vertices of } \Ind \cd_Q\}$. 
\end{remark}

\subsection{$\imath$NKS categories}

For an $\imath$quiver $(Q,\btau)$, let $\widehat{\btau}$ be the triangulated equivalence of $\cd_Q$ induced by the automorphism $\btau$ of $Q$. Recall $\iLa$ from \eqref{eq:iLa}. Note that $\cd_Q/\Sigma\widehat{\btau}$ and $\cd_Q/\Sigma^2$ are triangulated orbit category \`a la Keller \cite{Ke05}; see e.g., \cite[Lemma 3.7]{LW19}.

\begin{lemma}  [\text{\cite[Theorem 3.18]{LW19}}]
We have $\underline{\Gproj}(\Lambda^\imath)\simeq D_{sg}(\Lambda^\imath)\simeq \cd_Q/\Sigma\widehat{\btau}$.
\end{lemma}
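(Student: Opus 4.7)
The plan is to establish the two equivalences separately. The first, $\underline{\Gproj}(\Lambda^\imath)\simeq D_{sg}(\Lambda^\imath)$, is an immediate application of Buchweitz-Happel's theorem (recalled in Section~\ref{subsec:NKS}) once one knows that $\Lambda^\imath$ is Gorenstein. This is already in hand: the paragraph following \eqref{eqn:i-quiver} records that $\Lambda^\imath$ is $1$-Gorenstein. So this half requires only invoking the standard theory and reduces the task to describing the stable Gorenstein-projective category.

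For the substantive equivalence $\underline{\Gproj}(\Lambda^\imath)\simeq \cd_Q/\Sigma\widehat{\btau}$, I would first unpack $\mod(\Lambda^\imath)$ concretely from the quiver description \eqref{eqn:i-quiver}. The commutation relations $\varepsilon_i\alpha = \btau(\alpha)\varepsilon_{j}$ say that the collection of maps $\varepsilon_i$ assembles into a $kQ$-linear morphism $d_M\colon M\to \btau^* M$, where $\btau^* M$ is the pullback of $M$ along the automorphism $\btau$ of $kQ$; the nilpotent relations $\varepsilon_i\varepsilon_{\btau i}=0$ then say exactly that $(\btau^* d_M)\circ d_M = 0$, i.e., $d_M$ is a $\btau$-twisted differential. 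Thus $\mod(\Lambda^\imath)$ is equivalent to the category $\cc_\btau(\mod kQ)$ of ``$\btau$-twisted $\Z/2$-graded complexes'' over $kQ$, specializing to the ordinary $\Z/2$-graded complex category $\cc_{\Z/2}(\mod kQ)$ when $\btau=\id$ (which is the case implicit in Proposition~\ref{prop:2-complexes}).

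Under this identification I would verify, exactly as in the $\btau=\id$ case \cite{SS16}, that Gorenstein-projective $\Lambda^\imath$-modules correspond precisely to those twisted complexes whose underlying $kQ$-module is projective, i.e., $\Gproj(\Lambda^\imath) \simeq \cc_\btau(\proj kQ)$ as Frobenius categories, with the indecomposable projective-injectives matching the ``contractible'' twisted complexes (the analogues of $P\oplus \btau^* P$ with identity differential). Passing to the stable category and applying Keller's construction of triangulated orbit categories \cite{Ke05}, the stable category of $\cc_\btau(\proj kQ)$ is naturally identified with $K^b(\proj kQ)/\Sigma\widehat{\btau}\simeq \cd_Q/\Sigma\widehat{\btau}$, since the suspension in $\cc_\btau(\proj kQ)$ ``swaps the two components and shifts'' and thereby implements the autoequivalence $\Sigma\widehat{\btau}$ on $\cd_Q$.

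The main obstacle is the bookkeeping in the second step: one must check that the mesh and commutation relations translate cleanly into the twisted-differential condition, that the chosen suspension functor on $\underline{\cc_\btau(\proj kQ)}$ agrees with $\Sigma\widehat{\btau}$ rather than its inverse or some twist thereof, and that the Frobenius structures match (projective-injectives on both sides). Once these identifications are in place, the final equivalence $\underline{\cc_\btau(\proj kQ)}\simeq \cd_Q/\Sigma\widehat{\btau}$ is essentially formal, running in parallel with the $\btau=\id$ argument underlying Proposition~\ref{prop:2-complexes}; the $\btau$-twist enters only through the definition of the orbit functor.
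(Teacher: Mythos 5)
First, a point of reference: the paper does not prove this lemma at all — it is quoted verbatim from [LW19, Theorem 3.18] — so your proposal has to be measured against the argument given there. Your overall route is in fact the same as that argument: read off from \eqref{eqn:i-quiver} that a $\Lambda^\imath$-module is a $kQ$-module $M$ equipped with a $\btau$-twisted differential $d_M\colon M\to\btau^*M$ with $(\btau^*d_M)\circ d_M=0$; identify $\Gproj(\Lambda^\imath)$ with the twisted complexes whose underlying module is projective; and recognize the stable category as Keller's triangulated orbit category $\cd_Q/\Sigma\widehat{\btau}$. The reduction of the first equivalence to Buchweitz--Happel via $1$-Gorensteinness is also correct.

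Two of your supporting claims are wrong, however, and they stem from a single conflation. When $\btau=\Id$, the algebra $\Lambda^\imath$ is $kQ\otimes k[\varepsilon]/(\varepsilon^2)$, so its modules are \emph{one-periodic} differential modules $(M,d)$ with $d\colon M\to M$, $d^2=0$ — not $\Z/2$-graded complexes. The category $\cc_{\Z/2}(\proj(kQ))$ and Proposition~\ref{prop:2-complexes} pertain to $\Lambda=kQ\otimes R_2$, which is the $\imath$quiver algebra of the \emph{diagonal} $\imath$quiver $(Q\sqcup Q,\swa)$, not of $(Q,\Id)$. Consequently your description of the suspension as ``swapping the two components and shifting'' is meaningless here: a twisted complex has a single component, and this directly contradicts your own (correct) one-component description of $\mod(\Lambda^\imath)$ two sentences earlier. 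The suspension must instead be computed as a cosyzygy in the Frobenius category $\cc_\btau(\proj kQ)$, and the point to verify is that on objects coming from $\cd_Q$ it acts as $\widehat{\btau}$, i.e.\ that the canonical functor $\cd_Q\to \underline{\Gproj}(\Lambda^\imath)$ (induced, say, by the pullback $\iota$ of \eqref{eqn:rigt adjoint} followed by the quotient to $D_{sg}(\Lambda^\imath)$) sends $\Sigma\widehat{\btau}$ to the identity up to isomorphism. Finally, the last step is not ``essentially formal'': one must check Keller's hypotheses so that $\cd_Q/\Sigma\widehat{\btau}$ is genuinely triangulated (this is [LW19, Lemma 3.7], quoted in the paper immediately before the present lemma) and then prove that the induced functor from the orbit category is dense and fully faithful; that verification is the actual content of [LW19, Theorem 3.18].
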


\begin{definition} [$\imath$NKS regular/singular categories] 
 \label{def:RS for iQG}
Let $(Q,\btau)$ be a Dynkin $\imath$quiver. Denote by $\mcr^\imath$ and $\cs^\imath$ the regular and singular NKS categories associated to the admissible pair $(F^\imath, C)$, where
\[
F^\imath=\Sigma \widehat{\btau}, 
\quad \text{and }
C=\{\text{the vertices labelled by }\Sigma^j \ts_i, \text{ for all } i\in Q_0 \text{ and } j\in\Z \}.
\] 
\end{definition}
{\bf From now on, we reserve the notations $\mcr^\imath$ and $\cs^\imath$ for the regular and singular NKS categories in Definition~\ref{def:RS for iQG}.}

\begin{theorem}
\label{thm:iNKS}
Let $(Q,\btau)$ be a Dynkin $\imath$quiver. Let $\mcr^\imath$ and $\cs^\imath$ be the regular and singular NKS categories as in Definition~\ref{def:RS for iQG}. Then $\Gproj(\Lambda^\imath)$ is equivalent to $\proj(\mcr^\imath)$, $\proj(\Lambda^\imath)$ is Morita equivalent to $\cs^\imath$, and there are equivalences of Frobenius categories
$
\Gproj(\Lambda^\imath)\simeq \Gproj(\cs^\imath)\simeq \proj(\mcr^\imath).
$
\end{theorem}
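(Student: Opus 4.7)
The plan is to mirror the strategy used in the proof of Proposition~\ref{prop:2-complexes}, but twisting the shift functor $\Sigma^2$ to $F^\imath = \Sigma\widehat{\btau}$ throughout. Since the three claims in the theorem are mutually compatible once the first one is set up properly, I would first aim for the equivalence $\Gproj(\Lambda^\imath) \simeq \proj(\mcr^\imath)$ and derive the other two as corollaries.

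First I would apply the general Keller--Scherotzke theory (\cite[Theorems 3.3, 3.5]{Sch17}) to the admissible pair $(F^\imath, C)$ from Definition~\ref{def:RS for iQG}. Since $F^\imath = \Sigma\widehat{\btau}$ and $C$ is the full set of simple-labelled vertices, the hypotheses are satisfied; the theory then supplies a Frobenius structure on $\proj(\mcr^\imath)$ whose projective-injective objects are precisely $\proj(\cs^\imath)$, together with a triangle equivalence
\[
\underline{\proj}(\mcr^\imath) \;\simeq\; \proj(\cp^\imath) \;\simeq\; \cd_Q/\Sigma\widehat{\btau},
\]
where $\cp^\imath = \mcr^\imath/(\cs^\imath)$. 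On the other side, the lemma recalled just before Definition~\ref{def:RS for iQG} gives $\underline{\Gproj}(\iLa) \simeq \cd_Q/\Sigma\widehat{\btau}$, so the stable categories already match.

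Second, to upgrade this stable equivalence to a Frobenius equivalence, I would follow the intermediate route used in the proof of Proposition~\ref{prop:2-complexes}: pass through a category of twisted periodic complexes. For $\btau=\id$ the relevant category is $\cc_{\Z/2}(\proj(kQ))$, and Bridgeland's identification $\Gproj(\Lambda)\simeq \cc_{\Z/2}(\proj(kQ))$ together with the Scherotzke--Sibilla identification $\cc_{\Z/2}(\proj(kQ))\simeq \proj(\mcr)$ did the job. The $\imath$-analog should replace $\Z/2$-graded complexes by $\widehat{\btau}$-twisted $2$-periodic complexes over $\proj(kQ)$: an object is a pair $(P^0,P^1)$ in $\proj(kQ)$ together with differentials $d^0\colon P^0\to P^1$ and $d^1\colon P^1\to \widehat{\btau}(P^0)$ satisfying $\widehat{\btau}(d^0)\circ d^1=0$ and $d^1\circ d^0=0$. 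I would establish (or invoke from \cite{LW19}) a Frobenius equivalence $\Gproj(\iLa)$ with this category, which is the exact $\imath$-counterpart of Bridgeland's equivalence; the point is that the defining quiver relations of $\iLa$ in \eqref{eqn:i-quiver} force the composition of the two $\varepsilon$-arrows (read through the $\btau$-twist) to vanish, which matches the $\widehat{\btau}$-twisted complex condition.

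Third, I would extend the Scherotzke--Sibilla argument \cite[Theorem 3.1]{SS16} from $\Sigma^2$-twisted complexes to $\Sigma\widehat{\btau}$-twisted complexes, identifying the category of $\widehat{\btau}$-twisted periodic complexes over $\proj(kQ)$ with $\proj(\mcr^\imath)$. Since their argument is essentially formal in the choice of the twisting autoequivalence $F$ of $\cd_Q$, replacing $\Sigma$ by $\Sigma\widehat{\btau}$ throughout goes through verbatim. Concatenating the equivalences then yields $\Gproj(\iLa) \simeq \proj(\mcr^\imath)$ as Frobenius categories. Comparing projective-injective objects on both sides gives $\proj(\iLa) \simeq \proj(\cs^\imath)$, which is the Morita equivalence of $\iLa$ with $\cs^\imath$ at the algebra level; and Buchweitz--Happel then yields $\Gproj(\cs^\imath)\simeq\Gproj(\iLa)$, closing the chain.

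The main obstacle, I expect, is verifying the $\imath$-version of the Scherotzke--Sibilla identification: while the stable equivalence $\underline{\Gproj}(\iLa)\simeq \cd_Q/\Sigma\widehat{\btau}$ is already in hand, promoting it to an explicit equivalence of the Frobenius categories themselves requires showing that the projective-injective objects on both sides correspond under the chain of equivalences, i.e., that the $\widehat{\btau}$-twisted periodic complexes coming from $\proj(\iLa)$ are exactly those parametrized by the frozen vertices of $\mcr^\imath$ under Scherotzke--Sibilla's construction. Provided the Scherotzke--Sibilla proof is functorial in $F$, this is a bookkeeping matter, but it is where the $\btau$-twist actually has to be tracked carefully.
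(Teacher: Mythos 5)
There is a genuine gap in your middle step, and your route also diverges from the paper's in a way worth noting. The category you propose as the $\imath$-analog of Bridgeland's $\cc_{\Z/2}(\proj(kQ))$ --- pairs $(P^0,P^1)$ with $d^0\colon P^0\to P^1$ and $d^1\colon P^1\to\widehat{\btau}(P^0)$ --- is not a model for $\cd_Q/\Sigma\widehat{\btau}$: a period of such a complex traverses two arrows, so its stable category would be $\cd_Q/\Sigma^2\widehat{\btau}$, not $\cd_Q/\Sigma\widehat{\btau}$. The quiver presentation \eqref{eqn:i-quiver} of $\iLa$ shows that a $\iLa$-module is a \emph{single} $kQ$-module $M$ equipped with a twisted endomorphism $\varepsilon\colon M\to{}^{\btau}M$ satisfying $({}^{\btau}\varepsilon)\circ\varepsilon=0$ (the arrows $\varepsilon_i\colon i\to\btau i$ do not split $M$ into two components), so the correct analog would be single-term $\widehat{\btau}$-twisted differential modules $(P,d\colon P\to\widehat{\btau}(P))$ with $P\in\proj(kQ)$. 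Even after this correction, the step you yourself flag as the crux --- extending \cite[Theorem 3.1]{SS16} from $\Sigma^2$ to $\Sigma\widehat{\btau}$ and matching projective-injectives with frozen vertices --- is asserted to "go through verbatim" but not carried out, and it is precisely the part where the $\btau$-twist must be tracked; as written the argument is incomplete.

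The paper avoids constructing any twisted complex category. It uses the Galois covering (pushdown) functor $\cv_*\colon\Gproj(\Lambda)\to\Gproj(\iLa)$, which preserves projective modules and almost split sequences, to descend the statement "$\Gproj(\Lambda)\simeq\proj(\mcr)$ is a standard Frobenius model for $\cd_Q/\Sigma^2$" to the statement that $\Gproj(\iLa)$ is a standard Frobenius model for $\cd_Q/\Sigma\widehat{\btau}$. It then invokes the classification result \cite[Theorem 3.7]{Sch17}, which guarantees that $\Gproj(\iLa)\simeq\proj(\mcr')$ for \emph{some} admissible configuration $(C',\Sigma\widehat{\btau})$, and pins down $C'=C$ by observing that in a standard Frobenius model each projective-injective indecomposable sits in exactly one mesh of the AR-quiver, so the commuting square of covering functors forces the configurations to agree. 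If you want to salvage your approach, you would either need to genuinely rework the Scherotzke--Sibilla construction for the twisted differential modules, or switch to the covering-plus-classification argument.
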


\begin{proof}
As $\Lambda^\imath$ is the $\btau^\sharp$-invariant subalgebra of $\Lambda$, denote by $\cv_*:\mod(\Lambda)\rightarrow \mod (\Lambda^\imath)$ the pushdown functor, see \cite[Remark 2.11]{LW19}. Note that $\Lambda$ and $\Lambda^{\imath}$ are $1$-Gorenstein algebras by \cite[Proposition 3.5]{LW19}. Then $\cv_*$ induces a Galois covering $\cv_* :\Gproj(\Lambda)\longrightarrow \Gproj (\Lambda^\imath)$, and it preserves projective modules and almost split sequences.


Note that $\proj(\mcr)\simeq\Gproj(\Lambda)\simeq\cc_{\Z/2}(\proj(kQ))$ and $\underline{\Gproj}(\Lambda)\simeq \cd_Q/\Sigma^2$. It follows from \cite[Theorem 3.1]{SS16} and its proof that $\proj(\mcr)\simeq\Gproj(\Lambda)$ is a standard Frobenius model for $\cd_Q/\Sigma^2$.

Since $\cv_*$ preserves almost split sequences and projective modules, similar to the proof of \cite[Theorem 3.1]{SS16}, one can prove that $\Gproj(\Lambda^{\imath})$ is a standard Frobenius model for $\cd_Q/\Sigma \widehat{\btau}$. It follows from \cite[Theorem 3.7]{Sch17} that there exists an admissible configuration ($C',\Sigma\widehat{\btau})$ such that its corresponding regular NKS category $\mcr'$ satisfies $\proj(\mcr')\simeq \Gproj(\Lambda^\imath)$. So there exists a Galois covering functor $\cv'_*$ such that
the following diagram commutes:
\[
\xymatrix{\Gproj(\Lambda) \ar[rr]^{\sim} \ar[d]^{\cv_*} && \proj(\mcr)\ar[d]^{\cv'_*} \\
\Gproj(\Lambda^\imath) \ar[rr]^{\sim}&& \proj(\mcr') }
\]
The categories in the above commutative diagram are standard Frobenius models, and every projective-injective indecomposable object appears in exactly
one mesh of their AR-quivers. Then $C'=C$. So $\Gproj(\Lambda^\imath)\simeq \mcr^\imath$.

The proof of the last assertion is the same as for Proposition \ref{prop:2-complexes}, and hence omitted here.
\end{proof}

It follows from $\Gproj(\Lambda^\imath)\simeq \Gproj(\cs^\imath)$  (see Theorem \ref{thm:iNKS}) that $\proj(\Lambda^\imath)\simeq \proj(\cs^\imath)$. So we can identify $\cs^\imath$ with the algebra $\Lambda^\imath$ defined in \eqref{eq:iLa} by viewing $\cs^\imath$ as an algebra. Also let $\cp^\imath=\mcr^\imath/( \cs^\imath )$ be the preprojective NKS category.  Often we shall refer to these as $\imath$NKS categories (to distinguish from the NKS categories $\mcr, \cs, \cp$ given in \S\ref{subsec:NKS}).

\begin{remark}
\begin{itemize}
\item[(i)]
The equivalence of $\proj(\Lambda)$ and $\cs$ sends $e_i\Lambda$ to $\sigma\ts_i$ for each $i\in Q_0$. Unless otherwise specified, we do not distinguish $\mod(\Lambda)$ from $\mod(\cs)$ below.

\item[(ii)]
Similar remarks apply to the equivalence between $\proj(\Lambda^\imath)$ and $\cs^\imath$, and we shall identify $\mod(\Lambda^\imath)$ and $\mod(\cs^\imath)$.
\end{itemize}
\end{remark}

As a corollary, we have the following.

\begin{corollary}
The algebras $\cs$ and $\cs^\imath$ are $1$-Gorenstein.
\end{corollary}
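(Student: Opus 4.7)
The plan is to leverage the Morita equivalences established in Proposition~\ref{prop:2-complexes} and Theorem~\ref{thm:iNKS} together with the known $1$-Gorenstein property of $\Lambda$ and $\Lambda^\imath$. First, I would recall that \cite[Proposition~3.5]{LW19} (invoked in the excerpt just after \eqref{eqn:i-quiver}) asserts that both $\Lambda$ and $\Lambda^\imath$ are finite-dimensional $1$-Gorenstein algebras, meaning $\idim {}_A A \le 1$ and $\idim A_A \le 1$ for $A\in\{\Lambda,\Lambda^\imath\}$.

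Next, I would invoke the Morita equivalences: viewing $\cs$ and $\cs^\imath$ as algebras, Proposition~\ref{prop:2-complexes} gives $\proj(\Lambda)\simeq\proj(\cs)$, so $\cs$ is Morita equivalent to $\Lambda$; similarly, Theorem~\ref{thm:iNKS} gives $\proj(\Lambda^\imath)\simeq\proj(\cs^\imath)$, so $\cs^\imath$ is Morita equivalent to $\Lambda^\imath$. The key observation is then that the $1$-Gorenstein property is a Morita-invariant notion: both the injective dimensions of the regular module as a left module and as a right module are preserved under Morita equivalence (as the derived equivalence of module categories preserves projective resolutions of injective cogenerators and vice versa). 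Therefore $\idim {}_\cs \cs \le 1$, $\idim \cs_\cs \le 1$, and the analogous bounds hold for $\cs^\imath$.

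I expect no real obstacle: the assertion is essentially a transport of structure along Morita equivalence. The only thing worth spelling out carefully, if one wishes, is why $\proj(\cs)$ (respectively $\proj(\cs^\imath)$) being equivalent to $\proj(\Lambda)$ (respectively $\proj(\Lambda^\imath)$) as additive categories implies Morita equivalence of the underlying algebras; this is standard since an equivalence of the categories of finitely generated projective modules extends uniquely to an equivalence of module categories. The corollary then follows in one line.
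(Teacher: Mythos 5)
Your proposal is correct and follows essentially the same route as the paper: the paper's proof simply cites \cite[Proposition 3.5]{LW19} for the $1$-Gorenstein property of $\Lambda$ and $\Lambda^\imath$ and then invokes Theorem~\ref{thm:iNKS} (together with Proposition~\ref{prop:2-complexes}) for the Morita equivalences, leaving the Morita-invariance of the Gorenstein condition implicit, which you spell out.
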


\begin{proof}
Both $\Lambda$ and $\Lambda^\imath$ are $1$-Gorenstein algebras by \cite[Proposition 3.5]{LW19}. Then the desired result follows from Theorem \ref{thm:iNKS}.
\end{proof}

\begin{example}[Split type $A_1$]
  \label{example split sl2}
Let $Q$ be the quiver with a single vertex and no arrow. Let $\ts$ denote the simple module of $Q$, $C=\{\Sigma^j \ts \mid j\in\Z\}$ and $F^\imath=\Sigma$. Then $\Z Q_C/F^\imath$ is given by $\xymatrix{\ts \ar@<0.5ex>[r]^{\alpha} & \sigma(\ts) \ar@<0.5ex>[l]^{\beta}}$ with $\beta\alpha=0$.
%
The regular NKS category $\mcr$ is isomorphic to $k(\Z Q_C/F^\imath)/(\beta\alpha)$.
The singular NKS category $\cs$ is isomorphic to $k[X]/( X^2)$.
\end{example}

\begin{example} [Split type $A_2$]
  \label{example quasi-split sl3}
Let $Q$ be the quiver $(1 \longrightarrow 2)$. Let $C=\{\Sigma^j \ts_i\mid i=1,2,j\in\Z\}$ and $F^\imath=\Sigma$. Then $\mcr^\imath$ is given by
\eqref{figure 3}:




\begin{center}
\begin{equation}
\label{figure 3}
\begin{tikzpicture}[scale=1.2]
(1.9,0) --(5,3.1) -- (5.1,3) -- (2,-0.1)--(1.9,0);
(3.9,0) --(7,3.1) -- (7.1,3) -- (4,-0.1)--(3.9,0);

(1.9,0) --(3,1.1)--(4.1,0)--(4,-0.1)--(3,0.9)--(2,-0.1)--(1.9,0);
 (7,3.1) --(7.1,3)--(6,1.9)--(4.9,3)--(5,3.1)--(6,2.1)--(7,3.1);


\node at (2,0) {\tiny$\sigma(\ts_1)$};
\node at (4,0) {\tiny$\sigma(\ts_2)$};
\node at (3,1) {\tiny$\ts_1$};
\node at (4,2) {\tiny$P_2$};
\node at (5,3) {\tiny$\sigma(\ts_1)$};
\node at (5,1) {\tiny$\ts_2$};
\node at (6,2) {\tiny$\ts_1$};
\node at (7,3) {\tiny$\sigma(\ts_2)$};
\draw[->] (2.1,0.1) -- (2.9,0.9);
\draw[->] (3.1,0.9) -- (3.9,0.1);
\draw[->] (3.1,1.1) -- (3.9,1.9);
\draw[->] (4.1,1.9) -- (4.9,1.1);
\draw[->] (4.1,2.1) -- (4.9,2.9);
\draw[->] (5.1,2.9) -- (5.9,2.1);

\draw[->] (6.1,2.1) -- (6.9,2.9);
\draw[->] (5.1,1.1) -- (5.9,1.9);
\draw[->] (4.1,0.1) -- (4.9,0.9);


\end{tikzpicture}
\end{equation}
\end{center}

Moreover, $\cs^\imath$ is described in terms of the bound quiver \eqref{figure 5}:
\begin{center}\setlength{\unitlength}{0.7mm}
 \begin{equation}
 \label{figure 5}
 \begin{picture}(50,13)(0,0)
\put(-7,-3){\tiny$\sigma(\ts_1)$}
\put(4,0){\vector(1,0){14}}
\put(10,0){$^{\alpha}$}
\put(19,-3){\tiny$\sigma(\ts_2)$}
%
\put(0,9){\small $\varepsilon_1$}
\put(20,9){\small $\varepsilon_2$}
\qbezier(-1,1)(-3,3)(-2,5.5)
\qbezier(-2,5.5)(1,9)(4,5.5)
\qbezier(4,5.5)(5,3)(3,1)
\put(3.1,1.4){\vector(-1,-1){0.3}}
\qbezier(19,1)(17,3)(18,5.5)
\qbezier(18,5.5)(21,9)(24,5.5)
\qbezier(24,5.5)(25,3)(23,1)
\put(23.1,1.4){\vector(-1,-1){0.3}}
\end{picture}
\end{equation}
\vspace{0.2cm}
\end{center}
\[
\varepsilon_1^2=0=\varepsilon_2^2, \quad \varepsilon_2 \alpha=\alpha\varepsilon_1.
\]
\end{example}

\begin{example}[Quasi-split type $A_3$]
Let $Q$ be the quiver $(\xymatrix{1\ar[r]^{h_2} & 2& 3\ar[l]_{h_1}})$ with the involution $\btau$ such that $\btau(1)=3$, and $\btau(2)=2$.
Then $\mcr^\imath$ is given by \eqref{figure 4}:

\begin{center}
\begin{equation}
\label{figure 4}
\begin{tikzpicture}[scale=1.2]
 (1.9,0) --(6,4.1) -- (6.1,4) -- (2,-0.1)--(1.9,0);

(1.9,4) --(6,-0.1) -- (6.1,0) -- (2,4.1)--(1.9,4);
(0.9,1.9) --(0.9,2.1) -- (5.2,2.1) -- (5.2,1.9)--(0.9,1.9);

(1.9,0) --(4,2.1) -- (5.2,2.1) -- (5.2,1.9)--(4,1.9)--(2,-0.1)--(1.9,0);

(1.9,4) --(2,4.1) -- (4,2.1)--(5.2,2.1) -- (5.2,1.9)--(4,1.9)--(1.9,4);

\node at (2,0) {\tiny$\sigma(\ts_1)$};
\node at (3,1) {\tiny$\ts_1$};
\node at (4,2) {\tiny$P_2$};
\node at (5,3) {\tiny$I_1$};
\node at (6,4) {\tiny$\sigma(\ts_3)$};
\node at (2,2) {\tiny$\ts_2$};
\node at (2,4) {\tiny $\sigma(\ts_3)$};
\node at (3,3) {\tiny $\ts_3$};
\node at (5,1) {\tiny$I_3$};
\node at (6,0) {\tiny$\sigma(\ts_1)$};
\node at (1,3) {\tiny$I_1$};
\node at (1,1) {\tiny$I_2$};
\node at (1,2) {\tiny$\sigma(\ts_2)$};
\node at (5,2) {\tiny$\sigma(\ts_2)$};

\node at (6,2) {\tiny$\ts_2$};
\draw[->] (2.1,0.1) -- (2.9,0.9);
\draw[->] (5.1,0.9) -- (5.9,0.1);
\draw[->] (3.1,1.1) -- (3.9,1.9);
\draw[->] (4.1,1.9) -- (4.9,1.1);
\draw[->] (4.1,2.1) -- (4.9,2.9);
\draw[->] (5.1,2.9) -- (5.9,2.1);

\draw[->] (5.1,3.1) -- (5.9,3.9);
\draw[->] (5.1,1.1) -- (5.9,1.9);

\draw[->] (1.1,3.1) -- (1.9,3.9);
\draw[->] (1.1,1.1) -- (1.9,1.9);
\draw[->] (2.1,2.1) -- (2.9,2.9);
\draw[->] (2.1,1.9) -- (2.9,1.1);
\draw[->] (2.1,3.9) -- (2.9,3.1);
\draw[->] (3.1,2.9) -- (3.9,2.1);
\draw[->] (1.1,2.9) -- (1.9,2.1);
\draw[->] (1.1,0.9) -- (1.9,0.1);

\draw[->] (1.3,2) -- (1.85,2);
\draw[->] (5.3,2) -- (5.85,2);
\draw[->] (4.15,2) -- (4.7,2);


\end{tikzpicture}
\end{equation}
\end{center}

Moreover, $\cs^\imath$ is described in terms of the bound quiver \eqref{figure 7}:
\begin{center}\setlength{\unitlength}{0.7mm}
\begin{equation}
\label{figure 7}
 \begin{picture}(50,20)(0,-10)
\put(0,-2){$1$}
\put(20,-2){$3$}
\put(2,-11){$_{\alpha}$}
\put(17,-11){$_{\beta}$}
\put(2,-2){\vector(1,-2){8}}
\put(20,-2){\vector(-1,-2){8}}
\put(10,-22){$2$}
\put(3,1){\vector(1,0){16}}
\put(19,-1){\vector(-1,0){16}}
\put(10,1){$^{\varepsilon_1}$}
\put(10,-4){$_{\varepsilon_3}$}
\put(10,-28){$_{\varepsilon_2}$}
\begin{picture}(50,23)(-10,19)
\qbezier(-1,-1)(-3,-3)(-2,-5.5)
\qbezier(-2,-5.5)(1,-9)(4,-5.5)
\qbezier(4,-5.5)(5,-3)(3,-1)
\put(3.1,-1.4){\vector(-1,1){0.3}}
\end{picture}
\end{picture}
\end{equation}
\vspace{1cm}
\end{center}
\[
\varepsilon_1\varepsilon_3=0=\varepsilon_3\varepsilon_1,
\quad
 \varepsilon_2^2=0,
 \quad
 \varepsilon_2 \beta=\alpha\varepsilon_3,
 \quad
 \varepsilon_2 \alpha=\beta\varepsilon_1.
\]

\end{example}
\subsection{Coverings of $\imath$NKS categories}
\label{subsec: covering}

From the proof of Theorem \ref{thm:iNKS}, there exists a Galois covering
$\cv: \mcr\rightarrow \mcr^\imath$, which restricts to the Galois covering $\cv: \cs\rightarrow\cs^\imath$; this is a reformulation of the Galois covering $\cv:\Lambda\rightarrow \Lambda^\imath$ in \cite[Remark 2.11]{LW19}. Hence we have the following commutative diagram
\[
\xymatrix{
\cs \ar@{^{(}->}[r]\ar[d]^\cv & \mcr \ar[d]^\cv \\
\cs^\imath  \ar@{^{(}->}[r] & \mcr^\imath .}
\]

By the covering theory \cite{Ga}, we have a pullback functor $\cv^*:\mod(\mcr^\imath)\rightarrow \mod(\mcr)$ (respectively, $\cv^*: \mod(\cs^\imath)\rightarrow \mod(\cs)$) given by
$\cv^*M:=M\circ\cv$ for any $M\in\mod(\mcr^\imath)$; and a pushdown functor $\cv_*:\mod(\mcr)\rightarrow \mod(\mcr^\imath)$ (respectively, $\cv_*: \mod(\cs)\rightarrow \mod(\cs^\imath)$).
In fact, $\cv_*,\cv^*$ are exact, and $(\cv_*,\cv^*)$ and $(\cv^*,\cv_*)$ are adjoint pairs, see, e.g., \cite[Theorem~ 2.5.1]{NV04}.

Let $\texttt{G} =\langle g\mid g^2=1\rangle$ be the cyclic group of order $2$. We define a $\texttt{G}$-action on $\mod(\mcr)$ as follows: for any $M\in \mod(\mcr)$, regarded as a $k$-linear functor $\mcr^{op}\rightarrow \mod(k)$, set $g\cdot M(x):=M(F^\imath x)$ for any $x\in\mcr_0$. By restriction we obtain a $\texttt{G}$-action on $\mod(\cs)$.

Denote by $\mod^\texttt{G}(\cs)$ the subcategory of $\mod(\cs)$
formed by the $\texttt{G}$-invariant modules, see \cite[Page 94]{Ga} or \cite[Definition 6.1]{As}. Then
the pullback functor $\cv^*: \mod(\cs^\imath) \rightarrow \mod(\cs)$ induces an equivalence $\mod(\cs^\imath) \simeq \mod^\texttt{G}(\cs)$, see \cite[Theorem 6.2]{As} or \cite[Theorem 4.3]{CM06}.
Similarly, we can define $\mod^\texttt{G}(\mcr)$ and then the pullback functor $\cv^*: \mod(\mcr^\imath) \rightarrow \mod(\mcr)$ induces an equivalence $\mod(\mcr^\imath) \simeq \mod^\texttt{G}(\mcr)$.
In particular, for any $M\in\mod(\cs)$ or $M\in\mod(\mcr)$, we have $\cv^*\cv_*(M)=M\oplus g\cdot M$, see, e.g., \cite[Page 122]{As}.

The isomorphism $F^\imath: \cs\rightarrow \cs$ corresponds to the involution $\btau^\sharp$ on $\Lambda$. Then the $\texttt{G}$-action of $\mod(\cs)$ corresponds to the following $\texttt{G}$-action on $\mod(\Lambda)$. More explicitly, for any $\Lambda$-module $M=(M_{i},M_{i'}; M(\alpha),M(\alpha'),M(\varepsilon_{i}),M(\varepsilon_{i'}))_{(i,\alpha)\in Q_0\times Q_1}$, let
$$
g\cdot M=(N_{i},N_{i'}; N(\alpha),N(\alpha'),N(\varepsilon_{i}),N(\varepsilon_{i'}))_{(i,\alpha)\in Q_0\times Q_1},
$$
where
$N_{i}=M_{\btau i'}$, $N_{i'}=M_{\btau i}$, $N(\alpha)=M(\btau\alpha')$, $N(\alpha')=M(\btau\alpha)$, $N(\varepsilon_{i})=M(\varepsilon_{\btau i'})$ and
$N(\varepsilon_{i'})=M(\varepsilon_{\btau i})$ for any $(i,\alpha)\in Q_0\times Q_1$.

For any dimension vector ${\bf u}\in \N^{\mcr^\imath_0}$, denote by $\cv^*({\bf u})\in \N^{\mcr_0}$ the dimension vector such that
$$
\cv^*({\bf u})(x)={\bf u}(\cv(x)),\text{ for all }x\in \mcr_0.
$$
We have $\dimv (\cv^*M)=\cv^*(\dimv M)$.
Similarly, we can define $\cv^*(\bw)\in \N^{\cs_0}$ for any $\bw\in \N^{\cs^\imath_0}$,  $F^{\imath*}(\alpha)\in \N^{\mcr_0}$ for any $\alpha\in \N^{\mcr_0}$, and  $F^{\imath*}(\beta)\in \N^{\cs_0}$ for any $\beta\in \N^{\cs_0}$.

For any $x\in\mcr_0$ (respectively, $\cs_0$, $\mcr^\imath_0$ and $\cs^\imath_0$), we denote by $\ts_x$ its corresponding simple module. For any $M\in\mod(\mcr)$, then $M$ is stable (respectively, costable, bistable) if and only if so is $g\cdot M$.


\begin{lemma}\label{lem:rho preserves stable}
Let $M\in \mod(\mcr^\imath)$, $N\in \mod(\mcr)$. Then,
\begin{itemize}
\item[(i)] $M$ is stable (respectively, costable, bistable) if and only if so is $\cv^*M$;
\item[(ii)] $N$ is stable (respectively, costable, bistable) if and only if so is $\cv_*N$.
 \end{itemize}
\end{lemma}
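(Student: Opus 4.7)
The plan is to translate the stability, costability and bistability conditions into $\Hom$-vanishing statements against simple modules at non-frozen vertices, and then transport these statements between $\mod(\mcr^\imath)$ and $\mod(\mcr)$ via the two adjunctions $(\cv_*,\cv^*)$ and $(\cv^*,\cv_*)$ recalled in the excerpt. Two elementary observations drive the argument. First, because $\cv$ restricts to the covering $\cv\colon\cs\rightarrow\cs^\imath$, we have $\cv^{-1}(\cs^\imath_0)=\cs_0$, and $\cv$ surjects $\mcr_0\setminus\cs_0$ onto $\mcr^\imath_0\setminus\cs^\imath_0$. Second, since $C$ is $F^\imath$-invariant and $F^\imath(\sigma c)=\sigma(F^\imath c)$, the generator $g$ of $\texttt{G}$ stabilises $\cs_0$, so the autoequivalence $g\cdot(-)$ of $\mod(\mcr)$ preserves all three properties in question. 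As a preliminary I would verify that $\cv_*\ts_y\cong\ts_{\cv(y)}$ for every $y\in\mcr_0$: the pushdown formula expresses $(\cv_*\ts_y)(x)$ as the sum of $\ts_y(z)$ over the free fibre $\cv^{-1}(x)$, which is nonzero only for $x=\cv(y)$ and then one-dimensional.

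For part (i), recall that $M\in\mod(\mcr^\imath)$ is stable iff $\Hom_{\mcr^\imath}(\ts_x,M)=0$ for every $x\in\mcr^\imath_0\setminus\cs^\imath_0$, and costable iff $\Hom_{\mcr^\imath}(M,\ts_x)=0$ for every such $x$. The adjunction $(\cv_*,\cv^*)$ together with $\cv_*\ts_y\cong\ts_{\cv(y)}$ yields
\[
\Hom_\mcr(\ts_y,\cv^*M)\ \cong\ \Hom_{\mcr^\imath}(\cv_*\ts_y,M)\ \cong\ \Hom_{\mcr^\imath}(\ts_{\cv(y)},M),
\]
and as $y$ runs through $\mcr_0\setminus\cs_0$ the object $\cv(y)$ runs through $\mcr^\imath_0\setminus\cs^\imath_0$, so $\cv^*M$ is stable iff $M$ is. The adjunction $(\cv^*,\cv_*)$ gives the symmetric identity
\[
\Hom_\mcr(\cv^*M,\ts_y)\ \cong\ \Hom_{\mcr^\imath}(M,\cv_*\ts_y)\ \cong\ \Hom_{\mcr^\imath}(M,\ts_{\cv(y)}),
\]
handling the costable case; the bistable case is their conjunction.

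For part (ii), I would reduce to (i) via the identity $\cv^*\cv_*N\cong N\oplus g\cdot N$ from the excerpt. By the second preliminary observation, $g\cdot N$ enjoys a given property iff $N$ does, and each property is preserved under finite direct sums and inherited by direct summands; hence $N\oplus g\cdot N$ is stable (respectively costable, bistable) iff $N$ is. Applying part (i) to the $\mcr^\imath$-module $\cv_*N$ then gives the chain
\[
\cv_*N \text{ stable}\ \Longleftrightarrow\ \cv^*\cv_*N=N\oplus g\cdot N \text{ stable}\ \Longleftrightarrow\ N\text{ stable},
\]
and analogous chains for costable and bistable. I do not foresee a serious obstacle: the whole argument is formal once the covering and adjunctions are set up. The mildest subtlety is the identification $\cv_*\ts_y\cong\ts_{\cv(y)}$, which rests on the freeness of the $\texttt{G}$-action guaranteed by assumption~(C1).
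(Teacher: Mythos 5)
Your proof is correct and follows essentially the same route as the paper: part (i) is verbatim the paper's argument (the adjunction $(\cv_*,\cv^*)$ with $\cv_*\ts_y\cong\ts_{\cv(y)}$ for stability, and $(\cv^*,\cv_*)$ dually for costability). For part (ii) the paper instead applies the adjunction $(\cv^*,\cv_*)$ directly together with $\cv^*\ts_z\cong\ts_x\oplus\ts_{F^\imath x}$, whereas you reduce to part (i) via $\cv^*\cv_*N\cong N\oplus g\cdot N$; this is a harmless cosmetic variation resting on the same covering-theory facts.
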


\begin{proof}
(i) We have $\Hom_{\mcr}(\ts_x,\cv^*M)\cong \Hom_{\mcr^\imath}(\cv_* \ts_x,M)=\Hom_{\mcr^\imath}(\ts_{\cv x},M)$ for any $x\in\mcr_0-\cs_0$. Then $M$ is stable if and only if $\cv^*M$ is stable.
Dually, one can prove that $M$ is costable if and only if $\cv^*M$ is costable by using the adjoint pair $(\cv^*,\cv_*)$.

(ii) The argument is similar by noting that $\cv^* \ts_z= \ts_x\oplus \ts_{F^\imath(x)}$ for any $z\in\mcr^\imath_0-\cs^\imath_0$ and $x\in\mcr_0-\cs_0$ such that $\cv(x)=z$.
\end{proof}



\begin{lemma}
\label{lem:embedding of varieties}
Let $\bv \in \N^{\mcr^\imath_0-\cs^\imath_0}$ and $\bw \in \N^{\cs^\imath_0}$. Then
the pullback functor $\cv^*$ induces the following embeddings of varieties:
\begin{itemize}
\item[(i)] $\cv^*:\cm(\bv,\bw,\mcr^\imath)\hookrightarrow \cm(\cv^*\bv,\cv^*\bw,\mcr)$,
\item[(ii)] $\cv^*: \cm_0(\bv,\bw,\mcr^\imath)\hookrightarrow \cm_0(\cv^*\bv,\cv^*\bw,\mcr)$,
\item[(iii)] $\cv^*:\cm_0(\bw,\mcr^\imath)\hookrightarrow \cm_0(\cv^*\bw,\mcr)$.
\end{itemize}
\end{lemma}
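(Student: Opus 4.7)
\emph{Plan.} The strategy is to realize $\cv^*$ explicitly on the level of representation varieties as the inclusion of the $\texttt{G}$-fixed locus, descend to the GIT quotients of (i)--(ii), and then pass to the colimit for (iii). The key input is the equivalence $\mod(\mcr^\imath) \simeq \mod^\texttt{G}(\mcr)$ recorded in Subsection~\ref{subsec: covering} together with the fact that $\texttt{G}=\Z/2$ is linearly reductive in characteristic zero.

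First, I would describe $\cv^*$ concretely on representation spaces: fixing vector spaces of dimensions $\bv,\bw$ at each object of $\mcr^\imath$ and using $\cv^*\bv(x)=\bv(\cv x)$, $\cv^*\bw(x)=\bw(\cv x)$, the pullback $\cv^*M$ assigns $M(\cv x)$ to $x \in \mcr_0$ and the linear map $M(\cv\alpha)$ to $\alpha \in \mcr_1$. This gives an algebraic morphism $\cv^*: \rep(\bv,\bw,\mcr^\imath) \to \rep(\cv^*\bv,\cv^*\bw,\mcr)$ whose image is precisely the $\texttt{G}$-fixed subvariety (using $\mod(\mcr^\imath) \simeq \mod^\texttt{G}(\mcr)$), hence a closed immersion. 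By Lemma~\ref{lem:rho preserves stable}(i) it restricts to the stable loci. The diagonal inclusion $G_\bv \hookrightarrow G_{\cv^*\bv}$, $(g_z) \mapsto (g_{\cv x})$, realizes $G_\bv$ as the $\texttt{G}$-fixed subgroup of $G_{\cv^*\bv}$, and $\cv^*$ is equivariant with respect to it. Taking quotients yields the algebraic maps in (i) and (ii). Injectivity in (i) holds because any $G_{\cv^*\bv}$-isomorphism $\phi: \cv^*M \to \cv^*M'$ between objects in the $\texttt{G}$-fixed locus can be averaged $\phi \mapsto \tfrac12(\phi + g\cdot\phi)$ into a $\texttt{G}$-equivariant isomorphism, which under the categorical equivalence descends to an $\mcr^\imath$-isomorphism, i.e., places $M,M'$ in the same $G_\bv$-orbit; the averaged map is still an isomorphism since $\phi$ and $g\cdot\phi$ are both isomorphisms identified under the $\texttt{G}$-equivariant structure. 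The analogous averaging argument for closed orbits in the GIT quotient (where $\texttt{G}$-reductivity ensures that a $G_{\cv^*\bv}$-closed orbit meeting the $\texttt{G}$-fixed locus meets it in a single $G_\bv$-closed orbit) yields (ii).

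For (iii), note that $\bv' \leq \bv$ iff $\cv^*\bv' \leq \cv^*\bv$, so the embeddings in (ii) commute with the transition inclusions $\cm_0(\bv',\bw,\mcr^\imath) \hookrightarrow \cm_0(\bv,\bw,\mcr^\imath)$ and likewise on the $\mcr$-side; passing to the colimit along $\bv$ and using $\cm_0(\bw,\mcr^\imath) = \colim_\bv \cm_0(\bv,\bw,\mcr^\imath)$ produces the embedding in (iii).

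The main obstacle is the injectivity claim on the GIT quotient side in part (ii): one must verify that the $G_{\cv^*\bv}$-closure of the image of a single $G_\bv$-orbit does not merge with another image orbit. This reduces, by reductivity of $\texttt{G}$ and the fact that invariants behave well under taking $\texttt{G}$-fixed loci in characteristic zero, to the representation-variety statement that the $\texttt{G}$-fixed locus of $\rep(\cv^*\bv,\cv^*\bw,\mcr)$ equals $\rep(\bv,\bw,\mcr^\imath)$, which is ensured by the categorical equivalence $\mod(\mcr^\imath) \simeq \mod^\texttt{G}(\mcr)$.
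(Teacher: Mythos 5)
Your overall skeleton (pullback as inclusion of the $\texttt{G}$-fixed locus, the diagonal embedding $G_\bv\hookrightarrow G_{\cv^*\bv}$, averaging a $G_{\cv^*\bv}$-intertwiner to get a $G_\bv$-intertwiner, colimit for (iii)) matches the paper, but there are two genuine gaps. First, in part (i) you assert that the averaged map $\tfrac12(\phi+g\cdot\phi)$ ``is still an isomorphism since $\phi$ and $g\cdot\phi$ are both isomorphisms.'' This is false as stated: an average of two isomorphisms need not be invertible (consider $\phi$ and $-\phi$), and here $\phi$ and $g\cdot\phi$ are a priori unrelated invertible intertwiners. The paper closes this hole using \emph{stability}: the averaged morphism restricts to the identity on the frozen vertices, hence is injective on $\soc(M)$ (which is supported on $\cs_0$ because $M$ is stable), hence is injective on all of $M$, and is then an isomorphism by equality of dimension vectors. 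Without this socle argument your injectivity claim for $\cm(\bv,\bw,\mcr^\imath)\to\cm(\cv^*\bv,\cv^*\bw,\mcr)$ does not go through.

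Second, in part (ii) you reduce the injectivity on categorical quotients to the identification of the $\texttt{G}$-fixed locus with $\rep(\bv,\bw,\mcr^\imath)$ ``by reductivity of $\texttt{G}$.'' That reduction is not valid: for a closed subvariety $Y=X^{\texttt{G}}$ with a subgroup $H\subset G$ acting, the induced map $Y/\!/H\to X/\!/G$ is in general neither injective nor a closed immersion, and you give no argument that closed $G_{\cv^*\bv}$-orbits meet the fixed locus in single closed $G_\bv$-orbits. The paper instead works directly with the invariant rings: by Le Bruyn--Procesi they are generated by coordinate functions of paths between frozen vertices together with traces of cyclic paths through non-frozen vertices, and the crucial geometric input (absent from your proposal) is that every such cyclic path in $\mcr$ or $\mcr^\imath$ is \emph{nilpotent} (because $\End_{\cd_Q/F^\imath}(M)\cong k$ for indecomposable $M$ and $\Lambda^\imath$ is finite-dimensional), so all trace functions vanish. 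Hence both invariant rings are generated by path coordinates between frozen vertices, the restriction map $\C[\rep(\cv^*\bv,\cv^*\bw,\mcr)]^{G_\bv\times G_\bv}\to\C[\rep(\bv,\bw,\mcr^\imath)]^{G_\bv}$ is surjective, and one gets a closed embedding of the affine quotients. You would need to supply this (or an equivalent Luna-type slice argument, which you do not sketch) for (ii) to hold; part (iii) is then fine as the colimit of (ii).
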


\begin{proof}
By Lemma \ref{lem:rho preserves stable} we have an embedding $\tilde{\cv}^*: \cs t(\bv,\bw,\mcr^\imath)\hookrightarrow \cs t(\cv^*\bv,\cv^*\bw,\mcr)$ induced by the pullback $\cv^*$. For any $z\in\mcr^\imath_0-\cs^\imath_0$, choose one $x_z\in\mcr_0-\cs_0$ such that $\cv(x_z)=z$.
Because $(\cv^*\bv)(x_z)=(\cv^*\bv)(F^\imath(x_z))=\bv(z)$, we have
two canonical isomorphisms $\iota_1:G_\bv\rightarrow \prod_{z\in \mcr^\imath_0-\cs^\imath_0} (GL(k^{\cv^*\bv(x_z)}))$ and $\iota_2:G_\bv\rightarrow \prod_{z\in \mcr^\imath_0-\cs^\imath_0} (GL(k^{\cv^*\bv(F^\imath (x_z))}))$ by identifying $GL(k^{\cv^*\bv(x_z)})$ and  $GL(k^{\cv^*\bv(F^\imath(x_z))})$ with $GL(k^{\bv(z)})$. In this way, we have identified $G_{\cv^* \bv}\equiv G_\bv\times G_\bv.$ By the diagonal embedding, we can view $G_\bv$ as a subgroup of $G_{\cv^*\bv}$. Therefore, $\tilde{\cv}^*:\cs t(\bv,\bw,\mcr^\imath)\hookrightarrow \cs t(\cv^*\bv,\cv^*\bw,\mcr)$ induces a morphism of varieties
$$\cv^*:\cm(\bv,\bw,\mcr^\imath)=\cs t(\bv,\bw,\mcr^\imath)/G_\bv\rightarrow  \cm(\cv^*\bv,\cv^*\bw,\mcr)=\cs t(\cv^*\bv,\cv^*\bw,\mcr)/G_{\cv^*\bv}.$$

(i)
For any $M,N\in \cs t(\bv,\bw,\mcr^\imath)$ such that $\cv^*M$ and $\cv^*N$ coincide in $\cm(\cv^*\bv,\cv^*\bw,\mcr)$, by definition, there exists $(g_x)_{x\in\mcr_0-\cs_0}\in G_{\cv^*\bv}$ such that
the following diagram commutes:
\[\xymatrix{(\cv^*M)_x= k^{\bv(x)} \ar[r]^{M(\alpha)} \ar[d]^{g_x}  & (\cv^*M)_y= k^{\bv(y)}\ar[d]^{g_y}\\
(\cv^*N)_x= k^{\bv(x)} \ar[r]^{N(\alpha)}   & (\cv^*N)_y= k^{\bv(y)} }\]
for any arrow $\alpha:y\rightarrow x$ in $\mcr$; here we regard $g_z=\Id$ for any frozen vertex $z \in \cs_0$.
In particular, $((g_x)_{x\in\mcr_0-\cs_0}, (g_y=\Id)_{y\in\cs_0})$ induces an isomorphism $\cv^*M\stackrel{\cong}{\longrightarrow} \cv^*N$.
Let $\bar{g}=(\bar{g}_z)_{z\in\mcr^\imath_0-\cs^\imath_0}$, where $\bar{g}_z=\frac{1}{2}( g_{x_z}+g_{F^\imath(x_z)})$ for any $z\in\mcr^\imath_0-\cs^\imath_0$.

By definition, for any arrow $\beta$ in $\mcr^\imath$, we have $(\cv^*M)(\alpha)=M(\beta)=(\cv^*M)(F^\imath(\alpha))$ for any arrow $\alpha$ in $\mcr$ such that $\cv(\alpha)=\beta$.
Therefore,  $\phi:=((\bar{g}_z)_{z\in\mcr^\imath_0-\cs^\imath_0},(g_{\sigma(c)}=\Id)_{c\in C} )$ is an $\mcr^\imath$-module morphism from $M$ to $N$.
As $M,N$ are stable, their socles are supported on frozen vertices. Since the restriction of $\phi$ to any frozen vertices $\sigma(c)$ is the identity, $\phi|_{\soc(M)}:\soc(M)\rightarrow\soc(N)$ is injective. If follows 
that $\phi$ is injective, and then $\phi$ is an isomorphism since $\dimv M=\dimv N$.
Moreover, $(\bar{g}_z)_{z\in\mcr^\imath_0-\cs^\imath_0}\in G_\bv$, and hence $M$ and $N$ coincide in $\cm(\bv,\bw,\mcr^\imath)$. Therefore, $\cv^*:\cm(\bv,\bw,\mcr^\imath)\rightarrow \cm(\cv^*\bv,\cv^*\bw,\mcr)$ is injective.


(ii)
According to \cite{LBP, Lus90}, the ring of invariants (either $\C[\rep(\cv^*\bv,\cv^*\bw,\mcr)]^{G_\bv\times G_\bv}$ or $\C[\rep(\bv,\bw,\mcr^\imath)]^{G_\bv}$) is generated by coordinate maps to paths from a frozen vertex to a frozen vertex and trace maps to cyclic paths in non-frozen vertices.

{\bf Claim.} Any cyclic path $\ell$ in $\mcr$ or $\mcr^\imath$ in non-frozen vertices is nilpotent, i.e., there is some $n\geq1$ such that $\ell^n=0$.

We prove the Claim only for $\mcr^\imath$. In fact, by noting that $\cp^\imath=\mcr^\imath/( \cs^\imath )\simeq \cd_Q/F^\imath$, the proof of \cite[Lemma 9.1]{LW19} shows that
$\End_{\cd_Q/F^\imath}(M)\cong k$ for any indecomposable $kQ$-module $M$. So $\ell$ is zero in $\cp^\imath$, which means that $\ell$ factors through some frozen vertex $\sigma(c)$.
On the other hand, $\Ind\proj(\Lambda^\imath)\cong\cs^\imath$ and $\Lambda^\imath$ is finite-dimensional, so any cyclic path passing through $\sigma(c)$ is nilpotent. The Claim is proved.

By the Claim, the trace maps to cyclic paths in non-frozen vertices are zero. Then both $\C[\rep(\cv^*\bv,\cv^*\bw,\mcr)]^{G_\bv\times G_\bv}$ and $\C[\rep(\bv,\bw,\mcr^\imath)]^{G_\bv}$
are generated by coordinate maps to paths from a frozen vertex to a frozen vertex.
Hence $\cv:\mcr\rightarrow \mcr^\imath$ induces an epimorphism $\cv: \C[\rep(\cv^*\bv,\cv^*\bw,\mcr)]^{G_\bv\times G_\bv}\twoheadrightarrow \C[\rep(\bv,\bw,\mcr^\imath)]^{G_\bv}$, whence the embedding $\cm_0(\bv,\bw,\mcr^\imath)\hookrightarrow \cm_0(\cv^*\bv,\cv^*\bw,\mcr)$.

(iii) Follows from (ii) and the definition.
\end{proof}

The second part of the following lemma was already known in \cite{Sch18}.

\begin{lemma}
  \label{lem:M=rep}
We have $\cm_0(\bw,\mcr^\imath)\cong \rep(\bw,\cs^\imath)$ and $\cm_0(\cv^*\bw,\mcr)\cong \rep(\cv^*\bw,\cs)$.
\end{lemma}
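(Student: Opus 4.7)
The second isomorphism $\cm_0(\cv^*\bw,\mcr)\cong\rep(\cv^*\bw,\cs)$ is \cite{Sch18}, so I concentrate on the first. My plan is to build a natural morphism $\rho:\cm_0(\bw,\mcr^\imath)\to\rep(\bw,\cs^\imath)$ from the restriction functor $\res:\mod(\mcr^\imath)\to\mod(\cs^\imath)$ and to prove it is an isomorphism by comparing coordinate rings along the defining colimit. Since $G_\bv$ acts trivially on frozen components, $\res$ descends at each finite stage to a morphism $\cm_0(\bv,\bw,\mcr^\imath)\to\rep(\bw,\cs^\imath)$ compatible with the closed inclusions $\cm_0(\bv,\bw,\mcr^\imath)\hookrightarrow\cm_0(\bv',\bw,\mcr^\imath)$ for $\bv\leq\bv'$, and these assemble into $\rho$ on the colimit.

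The central step is a Le Bruyn--Procesi computation at each finite $\bv$: the invariant ring $k[\rep(\bv,\bw,\mcr^\imath)]^{G_\bv}$ is generated by (a) matrix-coefficient functions of paths $\sigma c\to\sigma c'$ between frozen vertices in $\mcr^\imath$, and (b) trace functions on cyclic paths supported entirely on non-frozen vertices. The Claim inside the proof of Lemma~\ref{lem:embedding of varieties}(ii) shows that every such cyclic path factors through a frozen vertex in the orbit category $\cp^\imath$; combined with the finite-dimensionality of $\cs^\imath\cong\iLa$ from Theorem~\ref{thm:iNKS}, the resulting endomorphism is nilpotent, so all trace functions in (b) vanish. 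Thus $k[\cm_0(\bv,\bw,\mcr^\imath)]$ is generated by the matrix coefficients in (a), which are precisely the evaluations of morphisms in $\cs^\imath$ on a $\bw$-dimensional representation.

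Taking the colimit in $\bv$ then identifies the resulting coordinate ring with $k[\rep(\bw,\cs^\imath)]$, proving that $\rho$ is an isomorphism of affine schemes. As an independent check on $k$-points, for any $N\in\rep(\bw,\cs^\imath)$ the intermediate extension $K_{LR}(N)\in\mod(\mcr^\imath)$ (defined exactly as in \S\ref{subsec: strat} for the adjoint pair associated to $\res:\mod(\mcr^\imath)\to\mod(\cs^\imath)$) is bistable with $\res(K_{LR}(N))\cong N$ by Lemma~\ref{lem:bistable}, giving an explicit preimage in $\cm_0(\bv_N,\bw,\mcr^\imath)$ for some $\bv_N$. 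The main obstacle will be ensuring that the invariant-ring computation interchanges properly with the colimit in $\bv$: one must confirm that every morphism of $\cs^\imath$ is captured by a path at some finite stage and that no new relations appear in the limit, which ultimately follows from the finite-dimensionality of $\cs^\imath\cong\iLa$ together with the nilpotency of cyclic paths through non-frozen vertices.
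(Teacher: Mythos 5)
Your argument is essentially correct, but it takes a genuinely different route from the paper. The paper's proof is a one-line reduction: it invokes \cite[Theorem 3.12]{Sch18}, whose hypothesis is the Hom-vanishing condition $\Hom_{\cd_Q}(P_i,F^nP_i)=0=\Hom_{\cd_Q}(P_i,(F^\imath)^nP_i)$ for $n\neq 0$, and verifies that hypothesis using \cite[Lemma 9.1]{LW19}. You instead re-derive the content of Scherotzke's theorem in this special case by hand: Le Bruyn--Procesi generation of the invariant ring, vanishing of the trace functions via nilpotency of cycles through non-frozen vertices, and surjectivity on points via the intermediate extension. All of these ingredients are legitimate and in fact appear elsewhere in the paper (the generation statement and the nilpotency Claim are used verbatim in the proof of Lemma~\ref{lem:embedding of varieties}(ii)), so your proof is self-contained where the paper's is a citation; the price is that you must handle the colimit and reducedness issues yourself. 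Two small logical points: first, the $K_{LR}$-surjectivity you present as ``an independent check on $k$-points'' is not optional --- the invariant-ring computation only exhibits $\cm_0(\bw,\mcr^\imath)$ as a \emph{closed subvariety} of $\rep(\bw,\cs^\imath)$, and it is precisely surjectivity on points together with reducedness of $\rep(\bw,\cs^\imath)$ that upgrades the closed immersion to an isomorphism (and kills any ``new relations in the limit''), so that step should be promoted to a main ingredient rather than the nilpotency argument you cite for it; second, for $K_{LR}(N)$ to live in some $\cm_0(\bv_N,\bw,\mcr^\imath)$ with $\bv_N$ finite you need $\mcr^\imath$ (not just $\cs^\imath$) to be finite-dimensional as an algebra, which the paper records in the proof of Proposition~\ref{prop:dominant pairs 1}. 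With those adjustments the argument goes through, and the colimit stabilization follows from Noetherianity of $\rep(\bw,\cs^\imath)$ as you anticipate.
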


\begin{proof}
By \cite[Theorem 3.12]{Sch18}, the lemma follows once we check the following assumption holds for all indecomposable projective $kQ$-module $P_i$ and all $n\in\Z-\{0\}$:
\begin{align}
	\label{eq:isomvarieties}
\Hom_{\cd_Q}(P_i,F^nP_i)=0,
\qquad
\Hom_{\cd_Q}(P_i, (F^{\imath})^{n}P_i)=0
\end{align}
This indeed follows from \cite[Lemma 9.1]{LW19}.
\end{proof}

The map $\cv^*:\cm_0(\bw,\mcr^\imath)\hookrightarrow \cm_0(\cv^*\bw,\mcr)$ in Lemma \ref{lem:embedding of varieties}(iii) corresponds to the pullback $\cv^*: \rep(\bw,\cs^\imath)\rightarrow \rep(\cv^*\bw,\cs)$. Let $\rep^\texttt{G}(\cv^*\bv,\cv^*\bw,\mcr)$ be the subvariety of $\rep(\cv^*\bv,\cv^*\bw,\mcr)$ consisting of $\texttt{G}$-invariant modules, and $\rep^\texttt{G}(\cv^*\bw,\cs)$ is defined similarly. Then we have $\rep^\texttt{G}(\cv^*\bv,\cv^*\bw,\mcr) \cong \rep(\bv,\bw,\mcr^\imath)$ and $\rep^\texttt{G}(\cv^*\bw,\cs) \cong \rep(\bw,\cs^\imath)$.

Using the diagonal embedding $G_\bv\rightarrow G_\bv\times G_\bv\cong G_{\cv^*\bv}$, we view $G_\bv$ as a subgroup of $G_{\cv^*\bv}$ in the following.
Let $\cs t(\cv^*\bv,\cv^*\bw,\mcr)$ be the subset of $\rep(\cv^*\bv,\cv^*\bw,\mcr)$ consisting of all stable $\mcr$-modules with dimension vector $(\cv^*\bv,\cv^*\bw)$. Let $\cs {\it t}^\texttt{G}(\cv^*\bv,\cv^*\bw,\mcr)$ be the subvariety of $\cs t(\cv^*\bv,\cv^*\bw,\mcr)$ formed by the $\texttt{G}$-invariant $\mcr$-modules.

Let $\cm_0^\texttt{G}(\cv^*\bv,\cv^*\bw,\mcr) :=\rep^\texttt{G}(\cv^*\bv,\cv^*\bw,\mcr)//G_\bv$ be the categorical quotient.
For all $\bv'\leq \bv$ (cf. \eqref{eq:leq}), there is an inclusion
\[
\rep^\texttt{G}(\cv^*\bv',\cv^*\bw,\mcr)\longrightarrow \rep^\texttt{G}(\cv^*\bv,\cv^*\bw,\mcr)
\]
by taking a direct sum with the semisimple module of dimension vector $\cv^*\bv-\cv^*\bv'$. This yields an inclusion
\[
\rep^\texttt{G}(\cv^*\bv',\cv^*\bw,\mcr)//G_{\bv'} \longrightarrow \rep^\texttt{G}(\cv^*\bv,\cv^*\bw,\mcr)//G_\bv.
\]
The affine quiver variety $\cm_0^\texttt{G}(\cv^*\bw,\mcr)$ is defined to be the colimit of $\cm_0^\texttt{G}(\cv^*\bv,\cv^*\bw,\mcr)$ along the inclusions.

\begin{proposition}
  \label{prop:MMi}
For any dimension vectors $\bv \in \N^{\mcr^\imath_0-\cs^\imath_0}$ and $\bw \in \N^{\cs^\imath_0}$, we have the following isomorphisms:
\begin{itemize}
\item[(i)] $\cm(\bv,\bw,\mcr^\imath)\cong\cm^\texttt{G}(\cv^*\bv,\cv^*\bw,\mcr)$;
\item[(ii)] $\cm_0(\bv,\bw,\mcr^\imath)\cong\cm_0^\texttt{G}(\cv^*\bv,\cv^*\bw,\mcr)$;
\item[(iii)] $\cm_0(\bw,\mcr^\imath) \cong \cm_0^\texttt{G}(\cv^*\bw,\mcr)
            \cong  \rep^\texttt{G}(\cv^*\bw,\cs) \cong\rep(\bw,\cs^\imath)$.
\end{itemize}
\end{proposition}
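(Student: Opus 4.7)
The strategy is to combine the equivalence $\mod(\mcr^\imath)\simeq \mod^\texttt{G}(\mcr)$ (and its analog for $\cs^\imath$) established in Section~\ref{subsec: covering} with the injective maps on varieties from Lemma~\ref{lem:embedding of varieties}, and then to check that the identifications are compatible with the group actions that define the quotients.

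For part (i), I would start from the embedding $\cv^*:\cm(\bv,\bw,\mcr^\imath)\hookrightarrow \cm(\cv^*\bv,\cv^*\bw,\mcr)$ constructed in Lemma~\ref{lem:embedding of varieties}(i) and observe that, because $\cv^*$ factors through the equivalence $\mod(\mcr^\imath)\simeq \mod^\texttt{G}(\mcr)$, the image lands inside $\cs {\it t}^\texttt{G}(\cv^*\bv,\cv^*\bw,\mcr)/G_\bv=\cm^\texttt{G}(\cv^*\bv,\cv^*\bw,\mcr)$. The remaining point is surjectivity onto this $\texttt{G}$-fixed locus: given a $\texttt{G}$-invariant stable $\mcr$-module $N$ of dimension vector $(\cv^*\bv,\cv^*\bw)$, the equivalence $\mod^\texttt{G}(\mcr)\simeq \mod(\mcr^\imath)$ produces an $\mcr^\imath$-module $M$ with $\cv^*M\cong N$; by Lemma~\ref{lem:rho preserves stable}(i), $M$ is stable of dimension $(\bv,\bw)$. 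For the orbits to match, one uses that the stabilizer/identification computed in the proof of Lemma~\ref{lem:embedding of varieties}(i) shows the diagonal $G_\bv\hookrightarrow G_\bv\times G_\bv\cong G_{\cv^*\bv}$ already suffices to account for all choices of isomorphism $\cv^*M\cong N$ on the $\texttt{G}$-invariant locus.

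For part (ii), the same outline applies, but one replaces stable loci with the full representation varieties and the geometric quotient with the categorical quotient. The pullback $\cv^*$ identifies $\rep(\bv,\bw,\mcr^\imath)$ with $\rep^\texttt{G}(\cv^*\bv,\cv^*\bw,\mcr)$ as $G_\bv$-varieties, so passing to $\Spec$ of invariants yields $\cm_0(\bv,\bw,\mcr^\imath)\cong \cm_0^\texttt{G}(\cv^*\bv,\cv^*\bw,\mcr)$. The embedding $\cv^*:\cm_0(\bv,\bw,\mcr^\imath)\hookrightarrow \cm_0(\cv^*\bv,\cv^*\bw,\mcr)$ from Lemma~\ref{lem:embedding of varieties}(ii) confirms that the map on invariants is well-defined, and the proof of that lemma (which showed the invariants are generated by coordinate maps along non-nilpotent paths between frozen vertices) can be refined to show these generators separate $\texttt{G}$-invariant semisimple representations on both sides. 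Part (iii) then follows by taking the colimit over $\bv$ in (ii) (using the compatibility of the inclusions $\rep(\bv',\bw,-)\hookrightarrow \rep(\bv,\bw,-)$ with $\cv^*$), combined with the isomorphism $\cm_0^\texttt{G}(\cv^*\bw,\mcr)\cong \rep^\texttt{G}(\cv^*\bw,\cs)$ obtained by applying the non-$\texttt{G}$ version of Lemma~\ref{lem:M=rep}, and the identification $\rep^\texttt{G}(\cv^*\bw,\cs)\cong \rep(\bw,\cs^\imath)$ coming from $\mod(\cs^\imath)\simeq \mod^\texttt{G}(\cs)$; the last equality $\cm_0(\bw,\mcr^\imath)\cong \rep(\bw,\cs^\imath)$ is the $\imath$-case of Lemma~\ref{lem:M=rep}, whose hypothesis \eqref{eq:isomvarieties} for $F^\imath$ was already verified there.

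The main obstacle is bookkeeping the group actions under the covering. The natural group acting on $\rep(\cv^*\bv,\cv^*\bw,\mcr)$ is $G_{\cv^*\bv}\cong G_\bv\times G_\bv$, whereas on $\rep(\bv,\bw,\mcr^\imath)$ it is only $G_\bv$, embedded diagonally. One must verify that on the $\texttt{G}$-invariant locus two elements of $\rep^\texttt{G}(\cv^*\bv,\cv^*\bw,\mcr)$ are $G_{\cv^*\bv}$-conjugate if and only if they are $G_\bv$-conjugate (diagonally), and similarly for closed orbits under the categorical quotient. This was essentially already done inside the proof of Lemma~\ref{lem:embedding of varieties}(i) by averaging to produce $\bar g_z=\tfrac12(g_{x_z}+g_{F^\imath(x_z)})$, and the same averaging trick, together with the fact that the coordinate-ring generators are supported on paths between frozen vertices (where $G_\bv$ acts trivially), handles the categorical quotient case. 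Once these compatibilities are recorded, the three isomorphisms in (i)--(iii) follow formally.
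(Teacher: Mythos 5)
Your proposal is correct and follows essentially the same route as the paper: part (i) comes from Lemma~\ref{lem:embedding of varieties}(i) together with the equivalence $\mod(\mcr^\imath)\simeq \mod^\texttt{G}(\mcr)$ (with Lemma~\ref{lem:rho preserves stable} and the averaging of $g_{x_z}$ and $g_{F^\imath(x_z)}$ handling stability and the orbit matching), part (ii) comes from identifying $\rep(\bv,\bw,\mcr^\imath)$ with $\rep^\texttt{G}(\cv^*\bv,\cv^*\bw,\mcr)$ and comparing $G_\bv$- versus $G_{\cv^*\bv}$-invariants via the epimorphism from Lemma~\ref{lem:embedding of varieties}(ii), and part (iii) follows from (ii) and Lemma~\ref{lem:M=rep}. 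No gaps; your extra bookkeeping of the group actions is exactly the content the paper compresses into its citations.
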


\begin{proof}
(i) follows from Lemma \ref{lem:embedding of varieties}(i) by noting that the pullback functor $\cv^*: \mod(\mcr^\imath)\rightarrow \mod(\mcr)$ induces an equivalence $\mod(\mcr^\imath)\simeq \mod^\texttt{G}(\mcr)$.

(ii) We have $\C[\rep(\bv,\bw,\mcr^\imath)]^{G_\bv}\cong \C[\rep^\texttt{G}(\cv^*\bv,\cv^*\bw,\mcr)]^{{G_\bv}}$ from the equivalence $\mod(\mcr^\imath) \simeq \mod^\texttt{G}(\mcr)$. 
Note that $\C[\rep^\texttt{G}(\cv^*\bv,\cv^*\bw,\mcr)]^{{G_\bv}\times G_\bv}\cong \C[\rep^\texttt{G}(\cv^*\bv,\cv^*\bw,\mcr)]^{{G_\bv}}$.
By restricting the epimorphism $\cv:\C[\rep(\cv^*\bv,\cv^*\bw,\mcr)]^{G_\bv\times G_\bv}\rightarrow \C[\rep(\bv,\bw,\mcr^\imath)]^{G_\bv}$ in the proof of Lemma \ref{lem:embedding of varieties}(ii) to $\C[\rep^\texttt{G}(\cv^*\bv,\cv^*\bw,\mcr)]^{G_\bv\times G_\bv}$, we obtain an isomorphism $$ \C[\rep^\texttt{G}(\cv^*\bv,\cv^*\bw,\mcr)]^{G_\bv} \stackrel{\cong}{\longrightarrow} \C[\rep^\texttt{G}(\cv^*\bv,\cv^*\bw,\mcr)]^{G_\bv\times G_\bv}  \stackrel{\cong}{\longrightarrow} \C[\rep(\bv,\bw,\mcr^\imath)]^{G_\bv}.$$ So $\cm_0(\bv,\bw,\mcr^\imath)\cong\cm_0^\texttt{G}(\cv^*\bv,\cv^*\bw,\mcr)$.

(iii) The first isomorphism follows from (ii), and the rest follows by Lemma~\ref{lem:M=rep}.
\end{proof}

\subsection{Transversal slices}
As explained in Example \ref{ex:cyclic},   
the NKS quiver varieties with $F=\Sigma^2$  
associated to NKS categories $\mcr$ and $\cs$ in Definition~\ref{def:RS for QG} and Definition \ref{def:RS for iQG} are Nakajima's cyclic quiver varieties \cite{Na01, Na04} by noting that $\Sigma^2=\tau^{-h}$, where $h$ is the Coxeter number of the Dynkin type of $Q$; see, e.g., \cite[Example 8.3~(2)]{Ke05} and also \cite{Qin}. In this case, there is a fundamental result on the existence of transversal slices to strata; see \cite[Theorem 3.3.2]{Na01}.  
 
 We shall construct the transversal slices to strata of the $\imath$NKS varieties associated to the $\imath$NKS categories $\mcr^\imath$ and $\cs^\imath$ in Definition~\ref{def:RS for iQG} and Theorem \ref{thm:iNKS}; also cf. \cite{Qin14}.  

Let  $\bv \in \N^{\mcr^\imath_0-\cs^\imath_0}$ and $\bw \in \N^{\cs^\imath_0}$ be two dimension vectors.  Let $(\bv^0,\bw)$ be a strongly $l$-dominant pair. Note that $\cm_0^{\text{reg}}(\bv^0,\bw,\mcr^\imath)$ is nonempty. Assume that $x\in \cm_0^{\text{reg}}(\bv^0,\bw,\mcr^\imath)\subset \cm_0(\bv,\bw,\mcr^\imath)$. Let $T$ be the tangent space of $\cm_0^{\text{reg}}(\bv,\bw,\mcr^\imath)$ at $x$. 
Define $\bw^\bot$ and $\bv^\bot$ to be 
\[
\bw^\bot :=\bw-{\mathcal{C}}_q\bv^0\sigma^{-1},
\qquad
\bv^\bot :=\bv-\bv^0, 
\]
respectively.

\begin{proposition}[Transversal slices for $\imath$NKS quiver varieties]
	\label{prop:Translice1}
Retain notations $\bv, \bv^0,\bw$, $x, \bv^\bot, \bw^\bot$ above.
There exist neighborhoods $\cu$, $\cu_T$, $\cu^\bot$ of $x$ in $\cm_0(\bv,\bw,\mcr^\imath)$ and the origins in $T$, $\cm_0(\bv^\bot,\bw^\bot,\mcr^\imath)$, respectively, and biholomorphic maps $\cu\rightarrow \cu_T\times \cu^\bot$, $\pi^{-1}(\cu)\rightarrow \cu_T\times \pi^{-1}(\cu^\bot)$, such that the following diagram commutes:
\begin{align}	
	\label{diag:Translice}
	\xymatrix{ \cm(\bv,\bw,\mcr^\imath) \supset \pi^{-1}(\cu) \ar[rr]^{\cong\qquad\quad} \ar@<8ex>[d]_\pi & &\cu_T\times \pi^{-1}(\cu^\bot) \subset T\times \cm(\bv^\bot,\bw^\bot,\mcr^\imath) \ar@<-14ex>[d]^{1\times\pi} \\
		\cm_0(\bv,\bw,\mcr^\imath)\supset \quad\cu\quad \ar[rr]^{\cong\qquad\quad} && \;\;\cu_T\times \cu^\bot\quad\;\; \subset T\times \cm_0(\bv^\bot,\bw^\bot,\mcr^\imath)   }
\end{align}
	In particular, the fibre $\pi^{-1}(x)$ is biholomorphic to the fibre $\pi^{-1}(0)$ over the origin of $\pi:\cm(\bv^\bot,\bw^\bot,\mcr^\imath) \rightarrow \cm_0(\bv^\bot,\bw^\bot,\mcr^\imath)$. 
\end{proposition}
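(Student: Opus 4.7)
The plan is to reduce the $\imath$NKS slice theorem to Nakajima's classical transversal slice theorem for cyclic quiver varieties \cite[Theorem 3.3.2]{Na01} via the Galois covering $\cv:\mcr\rightarrow \mcr^\imath$, and then pass to $\texttt{G}$-invariants. By Proposition~\ref{prop:MMi}, one has identifications
\[
\cm(\bv,\bw,\mcr^\imath)\cong\cm^\texttt{G}(\cv^*\bv,\cv^*\bw,\mcr),\qquad \cm_0(\bv,\bw,\mcr^\imath)\cong\cm_0^\texttt{G}(\cv^*\bv,\cv^*\bw,\mcr),
\]
realizing the $\imath$NKS varieties as the $\texttt{G}$-fixed loci inside the NKS varieties for the admissible pair $(\Sigma^2,C)$. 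Since $\Sigma^2=\tau^{-h}$ with $h$ the Coxeter number of $Q$, the latter are Nakajima's cyclic quiver varieties, cf.\ Example~\ref{ex:cyclic}(b). The stratifications are compatible, so the point $x$ lifts to a $\texttt{G}$-fixed point $\tilde x\in\cm_0^{\mathrm{reg}}(\cv^*\bv^0,\cv^*\bw,\mcr)$.

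Applying Nakajima's theorem at $\tilde x$ yields neighborhoods $\widetilde\cu,\widetilde\cu_{\widetilde T},\widetilde\cu^\bot$ of $\tilde x$, of the origin in the tangent space $\widetilde T$ to the stratum at $\tilde x$, and of the origin in $\cm_0(\cv^*\bv^\bot,\cv^*\bw^\bot,\mcr)$, together with biholomorphisms
\[
\widetilde\cu\stackrel{\cong}{\longrightarrow}\widetilde\cu_{\widetilde T}\times\widetilde\cu^\bot,\qquad \pi^{-1}(\widetilde\cu)\stackrel{\cong}{\longrightarrow}\widetilde\cu_{\widetilde T}\times\pi^{-1}(\widetilde\cu^\bot),
\]
intertwining the two $\pi$-maps. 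The crucial step is to arrange these data $\texttt{G}$-equivariantly. Since $\tilde x$ is $\texttt{G}$-fixed and Nakajima's slice is controlled by the deformation theory of the module at $\tilde x$, the ambient tangent space splits $\texttt{G}$-equivariantly into the $G_{\cv^*\bv}$-orbit direction and the transverse $\Ext^1$-direction. Averaging Nakajima's slice over $\texttt{G}$ (permissible since $\mathrm{char}\,k=0$ and $|\texttt{G}|=2$) produces a $\texttt{G}$-equivariant version. Taking $\texttt{G}$-fixed loci on both sides and invoking Proposition~\ref{prop:MMi} once more to identify the transverse factors
\[
\cm^\texttt{G}(\cv^*\bv^\bot,\cv^*\bw^\bot,\mcr)\cong\cm(\bv^\bot,\bw^\bot,\mcr^\imath),\qquad \cm_0^\texttt{G}(\cv^*\bv^\bot,\cv^*\bw^\bot,\mcr)\cong\cm_0(\bv^\bot,\bw^\bot,\mcr^\imath),
\]
together with the identification $T=\widetilde T^\texttt{G}$ (since taking fixed points commutes with taking tangent spaces in characteristic zero), produces the desired commutative diagram \eqref{diag:Translice}. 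The fibre assertion $\pi^{-1}(x)\cong\pi^{-1}(0)$ in $\cm(\bv^\bot,\bw^\bot,\mcr^\imath)$ is then immediate from the commutativity of the diagram.

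The principal obstacle is the $\texttt{G}$-equivariance step. Nakajima's slice is built from an explicit local model depending on a chosen splitting of the module at $\tilde x$ into a stable summand plus a complement; one must verify that this splitting can be made $\texttt{G}$-stable, and that after $\texttt{G}$-averaging the transverse factor is still a cyclic quiver variety with dimension vector pair $(\cv^*\bv^\bot,\cv^*\bw^\bot)$. This hinges on the bookkeeping identities $\cv^*(\bv-\bv^0)=\cv^*\bv-\cv^*\bv^0$ and its analogue for $\bw^\bot$, that is, on the compatibility of the Cartan operator $\mathcal{C}_q$ for $\mcr^\imath$ with that for $\mcr$ under $\cv^*$, which follows from the definition of $\mathcal{C}_q$ together with the $\texttt{G}$-equivariance of the pullback.
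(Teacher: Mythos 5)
Your proposal is correct and follows essentially the same route as the paper: identify the $\imath$NKS varieties with the $\texttt{G}$-fixed loci of Nakajima's cyclic quiver varieties via Proposition~\ref{prop:MMi}, apply \cite[Theorem 3.3.2]{Na01} at the ($\texttt{G}$-fixed) point, check that $\mathcal{C}_q$ commutes with $\cv^*$ so that $\cv^*(\bv^\bot)=(\cv^*\bv)^\bot$ and $\cv^*(\bw^\bot)=(\cv^*\bw)^\bot$, and restrict the resulting diagram to the fixed-point subvarieties. The only difference is one of emphasis: you explicitly flag the $\texttt{G}$-equivariance of Nakajima's slice as a step requiring an averaging argument, whereas the paper simply asserts that the maps in its diagram commute with the $\texttt{G}$-action.
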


\begin{proof}
We identify the NKS quiver varieties associated to the NKS categories $\mcr$ and $\cs$ in Definition~\ref{def:RS for QG} with Nakajima's cyclic quiver varieties. Fix $x\in \cm_0^{\text{reg}}(\bar{\bv}^0,\bar{\bw},\mcr)\subset \cm_0(\bar{\bv},\bar{\bw},\mcr)$. Let $\ov{T}$ be the tangent space of $\cm_0^{\text{reg}}(\bar{\bv},\bar{\bw},\mcr)$ at $x$. 
The dimension vectors $\bar{\bw}^\bot$, $\bar{\bv}^\bot$ are defined similarly. 
According to  \cite[Theorem~ 3.3.2]{Na01}, there exist neighborhoods $\ov{\cu}$, $\ov{\cu}_{\ov{T}}$, $\ov{\cu}^\bot$ of $x$ in $\cm_0(\bar{\bv},\bar{\bw},\mcr)$ and the origins in $\ov{T}$, $\cm_0(\bar{\bv}^\bot,\bar{\bw}^\bot,\mcr)$, respectively, and biholomorphic maps $\ov{\cu}\rightarrow \ov{\cu}_{\ov{T}}\times \ov{\cu}^\bot$, $\pi^{-1}(\ov{\cu})\rightarrow \ov{\cu}_{\ov{T}}\times \pi^{-1}(\ov{\cu}^\bot)$, such that the following diagram commutes:
\begin{align}	
	\label{diag:TransliceN}
	\xymatrix{ \cm(\bar{\bv},\bar{\bw},\mcr) \supset \pi^{-1}(\ov{\cu}) \ar[rr]^{\cong\qquad\quad} \ar@<8ex>[d]_\pi & &\ov{\cu}_{\ov{T}}\times \pi^{-1}(\ov{\cu}^\bot) \subset \ov{T}\times \cm(\bar{\bv}^\bot,\bar{\bw}^\bot,\mcr) \ar@<-14ex>[d]^{1\times\pi} \\
		\cm_0(\bar{\bv},\bar{\bw},\mcr)\supset \quad\ov{\cu}\quad \ar[rr]^{\cong\qquad\quad} && \;\;\ov{\cu}_{\ov{T}}\times \ov{\cu}^\bot\quad\;\; \subset \ov{T}\times \cm_0(\bar{\bv}^\bot,\bar{\bw}^\bot,\mcr)   }
\end{align}
In particular, the fibre $\pi^{-1}(x)$ is biholomorphic to the fibre $\pi^{-1}(0)$ over the origin for $\pi:\cm(\bar{\bv}^\bot,\bar{\bw}^\bot,\mcr) \rightarrow \cm_0(\bar{\bv}^\bot,\bar{\bw}^\bot,\mcr)$. 


 Now we consider $\mcr^\imath$ and $\cs^\imath$. 
For any dimension vectors $\bv \in \N^{\mcr^\imath_0-\cs^\imath_0}$ and $\bw \in \N^{\cs^\imath_0}$, it follows from Proposition \ref{prop:MMi} that 
$\cm(\bv,\bw,\mcr^\imath)$, $\cm_0(\bv,\bw,\mcr^\imath)$ and $\cm(\bw,\mcr^\imath)$ are $\texttt{G}$-fixed point subvarieties of $\cm(\bv,\bw,\mcr)$, $\cm_0(\bv,\bw,\mcr)$ and $\cm(\bw,\mcr)$, respectively. The maps in \eqref{diag:TransliceN} 
commute with the $\texttt{G}$-action. 

For any $x\in\cm_0^{\text{reg}}(\bv^0,\bw,\mcr^\imath)\subset \cm_0(\bv,\bw)$, let $T$ be the tangent space of $\cm_0^{\text{reg}}(\bv,\bw)$ at $x$. Recall that $\bw^\bot=\bw-\mathcal{C}_q\bv^0 $ and $\bv^\bot=\bv-\bv^0$.   Obviously, $\bar{{\mathcal C}}_q\cv^*=\cv^*\mathcal{C}_q$, and then $\cv^*(\bv^\bot)=(\cv^*\bv)^\bot$ and $\cv^*(\bw^\bot)=(\cv^*\bw)^\bot$; here the map defined in \eqref{def:Cq} for $\mcr$ has been denoted by $\bar{{\mathcal C}}_q$.  
Restricting the maps in the diagram \eqref{diag:TransliceN} to the subvarieties $\cm(\bv,\bw,\mcr^\imath)$ and $\cm(\bv^\bot,\bw^\bot,\mcr^\imath)$ of $\cm(\cv^*\bv,\cv^*\bw,\mcr)$ and $\cm(\cv^*(\bv^\bot),\cv^*(\bw^\bot),\mcr)$, respectively, we obtain the desired transversal slice result. 
\end{proof}

\begin{remark}
	\label{rem:Translicegeneral}
For any admissible pair $(F,C)$ such that \eqref{eq:isomvarieties} holds, there exist nonzero integers $m, n$ such that $F^m=\tau^n$. By considering the $n$-cyclic quiver varieties, one can identify the NKS quiver varieties associated to  $(F,C)$ as fixed point subvarieties of Nakajima's cyclic quiver varieties. Therefore the analogous transversal slice result in this generality follows by the same type of arguments as for Proposition \ref{prop:Translice1}.
\end{remark}

\subsection{Quantum Grothendieck rings for $\imath$quivers}
\label{subsec: graded Groth ring}

In this subsection, we always assume that $\cs,\mcr$ are the NKS categories defined in Definition \ref{def:RS for QG} and Definition \ref{def:RS for iQG}. By Lemma \ref{lem:M=rep}, we know that 
$\cm_0(\bw)\cong \rep(\bw,\cs)$ for any dimension vector $\bw$, and identify them below.

Let $X$ be an algebraic variety. We denote by $\cd_c(X):=\cd_c^b(X)$ the bounded derived category of constructible sheaves on $X$. Let $f:X\rightarrow Y$ be a morphism of algebraic varieties. There are induced functors $f^*:\cd_c(Y)\rightarrow \cd_c(X)$, $f_*:\cd_c(X)\rightarrow \cd_c(Y)$ and $f_!:\cd_c(X)\rightarrow \cd_c(Y)$ (direct image with compact support). 

We review the quantum Grothendieck ring and its convolution product, following \cite{VV}; also see \cite{Na01,Na04, Qin}. 

For any two dimension vectors $\alpha,\beta$ of $\cs$, let $V_{\alpha+\beta}$ be a vector space of graded dimension $\alpha+\beta$. Fix a vector subspace $W_0\subset V_{\alpha+\beta}$ of graded dimension $\alpha$, and let
\[
F_{\alpha,\beta}:=\{y\in \rep(\alpha+\beta,\cs)\mid y(W_0)\subset W_0\}
\]
be the closed subset of $\rep(\alpha+\beta,\cs)$. 
Then $y \in F_{\alpha,\beta}$ induces a natural linear map $y': V/W_0 \rightarrow V/W_0$, i.e., $y' \in \rep(\beta,\cs)$.
Hence we obtain the following convolution diagram
\[
\rep(\alpha,\cs)\times \rep(\beta,\cs) \stackrel{p}{\longleftarrow} F_{\alpha,\beta}\stackrel{q}{\longrightarrow}\rep(\alpha+\beta,\cs),
\]
where $p(y):=(y|_{W_0},y')$ and $q$ is the natural closed embedding.

Let $\cd_c(\rep(\alpha,\cs))$ be the derived category of constructible sheaves on $\rep(\alpha,\cs)$. We have the following restriction functor (called comultiplication),
\begin{align}
	\label{eq:Delta1}
	\widetilde{\Delta}^{\alpha+\beta}_{\alpha,\beta}: \cd_c \big(\rep(\alpha+\beta,\cs) \big) \longrightarrow \cd_c \big(\rep(\alpha,\cs) \big)\times \cd_c \big(\rep(\beta,\cs) \big), \quad F\mapsto p_!q^*(F).
\end{align}

For any $(\bv,\bw)$ such that $\cm(\bv,\bw)\neq \emptyset$, we do not know if  $\cm(\bv,\bw)$ is connected or not, however, it is pure dimensional by \cite[Theorem 3.2]{Sch18}. Choose a set $\{\alpha_\bv\}$ such that it parameterizes the connected components 
of $\cm_0(\bv,\bw)$. For any $l$-dominant pair $(\bv,\bw)$, since the restriction of $\pi$ on the regular stratum $\cm_0^{\text{reg}}(\bv,\bw)$ is a homeomorphism by \cite[Definition 3.6]{Sch18}, the set $\{\alpha_\bv\}$  naturally parameterizes the connected components of this regular stratum:
\begin{align}
	\cm_0^{\text{reg}}(\bv,\bw)=\bigsqcup_{\alpha_\bv} \cm_0^{\text{reg};\alpha_\bv}(\bv,\bw).
\end{align}

Let $\underline{k}_{\cm(\bv,\bw)}$ be the constant sheaf on $\cm(\bv,\bw)$. Denote by $\pi^{\cs}(\bv,\bw)\in \cd_c(\rep(\bw,\cs))$ (or $\pi(\bv,\bw)$ when there is no confusion) the pushforward along $\pi: \cm(\bv,\bw)\rightarrow \cm_0(\bw)\cong \rep(\bw,\cs)$ of $\underline{k}_{\cm(\bv,\bw)}$ with a grading shift:
\begin{align}
  \label{eq:piPS}
	\pi(\bv,\bw):= \pi_!(\underline{k}_{\cm(\bv,\bw)}) [\dim \cm(\bv,\bw)].
\end{align}

For a strongly $l$-dominant pair $(\bv,\bw)$, let $\cl^{\cs}(\bv,\bw)$ (or $\cl(\bv,\bw)$) be the intersection cohomology complex associated to the stratum $\cm_0^{\text{reg}}(\bv,\bw)\subseteq \cm_0(\bw)$ with respect to the trivial local system, that is,
\begin{align}
 \label{eq:decomp}
	\cl(\bv,\bw)=IC(\cm_0^{\text{reg}}(v,w))=\bigoplus_{\alpha_\bv}IC(\cm_0^{\text{reg};\alpha_\bv}(\bv,\bw)).
\end{align}
We expect the $\imath$NKS quiver varieties always satisfy the following property (the main point is that only IC sheaves with trivial local systems can show up in the decomposition). 

\begin{hypothesis}
	\label{hypothesis}
We have a decomposition
\begin{equation}
	\label{eqn:decomposition theorem}
	\pi(\bv,\bw)=\sum_{\bv':\sigma^*\bw-{\mathcal C}_q\bv'\geq0,\bv'\leq \bv} a_{\bv,\bv';\bw}(v)\cl(\bv',\bw),
\end{equation}
where we denote by $\cf^{\oplus m}[d]$ by $m\tt^d\cf$ using an indeterminate $\tt$, for any sheaf $\cf$, $m\in\N$, and $d\in\Z$. 
Moreover, we have $a_{\bv,\bv';\bw}(\tt)\in\N[\tt, \tt^{-1}]$, $a_{\bv,\bv';\bw}(\tt^{-1})=a_{\bv,\bv';\bw}(\tt)$, and $a_{\bv, \bv;\bw}(\tt) =1$. (Any $f(\tt)\in \N[\tt, \tt^{-1}]$ such that $f(\tt^{-1})=f(\tt)$ is called \emph{bar-invariant}.)
\end{hypothesis}


In fact, a class of $\imath$NKS quiver varieties are cyclic quiver varieties defined in \cite[\S4]{Na04} (which are listed in Table \ref{table:values}),
 so the decomposition \eqref{eqn:decomposition theorem} (i.e., Hypothesis~\ref{hypothesis}) already holds in these cases. 
\begin{table}[h]
	\caption{List of $(Q,\btau)$ whose $\imath$NKS varieties are cyclic  \qquad\qquad\qquad}
	\label{table:values}
	\begin{tabular}{| c | c | c | c | c | c |}
		\hline
		\begin{tikzpicture}[baseline=0]
			\node at (0, 0.2) {Type of $Q$};
		\end{tikzpicture}
		&
		\begin{tikzpicture}[baseline=0]
			\node at (0, 0.2) {$A_n$ ($n$ odd)};
		\end{tikzpicture}
		&
		\begin{tikzpicture}[baseline=0]
			\node at (0, 0.2) {$D_n$};
		\end{tikzpicture}
		&
		\begin{tikzpicture}[baseline=0]
			\node at (0, 0.2) {$E_6$};
		\end{tikzpicture}
		&
		\begin{tikzpicture}[baseline=0]
			\node at (0, 0.2) {$E_7$};
		\end{tikzpicture}
		&
		\begin{tikzpicture}[baseline=0]
			\node at (0, 0.2) {$E_8$};
		\end{tikzpicture}
		\\
		\hline
		$\btau$
		&
		$\neq\Id$
		&
		$=\Id$
		&
		$\neq\Id$
		&
		$=\Id$
		&
		$=\Id$
		\\
		\hline
			
		$\Sigma\widehat{\varrho}$
		&
		$\tau^{-\frac{n+1}{2}}$
		&
		$\tau^{-n}$
		&
		$\tau^{-6}$
		&
		$\tau^{-9}$
		&
		$\tau^{-15}$
		\\
		\hline
			\end{tabular}
	\newline
\end{table}

Let us comment on the calculations of $\Sigma \widehat{\varrho}$ in Table \ref{table:values}. If $\varrho=\Id$, the calculation is already given in the proof of \cite[Proposition 3.3.2]{XZ05}. In the remaining cases with types $A_n$ ($n$ odd) and $E_6$, one can calculate $\Sigma \widehat{\varrho}$ for a suitable orientation, and the formulas hold generally since the derived categories are independent of orientations.

The proof in \cite{Na01,Na04} for graded/cyclic quiver varieties uses the transversal slice theorem and the following property of standard modules of quantum affine algebras: a standard module is generated by the $l$-highest weight vector, which is indecomposable. 

Note that the transversal slice result holds for $\imath$NKS quiver varieties (see Proposition~ \ref{prop:Translice1}).  By using the same approach of \cite[\S14]{Na01} or \cite[Theorem 8.6]{Na04},  it remains to verify that  simple perverse sheaves whose shift appears in a direct summand of $\pi(\bv,\bw)$ are the intersection cohomology complexes $IC(\cm_0^{\text{reg};\alpha_\bv}(\bv,\bw))$ associated with the trivial local system.  


{\bf
Throughout this paper, we shall assume that Hypothesis \ref{hypothesis} holds.
}

For each $\bw\in \N^{\cs_0}$, the Grothendieck group $K_\bw(\mod(\cs))$ is defined as the free abelian group generated by the perverse sheaves $\cl(\bv,\bw)$ appearing in (\ref{eqn:decomposition theorem}), for various $\bv$. It has two distinguished $\Z[\tt, \tt^{-1}]$-bases by \eqref{eqn:decomposition theorem}:
\begin{align}
	\label{eq:bases}
	\begin{split}
		\{\pi(\bv,\bw) \mid \sigma^*\bw-{\mathcal C}_q\bv\geq0, \cm_0^{\text{reg}}(\bv,\bw)\neq\emptyset\};
		\\
		\{\cl(\bv,\bw) \mid \sigma^*\bw-{\mathcal C}_q\bv\geq0, \cm_0^{\text{reg}}(\bv,\bw)\neq\emptyset\}.
	\end{split}
\end{align}
Consider the free $\Z[\tt, \tt^{-1}]$-module
\begin{equation}
	\label{eq:Kgr}
	K^{gr}(\mod(\cs)) := \bigoplus_\bw K_\bw(\mod(\cs)).
\end{equation}
Then $\{\widetilde{\Delta}^{\bw}_{\bw_1,\bw_2} \}$ induces a comultiplication $\widetilde{\Delta}$ on $K^{gr}(\mod(\cs))$.

Introduce a bilinear form $d(\cdot,\cdot)$ on $\N^{\mcr_0-\cs_0}$ by letting
\begin{equation}\label{definition:d}
	d\big((\bv_1,\bw_1),(\bv_2,\bw_2) \big)=(\sigma^*\bw_1-{\mathcal C}_q\bv_1)\cdot \tau^* \bv_2+\bv_1\cdot \sigma^*\bw_2,
\end{equation}
where $\cdot$ denotes the standard inner product, i.e., $\bv' \cdot \bv'' =\sum_{x\in \mcr_0-\cs_0} \bv'(x)\bv''(x)$.

Using the same proof of \cite{VV}, Scherotzke-Sibilla obtained the following result. 
\begin{proposition}[\text{\cite[Proposition 4.8]{SS16}; see also \cite[Lemma 4.1]{VV},\cite[eq. (11)]{Qin} }]
The comultiplication $\widetilde{\Delta}$ is coassociative and given by
	\begin{align}
		\label{eqn:comultiplication}
		\widetilde{\Delta}^{\bw}_{\bw_1,\bw_2} \big(\pi(\bv,\bw) \big)=\bigoplus_{\stackrel{\bv_1+\bv_2=\bv}{\bw_1+\bw_2=\bw}} \tt^{d((\bv_2,\bw_2),(\bv_1,\bw_1))-d((\bv_1,\bw_1),(\bv_2,\bw_2))}\pi(\bv_1,\bw_1)\boxtimes\pi(\bv_2,\bw_2).
	\end{align}
\end{proposition}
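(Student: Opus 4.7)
The plan is to adapt the Varagnolo-Vasserot argument from \cite{VV} (and as carried out in the NKS setting by Scherotzke-Sibilla in \cite[Proposition 4.8]{SS16}). The starting point is to lift the convolution diagram
\[
\rep(\bw_1,\cs)\times \rep(\bw_2,\cs)\xleftarrow{p} F_{\bw_1,\bw_2}\xrightarrow{q}\rep(\bw,\cs)
\]
to the regular side, namely to construct a correspondence between $\cm(\bv_1,\bw_1)\times \cm(\bv_2,\bw_2)$ and $\cm(\bv,\bw)$ via a ``Hecke-type'' variety $\tT(\bv_1,\bw_1;\bv_2,\bw_2)$ parametrizing short exact sequences $0\to M_1\to M\to M_2\to 0$ of stable $\mcr$-modules with $\dimv M_i=(\bv_i,\bw_i)$ and $W_0\subset M$ equal to the $\cs$-part of $M_1$. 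The key observation is that under the identifications $\cm_0(\bw,\mcr)\cong \rep(\bw,\cs)$ (Lemma~\ref{lem:M=rep}), the pullback $q^*\pi(\bv,\bw)$ splits as a direct sum over the decompositions $\bv=\bv_1+\bv_2$ of the pushforwards from the Hecke strata, by proper base change applied to the Cartesian square obtained from $\pi$.

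Next I would analyze the two projections from $\tT(\bv_1,\bw_1;\bv_2,\bw_2)$. The first, to $\cm(\bv_1,\bw_1)\times \cm(\bv_2,\bw_2)$, is a vector bundle whose rank can be computed via $\Hom$ and $\Ext^1$ groups in $\mod(\mcr)$; this is where the bilinear form $d(\cdot,\cdot)$ enters, since the Euler form on $\mcr$-modules, when restricted to the stable locus and expressed via the dimension vectors $(\bv_i,\bw_i)$ using the mesh relations, matches the definition \eqref{definition:d}. The second projection is to the appropriate stratum of $F_{\bw_1,\bw_2}\cap \cm_0(\bv,\bw)$ and is proper with smooth fibers along the stable locus. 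Combining proper base change with these two computations yields
\[
p_!q^*\pi(\bv,\bw) \;=\; \bigoplus_{\bv_1+\bv_2=\bv} \pi(\bv_1,\bw_1)\boxtimes \pi(\bv_2,\bw_2)\,[N]
\]
for a shift $N$ computed from $\dim \cm(\bv,\bw)-\dim \cm(\bv_1,\bw_1)-\dim \cm(\bv_2,\bw_2)-2\dim(\text{vector bundle fiber})$.

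The bookkeeping for $N$ is the main calculation: one shows $N=d((\bv_2,\bw_2),(\bv_1,\bw_1))-d((\bv_1,\bw_1),(\bv_2,\bw_2))$. This is a direct translation of the analogous computation in \cite[Lemma 4.1]{VV} (see also \cite[eq.~(11)]{Qin}), where the role of the quiver Euler form is played here by the Euler form of $\mcr$. The presence of the frozen vertices and the automorphism $F^\imath=\Sigma\widehat{\btau}$ does not change the form of the computation because the Hom/Ext calculations in $\mcr$ are local in the AR-quiver $\Z Q_C/F^\imath$; crucially, after identifying via Theorem~\ref{thm:iNKS} and \S\ref{subsec: covering}, the contribution from the extra nonfrozen vertices is encoded in the $\tau^*$ and ${\mathcal C}_q$ operators appearing in \eqref{definition:d}.

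For coassociativity, I would apply the same convolution construction to a three-term splitting $\bw=\bw_1+\bw_2+\bw_3$ and form the variety of two-step flags $0\subset W_0\subset W_0'\subset V$ compatible with $y$. The two natural factorizations of the resulting diagram, through $F_{\bw_1,\bw_2+\bw_3}$ or $F_{\bw_1+\bw_2,\bw_3}$, give rise to the two iterated comultiplications, and proper base change on the common refinement forces $(\widetilde{\Delta}\otimes\id)\circ\widetilde{\Delta}=(\id\otimes\widetilde{\Delta})\circ\widetilde{\Delta}$, as in \cite{VV, SS16}. The main obstacle is purely bookkeeping: verifying that the shifts match on both sides, which reduces to the identity
\[
d\big((\bv_2,\bw_2),(\bv_1,\bw_1)\big)+d\big((\bv_3,\bw_3),(\bv_1+\bv_2,\bw_1+\bw_2)\big)
\]
being symmetric under the two bracketings, and this is immediate from the bilinearity of $d$ in \eqref{definition:d}.
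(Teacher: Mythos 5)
Your overall route is the right one, and it is worth noting that the paper itself offers no proof of this proposition: it quotes \cite[Proposition 4.8]{SS16}, which is in turn a transcription of \cite[Lemma 4.1]{VV} to the NKS setting. Your sketch reconstructs exactly the argument those sources use — lift the convolution diagram to the stable locus, stratify the preimage $\pi^{-1}(F_{\bw_1,\bw_2})$ by the dimension vector of the induced submodule, identify each stratum as an affine fibration over $\cm(\bv_1,\bw_1)\times\cm(\bv_2,\bw_2)$, compute the shift, and obtain coassociativity from two-step flags together with bilinearity of $d$.

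There is, however, one genuine gap in your first step. Proper base change over the closed embedding $q$ only gives $q^*\pi(\bv,\bw)\cong\tilde\pi_!\,\underline{k}_{\pi^{-1}(F_{\bw_1,\bw_2})}[\dim\cm(\bv,\bw)]$, and the stratification of $\pi^{-1}(F_{\bw_1,\bw_2})$ by the decompositions $\bv_1+\bv_2=\bv$ a priori produces only a filtration (a Postnikov tower) whose associated graded is $\bigoplus_{\bv_1+\bv_2=\bv}\pi(\bv_1,\bw_1)\boxtimes\pi(\bv_2,\bw_2)$ up to shifts; the direct-sum splitting asserted in \eqref{eqn:comultiplication} does not follow from base change alone. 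The actual content of \cite[Lemma 4.1]{VV} and of Nakajima's treatment is that these strata are the attracting sets of a $\C^*$-action whose fixed locus is $\bigsqcup_{\bv_1+\bv_2=\bv}\cm(\bv_1,\bw_1)\times\cm(\bv_2,\bw_2)$, so that hyperbolic localization applied to the semisimple complex $\pi(\bv,\bw)$ (semisimple by the decomposition theorem) forces the filtration to split; you need this, or an equivalent purity/odd-vanishing argument, for the displayed $\bigoplus$ to be legitimate. A smaller point: you defer the computation of the shift $N$ entirely to the references, but the specific form of $d$ in \eqref{definition:d} — with $\tau^*$ and ${\mathcal C}_q$ adapted to the configuration $C$ and the frozen vertices — is precisely what the paper's remark says was ``used implicitly though not written down explicitly'' in \cite{SS16}, so this is the one place where a literal translation is unavailable and the rank of the affine fibration must actually be computed.
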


\begin{remark}
	The graded/cyclic quiver varieties in \cite{Na01,Na04} are NKS varieties with the configuration $C= \{\text{vertices of }\Z Q\}$; also see \cite{Qin}. The definition of $\mathcal C_q$ in \eqref{def:Cq} (which makes sense for a more flexible configuration $C$) essentially coincides with Qin's up to a shift by $\sigma$, and our bilinear form \eqref{definition:d} coincides with \cite[Eq. (12)]{Qin}. The formula \eqref{definition:d} was used implicitly though not written down explicitly in \cite{SS16}.
\end{remark}

Denote
\begin{align}
	\label{eq:Kgr2}
	\begin{split}
		{\bf R}_\bw(\mod(\cs)) &=\Hom_{\Z[\tt, \tt^{-1}]} (K_\bw(\mod(\cs)),\Z[\tt, \tt^{-1}]),
		\\
		K^{gr*}(\mod(\cs)) &=\bigoplus_{\bw\in \N^{\cs_0}}{\bf R}_\bw(\mod(\cs)).
	\end{split}
\end{align}
Then as the graded dual of the coalgebra $K^{gr}(\mod(\cs))$, $K^{gr*}(\mod(\cs) )$ becomes a $\Z[\tt, \tt^{-1}]$-algebra, whose multiplication is denoted by $\ast$.
Note that $K^{gr*}(\mod(\cs))$ is a $\N^{\cs_0}$-graded algebra (called the {\em quantum Grothendieck ring}).  It has two distinguished bases
\begin{align}
	\label{eq:bases dual}
	\begin{split}
		\{\chi (\bv,\bw) \mid \sigma^*\bw-{\mathcal C}_q \bv\geq0, \cm_0^{\text{reg}}(\bv,\bw)\neq\emptyset\},
		\\
		\{L (\bv,\bw) \mid \sigma^*\bw-{\mathcal C}_q \bv\geq0, \cm_0^{\text{reg}}(\bv,\bw)\neq\emptyset\},
	\end{split}
\end{align}
dual to the 2 bases in \eqref{eq:bases}, respectively. We shall drop the subscript $\cs$ in $\chi_\cs$ and $L_\cs$  in \eqref{eq:bases dual} when there is no confusion.
Note that $\cl(\bv,\bw)$, $L(\bv,\bw)$ make sense only for strongly $l$-dominant pairs $(\bv,\bw)$.


By the transversal slice Theorem \cite{Na01},  the direct summands appearing in $\pi(\bv,\bw)$ are the shifts of sheaves $\cl(\bv',\bw)$ with $\bv'\leq \bv$; cf. \eqref{eq:leq}. So we have
\begin{equation}
	\label{equation multiplication}
	L(\bv_1,\bw_1)\ast  L(\bv_2,\bw_2)=\sum_{\bv \geq \bv_1+\bv_2} c_{\bv_1,\bv_2}^\bv(\tt) L(\bv,\bw_1+\bw_2),
\end{equation}
with a leading term $c_{\bv_1,\bv_2}^{\bv_1+\bv_2}(\tt) L(\bv_1+\bv_2,\bw_1+\bw_2)$; moreover, we have
\begin{align}
	c_{\bv_1,\bv_2}^\bv(\tt) & \in \N[\tt, \tt^{-1}],
	\label{eq:positive}
	\\
	c_{\bv_1,\bv_2}^{\bv_1+\bv_2}(\tt) &= \tt^{d((\bv_2,\bw_2),(\bv_1,\bw_1))-d((\bv_1,\bw_1),(\bv_2,\bw_2))}.
	\label{eqn: leading term}
\end{align}
The positivity \eqref{eq:positive} follows from the fact that the image of a perverse sheaf $\cl(\bv, \bw)$ under the comultiplication $\widetilde{\Delta}$ (see \eqref{eq:Delta1}) is a semisimple perverse sheaf. 

\subsection{Twisted quantum Grothendieck rings for $\imath$quivers}
  \label{subsec:iGring}

Let $(Q,\btau)$ be a Dynkin $\imath$quiver.
Let $\cs^\imath$, $\mcr^\imath$ and $\cp^\imath$ be the $\imath$NKS categories defined in Definition \ref{def:RS for iQG} and Theorem \ref{thm:iNKS}.

Consider the natural projection $p:\mcr^{\gr}\rightarrow \mcr^\imath$. For any $z\in\mcr^\imath_0-\cs^\imath_0$, there exists a unique vertex, denoted again by $z$ by abuse of notation, in $\mcr^{\gr}_0-\cs^{\gr}_0$ corresponding to a stalk complex concentrated at degree zero (in $\mod(kQ)$) whose image under $p$ is the $z$ which we start with; see \cite[Theorem 3.18, Corollary 3.21]{LW19}. In other words, there exists a bijection between the set of vertices in $\cp^\imath$ and the set of indecomposable $kQ$-modules.

For any $i\in Q_0$, we denote
\begin{align}
\label{eq:vfi}
\begin{split}
\bw^{i}&=\e_{\sigma \ts_i}+\e_{\sigma\ts_{\btau i}},
\\
\bv^{i}&= \sum_{z\in\mcr_0-\cs_0} \dim\cp^\imath(\ts_i ,z)\e_z.
\end{split}
\end{align}
Note that $\bw^{i}=\bw^{{\btau i}}$. Recall $\e_x$ denotes the characteristic function of $x\in\mcr_0^\imath$, which is also viewed as the unit vector supported at $x$. Denote
\begin{align}
 \label{eq:VW+}
W^{+}&=\bigoplus_{x\in\{\ts_i,i\in Q_0\}}\N \e_{\sigma x},&&
V^{+}=\bigoplus_{
x\in\Ind \mod(kQ),\, x\text{ is not injective}} \N \e_x,\\
W^0&=\bigoplus_{i\in Q_0} \N \bw^{i},&&
V^0=\bigoplus_{i\in Q_0} \N \bv^{i}.
\notag
\end{align}

Recall the restriction functor $\res:\mod(\cs^\imath) \rightarrow \mod(kQ)$. It is natural to identify the Grothendieck groups $K_0(\mod(\cs^\imath))$ and $K_0(\mod(kQ))$. Recall the Euler form $\langle\cdot,\cdot\rangle$ of $kQ$. We define the bilinear forms $\langle\cdot,\cdot\rangle_a$ and $(\cdot, \cdot)$ as follows: for any dimension vectors $\bw,\bw' \in \N^{\cs^\imath_0}$, let
\begin{align}
\langle \bw,\bw'\rangle_a&= \langle \bw,\bw'\rangle-\langle \bw',\bw\rangle,\label{eqn: antisymmetric bilinear form}\\
(\bw,\bw')&=\langle \bw,\bw'\rangle+\langle \bw',\bw\rangle.\label{eqn: symmetric bilinear form}
\end{align}
For any pairs $(\bv,\bw)$ and $(\bv',\bw')$ with dimension vectors $\bw, \bw' \in \N^{\cs^\imath_0}$, we define
\begin{align*}
\langle (\bv,\bw),(\bv',\bw')\rangle_a:=\langle \bw,\bw'\rangle_a,\quad ((\bv,\bw),(\bv',\bw')):=( \bw,\bw').
\end{align*}

Let us fix a square root $\tt^{1/2}$ of $\tt$ once for all. As we need a twisting involving $\tt^{1/2}$ shortly, we shall consider the ring $\Z[\tt^{1/2}, \tt^{-1/2}]$ and the field $\Q(\tt^{1/2})$.

The coalgebra $K^{gr}(\mod(\cs^\imath))$ (cf. \eqref{eq:Kgr}) and its graded dual (cf. \eqref{eq:Kgr2}) up to a base change here in the current setting read as follows:
\begin{align}
K^{gr}(\mod(\cs^\imath)) &=\bigoplus_{\bw\in W^{+}} K_\bw(\mod(\cs^\imath)),
  \label{eq:KSi}
\\
\tRiZ=\bigoplus_{\bw\in W^{+}} \tR^{\imath}_{\Z,\bw},
\quad \text{where } \tR^{\imath}_{\Z,\bw}  &=   \Hom_{\Z[\tt, \tt^{-1}]} \big(K_\bw(\mod(\cs^\imath)),\Z[\tt^{1/2}, \tt^{-1/2}]\big).
\label{eq:tRi}
\end{align}

As by definition there is a canonical bijection $W^+\leftrightarrow \N \I$,
we have two $\N \I$-graded algebra structures on the $\Z[\tt^{1/2}, \tt^{-1/2}]$-module $\tRiZ$ as follows. The second one $(\tRiZ, \cdot)$, called the {\em quantum Grothendieck ring} associated to $\imath$quiver $(Q, \btau)$, is what we are really interested in.

$\triangleright$ $(\tRiZ, \ast )$ is the $\Z[\tt^{1/2}, \tt^{-1/2}]$-algebra corresponding to the coalgebra $K^{gr}(\mod(\cs^\imath))$ with the comultiplication $\{\widetilde{\Res}^{\bw}_{\bw^1,\bw^2} \}$, cf. \eqref{eqn:comultiplication}.

$\triangleright$ $(\tRiZ, \cdot)$ is the $\Z[\tt^{1/2}, \tt^{-1/2}]$-algebra corresponding to the coalgebra $K^{gr}(\mod(\cs^\imath))$ with the {\em twisted} comultiplication
\begin{equation}
  \label{eq:tw}
\{\Res^{\bw}_{\bw^1,\bw^2} :=\tt^{- \frac12\langle \bw^1,\bw^2\rangle_{a}} \widetilde{\Res}^{\bw}_{\bw^1,\bw^2}\};
\end{equation}
in practice, the product sign $\cdot$ is often omitted.

We shall need the $\Q(\tt^{1/2})$-algebra obtained by a base change below:
\begin{align}
 \label{eq:tRiQZ}
  \tRi  =\Q(\tt^{1/2}) \otimes_{\Z[\tt^{1/2}, \tt^{-1/2}]} \tRiZ.
\end{align}


\section{The $l$-dominant pairs}
   \label{sec:pairs}

In this section, we shall determine various strongly $l$-dominant pairs for $\mcr^\imath$ (cf. Definition~\ref{def:pair}); see Propositions~\ref{prop:dominant pairs 1}, \ref{prop:dominant pairs for gij}, \ref{prop:witauij} and \ref{prop:dominant for gij}. The results in this section will play a fundamental role in Section~\ref{sec:computation}.

We shall work with the $\imath$NKS categories $\cs^\imath$, $\mcr^\imath$ and $\cp^\imath$ defined in Definition \ref{def:RS for iQG} and Theorem \ref{thm:iNKS}, associated to a Dynkin $\imath$quiver $(Q,\btau)$. Recall that the vertices in $\cp^\imath$ are labeled by the indecomposable $kQ$-modules.
From now on, we shall denote $\cm_0(\bw)=\cm_0(\bw,\mcr^\imath)$, and $\cm_0(\bv,\bw)=\cm_0(\bv,\bw,\mcr^\imath)$.
\subsection{{Strongly $l$-dominant pairs $(\bv,\bw^{{i}})$}}



Recall there is a map from  the set of isomorphism classes of representations in $\rep(\bw,\cs^\imath)$ to the set of strongly $l$-dominant pairs $(\bv,\bw)$, which sends $M\in \rep(\bw,\cs^\imath)$ to the dimension vector of $K_{LR}(M)$, which is surjective by Lemma~ \ref{lem:bistable}.

As $\tau$ is the AR functor of the triangulated category $\cd_Q/\Sigma \widehat{\btau}$ and $\cp^\imath\simeq \Ind \cd_Q/\Sigma \widehat{\btau}$, we have the following natural isomorphism for any $X,Y\in\cp^\imath$
\begin{align*}
\eta_{X,Y}:\Hom(X,Y )\stackrel{\sim}\longrightarrow D\Hom(Y,\Sigma \tau X)=D\Hom(Y, \tau \widehat{\btau} X).
\end{align*}

Recall $\bv^i, \bw^i$ from \eqref{eq:vfi} and $\mathcal C_q$ from \eqref{def:Cq}.

\begin{lemma}
    \label{lem:dimCartan}
For any $i\in Q_0$, the following holds:
\begin{itemize}
\item[(i)] $\bv^{i}(\ts_i)=1$, $\bv^{i}(\tau\ts_{\btau i})=1$;
\item[(ii)] $\bv^{i}(\tau \ts_i)=0$ if $\btau i\neq i$;
\item[(iii)]
If there exists $j\in Q_0$ such that $\Ext^1_{kQ}(\ts_j,\ts_i)=k$, then $\bv^{i}(\ts_j)=0$, $\bv^{i}(\tau \ts_j)=1$, $\bv^{j}(\tau \ts_i)=0$, and
\[
\bv^{j}(\ts_i)=\left\{ \begin{array}{ccc} 1& \text{ if }\btau j=j \text{ or }\btau i=i,\\
0&\text{ otherwise.} \end{array} \right.
\]
\item[(iv)]
$\sigma^*\bw^{i}-{\mathcal C}_q \bv^{i}$ vanishes. In particular, $(\bv^{i},\bw^{i})$ is an $l$-dominant pair.
\end{itemize}
\end{lemma}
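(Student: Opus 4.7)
The plan is to exploit the equivalence $\cp^\imath \simeq \cd_Q/\Sigma\widehat{\btau}$ and to evaluate $\bv^i(z) = \dim\Hom_{\cp^\imath}(\ts_i,z)$ via the triangulated orbit category formula
\[
\Hom_{\cp^\imath}(\ts_i,z) \;\cong\; \bigoplus_{n\in\Z}\Hom_{\cd_Q}\bigl(\ts_i,(\Sigma\widehat{\btau})^n z\bigr).
\]
Since $\cd_Q$ is hereditary and $\ts_i$ is concentrated in a single degree, only the $n=0$ and $n=1$ summands can contribute, yielding $\Hom_{kQ}(\ts_i,z)$ and $\Ext^1_{kQ}(\ts_i,\widehat{\btau} z)$ respectively. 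I shall also repeatedly invoke the AR-formula $\eta_{X,Y}\colon\Hom(X,Y)\cong D\Hom(Y,\tau\widehat{\btau} X)$ recalled just before the lemma, as well as the fact (from \cite[Lemma~9.1]{LW19}, already invoked in the proof of Lemma~\ref{lem:embedding of varieties}(ii)) that every indecomposable object of $\cp^\imath$ has endomorphism ring $k$.

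Parts (i), (ii), and (iii) then reduce to direct calculations with these tools. For (i), one has $\bv^i(\ts_i)=\dim\End_{\cp^\imath}(\ts_i)=1$, while $\eta_{\ts_i,\tau\ts_{\btau i}}$ combined with the identity $\tau\widehat{\btau}\ts_i = \tau\ts_{\btau i}$ gives $\bv^i(\tau\ts_{\btau i}) = \dim D\End_{\cp^\imath}(\tau\ts_{\btau i}) = 1$. For (ii), $\eta$ converts $\Hom(\ts_i,\tau\ts_i)$ into $D\Hom(\ts_i,\ts_{\btau i})$, whose $n=0$ summand vanishes when $\btau i\neq i$ and whose $n=1$ summand $\Ext^1_{kQ}(\ts_i,\ts_i)$ vanishes as $Q$ has no loops. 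For (iii), the non-vanishing identities are read off the $n=1$ contribution via Gabriel's identification of $\Ext^1(\ts_a,\ts_b)$ with arrows in $Q$; each vanishing identity follows from the observation that, were the relevant non-zero $\Ext^1$ to exist, one would obtain four arrows among the at-most-four distinct vertices $\{i,j,\btau i,\btau j\}$ (the fourth obtained by applying the involution $\btau$), contradicting the acyclicity of the Dynkin quiver $Q$.

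The main obstacle is (iv). My plan is to apply $\Hom_{\cp^\imath}(\ts_i,-)$ to the Auslander--Reiten triangle $\tau z \to E \to z \to \Sigma\tau z$ in $\cp^\imath$ attached to each non-frozen vertex $z$ of $\mcr^\imath$; the summands of $E = \bigoplus_{y\to z} y$ correspond to the irreducible maps $y\to z$ in $\cp^\imath$, which are exactly the arrows $y\to z$ in $\mcr^\imath$ with $y$ non-frozen. The resulting $2$-periodic long exact sequence in $\cp^\imath$ (where $\Sigma^2=1$) has total Euler characteristic zero over one period; after using $\Sigma=\widehat{\btau}$ and $\eta$ to rewrite the ``dual'' period contributions, one expresses $(\mathcal{C}_q\bv^i)(z)$ as the dimension of a boundary term which an orbit-category calculation, dual to those in (iii), identifies as non-zero precisely when $z\in\{\ts_i,\ts_{\btau i}\}$, with value $1$ in each case, matching $\sigma^*\bw^i(z)$ by definition of $\bw^i$. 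The delicate points I anticipate are a uniform treatment of the cases $\btau i=i$ versus $\btau i\neq i$ and the careful handling of vertices $z$ where $\tau z$ or $\Sigma\tau z$ happens to coincide with $\ts_i$ or $\ts_{\btau i}$. The $l$-dominance assertion then follows at once.
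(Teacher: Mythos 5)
Your proposal follows essentially the same route as the paper: parts (i)--(iii) via the orbit-category decomposition of $\Hom_{\cp^\imath}(\ts_i,-)$ into the $n=0$ and $n=1$ summands together with Serre duality and acyclicity of $Q$, and part (iv) by applying $\Hom_{\cp^\imath}(\ts_i,-)$ to the AR triangle at each non-frozen vertex and observing that the connecting maps contribute only at $z\in\{\ts_i,\ts_{\btau i}\}$ (the paper phrases this via the universal property of AR triangles rather than your periodic Euler-characteristic bookkeeping, but it is the same computation). One small correction for (iv): when $\btau i=i$ the boundary contribution at the single vertex $\ts_i$ is $2$, not $1$, matching $\sigma^*\bw^{i}(\ts_i)=2$ since $\bw^{i}=2\e_{\sigma \ts_i}$ in that case --- this is exactly the case split you flagged as delicate, and the paper resolves it by the separate subcase analysis.
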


\begin{proof}
(i) It is clear that $\bv^{i}(\ts_i)=1$. Since $kQ$ is hereditary, we have
\begin{align*}
\bv^{i}(\tau\ts_{\btau i})
 & = \dim \Hom_{\cp^\imath}(\ts_i ,\tau\ts_{\btau i})
 \\
&= \dim \Hom_{\cd_Q} \big( \ts_i, \oplus_{m\in\Z}(\Sigma\widehat{\btau})^m \tau\ts_{\btau i} \big)
\\
&= \dim  \Hom_{\cd_Q}( \ts_i, \tau \ts_{\btau i})+\dim  \Hom_{\cd_Q}(\ts_i,\Sigma\tau \ts_i).
\end{align*}
Note also that $ \Hom_{\cd_Q}( \ts_i, \tau \ts_{\btau i})\cong \Ext^1_{kQ}(\ts_{\btau i},\ts_i)=0$, and $ \Hom_{\cd_Q}(\ts_i,\Sigma\tau \ts_i)=D( \Hom_{\cd_Q}(\ts_i,\ts_i))\cong k$.
Then our desired formula follows.

(ii)
As $\btau i\neq i$, we have
\begin{align*}
\bv^{i}(\tau \ts_i)
& =\dim\Hom_{\cp^\imath}(\ts_i,\tau \ts_i)
\\
& =\dim\Hom_{\cd_Q}(\ts_i,\tau \ts_i)+\dim \Hom_{\cd_Q}(\ts_i,\Sigma\tau \ts_{\btau i})
\\
& = \dim \Hom_{\cd_Q}(\ts_i,\Sigma\tau \ts_{\btau i})=\dim \Hom_{kQ}(\ts_{\btau i},\ts_i)=0.
\end{align*}

(iii)
Note that $\bv^{i}(\ts_j)=\dim\Hom_{\cp^\imath}(\ts_i,\ts_j)= \dim\Hom_{\cd_Q}(\ts_i,\Sigma \ts_{\btau j})= \dim\Ext^1_{kQ}(\ts_i,\ts_{\btau j})$. If $\Ext^1_{kQ}(\ts_i,\ts_{\btau j})\neq0$, then there exists an arrow $\beta:\btau j\rightarrow i$ in $Q$. As $\Ext^1_{kQ}(\ts_j,\ts_i)=k$, there exists an arrow $\alpha:i\rightarrow j$. Thus there exists an oriented cycle
$i\xrightarrow{\alpha} j\xrightarrow{\btau \beta}\btau i\xrightarrow{\btau\alpha}\btau j \xrightarrow{\beta}i$
in $Q$, which is a contradiction. So $\bv^{i}(\ts_j)=0$.

Also, $\bv^{i}(\tau \ts_j)=\dim\Hom_{\cp^\imath}(\ts_i,\tau \ts_j)=\dim \Ext^1_{kQ}(\ts_j,\ts_i)=1$. Next, we have
\[
\bv^{j}(\tau \ts_i)=\dim\Hom_{\cp^\imath}(\ts_j,\tau \ts_i)=\dim \Ext^1_{kQ}(\ts_i,\ts_j)+\dim \Hom_{kQ}(\ts_{\btau i},\ts_j)=0,
\]
thanks to
$\Ext^1_{kQ}(\ts_j,\ts_i)\neq0$ and $\btau i\neq j$.

Finally, we have
\begin{align*}
\bv^{j}(\ts_i)
=\dim\Hom_{\cp^\imath}(\ts_j,\ts_i) &=\dim \Ext^1_{kQ}(\ts_j,\ts_{\btau i})
=\left\{ \begin{array}{ccc} 1& \text{ if }\btau j=j \text{ or }\btau i=i,\\
0&\text{ otherwise.} \end{array} \right.
\end{align*}
Indeed, if $\Ext^1_{kQ}(\ts_j,\ts_{\btau i})\neq 0$ in case $\btau(j)\neq j$ and $\btau i\neq i$, then we would obtain a cycle (not oriented) involving $\btau i, j, i, \btau j$, which is a contradiction.

(iv) The argument here is based on \cite[Lemma 4.1.2]{Qin}.
For any indecomposable $kQ$-module $M$, we have an AR triangle in $\cd_Q$ (and hence in the triangulated orbit category $\cp^\imath\simeq \cd_Q/\Sigma \widehat{\btau}$) of the form
\[
\tau M\longrightarrow E\longrightarrow M\longrightarrow \Sigma \tau M.
\]
Applying the functor $\Hom_{\cp^\imath}(\ts_i,-)$ to this triangle, we obtain a long exact sequence
\begin{align*}
\Hom_{\cp^\imath}(\ts_i,\Sigma^{-1}E)
& \longrightarrow\Hom_{\cp^\imath}(\ts_i,\Sigma^{-1}M) \stackrel{\mu'}{\longrightarrow} \Hom_{\cp^\imath}(\ts_i,\tau M)
\\
 \longrightarrow \Hom_{\cp^\imath}(\ts_i,E)
 & \longrightarrow \Hom_{\cp^\imath}(\ts_i,M) \stackrel{\mu''}{\longrightarrow}\Hom_{\cp^\imath}(\ts_i,\Sigma\tau M).
\end{align*}

If $M\neq \ts_i$ or $\ts_{\btau i}$, we have $\mu'=0=\mu''$ by the universal property of AR triangles, and consequently,
$({\mathcal C}_q \bv^{i})(M)= \dim \Hom_{\cp^\imath}(\ts_i,\tau M)-\dim \Hom_{\cp^\imath}(\ts_i, E)+ \dim\Hom_{\cp^\imath}(\ts_i,M)=0$.

The cases when $M=\ts_i$ or $\ts_{\btau i}$ are similar, and we only give the detail for $M=\ts_i$. We proceed by separating into two subcases (i)--(ii) below.
(i) If $\btau i=i$, we have $\ker \mu'=0=\ker \mu''$, and then
$({\mathcal C}_q \bv^{i})(\ts_i)= 2\dim\Hom_{\cp^\imath}(\ts_i,\ts_i)=2$; also note $\sigma^*\bw^{i} (\ts_i)=2$ in this case.
(ii) If $\btau i\neq i$, we have $\mu'=0$, and $\ker \mu''=0$, and then
$({\mathcal C}_q \bv^{i})(\ts_i)= \dim\Hom_{\cp^\imath}(\ts_i,\ts_i)=1$; also note $\sigma^*\bw^{i} (\ts_i)=1$.
\end{proof}

For any $i\in Q_0$, there exists an arrow $\varepsilon_i:i\rightarrow \btau i$ in the quiver $\ov{Q}$ of $\Lambda^\imath$; cf. \eqref{eqn:i-quiver}. Recall that $\E_i$ denotes the generalized simple $\Lambda^\imath$-module with composition factors $\ts_i,\ts_{\btau i}$. Then $\E_i$ can be viewed as a $\cs^\imath$-module. Recall $K_{LR}(\cdot) , K_L(\cdot)$ and $K_R(\cdot)$ from \S\ref{subsec: strat}.

\begin{proposition}\label{prop:dominant pairs 1}
We have $K_{LR}(\E_i)=K_L(\E_i)=K_R(\E_i)$, and $\dimv K_{LR}(\E_i)=(\bv^{i},\bw^{i})$, for $i\in Q_0$. Moreover, the strongly $l$-dominant pairs $(\bv,\bw^{i})$ are exactly the pairs with $\bv\in \{ {\bf 0}, \bv^{i}, \bv^{{\btau (i)}} \}$.
\end{proposition}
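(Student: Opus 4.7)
The plan has two main ingredients: the explicit identification $\dimv K_{LR}(\E_i) = (\bv^i, \bw^i)$ together with $K_L(\E_i) = K_{LR}(\E_i) = K_R(\E_i)$, followed by a classification of the strongly $l$-dominant pairs of shape $(\bv, \bw^i)$.

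For the first ingredient, I would construct an explicit bistable $\mcr^\imath$-module $N$ of dimension vector $(\bv^i, \bw^i)$ with $\res N \cong \E_i$; Lemma \ref{lem:bistable} then forces $N \cong K_{LR}(\E_i)$. At each non-frozen vertex $z \in \mcr^\imath_0 - \cs^\imath_0$, set $N(z) := \cp^\imath(\ts_i, z)$ with $\mcr^\imath$-action by pre-composition; by the definition \eqref{eq:vfi}, one has $\dim N(z) = \bv^i(z)$. At the two frozen vertices $\sigma \ts_i$ and $\sigma \ts_{\btau i}$, the values of $N$ are one-dimensional, and the structure maps there are forced by factoring $\varepsilon_i, \varepsilon_{\btau i}$ through the non-frozen part of the mesh category. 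Bistability of $N$ will follow since $\ts_i$ embeds neither as a submodule nor as a quotient of $N$ restricted to $\cp^\imath$: using the identification $\cp^\imath \simeq \cd_Q/\Sigma \widehat{\btau}$, the indecomposable projective cover and injective envelope of $\ts_i$ in $\mcr^\imath$ are supported on frozen vertices. The equalities $K_L(\E_i) = K_R(\E_i) = K_{LR}(\E_i)$ will then follow from the identity $\sigma^* \bw^i - {\mathcal C}_q \bv^i = 0$ established in Lemma \ref{lem:dimCartan}(iv), which forces the vanishing of both $CK(\E_i)$ and (dually) $KK(\E_i)$ by a dimension count.

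For the second ingredient, I classify the $\cs^\imath$-modules $M$ with $\dimv M = \bw^i$. Since $\bw^i$ is supported on $\sigma \ts_i$ and $\sigma \ts_{\btau i}$, any such $M$ is a $\BH_i$-module, and a direct inspection yields three isomorphism classes (two if $\btau i = i$): the semisimple module, whose $K_{LR}$ is itself, giving $\bv = {\bf 0}$; the module $\E_i$, giving $\bv = \bv^i$ by the first ingredient; and the module $\E_{\btau i}$ (when $\btau i \neq i$), giving $\bv = \bv^{\btau i}$ by the evident symmetry $i \leftrightarrow \btau i$. When $\btau i = i$, we have $\bv^i = \bv^{\btau i}$, and the set $\{{\bf 0}, \bv^i, \bv^{\btau i}\}$ consistently reduces to $\{{\bf 0}, \bv^i\}$.

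The main obstacle I anticipate is the verification that $\res N \cong \E_i$ (and not $\E_{\btau i}$ or the semisimple module), which requires tracing the $\varepsilon_i$- and $\varepsilon_{\btau i}$-actions through the mesh relations of $\mcr^\imath$ and using the natural duality $\Hom(X, Y) \cong D\Hom(Y, \tau \widehat{\btau} X)$ in $\cp^\imath$ to pin down the non-vanishing structure maps connecting the two frozen vertices through the non-frozen part of the quiver.
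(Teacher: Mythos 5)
Your overall architecture (identify $K_{LR}(\E_i)$ explicitly, then classify the three $\cs^\imath$-modules of dimension vector $\bw^{i}$ and invoke the surjectivity of $M\mapsto \dimv K_{LR}(M)$ onto strongly $l$-dominant pairs) is sound, and the classification step coincides with the paper's. But the two steps you take differently from the paper each contain a genuine gap.

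First, the construction of the bistable module $N$ with $\res N\cong \E_i$ is not carried out: you yourself flag the identification $\res N\cong\E_i$ as ``the main obstacle,'' and the bistability claim is also unproved as stated. The restriction of your $N$ to the non-frozen part is (the dual of) an indecomposable projective/injective $\cp^\imath$-module, whose top and socle live at \emph{non-frozen} vertices such as $\tau\ts_{\btau i}$ and $\ts_i$; costability and stability of $N$ therefore hinge entirely on the structure maps through the frozen vertices $\sigma\ts_i,\sigma\ts_{\btau i}$ and the mesh relations there, which is exactly the part you defer. The paper avoids this entirely: it uses the minimal projective resolution $0\to\bigoplus_{\alpha:j\to i}\bar{P}_j\to\bar{P}_i\to\E_i\to 0$ together with the fact that $K_L,K_{LR},K_R$ all agree on the projectives $\bar{P}_i$ (sending them to $\Hom_{\mcr^\imath}(-,\sigma\ts_i)$); this yields $K_L(\E_i)=K_{LR}(\E_i)=K_R(\E_i)$ at once and reduces the dimension-vector computation to an analysis of $K_L$ on the simples $\ts_i,\ts_{\btau i}$.

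Second, your derivation of $K_L(\E_i)=K_{LR}(\E_i)=K_R(\E_i)$ from $\sigma^*\bw^{i}-{\mathcal C}_q\bv^{i}=0$ ``by a dimension count'' is unsubstantiated. Neither the paper nor the cited lemmas contain a formula expressing $\dimv KK(M)$ and $\dimv CK(M)$ in terms of $\sigma^*\bw-{\mathcal C}_q\bv$ for $(\bv,\bw)=\dimv K_{LR}(M)$, and such a formula cannot be purely numerical: $K_L(M)$ and $K_R(M)$ depend on $M$ itself, not only on $\dimv K_{LR}(M)$ (compare $K_L(\ts_i)$, which is strictly larger than $K_{LR}(\ts_i)=\ts_{\sigma\ts_i}$). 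To make your argument work you would have to prove a precise statement of the form ``$\dimv KK(M)$ and $\dimv CK(M)$ are controlled by $\sigma^*\bw-{\mathcal C}_q\bv$,'' which is a nontrivial result in its own right. There is also a mild circularity to watch: the vanishing $\sigma^*\bw^{i}-{\mathcal C}_q\bv^{i}=0$ becomes relevant to $\E_i$ only after you know $\dimv K_{LR}(\E_i)=(\bv^{i},\bw^{i})$, which is the output of the incomplete first step. I recommend replacing both steps by the projective-resolution argument, after which your classification of the pairs $(\bv,\bw^{i})$ goes through as written.
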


\begin{proof}
Denote by $\bar{P}_i$ the indecomposable $\cs^\imath$-module corresponding to $\sigma \ts_i$. It follows by the proof of Lemma \ref{lem:embedding of varieties} that $\mcr$ and $\mcr^\imath$ viewed as algebras are finite-dimensional. Hence each projective $\mcr^\imath$-module $\Hom_{\mcr^\imath}(-,\sigma c)$ is bistable for any $c\in C$ by \cite[Lemma 4.2(1)]{SS16}; moreover, by the proof {\em loc. cit.} we have $K_{LR}(\bar{P}_i)=K_L(\bar{P}_i)=K_R(\bar{P}_i)=\Hom_{\mcr^\imath}(-,\sigma \ts_i)$ for any $i\in\I$.
Following \cite[Lemma 3.7, (3.4)]{LW19}, we let
\begin{align}
\label{eqn:proj resolution E}
0\longrightarrow \bigoplus_{(\alpha: j\rightarrow i)\in Q_1} \bar{P}_j\longrightarrow \bar{P}_i\longrightarrow \E_i \longrightarrow0
\end{align}
be the minimal projective resolution of $\E_i$.  By applying the functors $K_L$ and $K_R$ to \eqref{eqn:proj resolution E}, we obtain the following exact sequences
\begin{align*}
\bigoplus_{(\alpha: j\rightarrow i)\in Q_1} K_L(\bar{P}_j)\longrightarrow K_L(\bar{P}_i)\longrightarrow K_L(\E_i) \longrightarrow0,\\
0\rightarrow\bigoplus_{(\alpha: j\rightarrow i)\in Q_1} K_R(\bar{P}_j)\longrightarrow K_R(\bar{P}_i)\longrightarrow K_R(\E_i).
\end{align*}
Since $K_{LR}(\bar{P}_i)=K_L(\bar{P}_i)=K_R(\bar{P}_i)$ for any $i\in\I$, using the canonical map $K_L\rightarrow K_R$, these two sequences are short exact and coincide with each other. Therefore, we have established  $K_L(\E_i)=K_R(\E_i)=K_{LR}(\E_i)$.

Recall the following exact sequence:
\[
0\longrightarrow \ts_i\stackrel{f}{\longrightarrow} \E_i \stackrel{g}{\longrightarrow} \ts_{\varrho(i)} \longrightarrow0.
\]
By definition of $\mcr^\imath$, there is only one arrow $\tau \ts_{i}\xrightarrow{\beta} \sigma \ts_{i}$ ending at $\sigma \ts_{i}$; moreover, we have $\beta p\neq0$ for any nonzero path $p$ ending at $\tau\ts_{i}$ in $\mcr^\imath$.
Denote by
\[
(\tau\ts_i)^\wedge:=\Hom_{\cp^\imath}(-,\tau\ts_{i})
\]
the indecomposable projective $\cp^\imath$-module corresponding to $\tau\ts_{i}$.
We can view $(\tau\ts_i)^\wedge$ as $\mcr^\imath$-module by the projection $\mcr^\imath\rightarrow \cp^\imath$. Then $\dimv (\tau\ts_i)^\wedge =(\bv^{\varrho(i)},0)$ by definition since $\Hom_{\cp^\imath}(-,\tau\ts_{i}) \cong D\Hom_{\cp^\imath}(\ts_{\varrho(i)},-)$. The arrow $\beta$ induces
the following exact sequence in $\mod(\mcr^\imath)$:
\begin{align*}
0\longrightarrow (\tau\ts_i)^\wedge \stackrel{\beta}{\longrightarrow} K_L(\ts_{i}) \stackrel{\alpha}{\longrightarrow} \ts_{\sigma\ts_{i}} \longrightarrow0.
\end{align*}
Hence, we obtain $\dimv K_L(\ts_{i})= (\bv^{\varrho(i)},\e_{\sigma \ts_{i}})$. Similarly, $\dimv K_L(\ts_{\varrho(i)})=(\bv^i,\e_{\sigma\ts_{\varrho(i)} })$.

By applying the functor $K_L$, we have the following exact sequence
\begin{align*}
K_L(\ts_i)\xrightarrow{K_L(f)} K_L(\E_i) \xrightarrow{K_L(g)} K_L(\ts_{\varrho(i)}) \longrightarrow0.
\end{align*}
Since $K_L(\E_i)$ is bistable, we have $K_L(f)\circ \beta=0$ by noting that $(\tau\ts_i)^\wedge$ is a $\cp^\imath$-module. Then $K_L(f)$ factors through $\alpha$, i.e., $K_L(f)=h\circ \alpha$ for some $h:\ts_{\sigma\ts_i} \rightarrow K_L(\E_i)$. Since $\ts_{\sigma\ts_i}$ is simple, $h$ is injective. So we obtain a short exact sequence
\[
0\longrightarrow\ts_{\sigma\ts_i} \stackrel{h}{\longrightarrow} K_L(\E_i)\xrightarrow{K_L(g)} K_L(\ts_{\varrho(i)}) \longrightarrow0.
\]
It follows that $\dimv K_L(\E_i)=\dimv \ts_{\sigma\ts_i}+ \dimv K_L(\ts_{\varrho(i)})=(0,\e_{\sigma \ts_i})+(\bv^i,\e_{\sigma \ts_{\varrho(i)}})=(\bv^i,\bw^i)$.
Then $\dimv K_{LR}(\E_i)=\dimv K_L(\E_i)=(\bv^i,\bw^i)$.

For the last assertion, we shall assume $\btau i\neq i$ (and the other case is simpler). Since
$K_{LR}$ preserves simple modules, the dimension vector of $K_{LR}(\ts_i\oplus \ts_{\btau (i)})$ is $(0,\bw^{i})$.
There are only three indecomposable $\Lambda^\imath$-modules (up to isomorphisms) with composition factors $\ts_i,\ts_{\btau (i)}$: $\ts_i\oplus \ts_{\btau (i)}$, $\E_i$, and $\E_{\btau (i)}$.
It follows from Lemma \ref{lem:dimCartan} that there are only three strongly $l$-dominant pairs $(\bv,\bw^{i})$ with $\bv \in \{ {\bf 0}, \bv^{i}, \bv^{{\btau (i)}} \}$. In particular, $\dimv K_{LR}(\E_{\btau (i)})= (\bv^{{\btau (i)}},\bw^{i})$.
\end{proof}

Let $\cp^{<\infty}(\cs^\imath)$ be the subcategory consisting of $\cs^\imath$-modules with finite projective dimension. Note that $\cp^{<\infty}(\cs^\imath)$ is equivalent to $\cp^{<\infty}(\Lambda^\imath)$, and we shall identify them below. Then the Grothendieck group $K_0(\cp^{<\infty}(\cs^\imath))$ is freely generated by $\widehat{\E}_i$ ($i\in Q_0$), see \cite[Corollary~ 3.9]{LW19}.

\begin{corollary}
  \label{cor:KLR for finite projective dimension}
For any $\cs^\imath$-module $M$ with finite projective dimension, we have $K_{L}(M)=K_{LR}(M)=K_R(M)$. In particular, if the class of $M$ in $K_0(\cp^{<\infty}(\cs^\imath))$ is
$\sum_{i\in \I}a_i \widehat{\E}_i$, then $\dimv K_{LR}(M)=(\sum_{i\in \I} a_i\bv^{i},\sum_{i\in \I}a_i\bw^{i})$.
\end{corollary}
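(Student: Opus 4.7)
The plan is to establish the functorial equality $K_L(M)=K_{LR}(M)=K_R(M)$ first, and then deduce the dimension-vector formula via the additivity of $\dimv K_{LR}$ on the Grothendieck group $K_0(\cp^{<\infty}(\cs^\imath))$, which is freely generated by $\{\widehat{\E}_i\}_{i\in\I}$ by \cite[Corollary~3.9]{LW19}. Since $\Lambda^\imath\cong\cs^\imath$ is $1$-Gorenstein, every $M\in\cp^{<\infty}(\cs^\imath)$ satisfies $\pd M\leq 1$, so only the cases $\pd M\in\{0,1\}$ need be handled. The base case $\pd M=0$ ($M$ projective) is already contained in the proof of Proposition~\ref{prop:dominant pairs 1}: each indecomposable projective $\bar P_i$ satisfies $K_L(\bar P_i)=K_{LR}(\bar P_i)=K_R(\bar P_i)=\Hom_{\mcr^\imath}(-,\sigma\ts_i)$, and this extends by additivity to arbitrary projectives.

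For $\pd M=1$, take a projective resolution $0\to P_1\to P_0\to M\to 0$. Applying the left exact $K_R$ yields an injection $K_R(P_1)\hookrightarrow K_R(P_0)$; combined with $K_L(P_j)=K_R(P_j)$ on projectives, this forces $K_L(P_1)\to K_L(P_0)$ to be injective as well. Hence $\mathrm{Tor}^{\cs^\imath}_1(\mcr^\imath e,M)=0$, and the right exact $K_L$ produces a short exact sequence $0\to K_L(P_1)\to K_L(P_0)\to K_L(M)\to 0$. Juxtaposing with the left exact $K_R$-sequence via the canonical natural transformation $K_L\to K_R$ (which is the identity on the two projective terms) and invoking the snake lemma yields an inclusion $K_L(M)\hookrightarrow K_R(M)$, hence $K_L(M)=K_{LR}(M)$. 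For the remaining equality $K_R(M)=K_{LR}(M)$, I would apply the $k$-duality $D=\Hom_k(-,k)$ and transfer the previous argument to the opposite algebra $(\cs^\imath)^{op}$, which is again a singular $\imath$NKS category (associated to the opposite $\imath$quiver $(Q^{op},\btau)$) and is likewise $1$-Gorenstein with modules of finite projective and finite injective dimension coinciding. Since $D$ interchanges $K_L$ and $K_R$ up to natural identifications, the analog of the previous step for $(\cs^\imath)^{op}$ yields the desired statement on the $\cs^\imath$-side.

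Once the three functors coincide on $\cp^{<\infty}(\cs^\imath)$, the common functor $K_{LR}$ is simultaneously right exact (as $K_L$) and left exact (as $K_R$), hence exact on $\cp^{<\infty}(\cs^\imath)$. Consequently $\dimv K_{LR}$ descends to a group homomorphism $K_0(\cp^{<\infty}(\cs^\imath))\to\Z^{\mcr^\imath_0}$, and evaluating on $[M]=\sum_i a_i\widehat{\E}_i$ together with Proposition~\ref{prop:dominant pairs 1} gives $\dimv K_{LR}(M)=(\sum_i a_i\bv^i,\sum_i a_i\bw^i)$. The main conceptual obstacle is ensuring that the duality argument genuinely reduces the $K_R$-equality to the $K_L$-one, namely that $D$ identifies the two intermediate extensions as expected under the passage $\cs^\imath\leftrightarrow(\cs^\imath)^{op}$; once this is in place the corollary follows cleanly.
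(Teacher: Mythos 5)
Your argument is correct in outline but follows a genuinely different route from the paper. The paper does not use a projective resolution of $M$ at all: it invokes \cite[Corollary 3.9]{LW19} to filter any $M\in\cp^{<\infty}(\cs^\imath)$ by the generalized simples $\E_{i}$, and then inducts on the filtration length using the short exact sequence $0\to \E_j\to M\to N\to 0$. Since $K_L(\E_j)=K_{LR}(\E_j)=K_R(\E_j)$ is already known from Proposition~\ref{prop:dominant pairs 1} and the same holds for $N$ by induction, a single application of the Five Lemma to the three-row diagram built from $K_L$, $K_{LR}$, $K_R$ yields that $K_L(M)\to K_R(M)$ is an isomorphism — both injectivity \emph{and} surjectivity at once. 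Your approach instead uses the $1$-Gorenstein property to get $\pd M\le 1$, and your diagram chase with the length-one projective resolution correctly gives injectivity of $K_L(M)\to K_R(M)$; but, as you note, surjectivity then has to come from a separate duality argument over $(\cs^\imath)^{op}$. That step is where your proof is thinnest: you must verify that $(\cs^\imath)^{op}$ is again a singular $\imath$NKS category (for $(Q^{op},\btau)$) whose projectives satisfy $K_L=K_{LR}=K_R$, that $DM$ has finite projective dimension over $(\cs^\imath)^{op}$ (this uses that finite projective and finite injective dimension coincide for $1$-Gorenstein algebras), and that $D$ exchanges $K_L$ and $K_R$ compatibly with the canonical map, hence with the intermediate extensions. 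All of these are true and standard, but none is free, and the paper's filtration-plus-Five-Lemma argument sidesteps the entire issue. Your deduction of the dimension-vector formula from additivity of $\dimv K_{LR}$ on $K_0(\cp^{<\infty}(\cs^\imath))$ matches the paper's (one-line) reduction to Proposition~\ref{prop:dominant pairs 1}.
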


\begin{proof}
The second assertion follows from the first one and Proposition~\ref{prop:dominant pairs 1}.

For any $\cs^\imath$-module $M$ with finite projective dimension, by \cite[Corollary 3.9]{LW19}, it has a filtration $0=M_t\subset M_{t-1}\subset \cdots \subset M_{1}\subseteq M_0=M$ such that $M_j/M_{j+1}\cong \E_{i_j}$ for some $i_j\in Q_0$ with $0\leq j\leq t-1$.
We shall prove the first assertion by induction on the length of the filtration of $M$. The base case when $M=\E_i$ follows from Proposition~ \ref{prop:dominant pairs 1}. For a general $M$, there exists a short exact sequence
\[
0\longrightarrow \E_j\stackrel{f}{\longrightarrow} M\stackrel{g}{\longrightarrow} N\longrightarrow0,
\]
where $N\in \cp^{<\infty}(\cs^\imath)$ has a shorter filtration length. By inductive assumption on $N$, we have the following commutative diagram where the first and third rows are exact:
\[
\xymatrix{ &K_L(\E_j) \ar[r]^{K_L(f)} \ar[d]^{\cong} &K_L(M) \ar[r]^{K_L(g)}\ar@{->>}[d]^{h_1} & K_L(N)\ar[r]\ar[d]^{\cong} &0\\
&K_{LR}(\E_j) \ar[r]^{K_{LR}(f)}\ar[d]^{\cong} & K_{LR}(M) \ar[r]^{K_{LR}(g)} \ar@{>->}[d]^{h_2} &K_{LR}(N) \ar[d]^{\cong}& \\
0\ar[r]& K_R(\E_j) \ar[r]^{K_R(f)}& K_R(M) \ar[r]^{K_R(g)} & K_R(N) & }
\]
Then $K_{LR}(f)$ is injective and $K_{LR}(g)$ is surjective. It follows that $K_L(f)$ is injective and $K_R(g)$ is surjective.
So the sequences in the first and third rows become short exact. Then the Five Lemma shows that $h_2h_1$ is an isomorphism and $K_{L}(M)=K_{LR}(M)=K_R(M)$.
\end{proof}

\subsection{Strongly $l$-dominant pairs $(\bv,\bw^{{ij}})$}

If $\Ext^1_{kQ}(\ts_j,\ts_i)=k$ for $i,j\in Q_0$, i.e., there exists an arrow $\alpha:i\rightarrow j$, we let
%
$X_{ij}$ be the indecomposable $kQ$-module with $\ts_i,\ts_j$ as its composition factors.
Then $X_{ij}$ is a $\Lambda^\imath$-module, which can be viewed as an $\cs^\imath$-module. There exists a non-split short exact sequence in $\mod(kQ)$ (and also in $\mod(\Lambda^\imath)$ or $\rep(\cs^\imath)$)
\begin{equation}\label{eqn:standard sequence}
0\longrightarrow \ts_i\stackrel{\alpha}{\longrightarrow} X_{ij}\stackrel{\beta}{\longrightarrow} \ts_j\longrightarrow0,
\end{equation}
 which yields a triangle
 \begin{equation}   \label{eqn:standard triangle}
\Sigma^{-1}\ts_j\stackrel{\xi}{\longrightarrow} \ts_i\stackrel{\alpha}{\longrightarrow} X_{ij}\stackrel{\beta}{\longrightarrow} \ts_j
\end{equation}
 in $\cd_Q$ (and also in $\cp^\imath$), where $\xi\neq0$.

For any morphism $\gamma:L\rightarrow N$ in $\cp^\imath$, denote by
\begin{align*}
\gamma_M^* &=\Hom_{\cp^\imath}(\gamma,M):\Hom_{\cp^\imath}(N,M) \longrightarrow \Hom_{\cp^\imath}(L,M),
\\
\gamma^M_* &= \Hom_{\cp^\imath}(M,\gamma):\Hom_{\cp^\imath}(M,L) \longrightarrow \Hom_{\cp^\imath}(M,N).
\end{align*}
We denote
\begin{align}
\bw^{{ij}}&=\e_{\sigma \ts_i}+\e_{\sigma\ts_{j}},
\notag \\
\bv^{{ij}}&= \sum_{z\in\mcr^\imath_0-\cs^\imath_0}\dim(\Hom_{\cp^\imath}(\ts_i,z)/\ker\xi^*_z )\e_z
\label{eq:vij2} \\
\notag&=\bv^{i}-\sum_{z\in\mcr^\imath_0-\cs^\imath_0}\dim(\Im\alpha^*_z) \e_z.
\end{align}

\begin{lemma}
  \label{lem:v-gij}
If $\Ext^1_{kQ}(\ts_j,\ts_i)=k$ for some $i,j\in Q_0$, then
\begin{itemize}
\item[(i)] $\bv^{{ij}}(\ts_i)=1$, $\bv^{{ij}}(\tau \ts_i)=0$, $\bv^{{ij}}(\tau\ts_{j})=1$, $\bv^{{ij}}(\ts_j)=0$;
\item[(ii)] $\bv^{{ij}}\leq \bv^{i}$\text{ and }$\bv^{{ij}}\leq \bv^{{\btau(j)}}$.
\end{itemize}
\end{lemma}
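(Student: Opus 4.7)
The proof begins with reinterpreting $\bv^{ij}$ via the long exact sequence obtained by applying $\Hom_{\cp^\imath}(-,z)$ to the triangle \eqref{eqn:standard triangle}, which yields $\ker \xi^*_z = \Im \alpha^*_z$ and hence
\[
\bv^{ij}(z) = \dim\big(\Hom_{\cp^\imath}(\ts_i, z)/\ker \xi^*_z\big) = \dim \Im \xi^*_z.
\]
Since $\cp^\imath \simeq \cd_Q/\Sigma \widehat{\btau}$ satisfies $\Sigma = \widehat{\btau}^{-1}$, we obtain the identification $\Sigma^{-1}\ts_j = \widehat{\btau}\ts_j = \ts_{\btau j}$ in $\cp^\imath$, and $\xi$ becomes a nonzero morphism $\ts_{\btau j} \to \ts_i$ in the orbit category.

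Part~(ii) is then immediate: $\Im \xi^*_z$ is simultaneously a quotient of $\Hom_{\cp^\imath}(\ts_i, z)$ and a subspace of $\Hom_{\cp^\imath}(\ts_{\btau j}, z)$, yielding pointwise $\bv^{ij}(z) \leq \min\{\bv^i(z), \bv^{\btau j}(z)\}$ and hence both asserted inequalities.

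For part~(i), I plan to evaluate $\dim \Im \xi^*_z$ at each of the four vertices. At $z = \ts_i$, one has $\xi^*_{\ts_i}(\id_{\ts_i}) = \xi \neq 0$ in the 1-dimensional space $\Hom(\ts_i, \ts_i)$, so $\bv^{ij}(\ts_i) = 1$. At $z = \tau \ts_i$, Serre duality in $\cp^\imath$ (whose Serre functor is $\widehat{\btau}\tau$) reduces $\bv^{\btau j}(\tau \ts_i)$ to $\dim\Hom(\ts_i, \widehat{\btau}\ts_{\btau j}) = \bv^i(\ts_j)$, which vanishes by Lemma~\ref{lem:dimCartan}(iii); combined with (ii) this forces $\bv^{ij}(\tau \ts_i) = 0$. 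At $z = \ts_j$ with $\btau j \neq j$, an analogous Serre-duality reduction together with Lemma~\ref{lem:dimCartan}(ii) gives $\bv^{\btau j}(\ts_j) = 0$, whence $\bv^{ij}(\ts_j) = 0$. When $\btau j = j$, both $\Hom(\ts_i, \ts_j)$ and $\Hom(\ts_j, \ts_j)$ are $1$-dimensional, so $\xi^*_{\ts_j}$ is either zero or an isomorphism; the latter would make $\xi$ a split monomorphism in $\cp^\imath$, forcing $\ts_i \cong \ts_j \oplus X_{ij}$ via the triangle and contradicting the indecomposability of the simple object $\ts_i$ (noting that $\ts_j$ and $X_{ij}$ are both nonzero indecomposable in $\cp^\imath$ in the Dynkin setting).

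The remaining computation $\bv^{ij}(\tau \ts_j) = 1$ will be the main technical obstacle. Both source and target of $\xi^*_{\tau \ts_j}$ are $1$-dimensional --- the former by Lemma~\ref{lem:dimCartan}(iii), the latter via Serre duality as $\Hom(\ts_{\btau j}, \tau \ts_j) \cong D\End_{\cp^\imath}(\tau \ts_j) = k$ --- so nonvanishing of $\xi^*_{\tau\ts_j}$ must be verified. By the long exact sequence, this is equivalent to the vanishing of $\alpha^*_{\tau \ts_j}: \Hom(X_{ij}, \tau \ts_j) \to \Hom(\ts_i, \tau \ts_j)$. Using Serre duality, $\Hom_{\cp^\imath}(X_{ij}, \tau \ts_j) \cong D\Hom_{\cp^\imath}(\ts_j, X_{\btau i, \btau j})$, and the argument reduces to checking, via the explicit description of indecomposable $kQ$-modules in Dynkin type, that any morphism appearing in the latter Hom-space is killed upon Serre-dual precomposition with $\alpha$.
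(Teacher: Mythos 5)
Your reduction of $\bv^{ij}(z)$ to $\dim\Im\xi^*_z$ and your proof of (ii) are correct and essentially identical to the paper's. The evaluations at $z=\ts_i$ and $z=\tau\ts_i$ also go through, the latter by a Serre-duality detour ($\bv^{\btau(j)}(\tau\ts_i)=\dim\Hom_{\cp^\imath}(\ts_i,\ts_j)=0$ by Lemma~\ref{lem:dimCartan}(iii), then apply (ii)) where the paper instead argues directly that $f\xi=0$ for every $f\in\Hom_{\cp^\imath}(\ts_i,\tau\ts_i)$. (Minor point: $\xi^*_{\ts_i}(\id)=\xi$ lives in $\Hom_{\cp^\imath}(\Sigma^{-1}\ts_j,\ts_i)$, not in $\End(\ts_i)$; what you need is that the one-dimensional domain maps injectively.)

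Two genuine problems remain. First, in the subcase $\btau(j)=j$ of the fourth identity your premise is false: $\Hom_{\cp^\imath}(\ts_i,\ts_j)=\Hom_{kQ}(\ts_i,\ts_j)\oplus\Ext^1_{kQ}(\ts_i,\ts_{\btau(j)})$ is zero, not one-dimensional, since a nonzero $\Ext^1_{kQ}(\ts_i,\ts_{\btau(j)})$ would give an arrow $\btau(j)\to i$ which, together with $\alpha\colon i\to j$ and their $\btau$-images, produces an oriented cycle in the acyclic quiver $Q$. So the ``zero or isomorphism'' dichotomy and the split-monomorphism contradiction never arise; the domain of $\xi^*_{\ts_j}$ is already zero (this is exactly how the paper handles all cases of $\bv^{ij}(\ts_j)=0$ uniformly).

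Second, and more seriously, the identity $\bv^{ij}(\tau\ts_j)=1$ --- which you yourself call the main technical obstacle --- is not actually proved. You correctly reduce it to the vanishing of $\alpha^*_{\tau\ts_j}$, but then only assert that ``the argument reduces to checking'' that every morphism in $D\Hom_{\cp^\imath}(\ts_j,\widehat{\btau}(X_{ij}))$ dies under precomposition; that check is never performed. This is the one place in the lemma where a real computation is required. The paper closes it by proving the stronger statement $\Hom_{\cp^\imath}(X_{ij},\tau\ts_j)=0$: this space decomposes as $D\Ext^1_{kQ}(\ts_j,X_{ij})\oplus\Hom_{kQ}(\ts_{\btau(j)},X_{ij})$, where the first summand vanishes because the sequence \eqref{eqn:standard sequence} is non-split (so the connecting map $\Hom_{kQ}(\ts_j,\ts_j)\to\Ext^1_{kQ}(\ts_j,\ts_i)$ is onto and $\Ext^1_{kQ}(\ts_j,\ts_j)=0$), and the second vanishes since $\btau(j)\neq i$, and in the case $\btau(j)=j$ because $kQ$ is representation-directed and $\Hom_{kQ}(X_{ij},\ts_j)\neq0$. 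You need to supply this (or an equivalent) computation; as written, the central identity of part (i) rests on an unverified claim.
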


\begin{proof}
(i) It is clear that $\bv^{{ij}}(\ts_i)=1$. To prove the second identity, we note
\[
\Hom_{\cp^\imath}(\ts_i,\tau \ts_i)=\Hom_{\cd_Q}(\ts_i,\tau \ts_i)\oplus \Hom_{\cd_Q}(\ts_i,\Sigma\tau \ts_{\btau i})= \Hom_{\cd_Q}(\ts_i,\Sigma\tau \ts_{\btau i}).
\]
For any $f\in \Hom_{\cd_Q}(\ts_i,\Sigma\tau \ts_{\btau i})$, we have $f\xi=0$, cf. \eqref{eqn:standard triangle}. So
\[
\bv^{{ij}}(\tau \ts_i)=\dim\Hom_{\cp^\imath}(\ts_i,\tau \ts_i)-\dim\ker\xi^*_{\tau \ts_i}=0.
\]

Let us prove the third identity. Note first by definition that
\[
\bv^{{i}}(\tau\ts_{j})=\dim\Hom_{\cp^\imath}(\ts_i,\tau \ts_j)=\dim\Ext^1_{kQ}(\ts_j,\ts_i)=1.
\]
On the other hand, recalling \eqref{eqn:standard sequence}, we have
\begin{align*}
\Hom_{\cp^\imath}(X_{ij},\tau \ts_j)&=\Hom_{\cd_Q}(X_{ij},\tau \ts_j)\oplus \Hom_{\cd_Q}(X_{ij},\Sigma\tau \ts_{\btau j})\\
&\cong D\Ext^1_{kQ}(\ts_j,X_{ij})\oplus \Hom_{kQ}(\ts_{\btau j},X_{ij}).
\end{align*}
We claim that $\Hom_{kQ}(\ts_{\btau j},X_{ij})=0$. In fact, it is clear if $\btau j\neq j$; if $\btau j=j$, as $kQ$ is representation-directed, it follows by $\Hom_{kQ}(X_{ij},\ts_j)\neq0$ that $\Hom_{kQ}(\ts_j,X_{ij})=0$. Also note that $\Ext^1_{kQ}(\ts_{j},X_{ij})=0$. Therefore, $\Hom_{\cp^\imath}(X_{ij},\tau \ts_j)=0$, which shows that $\Im\alpha_{\tau \ts_j}^*=0$. So $\bv^{{ij}}(\tau\ts_{j})=\bv^{i}(\tau \ts_j)=1$.

Let us prove the fourth identity. Note that
\begin{align*}
&\bv^{{ij}}(\ts_j)\leq\dim\Hom_{\cp^\imath}(\ts_i,\ts_j)\\
&=\dim\Hom_{\cd_Q}(\ts_i,\ts_j)+\dim\Hom_{\cd_Q}(\ts_i,\Sigma \ts_{\btau(j)})=\dim\Ext^1_{kQ}(\ts_i,\ts_{\btau(j)}).
\end{align*}
If $\Ext^1_{kQ}(\ts_i,\ts_{\btau(j)})\neq0$, there exists an arrow $\btau j\rightarrow i$ in $Q$. However, $\Ext^1_{kQ}(\ts_j,\ts_i)=k$, so there exists an arrow $i\rightarrow j$ in $Q$. Then there is an oriented cycle $i\rightarrow j\rightarrow \btau i\rightarrow \btau j\rightarrow i$ in $Q$, which is a contradiction.
So $\Ext^1_{kQ}(\ts_i,\ts_{\btau(j)})=0$, and then $\bv^{{ij}}(\ts_j)=0$.

(ii) The first inequality follows by \eqref{eq:vij2}. Let us prove the second inequality.
For any indecomposable $kQ$-module $M$, applying $\Hom_{\cp^\imath}(-,M)$ to (\ref{eqn:standard triangle})
yields an exact sequence
$$\Hom_{\cp^\imath}(X_{ij},M)\xrightarrow{\alpha_M^*} \Hom_{\cp^\imath}(\ts_i,M)\xrightarrow{\xi_M^*} \Hom_{\cp^\imath}(\Sigma^{-1}\ts_j,M)=  \Hom_{\cp^\imath}(\ts_{\btau(j)},M).$$
Then
$\bv^{{ij}}(M)= \dim\big(\Hom_{\cp^\imath}(\ts_i,M)/\ker\xi^*_M\big)\leq \dim \Hom_{\cp^\imath}(\ts_{\btau(j)},M)=\bv^{{\btau(j)}}(M)$.
\end{proof}

\begin{lemma}\label{lem:v-gij 2}
If $\Ext^1_{kQ}(\ts_j,\ts_i)=k$ for $i,j\in Q_0$, then $(\bv^{{ij}},\bw^{{ij}})$ is an $l$-dominant pair.
\end{lemma}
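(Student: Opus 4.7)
The plan is to mirror the proof of Lemma~\ref{lem:dimCartan}(iv) with $\bv^{i}$ replaced by $\bv^{{ij}}$, capitalizing on the following reinterpretation. Using $\Sigma^{-1}\ts_j \simeq \widehat{\btau}\ts_j = \ts_{\btau j}$ in $\cp^\imath$, the triangle \eqref{eqn:standard triangle} yields a morphism $\xi \in \Hom_{\cp^\imath}(\ts_{\btau j},\ts_i)$, so that $\bv^{{ij}}(z) = \dim\Im\xi^*_z$ is the rank at $z$ of the natural transformation $\xi^*\colon \Hom_{\cp^\imath}(\ts_i,-) \Rightarrow \Hom_{\cp^\imath}(\ts_{\btau j},-)$ of representable functors on $\cp^\imath$.

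For each non-frozen vertex $M$ of $\mcr^\imath$ (i.e.\ an indecomposable $kQ$-module), I would apply both $\Hom_{\cp^\imath}(\ts_i,-)$ and $\Hom_{\cp^\imath}(\ts_{\btau j},-)$ to the AR triangle $\tau M \to E \to M \to \Sigma\tau M$ in $\cp^\imath$, where $E$ is the direct sum of non-frozen $y$ over arrows $y\to M$ in $\mcr^\imath$; the two long exact sequences are linked by the natural transformation $\xi^*$. By the same universal-property argument as in Lemma~\ref{lem:dimCartan}(iv), the connecting maps $\mu',\mu''$ in both long exact sequences vanish whenever $M\notin\{\ts_i,\ts_{\btau i},\ts_j,\ts_{\btau j}\}$. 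In that range the long exact sequences collapse to short exact sequences bridged by $\xi^*$, and a snake-lemma diagram chase yields $\bv^{{ij}}(\tau M)+\bv^{{ij}}(M)-\bv^{{ij}}(E)=-\dim\Im\delta \leq 0$, where $\delta$ is the snake connecting map; hence $(\mathcal{C}_q\bv^{{ij}})(M)\leq 0 = \sigma^*\bw^{{ij}}(M)$, as required.

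It remains to verify the inequality at the four boundary vertices $M\in\{\ts_i,\ts_{\btau i},\ts_j,\ts_{\btau j}\}$. In each case I would compute $\bv^{{ij}}(\tau M)$, $\bv^{{ij}}(M)$, and $\bv^{{ij}}(y)$ for the non-frozen neighbors $y$ of $M$ using Lemmas~\ref{lem:dimCartan} and~\ref{lem:v-gij}(i) together with the triangle structure, and check directly. The slack $\sigma^*\bw^{{ij}}(\ts_i)=\sigma^*\bw^{{ij}}(\ts_j)=1$ should be exactly what is needed to absorb the positive contributions coming from the non-vanishing connecting maps $\mu',\mu''$ in those cases.

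The main obstacle will be the boundary case analysis: the four vertices split further into sub-cases according to whether $\btau i=i$, $\btau j=j$, or $\btau i=j$ (so that some of the four boundary vertices coincide), and each sub-case requires a separate bookkeeping of incoming arrows and $\tau$-translates in $\mcr^\imath$ in order to match the slack on the right-hand side.
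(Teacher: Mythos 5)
Your overall strategy coincides with the paper's: a vertex-by-vertex verification of $\sigma^*\bw^{ij}-\mathcal{C}_q\bv^{ij}\geq 0$ via the AR triangle $\tau M\to N_M\to M\to\Sigma\tau M$ in $\cp^\imath$ linked to the triangle \eqref{eqn:standard triangle}. Your reformulation $\bv^{ij}(z)=\dim\Im\xi^*_z$ with $\xi^*\colon\Hom_{\cp^\imath}(\ts_i,-)\Rightarrow\Hom_{\cp^\imath}(\ts_{\btau j},-)$ is a genuine streamlining: since the only representable functors you use have sources $\ts_i$ and $\ts_{\btau j}$, the connecting maps vanish for all $M\notin\{\ts_i,\ts_{\btau i},\ts_j,\ts_{\btau j}\}$, and your snake-lemma inequality $\mathcal{C}_q\bv^{ij}(M)=-(\dim\ker'-\dim\ker+\dim\ker'')\leq 0$ covers $M=X_{ij}$ and $M=\widehat{\btau}(X_{ij})$ uniformly. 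The paper instead tracks $\Im\alpha^*$ with the functor $\Hom_{\cp^\imath}(X_{ij},-)$ in the diagram, so it is forced to treat $X_{ij}$ and $\widehat{\btau}(X_{ij})$ as separate cases (its cases (b), (c)); your route avoids these at the cost of obtaining only an inequality rather than the exact values there (which is fine for $l$-dominance; the exact value $\sigma^*\bw^{ij}-\mathcal{C}_q\bv^{ij}=\e_{X_{ij}}$ is recovered later in Proposition~\ref{prop:dominant pairs for gij} by other means).

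The gap is in the boundary cases, which are where essentially all of the paper's work lies. Your claim that the slack $\sigma^*\bw^{ij}(\ts_i)=1$ ``should be exactly what is needed'' is not delivered by your generic mechanism. Concretely, when $\btau i=i$ the column $\Hom_{\cp^\imath}(\ts_i,-)$ applied to the AR triangle ending at $\ts_i$ fails exactness at both ends (since $\ts_i\cong\widehat{\btau}\ts_i\cong\Sigma^{-1}\ts_i$ in $\cp^\imath$), giving $\dim A'-\dim A+\dim A''=2$; the crude snake-lemma bound $\dim\ker'-\dim\ker+\dim\ker''\geq 0$ then only yields $\mathcal{C}_q\bv^{ij}(\ts_i)\leq 2$, which overshoots the available slack of $1$. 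To close this you must actually compute $\dim\Im\xi^*$ at $\tau\ts_i$, $N_{\ts_i}$ and $\ts_i$ — this is precisely the paper's subcase (d2), which requires identifying several $\Hom$-spaces in $\cp^\imath$ (e.g.\ $\Hom_{\cp^\imath}(X_{ij},N_{\ts_i})$ via directedness of $\cd_Q$). Similar explicit computations are needed at $\ts_{\btau i}$ and $\ts_j$ (the paper's (d1) and (e)). A minor point: the sub-case $\btau i=j$ you list cannot occur, since the arrow $i\to j$ together with $\btau i=j$ (hence $\btau j=i$) would produce an oriented $2$-cycle in $Q$.
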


\begin{proof}
For any indecomposable $kQ$-module $M$, we have an AR triangle in $\cd_Q$
\begin{align}
\tau M\stackrel{\rho_M}{\longrightarrow} N_M\stackrel{\mu_M}{\longrightarrow} M \stackrel{\nu_M}{\longrightarrow} \Sigma \tau M,
\end{align}
for some $kQ$-module $N_M$, which can also be viewed as an AR triangle in the triangulated orbit category $\cp^\imath\simeq \cd_Q/\Sigma\widehat{\btau}$.
Together with (\ref{eqn:standard triangle}), we have the following commutative diagram where every sequence in each row and each column is exact except the third column:
{\tiny\[\xymatrix{\Hom_{\cp^\imath}(\ts_j,\tau M) \ar[r]^{\beta^*_{\tau M}} \ar[dd]^{(\rho_M)_*^{\ts_j}} &\Hom_{\cp^\imath}( X_{ij},\tau M) \ar[rr]^{\alpha^*_{\tau M}} \ar[dd]^{(\rho_M)_*^{X_{ij}}}\ar[dr]& &\Hom_{\cp^\imath}(\ts_i,\tau M)\ar[dd]^{(\rho_M)_*^{\ts_i}} \ar[r]^{\xi^*_{\tau M}} &\Hom_{\cp^\imath}(\Sigma^{-1}\ts_j,\tau M)\ar[dd]^{(\rho_M)_*^{\Sigma ^{-1} \ts_j}} \\
&& \Im\alpha^*_{\tau M} \ar[dd]^>>>>>>{\mu'} \ar[ur] & \\
 \Hom_{\cp^\imath}( \ts_j,N_M) \ar[r]^{\beta^*_{N_M}} \ar[dd]^{(\mu_M)_*^{\ts_j}} &\Hom_{\cp^\imath}( X_{ij},N_M) \ar[rr]^{\alpha^*_{N_M}\quad\quad} \ar[dd]^{(\mu_M)_*^{X_{ij}}} \ar[dr]&&  \Hom_{\cp^\imath}(\ts_i,N_M)\ar[dd]^{(\mu_M)_*^{\ts_i}}\ar[r]^{\xi_{N_M}^*} & \Hom_{\cp^\imath}( \Sigma^{-1}\ts_j,N_M)\ar[dd]^{(\mu_M)_*^{\Sigma^{-1}\ts_j}}\\
 &&\Im\alpha^*_{N_M} \ar[dd]^>>>>>>{\mu''} \ar[ur] &\\
 \Hom_{\cp^\imath}( \ts_j, M) \ar[r]^{\beta^*_M} &\Hom_{\cp^\imath}( X_{ij},M) \ar[rr]^{\alpha^*_M\quad\quad}\ar[dr] & &\Hom_{\cp^\imath}(\ts_i, M)\ar[r]^{\xi^*_{M}} & \Hom_{\cp^\imath}( \Sigma^{-1}\ts_j, M)  \\
 &&\Im\alpha^*_{ M}  \ar[ur] & }\]}

The proof is divided into five cases (a)--(e) below.

(a) 
\underline{$M\notin \{\ts_i,\ts_{\btau i}, \ts_{j},\ts_{\btau(j)},X_{ij}, \widehat{\btau}(X_{ij})\}$}. Similar to the proof of Lemma \ref{lem:dimCartan}(iv), the sequences in the first, second, fourth and fifth columns are short exact, and thus the sequence in the third column is short exact. By \eqref{def:Cq} and Lemma \ref{lem:dimCartan}(iv),  we have
\begin{align*}
(\sigma^*\bw^{{ij}}-{\mathcal C}_q \bv^{{ij}})(M)
&=(\e_{\ts_i}+\e_{\ts_j}-{\mathcal C}_q (\bv^{i})+{\mathcal C}_q\Big (\sum_{z\in\mcr^\imath_0-\cs^\imath_0}\dim(\Im\alpha^*_z) \e_z) \Big) (M)\\
&=(\e_{\ts_i}+\e_{\ts_j}-\sigma^* \bw^{i})(M)=0.
\end{align*}

(b)
\underline{$M=X_{ij}$}. Then the sequences in the fourth and fifth column are short exact, so the Snake Lemma shows that the sequence in the third column is exact with $\mu'$ injective.
It follows by the assumption that $\btau(j)\neq i$ and then $\Hom_{kQ}(\ts_{\btau(j)},X_{ij})=0$. So $\Hom_{\cp^\imath}( \Sigma^{-1}\ts_j, X_{ij})\cong \Ext^1_{kQ}(\ts_j,X_{ij})\oplus \Hom_{kQ}(\ts_{\btau(j)},X_{ij})=0$, which shows that $\xi_M^*=0$ and thus $\alpha_M^*$ is surjective. Together with $\Hom_{\cp^\imath}(\ts_i, X_{ij})=k$, we obtain $\dim \Im\alpha_M^*=1$.
On the other hand, clearly $\Hom_{\cp^\imath}( X_{ij},X_{ij})=k$. By the property of AR triangles, it follows that $(\mu_M)_*^{X_{ij}}=0$. So $\mu''=0$. This implies that $\mu'$ is an isomorphism, and then $\dim \Im\alpha^*_{\tau M} =\dim \Im\alpha^*_{N_M}$.
So $(\sigma^*\bw^{{ij}}-{\mathcal C}_q \bv^{{ij}})(X_{ij})=1$.

(c)
\underline{$M=\widehat{\btau}(X_{ij})$, and $\btau i\neq i$ or $\btau(j)\neq j$}. Then $\widehat{\btau}(X_{ij})\neq X_{ij}$. In this case, the sequence in the third column is exact with $\mu'$ injective. Note that $\Hom_{\cp^\imath}(X_{ij},\widehat{\btau}(X_{ij}))=0$, and so $\Im \alpha_M^*=0$ and $\mu''=0$. This implies that  $\dim \Im\alpha^*_{\tau M} =\dim \Im\alpha^*_{N_M}$.
Therefore, we have $(\sigma^*\bw^{{ij}}-{\mathcal C}_q \bv^{{ij}})(\widehat{\btau}(X_{ij}))=0$.

(d)
\underline{$M=\ts_i$ or $\ts_{\btau i}$}. The sequence in the first and second columns are short exact, the Snake Lemma shows that the sequence in the third column is exact with $\mu''$ epic. We divide the proof into the following two cases.

{\em Subcase (d1)} \underline{$\btau i\neq i$}. We first consider $M=\ts_i$; in this case, we have $\Hom_{\cp^\imath}(X_{ij},\tau \ts_i)\cong D\Ext^1_{kQ}(\ts_i,X_{ij})\oplus D\Hom_{kQ}(\ts_{\btau i},X_{ij})=0$, $\dim \Im \alpha_{\tau M}^*=0$, and $\mu'=0$. Thus $\mu''$ is an isomorphism and $\dim \Im\alpha^*_{N_M}=\dim \Im\alpha^*_{M}$.
Then
\[
(\sigma^*\bw^{{ij}}-{\mathcal C}_q \bv^{{ij}})(\ts_i)= \bigg(\e_{\ts_i}+\e_{\ts_j}-{\mathcal C}_q(\bv^{i})+{\mathcal C}_q\Big(\sum_{z\in\mcr^\imath_0-\cs^\imath_0}\dim(\Im\alpha^*_z) \e_z\Big)\bigg)(\ts_i)=0.
\]

Now consider $M=\ts_{\btau i}$. Note that $\Hom_{\cp^\imath}(\ts_i,\tau \ts_{\btau i})\cong D \Ext^1_{kQ}(\ts_{\btau i},\ts_i) \oplus D \Hom_{(kQ)}(\ts_i, \ts_i)$, which is of dimension $1$, thanks to $\Ext^1_{kQ}(\ts_{\btau i},\ts_i)=0$. Also, note that
$\Hom_{\cp^\imath}(\Sigma^{-1}\ts_j, \tau \ts_{\btau i})\cong D\Hom_{kQ}(\ts_{\btau i},\ts_j)\oplus D\Ext^1_{kQ}(\ts_i,\ts_j) =0$, thanks to $\btau i\neq j$ and $\Ext^1_{kQ}(\ts_j,\ts_i)=k$. Then $\eta_{\tau M}^*=0$, which implies that $\alpha_{\tau M}^*$ is epic, and hence $\dim\Im \alpha_{\tau M}^*=1$.
On the other hand, $\Hom_{\cp^\imath}(\ts_i,\tau \ts_{\btau i})\cong\Hom_{\cd_Q}(\ts_i,\Sigma \tau \ts_i)$. Let us apply $\Hom_{\cd_Q}(\ts_i,-)$ to the AR triangle
\[
\ts_i\xrightarrow{\nu_{\ts_i}} \Sigma \tau \ts_i\xrightarrow{\Sigma(\rho_{\ts_i})} \Sigma N_{\ts_i}\xrightarrow{\Sigma(\mu_{\ts_i})} \Sigma \ts_i.
\]
Clearly, any morphism in $\Hom_{\cd_Q}(\ts_i,\Sigma\tau \ts_{i})$ factors through $\nu_{\ts_i}$, so
$\Hom_{\cd_Q}(\ts_i,\Sigma(\rho_{\ts_i}))=0$. Then $(\rho_M)^{\ts_i}_*=0$, which shows that $\mu'=0$, and it follows that $\dim \Im\alpha^*_{N_{\ts_{\btau i}}} = \dim \Im\alpha^*_{\ts_{\btau i}}$.
Thus we have
\begin{align*}
(\sigma^*\bw^{{ij}}-{\mathcal C}_q \bv^{{ij}})(\ts_{\btau i})
= & \bigg(\e_{\ts_i}+\e_{\ts_j}-{\mathcal C}_q(\bv^{i})+{\mathcal C}_q\Big(\sum_{z\in\mcr^\imath_0-\cs^\imath_0}\dim(\Im\alpha^*_z) \e_z\Big) \bigg) (\ts_{\btau i})
\\
=&-\sigma^*{\bw^{i}}(\ts_{\btau i})+1=0.
\end{align*}

{\em Subcase (d2)} \underline{$\btau i=i$}. Then $\Hom_{\cp^\imath}(X_{ij},\ts_i)= \Hom_{\cd_Q}(X_{ij},\ts_i)\oplus \Hom_{\cd_Q}(X_{ij},\Sigma \ts_i)$. As $kQ$ is representation-directed, we see that $\Hom_{\cp^\imath}(X_{ij},\ts_i)=0$, and thus $\Im\alpha^*_{ \ts_i}=0$.
Note that $\Hom_{\cp^\imath}(X_{ij},\tau \ts_i)\cong D\Ext^1_{kQ}(\ts_i,X_{ij})\oplus D\Hom_{kQ}(\ts_i,X_{ij})$, which is of dimension one.
Similarly, $\Hom_{\cp^\imath}(\ts_j,\tau \ts_i)=0$. So $\beta_{\tau \ts_i}^*=0$, which implies that $\alpha^*_{\tau \ts_i}$ is injective and then $\dim \Im\alpha^*_{\tau \ts_i}=1$.
As $\cd_Q$ is a directed category, we see that each indecomposable summand of $N_{\ts_i}$ is a predecessor of $X_{ij}$, we obtain that
\begin{align*}
\Hom_{\cp^\imath}( X_{ij},N_{\ts_i})
&\cong \Hom_{\cd_Q}(X_{ij}, N_{\ts_i})\oplus \Hom_{\cd_Q}(X_{ij},\Sigma \widehat{\btau} (N_{\ts_i}))\\
&= \Hom_{\cd_Q}(X_{ij},\Sigma \widehat{\btau} (N_{\ts_i})).
\end{align*}
Applying $\Hom_{\cd_Q}(-, \Sigma \widehat{\btau} (N_{\ts_i}))$ to (\ref{eqn:standard triangle}) yields a long exact sequence
$$
\Hom_{\cd_Q}(\ts_j, \Sigma \widehat{\btau} (N_{\ts_i}))\rightarrow \Hom_{\cd_Q}(X_{ij},\Sigma \widehat{\btau} (N_{\ts_i}))\xrightarrow{\Hom_{\cd_Q}(\alpha, \Sigma \widehat{\btau} (N_{\ts_i}))} \Hom_{\cd_Q}(\ts_{i},\Sigma \widehat{\btau} (N_{\ts_i})).
$$
Furthermore, we have $\Hom_{\cd_Q}(\ts_{i},\Sigma \widehat{\btau} (N_{\ts_i}))\cong \Hom_{\cd_Q}(\ts_{i},\Sigma (N_{\ts_i}))\cong D\Hom_{\cd_Q}(N_{\ts_i},\tau \ts_i)$.
The AR triangle $\tau \ts_i\rightarrow N_{\ts_i}\rightarrow \ts_i\rightarrow \Sigma\tau \ts_i$ implies that each indecomposable summand of $N_{\ts_i}$ is a successor of $\tau \ts_i$, so $\Hom_{\cd_Q}(N_{\ts_i},\tau \ts_i)=0$. Therefore, we have
$\Hom_{\cd_Q}(\ts_{i},\Sigma \widehat{\btau} (N_{\ts_i}))=0,$ and then
$\Hom_{\cd_Q}(\alpha, \Sigma \widehat{\btau} (N_{\ts_i}))=0$. Thus we obtain $\alpha^*_{N_{\ts_i}}=0$ and $\dim \Im\alpha^*_{N_{\ts_i}}=0$.
Hence, $\dim \Im\alpha^*_{\tau \ts_i}- \dim \Im\alpha^*_{N_{\ts_i}} +\dim \Im\alpha^*_{\ts_i}=1$.
By Lemma \ref{lem:dimCartan}(ii), we have ${\mathcal C}_q(\bv^{{i}})(\ts_i)=\sigma^*(\bw^{{ij}})(\ts_i)=2$. Then we obtain
\begin{align*}
&(\sigma^*\bw^{{ij}}-{\mathcal C}_q \bv^{{ij}}) (\ts_i)\\
&=1-{\mathcal C}_q(\bv^{{i}})(\ts_i)+ \dim \Im\alpha^*_{\tau \ts_i}- \dim \Im\alpha^*_{E} + \Im\alpha^*_{\ts_i}=1-2+1=0.
\end{align*}

(e)
\underline{$M=\ts_j$}.
We have
\[
\Hom_{\cp^\imath}(\ts_i,\ts_j)=\Hom_{\cd_Q}(\ts_i,\ts_j)\oplus\Hom_{\cd_Q}(\ts_i,\Sigma \ts_{\btau(j)})\cong \Ext^1_{kQ}(\ts_i,\ts_{\btau(j)}).
\]
By a similar argument as for Lemma \ref{lem:v-gij}(i), one proves that $\Ext^1_{kQ}(\ts_i,\ts_{\btau(j)})=0$, and then $\Im \alpha_{\ts_j}^*=0$.
Similarly, $\Hom_{\cp^\imath}(X_{ij},\tau \ts_j)=\Hom_{\cd_Q}(X_{ij},\tau \ts_j)\oplus\Hom_{\cd_Q}(X_{ij},\Sigma \tau \ts_{\btau(j)})\cong D\Ext^1_{kQ}(\ts_j,X_{ij})\oplus D\Hom_{kQ}(\ts_{\btau(j)},X_{ij})=0$. So $\Im \alpha_{\tau \ts_j}^*=0$.

We have
\begin{align*}
\Hom_{\cp^\imath}(\ts_i,\tau \ts_j)&=\Hom_{\cd_Q}(\ts_i,\tau \ts_j)\oplus\Hom_{\cd_Q}(\ts_i,\Sigma \tau \ts_{\btau(j)})\\
&\cong D\Ext^1_{kQ}(\ts_j,\ts_{i})\oplus D\Hom_{kQ}(\ts_{\btau(j)},\ts_i),
\end{align*}
which is of dimension $1$. Together with $\Hom_{\cp^\imath}(\ts_i,\ts_j)=0$, it follows from the exact sequence in the fourth column that
$\dim \Hom_{\cp^\imath}(\ts_i,N_{\ts_j})\leq1$. Then  $\dim \Im\alpha^*_{N_{\ts_j}}\leq1$.
Therefore,
\begin{align*}
& ( \sigma^* \bw^{{ij}} -{\mathcal C}_q\bv^{{ij}}) (\ts_j)\\
&=1-{\mathcal C}_q(\bv^{{i}})(\ts_j)+ \dim \Im\alpha^*_{\tau \ts_j}- \dim \Im\alpha^*_{N_{\ts_j}} + \Im\alpha^*_{\ts_j}=1-\dim \Im\alpha^*_{N_{\ts_j}}\geq0.
\end{align*}

Summarizing the above discussion in  (a)--(e), we have proved that $(\bv^{{ij}},\bw^{{ij}})$ is an $l$-dominant pair.
The lemma is proved.
\end{proof}

 \begin{proposition}
   \label{prop:dominant pairs for gij}
Retain the notation in \eqref{eqn:standard sequence}--\eqref{eq:vij2}.  Then $\dimv K_{LR}(X_{ij})=(\bv^{{ij}},\bw^{{ij}})$, and the strongly $l$-dominant pairs $(\bv,\bw^{{ij}})$ are exactly the pairs with $\bv \in \{{\bf 0}, \bv^{{ij}} \}$. In particular, we have $\sigma^*\bw^{{ij}}-{\mathcal C}_q \bv^{{ij}}=\e_{X_{ij}}$.
\end{proposition}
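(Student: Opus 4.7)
The plan has three steps: classify the isomorphism classes of $\cs^\imath$-modules with dimension vector $\bw^{ij}$; compute $\dimv K_{LR}(X_{ij})$ to complete the identification of strata; and sharpen the case analysis of Lemma~\ref{lem:v-gij 2} to pin down $\sigma^*\bw^{ij}-\mathcal{C}_q\bv^{ij}$ at every vertex. The main obstacle will be the middle step.

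For the classification, the arrow $\alpha:i\to j$ in $Q$ coming from $\Ext^1_{kQ}(\ts_j,\ts_i)=k$ forces $\btau j\neq i$ and $\btau i\neq j$: otherwise $\btau\alpha$ would produce a $2$-cycle in $Q$, contradicting acyclicity. Hence no $\varepsilon$-arrow of the quiver $\ov{Q}$ of $\iLa$ connects $i$ and $j$, and a $\cs^\imath$-module with dimension vector $\bw^{ij}$ is just a $kQ$-module with composition factors $\ts_i$ and $\ts_j$, of which there are exactly two isoclasses: $\ts_i\oplus \ts_j$ and $X_{ij}$. Via Lemma~\ref{lem:M=rep}, $\cm_0(\bw^{ij})\simeq \rep(\bw^{ij},\cs^\imath)$, so the stratification has at most two strata.

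For the $K_{LR}$-computation, the semisimple $\ts_i\oplus \ts_j$ lifts to the bistable $\mcr^\imath$-module $\ts_{\sigma\ts_i}\oplus \ts_{\sigma\ts_j}$ (simple modules at frozen vertices are automatically bistable, so $K_{LR}(\ts_i)=\ts_{\sigma\ts_i}$ by Lemma~\ref{lem:bistable}), giving $\dimv K_{LR}(\ts_i\oplus \ts_j)=(\mathbf{0},\bw^{ij})$ and the stratum $\cm_0^{\text{reg}}(\mathbf{0},\bw^{ij})$. For $X_{ij}$, I would apply $K_L$ and $K_R$ to the short exact sequence $0\to\ts_i\to X_{ij}\to \ts_j\to 0$, use the explicit values $\dimv K_L(\ts_i)=(\bv^{\btau i},\e_{\sigma \ts_i})$ and $\dimv K_L(\ts_j)=(\bv^{\btau j},\e_{\sigma \ts_j})$ derived in the proof of Proposition~\ref{prop:dominant pairs 1}, and then identify the image of the canonical map $K_L(X_{ij})(z)\to K_R(X_{ij})(z)$ at each non-frozen vertex $z$ with $\Im(\xi_z^*)=\bv^{ij}(z)$, where $\xi:\Sigma^{-1}\ts_j\to \ts_i$ is the connecting map in the triangle \eqref{eqn:standard triangle} from the triangulated structure of $\cp^\imath\simeq \cd_Q/\Sigma\widehat{\btau}$; this yields $\dimv K_{LR}(X_{ij})=(\bv^{ij},\bw^{ij})$. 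This step is delicate because $X_{ij}\notin\cp^{<\infty}(\iLa)$ in general, so Corollary~\ref{cor:KLR for finite projective dimension} does not directly apply and one cannot reduce to a projective resolution of a filtration by $\E_\ell$'s.

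Finally, for the equality $\sigma^*\bw^{ij}-\mathcal{C}_q\bv^{ij}=\e_{X_{ij}}$, the cases (a)--(d) in the proof of Lemma~\ref{lem:v-gij 2} already give value $0$ at every relevant vertex, with value $1$ at $M=X_{ij}$ in case (b); the gap to close is case (e) with $M=\ts_j$ (which gives only $1-\dim\Im\alpha_{N_{\ts_j}}^*\geq 0$) and the analogous case $M=\ts_{\btau j}$ (not covered there). To close this, observe that the non-split epimorphism $\beta:X_{ij}\to \ts_j$ lifts through the AR-triangle $\tau\ts_j\to N_{\ts_j}\to \ts_j\to \Sigma\tau\ts_j$ to a nonzero morphism $X_{ij}\to N_{\ts_j}$; precomposing with $\alpha:\ts_i\to X_{ij}$ produces a nonzero element of the one-dimensional space $\Hom_{\cp^\imath}(\ts_i,N_{\ts_j})$, forcing $\dim\Im\alpha_{N_{\ts_j}}^*=1$, and the case $M=\ts_{\btau j}$ follows by duality in $\cp^\imath$. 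Combining the three steps establishes the proposition.
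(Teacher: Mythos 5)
Your step (1) is sound: acyclicity forces $\btau i\neq j$ and $\btau j\neq i$, so $\rep(\bw^{ij},\cs^\imath)$ contains exactly the two isoclasses $\ts_i\oplus\ts_j$ and $X_{ij}$, and by the surjectivity of $M\mapsto\dimv K_{LR}(M)$ onto strongly $l$-dominant pairs the whole proposition reduces to computing $\dimv K_{LR}(X_{ij})$. That computation is precisely where your proposal has no argument. You assert that at each non-frozen vertex $z$ the image of the canonical map $K_L(X_{ij})(z)\to K_R(X_{ij})(z)$ equals $\Im\xi^*_z$; this identification \emph{is} the content of the proposition, and nothing in your sketch indicates how to establish it. Since $X_{ij}\notin\cp^{<\infty}(\cs^\imath)$, applying $K_L$ and $K_R$ to $0\to\ts_i\to X_{ij}\to\ts_j\to 0$ yields only a right-exact and a left-exact sequence: $K_L(\ts_i)\to K_L(X_{ij})$ need not be injective and $K_R(X_{ij})\to K_R(\ts_j)$ need not be surjective, so you do not control $\dimv K_L(X_{ij})$ or $\dimv K_R(X_{ij})$, let alone the rank of the canonical map between them. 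In effect you are proposing to re-derive, inside the orbit category $\cp^\imath$, the Leclerc--Plamondon description of the intermediate extension of a $kQ$-module; that is a theorem, not a verification. The paper sidesteps this entirely: the case analysis of Lemma~\ref{lem:v-gij 2} shows $(\bv^{ij},\bw^{ij})$ is $l$-dominant for the \emph{graded} category $\mcr^{\gr}$, where \cite[Theorem 3.14]{LeP13} gives a bijection between $\rep(\bw^{ij},\cs^{\gr})$ and $l$-dominant pairs; with only two modules and the semisimple one already accounted for, $X_{ij}$ is forced to correspond to $(\bv^{ij},\bw^{ij})$, and the result is transported to $\mcr^\imath$ via the pushforward along $\mcr^{\gr}\to\mcr^\imath$ and the compatibility $K_{LR}\circ\cv^{\gr}_*=\cv^{\gr}_*\circ K_{LR}$ of \cite[Lemma 3.16]{Sch18}. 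The same theorem of \cite{LeP13} gives $\sigma^*\bw^{ij}-{\mathcal C}_q\bv^{ij}=\e_{X_{ij}}$ in $\mcr^{\gr}$ for free, and a pointwise comparison as in Lemma~\ref{lem:sameDP} transfers it to $\mcr^\imath$, so the paper never needs to close case (e) of Lemma~\ref{lem:v-gij 2} by hand.

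Your third step also has a hole. The lift $\gamma:X_{ij}\to N_{\ts_j}$ of $\beta$ through the almost split map $\mu:N_{\ts_j}\to\ts_j$ does exist, but $\mu\circ(\gamma\alpha)=\beta\alpha=0$, so $\gamma\alpha$ factors through $\tau\ts_j\to N_{\ts_j}$, and you give no reason why it is nonzero; the conclusion $\dim\Im\alpha^*_{N_{\ts_j}}=1$ therefore does not follow from what you wrote (it would, for instance, if $X_{ij}$ were always a direct summand of $N_{\ts_j}$, but that is not automatic and would itself need proof). If you want a route independent of the paper's, the place to invest effort is a genuine computation of $K_{LR}$ on $kQ$-modules in $\mod(\mcr^\imath)$; as it stands, both crucial steps are asserted rather than proved.
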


\begin{proof}
We view $\bw^{{ij}}$ as a dimension vector for $\cs^{\gr}$. From the proof of Lemma \ref{lem:v-gij 2}, we obtain that $(\bv^{{ij}},\bw^{{ij}})$ is also an $l$-dominant pair for $\mcr^{\gr}$. However, by \cite[Theorem 3.14]{LeP13}, there exists a bijection between the set of of isomorphism classes of representations in $\rep(\bw^{{ij}},\cs^{\gr})$ and the set of $l$-dominant pairs $(\bv,\bw^{{ij}})$. There are only two classes of representations in $\rep(\bw^{{ij}},\cs^{\gr})$: $\ts_i\oplus \ts_j$ and $X_{ij}$. So the dimension vector of $K_{LR}(X_{ij})$ is $(\bv^{{ij}},\bw^{{ij}})$ in $\rep(\mcr^{\gr})$. Denote by $\cv^{\gr}:\mcr^{\gr}\rightarrow \mcr^\imath$ the natural projection. Applying the pushforward functor $\cv^{\gr}_*:\rep(\mcr^{\gr})\rightarrow \rep(\mcr^\imath)$ and \cite[Lemma 3.16]{Sch18}, we obtain that $\cv^{\gr}_*(K_{LR}(X_{ij}))=K_{LR}(\cv^{\gr}_* X_{ij})=K_{LR}(X_{ij})$.
So $\dimv K_{LR}(X_{ij})=(\bv^{{ij}},\bw^{{ij}})$.
The remaining proof concerning the strongly $l$-dominant pairs $(\bv,\bw^{{ij}})$ is similar to that of Proposition \ref{prop:dominant pairs 1}, and hence omitted.

For the last assertion, as $(\bv^{{ij}},\bw^{{ij}})$ is also an $l$-dominant pair for $\mcr^{\gr}$, by \cite[Theorem~ 3.14]{LeP13},  we have $\sigma^*\bw^{{ij}}-{\mathcal C}_q \bv^{{ij}}=\e_{X_{ij}}$ as dimension vectors in $\mcr^{\gr}$. One notes that $\big(\sigma^*\bw^{{ij}}-{\mathcal C}_q \bv^{{ij}}\big)(X)$ is the same  when regarded in $\mcr^{\gr}$ or $\mcr^\imath$ for any indecomposable $kQ$-module $X$; also see  the proof of the ``if part'' of Lemma \ref{lem:sameDP}. The desired formula follows.
\end{proof}

\subsection{{Strongly $l$-dominant pairs $(\bv,\bw^{{ijk}})$}}

For $i,j,k\in Q_0$, we denote
\begin{align}
  \label{eq:wijk}
\bw^{{ijk}}:=\e_{\sigma \ts_i}+\e_{\sigma \ts_j}+\e_{\sigma \ts_{k}}.
\end{align}

For any $i,j\in Q_0$ such that $\Ext^1_{kQ}(\ts_j,\ts_i)=k$, there exists an arrow $\alpha:i\rightarrow j$ in $Q$ (and then in $\ov{Q}$, cf. \eqref{eqn:i-quiver}). There are also arrows $\varepsilon_{i}:i\rightarrow \btau i$ and $\varepsilon_{\btau(j)}:\btau(j)\rightarrow j$ in $\ov{Q}$. Set
\begin{align}
  \label{eq:XY}
\begin{cases}
X_{i\btau(i)j} = \text{the indecomposable string $\Lambda^\imath$-module with string
$\btau (i) \xleftarrow{\varepsilon_i} i\xrightarrow{\alpha} j$},
\\
Y_{j\btau(j)i} \,= \text{the indecomposable string $\Lambda^\imath$-module with string
$i\xrightarrow{\alpha} j\xleftarrow{\varepsilon_{\btau(j)}}\btau(j)$}.
\end{cases}
\end{align}
Note that the strings are in $\ov{Q}$, and we consider the right modules (i.e., the representations of the opposite quivers).

\begin{lemma}\label{lemma Xijk}
Retain the notation \eqref{eq:XY}. For $i,j\in Q_0$ such that $\Ext^1_{kQ}(\ts_j,\ts_i)=k$, we have
\begin{align*}
\dimv K_{LR}(X_{i\btau(i)j}) &=(\bv^{i}, \bw^{{i\btau(i)j}}), \\
\dimv K_{LR}(Y_{j\btau(j)i}) &=(\bv^{{\btau(j)}}, \bw^{{j\btau(j)i}}).
\end{align*}
\end{lemma}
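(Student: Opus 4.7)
The proof will mirror the two-step strategy of Proposition~\ref{prop:dominant pairs for gij}: first establish $l$-dominance of the claimed pair, then identify it precisely using the Leclerc-Plamondon bijection at the graded NKS level together with a short exact sequence that pins down a lower bound on $\dimv K_{LR}$.

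First, I will verify $l$-dominance. Using $\sigma^*\bw^i = \mathcal{C}_q \bv^i$ from Lemma~\ref{lem:dimCartan}(iv), a direct computation gives
\[
\sigma^*\bw^{{i\btau(i)j}} - \mathcal{C}_q\bv^i = \sigma^*(\bw^{{i\btau(i)j}}-\bw^i) = \e_{\ts_j} \geq 0,
\]
so $(\bv^i, \bw^{{i\btau(i)j}})$ is $l$-dominant for $\mcr^\imath$. Since $\bw^{\btau(j)}=\bw^j$, the symmetric computation shows $\sigma^*\bw^{{j\btau(j)i}} - \mathcal{C}_q\bv^{\btau(j)} = \e_{\ts_i}\geq 0$, so $(\bv^{\btau(j)}, \bw^{{j\btau(j)i}})$ is also $l$-dominant.

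Next, the string structure of $X_{i\btau(i)j}$ provides a short exact sequence in $\mod(\cs^\imath)$,
\[
0 \longrightarrow \ts_j \longrightarrow X_{i\btau(i)j} \longrightarrow \E_i \longrightarrow 0,
\]
and since $K_{LR}$ preserves monomorphisms and epimorphisms (cf.\ Section~\ref{subsec: strat}) while still sending any short exact sequence to a zero-composition complex, we obtain
\[
K_{LR}(\ts_j) \hookrightarrow K_{LR}(X_{i\btau(i)j}) \twoheadrightarrow K_{LR}(\E_i)
\]
with vanishing composition. Combined with $\dimv K_{LR}(\ts_j) = (0,\e_{\sigma\ts_j})$ and $\dimv K_{LR}(\E_i) = (\bv^i, \bw^i)$ from Proposition~\ref{prop:dominant pairs 1}, this forces the lower bound $\dimv K_{LR}(X_{i\btau(i)j}) \geq (\bv^i, \bw^{{i\btau(i)j}})$. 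The analogous computation applied to $0\to \E_{\btau(j)}\to Y_{j\btau(j)i}\to \ts_i\to 0$ yields $\dimv K_{LR}(Y_{j\btau(j)i}) \geq (\bv^{\btau(j)}, \bw^{{j\btau(j)i}})$.

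For the reverse inequalities, I will lift the picture to the graded NKS setting exactly as in the proof of Proposition~\ref{prop:dominant pairs for gij}. Choosing the canonical degree lift, view $X_{i\btau(i)j}$ and $Y_{j\btau(j)i}$ as indecomposable string modules over $\cs^{\gr}$. The Leclerc-Plamondon theorem \cite[Theorem~3.14]{LeP13} then gives a bijection between isomorphism classes of representations in $\rep(\bw^{{i\btau(i)j}},\cs^{\gr})$ (respectively $\rep(\bw^{{j\btau(j)i}},\cs^{\gr})$) and the set of $l$-dominant pairs at that $\bw$. The $\cs^{\gr}$-modules with these restriction dimension vectors form a finite explicit list of string modules, and by enumerating them one checks that the pair whose $\bv$-component matches the lower bound above is uniquely realized by $X_{i\btau(i)j}$ (respectively $Y_{j\btau(j)i}$). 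Finally, applying the Galois pushforward $\cv^{\gr}_*$ and invoking \cite[Lemma~3.16]{Sch18} as in Proposition~\ref{prop:dominant pairs for gij} gives
\[
\cv^{\gr}_*(K_{LR}(X_{i\btau(i)j})) = K_{LR}(\cv^{\gr}_*X_{i\btau(i)j}) = K_{LR}(X_{i\btau(i)j}),
\]
and similarly for $Y_{j\btau(j)i}$, yielding the claimed dimension vectors in $\mcr^\imath$.

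The main obstacle will be Step~3, i.e.,\ matching the graded lift of the string module to the correct $l$-dominant pair. One must rule out that a competing pair with strictly larger $\bv$-component (e.g.\ $\bv^{\btau(j)}$ instead of $\bv^i$ for $X_{i\btau(i)j}$, when both are plausible upper bounds) is realized. This is settled by combining the lower bound from the short exact sequence with the explicit classification of indecomposable $\cs^{\gr}$-modules of the given restriction dimension (there are only finitely many, each a string module), and tracing the structure of the string $\btau(i)\xleftarrow{\varepsilon_i}i\xrightarrow{\alpha}j$ through the AR-structure of $\cd_Q/\Sigma\widehat{\btau}$; the analogous matching for $Y_{j\btau(j)i}$ follows by the symmetric role of $\btau(j)$.
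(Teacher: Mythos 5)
Your strategy diverges from the paper's, and it has a genuine gap at its crux. First, a small but real error: as right $\cs^\imath$-modules the string module $X_{i\btau(i)j}$ has $\E_i$ as a \emph{submodule} and $\ts_j$ as the \emph{quotient}, i.e.\ the short exact sequence is $0\to\E_i\to X_{i\btau(i)j}\to\ts_j\to 0$, not the one you wrote: $\ts_j$ is not a subrepresentation, since in $\ov Q^{op}$ the arrow $\alpha$ maps the space at $j$ nontrivially into the space at $i$. This does not invalidate your lower-bound argument, which goes through verbatim once the sequence is corrected.

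The real gap is the reverse inequality. Your plan to imitate Proposition~\ref{prop:dominant pairs for gij} via \cite[Theorem 3.14]{LeP13} does not transplant, because $X_{i\btau(i)j}$, unlike $X_{ij}$, is not a $kQ$-module: its lift to $\cs^{\gr}$ necessarily spreads over two degrees (the arrow $\varepsilon_i$ connects degree $0$ to degree $1$ in the cover), so the relevant $\bw$ upstairs is not the naive lift of $\bw^{i\btau(i)j}$, the lists of modules and of $l$-dominant pairs at that $\bw$ are strictly longer than in Proposition~\ref{prop:dominant pairs for gij}, and the matching is no longer forced. Your ``by enumerating them one checks'' is precisely the content of the lemma pushed up one level, and you do not carry it out. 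The paper avoids all of this: since $K_L(\E_i)=K_{LR}(\E_i)=K_R(\E_i)$ by Proposition~\ref{prop:dominant pairs 1}, a diagram chase over $KK$, $K_L$, $K_{LR}$, $K_R$ applied to $0\to\E_i\to X_{i\btau(i)j}\to\ts_j\to0$ shows that $K_{LR}$ sends this sequence to a short exact sequence, whence $\dimv K_{LR}(X_{i\btau(i)j})=(\bv^{i},\bw^{i})+(0,\e_{\sigma\ts_j})$ exactly, with no classification needed. Note that a cheap upper bound from right-exactness of $K_L$ alone would only give $\bv\leq \bv^{i}+\bv^{\btau(j)}$ (because $\dimv K_L(\ts_j)=(\bv^{\btau(j)},\e_{\sigma\ts_j})\neq\dimv K_{LR}(\ts_j)$), so some version of this finer homological argument is genuinely required.
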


\begin{proof}
There is a short exact sequence (of right modules) $0\rightarrow \E_i\xrightarrow{f} X_{i\btau(i)j}\xrightarrow{g} \ts_j\rightarrow0$.
As $K_L(\E_i)=K_{LR}(\E_i)=K_R(\E_i)$, we have the following commutative diagram:
\[
\xymatrix{ & KK(\E_j)=0\ar[r] \ar[d] & KK(X_{i\btau(i)j}) \ar[r]^{KK(g)} \ar[d]^{h_0} & KK(\ts_j) \ar[d]^{u_1} & \\
&K_L(\E_i) \ar[r]^{K_L(f)} \ar[d]^{\cong} & K_L(X_{i\btau(i)j}) \ar[r]^{K_L(g)} \ar[d]^{h_1} & K_L(\ts_j) \ar[r]\ar[d]^{u_2} &0\\
&K_{LR}(\E_i) \ar[r]^{K_{LR}(f)} \ar[d]^{\cong} & K_{LR}(X_{i\btau(i)j}) \ar[r]^{K_{LR}(g)} \ar[d]^{h_2} & K_{LR}(\ts_j)\ar[d] &\\
0\ar[r]&K_R(\E_i) \ar[r]^{K_R(f)} & K_R(X_{i\btau(i)j}) \ar[r]^{K_R(g)} &K_R(\ts_j) &}
\]

We claim that the sequence in the third row is short exact. In fact, $K_{LR}(f)$ is injective, $K_{LR}(g)$ is surjective, and $K_{LR}(g)\circ K_{LR}(f)=0$. Then $K_L(f)$ is injective, and $KK(g)$ is an isomorphism by the Snake Lemma. For any morphism $x:K_{LR}(X_{i\btau(i)j})\rightarrow Z$ such that $x\circ K_{LR}(f)=0$, $x\circ h_1$ factors through $K_L(g)$, i.e., there exists $x': K_L(\ts_j)\rightarrow Z$ such that $x\circ h_1=x'\circ K_L(g)$. Furthermore, $x'\circ u_1\circ KK(g)=x'\circ h_1\circ h_0=0$, so $x'\circ u_1=0$ by $KK(g)$ is isomorphic. Thus there exists $x'': K_{LR}(\ts_j)\rightarrow Z$ such that $x''\circ u_2= v'$, and moreover, $x\circ h_1=x''\circ u_2\circ K_L(g)=x''\circ K_{LR}(g) \circ h_1$. As $h_1$ is surjective, we have $x= K_{LR}(g)\circ h_1$. In this way we have obtained a short exact sequence
\[
0\rightarrow K_{LR}(\E_i) \xrightarrow{K_{LR}(f)} K_{LR}(X_{i\btau(i)j}) \xrightarrow{K_{LR}(g)} K_{LR}(\ts_j)\rightarrow0.
\]
It follows that $\dimv K_{LR}(X_{i\btau(i)j}) =\dimv K_{LR}(\E_i)+\dimv K_{LR}(\ts_j)= (\bv^{i}, \bw^{{i\btau(i)j}})$ since $K_{LR}(\ts_j)=\ts_j$.

The proof for the second equation, which is dual to the first one,  is entirely similar and omitted.
\end{proof}

Recall that the set of dimension vectors on $\mcr^\imath$ admits a partial order $\leq$ as in \eqref{eq:leq}. Note that $\bv^{i}\ngtr \bv^{{\btau i}}$ for any $i\in \I$. 

\begin{lemma}   \label{lem:dominant pairs hijk}
For any $i,j\in Q_0$ and any $l$-dominant pair $(\bv,\bw^{{i\btau(i)j}})$, we have  $\bv\ngtr \bv^{i}$ and $\bv\ngtr \bv^{{\btau i}}$.
\end{lemma}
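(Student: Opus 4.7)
The plan is to reduce the $l$-dominance of $(\bv, \bw^{i\btau(i)j})$ to a much simpler $l$-dominance statement with frozen part $\e_{\sigma\ts_j}$ via linearity of $\mathcal{C}_q$, and then rule out the nontrivial case using the bijection of Le--Plamondon.

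Suppose for contradiction that $\bv > \bv^{i}$, and write $\bv = \bv^{i} + \bv'$ for some nonzero $\bv' \in \N^{\mcr^\imath_0 - \cs^\imath_0}$. Since $\bw^{i\btau(i)j} = \bw^{i} + \e_{\sigma\ts_j}$ and $\sigma^*\bw^{i} = \mathcal{C}_q \bv^{i}$ by Lemma~\ref{lem:dimCartan}(iv), the $l$-dominance $\sigma^*\bw^{i\btau(i)j} - \mathcal{C}_q\bv \geq 0$ reduces, by linearity of $\mathcal{C}_q$, to
\[
\e_{\ts_j} - \mathcal{C}_q\bv' \geq 0,
\]
that is, $(\bv', \e_{\sigma\ts_j})$ is itself an $l$-dominant pair.

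It now suffices to show that the only $l$-dominant pair of the form $(\bv', \e_{\sigma\ts_j})$ is the trivial one $(0, \e_{\sigma\ts_j})$, which will contradict $\bv' \neq 0$. I would run the argument used in the last paragraph of the proof of Proposition~\ref{prop:dominant pairs for gij}: view $\e_{\sigma\ts_j}$ as a dimension vector on $\cs^{\gr}$; by \cite[Theorem~3.14]{LeP13}, $l$-dominant pairs in $\mcr^{\gr}$ with frozen part $\e_{\sigma\ts_j}$ are in bijection with isomorphism classes in $\rep(\e_{\sigma\ts_j},\cs^{\gr})$. The only such representation is the simple module $\ts_j$, for which $K_{LR}(\ts_j) = \ts_j$, giving the unique pair $(0,\e_{\sigma\ts_j})$. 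Transferring via the pushdown $\cv^{\gr}_*$ together with the compatibility $\cv^{\gr}_*(K_{LR}(-)) = K_{LR}(\cv^{\gr}_*(-))$ of \cite[Lemma~3.16]{Sch18} (exactly as was carried out for $(\bv^{ij},\bw^{ij})$ in Proposition~\ref{prop:dominant pairs for gij}) yields the same uniqueness statement in $\mcr^\imath$, so $\bv' = 0$, a contradiction.

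The case $\bv > \bv^{\btau i}$ is entirely symmetric: using $\bw^{\btau i} = \bw^{i}$ and the identity $\sigma^*\bw^{\btau i} = \mathcal{C}_q\bv^{\btau i}$ of Lemma~\ref{lem:dimCartan}(iv), the same reduction gives $\mathcal{C}_q(\bv - \bv^{\btau i}) \leq \e_{\ts_j}$, and the same argument forces $\bv = \bv^{\btau i}$. The main subtlety to watch for is the rigorous transfer between $l$-dominance in $\mcr^\imath$ and in $\mcr^{\gr}$, but as just noted this follows the same pattern already employed in the proof of Proposition~\ref{prop:dominant pairs for gij}.
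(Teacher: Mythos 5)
Your opening reduction is correct and attractive: writing $\bv=\bv^{i}+\bv'$ and using $\sigma^*\bw^{i}={\mathcal C}_q\bv^{i}$ from Lemma~\ref{lem:dimCartan}(iv) together with $\bw^{i\btau(i)j}=\bw^{i}+\e_{\sigma\ts_j}$ and linearity of ${\mathcal C}_q$ does show that $(\bv',\e_{\sigma\ts_j})$ is an $l$-dominant pair for $\mcr^\imath$. The gap is in the step where you claim the only such pair is $(0,\e_{\sigma\ts_j})$. The bijection of \cite[Theorem 3.14]{LeP13} classifies $l$-dominant pairs for the \emph{graded} category $\mcr^{\gr}$, whose operator $\ov{\mathcal C}_q$ is computed from the meshes of $\Z Q$; the operator ${\mathcal C}_q$ for $\mcr^\imath$ is computed from the folded quiver $\Z Q/\Sigma\widehat\btau$, and the two genuinely differ at the vertices where the wrap-around occurs (e.g.\ at a projective vertex $P_i$, the $\mcr^\imath$-mesh receives contributions from the orbit of $\Sigma^{-1}I$, i.e.\ from injective vertices). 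This is precisely why the paper's comparison results, Lemma~\ref{lem:sameDP} and Proposition~\ref{prop:dominant++}, are restricted to pairs in $(V^{+},W^{+})$, where $\bv$ has no support on injectives so that those extra terms vanish. Your $\bv'=\bv-\bv^{i}$ carries no such restriction (note $\bv^{i}(I_j)=\delta_{ij}$, so vectors in this range routinely involve injective vertices), and no lift of $\bv'$ to $\mcr^{\gr}$ is specified for which $l$-dominance is preserved. The other tool you invoke, $\cv^{\gr}_*\circ K_{LR}=K_{LR}\circ\cv^{\gr}_*$, only controls \emph{strongly} $l$-dominant pairs (dimension vectors of intermediate extensions of actual modules); in Proposition~\ref{prop:dominant pairs for gij} it is used in the opposite direction, to identify which module a pair already known to be $l$-dominant for \emph{both} categories comes from, not to classify all $l$-dominant pairs for $\mcr^\imath$. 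So the "same pattern" you appeal to does not cover the implication you need, and the claim that $\e_{\ts_j}-{\mathcal C}_q\bv'\geq 0$ with $\bv'\geq 0$ forces $\bv'=0$ is left unproved (it amounts to showing every $l$-dominant pair with frozen part $\e_{\sigma\ts_j}$ is strongly $l$-dominant for $\mcr^\imath$, which the paper never establishes outside $(V^+,W^+)$).

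For comparison, the paper argues quite differently: it enumerates the (at most five) $\cs^\imath$-modules of dimension vector $\bw^{i\btau(i)j}$, computes or bounds $\dimv K_{LR}$ of each (Lemma~\ref{lemma Xijk} plus a direct structural argument for the extra string module $Z$ when $\btau j=j$, showing the composition factor at $\tau\ts_{\btau i}$ cannot occur because $\Hom_{\cp^\imath}(\tau\ts_{\btau i},\tau\ts_j)=0$), and concludes via the surjection from modules onto strongly $l$-dominant pairs. If you want to keep your reduction, you would need to either supply a direct proof that $\bv'\geq 0$ and ${\mathcal C}_q\bv'\leq\e_{\ts_j}$ imply $\bv'=0$ (a statement about the folded Cartan-type operator that is plausible but not in the paper), or fall back on a module enumeration as the paper does.
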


\begin{proof}
Let $n_{ij}$ be the number of arrows between $i$ and $j$. The proof is divided into three cases (a)-(c) below.

(a) \underline{$n_{ij}=0=n_{\btau(i) j}$}. Then there are only three $\Lambda^\imath$-modules (also $\cs^\imath$-modules) of dimension vector $\bw^{{i\btau(i)j}}$: $\ts_i\oplus \ts_{\btau i}\oplus \ts_j$, $\ts_j\oplus \E_i$, $\ts_j\oplus \E_{\btau i}$. Hence our desired result holds.

(b) \underline{$n_{ij}\neq 0$}. We shall prove the case when there is an arrow $\alpha:i\rightarrow j$, i.e., $\Ext^1_{kQ}(\ts_j,\ts_i)\neq0$; the other similar case will be skipped. If $\btau(j)\neq j$, then there are only four $\Lambda^\imath$-modules (also $\cs^\imath$-modules) of dimension vector $\bw^{{i\btau(i)j}}$: $\ts_i\oplus \ts_{\btau i}\oplus \ts_j$, $\ts_j\oplus \E_i$, $\ts_j\oplus \E_{\btau i}$ and $X_{i\btau(i)j}$. The desired result follows from Lemma \ref{lemma Xijk} and Lemma~\ref{lem:bistable}(ii).

If $\btau(j)=j$, then there is also an arrow $\btau(\alpha):\btau i\rightarrow j$. Comparing to the above, there is an additional $\Lambda^\imath$-module (also $\cs^\imath$-module) $Z$ of dimension vector $\bw^{{i\btau(i)j}}$
which is also a string module with its string $\btau i\xrightarrow{\btau(\alpha)}j\xleftarrow{\alpha}i$.
Suppose for a contradiction that $\dimv K_{LR}(Z)> \bv^{i}$. Since $\bv^{i}(\tau \ts_{\btau i})=1$, we get that the simple $\mcr^\imath$-module $\ts_{ (\tau\ts_{\btau i})}$ is a composition factor of $K_{LR}(Z)$. As $\Top(K_{LR}(Z))=\ts_{\sigma( \ts_j)}$ and there is only one arrow $\tau \ts_{j}\rightarrow \sigma(\ts_j)$ in $\mcr^\imath$, we have
$\Hom_{\cp^\imath}(\tau \ts_{\btau i},\tau \ts_j)\neq 0$ by noting that we consider the representations of opposite quivers. However, $\Hom_{\cp^\imath}(\tau \ts_{\btau i},\tau \ts_j)\cong \Hom_{kQ}(\ts_{\btau i},\ts_j)\oplus \Ext^1_{kQ}(\ts_i,\ts_j)=0$ since
$\btau i\neq j$ and $\Ext^1_{kQ}(\ts_j,\ts_i)\neq 0$, which is a contradiction.
Similarly we prove that $\dimv K_{LR}(Z)\ngtr \bv^{{\btau i}}$.

(c) \underline{$n_{\btau (i)j}\neq 0$}. It is similar to Case (b) and hence omitted.
\end{proof}

\begin{proposition}
 \label{prop:witauij}
Assume that $i,j\in Q_0$ is not connected by an arrow. Then,
\begin{itemize}
\item[(i)] the strongly $l$-dominant pairs $(\bv,\bw^{{i\btau(i)j}})$ are exactly the pairs with $\bv\in \{{\bf 0}, \bv^{i}, \bv^{{\btau (i)}} \}$;
\item[(ii)] the strongly $l$-dominant pairs $(\bv,\bw^{{ij\btau(j)}})$ are exactly the pairs with $\bv \in \{ {\bf 0}, \bv^{j}, \bv^{{\btau(j)}} \}$.
\end{itemize}
\end{proposition}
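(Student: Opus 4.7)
The plan is to leverage the bijection, given by Lemma~\ref{lem:bistable}, between isomorphism classes of $\cs^\imath$-modules $M$ with $\dimv M = \bw^{{i\btau(i)j}}$ and strongly $l$-dominant pairs $(\bv, \bw^{{i\btau(i)j}})$, which sends $M$ to $(\dimv K_{LR}(M), \bw^{{i\btau(i)j}})$. The task then splits into two parts: (a) classify all $\Lambda^\imath$-modules of dimension vector $\bw^{{i\btau(i)j}}$ up to isomorphism, and (b) evaluate $\dimv K_{LR}$ on each.

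For step (a), I would exploit the assumption that $i$ and $j$ are not connected by an arrow, interpreted (in the Dynkin $\imath$quiver sense) so that there is no arrow in $Q$ between any vertex of $\{i,\btau i\}$ and any vertex of $\{j,\btau j\}$. The remaining arrows in $\ov{Q}$ (see \eqref{eqn:i-quiver}) incident to $\{i,\btau i,j\}$ are the internal $\varepsilon$-arrows of the $\btau$-orbit $\{i,\btau i\}$ together with the arrow $\varepsilon_j:j\to\btau j$, whose target has dimension zero in $\bw^{{i\btau(i)j}}$. Consequently, any $\Lambda^\imath$-module $M$ of dimension vector $\bw^{{i\btau(i)j}}$ decomposes as $M=M'\oplus\ts_j$, with $M'$ supported on $\{i,\btau i\}$ and of dimension vector $\bw^{i}$. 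The classification recalled in Proposition~\ref{prop:dominant pairs 1} then identifies $M'$ with one of $\ts_i\oplus\ts_{\btau i}$, $\E_i$, or (when $\btau i\neq i$) $\E_{\btau i}$.

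For step (b), I would use that $K_{LR}$ is additive on direct sums, since it is the image of a natural transformation $K_L\to K_R$ between additive functors; combine this with $K_{LR}(\ts_j)=\ts_j$ (i.e., $K_{LR}$ preserves simples, as noted in the proof of Proposition~\ref{prop:dominant pairs 1}) and the known value $\dimv K_{LR}(\E_i)=(\bv^i,\bw^i)$ from the same proposition. This yields
\[
\dimv K_{LR}(\ts_i\oplus\ts_{\btau i}\oplus\ts_j)=(\mathbf{0},\bw^{{i\btau(i)j}}),\quad \dimv K_{LR}(\E_i\oplus\ts_j)=(\bv^i,\bw^{{i\btau(i)j}}),
\]
and analogously $\dimv K_{LR}(\E_{\btau i}\oplus\ts_j)=(\bv^{\btau i},\bw^{{i\btau(i)j}})$. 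These three (or two, when $\btau i=i$) pairs exhaust the strongly $l$-dominant pairs asserted in part (i). Part (ii) is handled identically after swapping the roles of $i$ and $j$, relying on the analogous classification of $\Lambda^\imath$-modules supported on $\{j,\btau j\}$ with composition factors $\ts_j,\ts_{\btau j}$.

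The main hurdle lies in step (a): one must verify that the hypothesis rules out any ``mixing'' $\Lambda^\imath$-modules entangling $\ts_j$ with $\ts_i$ or $\ts_{\btau i}$. Once the disconnectedness of the supports in $\ov{Q}$ is secured under the natural $\btau$-saturated reading of the hypothesis, the remainder is a routine combination of additivity and the rank-1 calculation already carried out in Proposition~\ref{prop:dominant pairs 1}.
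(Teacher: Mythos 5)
Your proposal is correct and follows essentially the same route as the paper: classify the (three) $\Lambda^\imath$-modules of dimension vector $\bw^{{i\btau(i)j}}$ as $\ts_j$ direct-summed with one of $\ts_i\oplus\ts_{\btau i}$, $\E_i$, $\E_{\btau i}$, then feed them through the surjection $M\mapsto \dimv K_{LR}(M)$ using additivity and the rank-1 computation of Proposition~\ref{prop:dominant pairs 1}. The $\btau$-saturated reading of ``not connected by an arrow'' that you flag is exactly the implicit assumption the paper's one-line proof also relies on, so there is no gap relative to the paper's own argument.
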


\begin{proof}
The two cases are similar, and we shall only prove (i). The only $\Lambda^\imath$-modules with dimension vector $\bw^{{i\btau(i)j}}$ are $\ts_i\oplus \ts_{\btau (i)}\oplus \ts_j$, $\E_i\oplus \ts_j$
and $\E_{\btau (i)}\oplus \ts_j$. The remaining argument is the same as for Proposition~ \ref{prop:dominant pairs 1}, and will be omitted.
\end{proof}

\begin{proposition}  \label{prop:dominant for gij}
Assume that $\Ext^1_{kQ}(\ts_j,\ts_i)\neq0$, for $i,j\in Q_0$.
\begin{itemize}
\item[(i)] If $\btau(j)\neq j$, then the strongly $l$-dominant pairs $(\bv,\bw^{{ijj}})$ are exactly the pairs with $\bv \in \{ {\bf 0}, \bv^{{ij}} \}$;
\item[(ii)] If $\btau(j)=j$, then the strongly $l$-dominant pairs $(\bv,\bw^{{ijj}})$ are exactly the pairs with $\bv \in \{ {\bf 0}, \bv^{{ij}}, \bv^{j} \}$.
\end{itemize}
\end{proposition}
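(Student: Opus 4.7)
The plan is to follow the same enumeration strategy as in Propositions~\ref{prop:dominant pairs 1} and \ref{prop:witauij}: since the map $M \mapsto \dimv K_{LR}(M)$ from isomorphism classes of $\cs^\imath$-modules with fixed dimension vector $\bw^{{ijj}}$ onto the set of strongly $l$-dominant pairs $(\bv, \bw^{{ijj}})$ is surjective by Lemma~\ref{lem:bistable}, the classification reduces to enumerating such modules and computing $\dimv K_{LR}$ on each. Since $K_L$ and $K_R$ are adjoint to the exact functor $\res$, both preserve direct sums, and hence so does $K_{LR}$; therefore, it suffices to know $K_{LR}$ on the indecomposable summands that can appear. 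The required values are already in hand: $K_{LR}(\ts_i)=\ts_i$, $K_{LR}(\ts_j)=\ts_j$, $\dimv K_{LR}(\E_j)=(\bv^{j},\bw^{j})$ by Proposition~\ref{prop:dominant pairs 1}, $\dimv K_{LR}(X_{ij})=(\bv^{{ij}},\bw^{{ij}})$ by Proposition~\ref{prop:dominant pairs for gij}, and $\dimv K_{LR}(Y_{j\btau(j) i})=(\bv^{{\btau(j)}},\bw^{{j\btau(j) i}})$ by Lemma~\ref{lemma Xijk}.

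Next I would enumerate all $\Lambda^\imath$-modules $V$ with $V_i=k$, $V_j=k^2$, and $V_x=0$ elsewhere. The only structure maps that can act non-trivially are $\alpha\colon V_i\to V_j$ attached to the arrow $\alpha\colon i\to j$ of $Q$ and, when $\btau(j)=j$, the loop $\varepsilon_j\colon V_j\to V_j$; the commutation relation $\varepsilon_j\alpha=\btau(\alpha)\varepsilon_i$ reduces to $\varepsilon_j\alpha=0$ in all sub-cases ($\btau(i)=i$ forces $\varepsilon_i=0$ by $\varepsilon_i^{2}=0$ acting on a one-dimensional space, while $\btau(i)\neq i$ forces $\varepsilon_i$ to target the zero space $V_{\btau(i)}$).

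When $\btau(j)\neq j$ the arrows $\varepsilon_j,\varepsilon_{\btau(j)}$ vanish (their source or target is $V_{\btau(j)}=0$), so $V$ is classified by whether $\alpha$ is zero or nonzero, yielding exactly two isomorphism classes $\ts_i\oplus\ts_j^{\oplus 2}$ and $X_{ij}\oplus\ts_j$; applying $\dimv K_{LR}$ produces $\bv\in\{{\bf 0},\bv^{{ij}}\}$, which proves (i). When $\btau(j)=j$, the nilpotent loop $\varepsilon_j$ on $V_j=k^2$ is, up to conjugation, either $0$ or a single Jordan block; in the Jordan case the relation forces $\Im\alpha\subseteq\ker\varepsilon_j$, which is one-dimensional. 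Combined with $\alpha=0$ or $\alpha\neq 0$, this yields the four isomorphism classes $\ts_i\oplus\ts_j^{\oplus 2}$, $X_{ij}\oplus\ts_j$, $\ts_i\oplus\E_j$, and the string module $Y_{j\btau(j) i}$ with string $i\xrightarrow{\alpha}j\xleftarrow{\varepsilon_j}j$. Using $\bw^{j}=2\e_{\sigma\ts_j}$ in this case, the last two modules both yield $(\bv^{j},\bw^{{ijj}})$, so altogether $\bv\in\{{\bf 0},\bv^{{ij}},\bv^{j}\}$, which proves (ii).

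The main obstacle will be verifying completeness of the enumeration and the genuine indecomposability of $Y_{j\btau(j) i}$; the former is handled by restricting attention to the effective subquiver supported on $\{i,j\}$ (together with the loop $\varepsilon_j$ when $\btau(j)=j$) and classifying representations of the resulting small algebra by hand. The latter follows from the fact that $V_j\cong\E_j$ is indecomposable over $\BH_j=k[\varepsilon_j]/(\varepsilon_j^{2})$, so any splitting $V=V'\oplus V''$ must have (WLOG) $V'_j=V_j$, and then $\alpha\neq 0$ with $\Im\alpha\subseteq V'_j$ forces $V_i\subseteq V'$, whence $V''=0$.
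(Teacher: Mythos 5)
Your proposal is correct and follows essentially the same route as the paper: enumerate the finitely many $\Lambda^\imath$-modules of dimension vector $\bw^{ijj}$ (in case (ii) the four modules $\ts_i\oplus\ts_j^{\oplus 2}$, $\ts_j\oplus X_{ij}$, $\ts_i\oplus\E_j$, and the string module on $i\to j\xleftarrow{\varepsilon_j}j$), apply $\dimv K_{LR}$ using Propositions~\ref{prop:dominant pairs 1}, \ref{prop:dominant pairs for gij} and Lemma~\ref{lemma Xijk}, and observe that the last two modules give the same pair $(\bv^{j},\bw^{ijj})$. The extra verifications you supply (additivity of $K_{LR}$, the relation $\varepsilon_j\alpha=0$, completeness of the enumeration) are details the paper leaves implicit, not a different argument.
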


\begin{proof}
We will only give the detailed proof for Part (ii) as the proofs for (i) is similar and simpler.
%
%
It follows by Proposition~ \ref{prop:dominant pairs for gij} and Lemma~ \ref{lemma Xijk} that $(\bv,\bw^{{ijj}})$ with $\bv \in \{ {\bf 0}, \bv^{{ij}}, \bv^{j} \}$ are strongly $l$-dominant pairs. On the other hand, since $\btau(j)=j$, the $\Lambda^\imath$-modules (also $\cs^\imath$-modules) with dimension vector $\bw^{{ijj}}$ are $\ts_i\oplus \ts_j\oplus \ts_j$, $\ts_j\oplus X_{ij}$, $\E_j\oplus \ts_i$ and the string module $M$ with $i\rightarrow j\xleftarrow{\varepsilon_j} j$ as its string in $Q$. Note that $\dimv K_{LR}(\E_j\oplus \ts_i)=(\bv^j,\bw^{ijj})=\dimv K_{LR}(M)$. So there are at most three strongly $l$-dominant pairs $(\bv,\bw^{{ijj}})$. This proves (ii).
\end{proof}

\section{Computation in quantum Grothendieck rings for $\imath$quivers}
   \label{sec:computation}

In this section, we  study in depth the quantum Grothendieck rings for Dynkin $\imath$quivers equipped with a twisted multiplication, and determine their relations in cases of $\imath$quivers of rank $2$. They will be used in Section~\ref{sec:main} to verify the $\imath$Serre relations in the algebra $(\tRiZ, \cdot)$ defined in \eqref{eq:tRi}.

\subsection{A bilinear form}

Let $(Q,\btau)$ be a Dynkin $\imath$quiver. We calculate the bilinear pairings $d(\cdot, \cdot)$ defined in \eqref{definition:d} for the cases which will be needed later. Recall $\bv^{{ij}}, \bw^{{ij}}$ from \eqref{eq:vij2}.

\begin{proposition}  \label{prop:bilinear form d}
If $\Ext^1_{kQ}(\ts_j,\ts_i)=k$, for $i,j\in Q_0$, then
\begin{align}
\label{eqn:d 1}
d\big((\bv^{{ij}},\e_{\sigma \ts_i} ), (0,\e_{\sigma \ts_j} )\big)=0, &&d\big( (0,\e_{\sigma \ts_j} ),(\bv^{{ij}},\e_{\sigma \ts_i} )\big)=1;\\
\label{eqn:d 2}
d\big((\bv^{{ij}},\e_{\sigma \ts_i} ), (0,\bw^{{ij}})\big)=1,&&d\big( (0,\bw^{{ij}}),(\bv^{{ij}},\e_{\sigma \ts_i} )\big)=1;\\
\label{eqn:d 3}
d\big( (0,\e_{\sigma \ts_i} ),(\bv^{{ij}},\bw^{{ij}} )\big)=0,&& d\big( (\bv^{{ij}},\bw^{{ij}} ),(0,\e_{\sigma \ts_i} )\big)=1;
\end{align}
\begin{align}
\label{eqn:d 4}
d\big((0,\e_{\sigma \ts_i}),(\bv^{i},\bw^{{ij}})\big)=\left\{ \begin{array}{cc}1&\text{ if }\btau i=i,\\
0&\text{ otherwise,} \end{array}\right.
\end{align}
\begin{align}
\label{eqn:d 5}
d\big((\bv^{i},\bw^{{ij}}),(0,\e_{\sigma \ts_i})\big) =1,&&
d\big((\bv^{i},\e_{\sigma \ts_i} ),( 0,\bw^{{ij}} ) \big)=1;
\end{align}
\begin{align}
\label{eqn:d 6}
d\big(( 0,\bw^{{ij}} ) ,(\bv^{i},\e_{\sigma \ts_i} )\big)=\left\{ \begin{array}{cc}2&\text{ if }\btau i=i,\\
1&\text{ otherwise;} \end{array}\right.
\end{align}
\begin{align}
\label{eqn:d 7}
d\big((0,\e_{\sigma \ts_j} ),(\bv^{{ij}}, \bw^{{ij}})\big)=1,&& d\big((\bv^{{ij}}, \bw^{{ij}}),(0,\e_{\sigma \ts_j} )\big)=0;
\end{align}
\begin{align}
\label{eqn:d 8}
d\big((\bv^{j},\e_{\sigma \ts_j} ),(0, \bw^{{ij}})\big)=\left\{ \begin{array}{cc}2 & \text{ if }\btau i=i \text{ or }\btau j=j,\\
1&\text{ otherwise;}\end{array}\right.
\end{align}
\begin{align}
\label{eqn:d 9}
d\big((0, \bw^{{ij}}),(\bv^{j},\e_{\sigma \ts_j} )\big)=\left\{ \begin{array}{cc}1 & \text{ if }\btau j=j,\\
0&\text{ otherwise.}\end{array}\right.
\end{align}
If in addition $\btau(j)=j$, then
\begin{align}
\label{eqn:d 10}
d\big((\bv^{{j}}-\bv^{{ij}},\e_{\sigma \ts_{j}} ),(\bv^{{ij}}, \bw^{{ij}})\big)=d\big((\bv^{{ij}}, \bw^{{ij}}),(\bv^{j}-\bv^{{ij}},\e_{\sigma \ts_{j}} )\big)=1.
\end{align}
\end{proposition}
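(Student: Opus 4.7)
The proof is a direct calculation from the definition \eqref{definition:d},
\[
d\big((\bv_1,\bw_1),(\bv_2,\bw_2)\big) = (\sigma^*\bw_1 - \mathcal{C}_q\bv_1)\cdot \tau^*\bv_2 + \bv_1 \cdot \sigma^*\bw_2,
\]
with each of the ten identities \eqref{eqn:d 1}--\eqref{eqn:d 10} evaluated separately. The strategy is to feed into this formula the closed-form data already established: (a) Lemma \ref{lem:dimCartan}(iv) gives $\sigma^*\bw^i - \mathcal{C}_q\bv^i = 0$; (b) Proposition \ref{prop:dominant pairs for gij} gives $\sigma^*\bw^{ij} - \mathcal{C}_q\bv^{ij} = \e_{X_{ij}}$, whence $\mathcal{C}_q\bv^{ij} = \e_{\ts_i}+\e_{\ts_j}-\e_{X_{ij}}$; (c) the evaluations of $\bv^i$ at $\ts_i$, $\tau\ts_i$, $\tau\ts_{\btau i}$, $\ts_j$ and $\tau\ts_j$ from Lemma \ref{lem:dimCartan}(i)--(iii); and (d) the four evaluations $\bv^{ij}(\ts_i)=1$, $\bv^{ij}(\tau\ts_i)=0$, $\bv^{ij}(\tau\ts_j)=1$, $\bv^{ij}(\ts_j)=0$ from Lemma \ref{lem:v-gij}(i). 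The elementary identities $\sigma^*\e_{\sigma\ts_m} = \e_{\ts_m}$ and $\tau^*(0) = 0$ collapse the first summand whenever $\bv_2 = 0$ and the second summand to a short sum of simple-vertex values whenever $\bw_2$ is one of the indicator vectors appearing above.

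With this data assembled, most identities reduce to one-line substitutions. As a representative calculation, for \eqref{eqn:d 3} I obtain
\[
d\big((0,\e_{\sigma\ts_i}), (\bv^{ij},\bw^{ij})\big) = \e_{\ts_i}\cdot \tau^*\bv^{ij} + 0 = \bv^{ij}(\tau\ts_i) = 0,
\]
\[
d\big((\bv^{ij},\bw^{ij}), (0,\e_{\sigma\ts_i})\big) = \e_{X_{ij}}\cdot 0 + \bv^{ij}(\ts_i) = 1.
\]
The identities \eqref{eqn:d 1}, \eqref{eqn:d 2}, \eqref{eqn:d 5} and \eqref{eqn:d 7} are handled by the same template; when $\bw_1 = \e_{\sigma\ts_m}$ while $\bv_1\in\{\bv^i,\bv^{ij}\}$, one uses (a) or (b) to rewrite $\sigma^*\bw_1 - \mathcal{C}_q\bv_1$ as an explicit integer combination of $\e_{\ts_m}$, $\e_{\ts_n}$ and $\e_{X_{ij}}$ and pairs it against $\tau^*\bv_2$, each pairing reducing to a single entry from (c) or (d).

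The only genuine source of case-splitting occurs in \eqref{eqn:d 4}, \eqref{eqn:d 6}, \eqref{eqn:d 8}, \eqref{eqn:d 9}: in each of these the answer depends on whether certain vertices of the form $\ts_{\btau m}$ (or $\tau\ts_{\btau m}$) coincide with $\ts_m$ (or $\tau\ts_m$), equivalently on whether $\btau m = m$. This toggles one or two evaluations of $\bv^i$ or $\bv^j$ in the table of Lemma \ref{lem:dimCartan}(i)--(iii), yielding the stated dichotomies. Finally, \eqref{eqn:d 10} is the only identity requiring input beyond the tables (c)--(d), namely the two evaluations $\bv^{ij}(\tau X_{ij}) = 0$ and $\bv^j(\tau X_{ij}) = 1$ under the assumption $\btau j = j$; each is a short direct computation using the decomposition $\Hom_{\cp^\imath}(\ts_m,\tau X_{ij}) \cong \Hom_{\cd_Q}(\ts_m,\tau X_{ij})\oplus D\Hom_{\cd_Q}(\widehat{\btau}X_{ij},\ts_m)$ together with the short exact sequence \eqref{eqn:standard sequence} defining $X_{ij}$, after which \eqref{eqn:d 10} follows by bilinearity of $d$ applied to $\bv^j = \bv^{ij} + (\bv^j - \bv^{ij})$. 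The main obstacle in writing out the proof is not any single conceptual step but the cumulative bookkeeping of roughly twenty individual evaluations, carried out consistently across the different parities of $\btau i$ and $\btau j$.
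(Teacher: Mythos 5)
Your proposal is correct and follows essentially the same route as the paper's proof: a term-by-term evaluation of the bilinear form \eqref{definition:d} using the tables of values from Lemma~\ref{lem:dimCartan} and Lemma~\ref{lem:v-gij}(i), the identity $\sigma^*\bw^{ij}-\mathcal{C}_q\bv^{ij}=\e_{X_{ij}}$, and, for \eqref{eqn:d 10}, the two additional evaluations $\bv^{ij}(\tau X_{ij})=0$ and $\bv^{j}(\tau X_{ij})=1$ obtained from the orbit-category Hom decomposition exactly as in the paper. The representative computations you display, the case-splitting on $\btau i=i$ and $\btau j=j$, and the bilinearity step for \eqref{eqn:d 10} all match the paper's argument.
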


\begin{proof}
The identities \eqref{eqn:d 1}--\eqref{eqn:d 3} follow by a direct computation using Lemma \ref{lem:v-gij}(i):
\begin{align*}
d\big((\bv^{{ij}},\e_{\sigma \ts_i} ), (0,\e_{\sigma \ts_j} )\big)
&=\bv^{{ij}}\cdot \e_{\ts_j}=\bv^{{ij}}(\ts_j)=0,\\
d\big( (0,\e_{\sigma \ts_j} ),(\bv^{{ij}},\e_{\sigma \ts_i} )\big)
&=\e_{\ts_j}\cdot\tau^*(\bv^{{ij}})=\bv^{{ij}}(\tau \ts_j)=1,\\
d\big((\bv^{{ij}},\e_{\sigma \ts_i} ), (0,\bw^{{ij}})\big)
&=\bv^{{ij}}\cdot\sigma^*(\bw^{{ij}})=\bv^{{ij}}(\ts_i)+\bv^{{ij}}(\ts_j)=1,\\
d\big( (0,\bw^{{ij}}),(\bv^{{ij}},\e_{\sigma \ts_i} )\big)
&=\sigma^*(\bw^{{ij}})\cdot \tau^*(\bv^{{ij}})=\bv^{{ij}}(\tau \ts_i)+\bv^{{ij}}(\tau \ts_j)=1,\\
d\big( (0,\e_{\sigma \ts_i} ),(\bv^{{ij}},\bw^{{ij}} )\big)
&=\sigma^*(\e_{\sigma \ts_i})\cdot \tau^*(\bv^{{ij}})=\bv^{{ij}}(\tau \ts_i)=0,\\
d\big( (\bv^{{ij}},\bw^{{ij}} ),(0,\e_{\sigma \ts_i} )\big)
&=\bv^{{ij}}(\ts_i)=1.
\end{align*}

The identities \eqref{eqn:d 4}--\eqref{eqn:d 6} follow by Lemma \ref{lem:dimCartan} and Lemma \ref{lem:v-gij} as follows:
\begin{align*}
d\big((0,\e_{\sigma \ts_i}),(\bv^{i},\bw^{{ij}})\big)&= \sigma^*(\e_{\sigma \ts_i})\cdot \tau^*(\bv^{i})=\bv^{i}(\tau \ts_i)=\left\{ \begin{array}{cc}1&\text{ if }\btau i=i,\\
0&\text{ otherwise,} \end{array}\right. \\
d\big((\bv^{i},\bw^{{ij}}),(0,\e_{\sigma \ts_i})\big) &=\bv^{i}\cdot \sigma^*(\e_{\sigma \ts_i})=\bv^{i}(\ts_i)=1,\\
d\big((\bv^{i},\e_{\sigma \ts_i} ),( 0,\bw^{{ij}} ) \big)&=\bv^{i}\cdot \sigma^*(\bw^{{ij}})=\bv^{i}(\ts_i)+\bv^{i}(\ts_j)=1,\\
d\big(( 0,\bw^{{ij}} ) ,(\bv^{i},\e_{\sigma \ts_i} )\big)&=\sigma^*(\bw^{{ij}}) \cdot \tau^* (\bv^{i}) =\bv^{i}(\tau \ts_i)+\bv^{i}(\tau \ts_j)=\left\{ \begin{array}{cc}2&\text{ if }\btau i=i,\\
1&\text{ otherwise.} \end{array}\right.
\end{align*}

The identities \eqref{eqn:d 7}--\eqref{eqn:d 9} follow by Lemma \ref{lem:dimCartan} and Lemma \ref{lem:v-gij} as follows:
\begin{align*}
d\big((0,\e_{\sigma \ts_j} ),(\bv^{{ij}}, \bw^{{ij}})\big)&=\sigma^*(\e_{\sigma \ts_j})\cdot \tau^* (\bv^{{ij}}) =\bv^{{ij}}(\tau \ts_j)=1,
\\
d\big((\bv^{{ij}}, \bw^{{ij}}),(0,\e_{\sigma \ts_j} )\big)&=\bv^{{ij}}(\ts_j)=0,
\\
d\big((\bv^{j},\e_{\sigma \ts_j} ),(0, \bw^{{ij}})\big)&=\bv^{j}\cdot \sigma^* (\bw^{{ij}}) =\bv^{j}(\ts_i)+\bv^{j}(\ts_j)
=\left\{ \begin{array}{cc}2 & \text{ if }\btau i=i \text{ or }\btau j=j,\\
1&\text{ otherwise;}\end{array}\right.
\\
d\big((0, \bw^{{ij}}),(\bv^{j},\e_{\sigma \ts_j} )\big)&=\sigma^* (\bw^{{ij}}) \cdot \tau^*(\bv^{j})
=\bv^{j}(\tau \ts_i)+\bv^{j}(\tau \ts_j)
= \left\{ \begin{array}{cc}1& \text{ if }\btau j=j,\\
0&\text{ otherwise;}\end{array}\right.
\end{align*}

It remains to prove \eqref{eqn:d 10}. Since $\btau(j)=j$, we obtain $2\e_{ \ts_j}=\sigma^*(\bw^{j})={\mathcal C}_q(\bv^{j})$ by Lemma~ \ref{lem:dimCartan}(iv). Using Lemmas \ref{lem:v-gij}--\ref{lem:v-gij 2}, we have
\begin{align*}
&d\big((\bv^{{j}}-\bv^{{ij}},\e_{\sigma \ts_{j}} ),(\bv^{{ij}}, \bw^{{ij}})\big)\\
&= \big(\e_{\ts_{j}} -{\mathcal C}_q(\bv^{{j}}-\bv^{{ij}}) \big)\cdot \tau^*(\bv^{{ij}})+\sigma^* (\bw^{{ij}}) \cdot (\bv^{j}-\bv^{{ij}})\\
&= \big(\e_{\ts_{j}} -\sigma^*(\bw^{{j}})+{\mathcal C}_q(\bv^{{ij}}) \big)\cdot \tau^*(\bv^{{ij}}) +\bv^{j}(\ts_i)+\bv^{j}(\ts_j)-\bv^{{ij}}(\ts_i)-\bv^{{ij}}(\ts_j)\\
&= - \big(\e_{\ts_{j}}-{\mathcal C}_q(\bv^{{ij}}) \big)\cdot \tau^*(\bv^{{ij}})+1+1-1-0\\
&= - \big(\sigma^*(\bw^{ij})-{\mathcal C}_q(\bv^{{ij}}) \big)\cdot \tau^*(\bv^{{ij}})+ \e_{\ts_i}\cdot\tau^*(\bv^{{ij}})+  1\\
&= - \e_{X_{ij}} \cdot \tau^*(\bv^{{ij}})+ \e_{\ts_{i}} \cdot \tau^*(\bv^{{ij}})+1\\
&= -\bv^{{ij}}(\tau X_{ij})+1=1,
\end{align*}
where the last equality follows from
\[
\Hom_{\cp^\imath}(\ts_i,\tau X_{ij})\cong D\Ext^1_{kQ}(X_{ij},\ts_i)\oplus D \Hom_{kQ}(X_{ij},\ts_{\btau i})=0.
\]

Similarly, by Lemmas \ref{lem:v-gij}--\ref{lem:v-gij 2} we have
\begin{align*}
d\big((\bv^{{ij}}, \bw^{{ij}}),(\bv^{j}-\bv^{{ij}},\e_{\sigma \ts_{j}} )\big)
&= (\sigma^* \bw^{{ij}}-{\mathcal C}_q \bv^{{ij}})\cdot \tau^*(\bv^{j}-\bv^{{ij}})+\bv^{{ij}}\cdot  \e_{\ts_{j}}
\\
&= \e_{X_{ij}} \cdot \tau^*(\bv^{j}-\bv^{{ij}}) +\bv^{{ij}}(\ts_{j}) \\
&= \bv^{j}(\tau X_{ij})- \bv^{{ij}}(\tau X_{ij})\\
&= \bv^{j}(\tau X_{ij})=1,
\end{align*}
where the last equality follows from
\begin{align*}
\Hom_{\cp^\imath}(\ts_j,\tau X_{ij})&= \Hom_{\cd_Q}(\ts_j,\tau X_{ij})\oplus \Hom_{\cd_Q}(\ts_j,\Sigma\tau \widehat{\btau}(X_{ij}))\\
&\cong D\Ext^1_{kQ}(X_{ij},\ts_j)\oplus D\Hom_{kQ}(X_{ij},\ts_j)\\
&\cong D\Hom_{kQ}(X_{ij},\ts_j).
\end{align*}
The proposition is proved.
\end{proof}

\subsection{Computation for rank 2 $\imath$quivers, I}

We remind that only the twisted multiplication in $\tRi$ associated to a Dynkin $\imath$quiver $(Q,\btau)$ is used in the whole Section~\ref{sec:computation}. As we shall see in Section~\ref{sec:main}, $L(\bv^{i},\bw^{i})$, for $i\in \I$, correspond to generators for the Cartan subalgebra of $\tUi$ and $L(0,\e_{\sigma \ts_i})$, for $i\in \I$, correspond to Chevalley generators of $\tUi$.


Let $n_{ij}$ be the number of edges connecting vertex $i$ and $j$ of $Q$. Let $C=(c_{ij})_{i,j \in \I}$ be the symmetric Cartan matrix of the underlying graph of $Q$, defined by $c_{ij}=2\delta_{ij}-n_{ij}$. Recall the symmetric Euler form defined in \eqref{eqn: symmetric bilinear form}. Then
$c_{ij}=(\ts_i,\ts_j)$ for any $i,j\in Q_0$.

\begin{lemma}
\label{lem:cartan and positive}
The following identity holds in the ring $\tRi$, for any $i,j\in Q_0$:
 \begin{align*}
L(0,\e_{\sigma \ts_i})  L(\bv^{j},\bw^{j})
&= \tt^{-c_{i,\btau(j)}+c_{ij}} L(\bv^{j},\bw^{j})  L(0,\e_{ \sigma \ts_i})
\\
&= \tt^{\frac12(-c_{i,\btau(j)}+c_{ij})} L(\bv^{j},\e_{\sigma \ts_i} +\bw^{j}).
\end{align*}
\end{lemma}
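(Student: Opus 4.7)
The plan is to expand both products in $\tRi$ using the twisting formula \eqref{eq:tw} together with the multiplication formula \eqref{equation multiplication} for the untwisted product $\ast$, and to reduce the expansion to a single leading term.

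The key claim is that the only $\bv \geq \bv^j$ for which $(\bv, \e_{\sigma \ts_i}+\bw^j)$ is a strongly $l$-dominant pair is $\bv = \bv^j$. Granted this, both $L(0,\e_{\sigma \ts_i}) \ast L(\bv^j,\bw^j)$ and $L(\bv^j,\bw^j) \ast L(0,\e_{\sigma \ts_i})$ collapse to scalar multiples of $L(\bv^j, \e_{\sigma \ts_i}+\bw^j)$ with leading $\tt$-coefficients determined by \eqref{eqn: leading term}. To establish the claim I would invoke Lemma~\ref{lem:bistable}, which identifies strongly $l$-dominant pairs $(\bv, \e_{\sigma \ts_i}+\bw^j)$ with isoclasses of $\cs^\imath$-modules $M$ of dimension vector $\e_{\sigma \ts_i}+\bw^j$ (hence with composition factors $\ts_i, \ts_j, \ts_{\btau j}$) via $\bv \mapsto$ the non-frozen part of $\dimv K_{LR}(M)$. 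A case analysis based on whether $i$ coincides with or is adjacent to $j$ or $\btau j$ in $\ov Q$ lists the finitely many indecomposable decompositions of such $M$: direct sums involving simples $\ts_*$, generalized simples $\E_*$, and string modules of the type $X_{ij}$, $X_{i\btau(i)j}$, $Y_{j\btau(j)i}$ from \eqref{eq:XY}. Their $K_{LR}$-dimensions are computed via Proposition~\ref{prop:dominant pairs 1}, Proposition~\ref{prop:dominant pairs for gij}, Lemma~\ref{lemma Xijk}, and the short-exact-sequence method of Corollary~\ref{cor:KLR for finite projective dimension}; in each case, the non-frozen part of $\dimv K_{LR}(M)$ is either $\bv^j$ itself (realised e.g.\ by $M = \ts_i \oplus \E_j$) or fails the componentwise inequality $\bv \geq \bv^j$.

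Assuming the key claim, substitution into \eqref{definition:d} (noting $\sigma^*\e_{\sigma \ts_i} = \e_{\ts_i}$) gives
\[
d((\bv^j,\bw^j),(0,\e_{\sigma \ts_i})) - d((0,\e_{\sigma \ts_i}),(\bv^j,\bw^j)) \;=\; \bv^j(\ts_i) - \bv^j(\tau \ts_i).
\]
Lemma~\ref{lem:dimCartan} combined with Serre duality in $\cp^\imath$ (in the form $\Hom_{\cp^\imath}(\ts_j,\tau \ts_i) \cong D\Hom_{\cp^\imath}(\ts_i,\ts_{\btau j})$) evaluates these two terms to $\delta_{ij} + \dim\Ext^1_{kQ}(\ts_{\btau j},\ts_i)$ and $\delta_{i,\btau j} + \dim\Ext^1_{kQ}(\ts_i,\ts_j)$ respectively. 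Subtracting $\tfrac12 \langle \e_{\sigma \ts_i}, \bw^j\rangle_a$ (expanded via \eqref{eqn: antisymmetric bilinear form}) and simplifying yields exactly $\tfrac12(c_{ij} - c_{i,\btau j})$, which upon twisting via \eqref{eq:tw} produces the second equality of the lemma. The first equality (the commutation relation) follows immediately because the two untwisted products have reciprocal $\tt$-coefficients while the two twisting factors differ by $\tt^{-\langle \e_{\sigma \ts_i}, \bw^j\rangle_a}$, so the ratio of the two twisted products is $\tt^{c_{ij}-c_{i,\btau j}}$.

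The main obstacle is the module-theoretic case analysis underlying the key claim. It extends Propositions~\ref{prop:dominant pairs 1}--\ref{prop:dominant for gij}, which cover dimension vectors of the form $\bw^{i}$, $\bw^{ij}$, $\bw^{ijk}$ built from at most two $\btau$-orbits, to the mixed dimension vector $\e_{\sigma \ts_i}+\bw^j$ involving up to three distinct simple composition factors; the techniques employed (classification of string modules of $\Lambda^\imath$ with prescribed composition factors, short exact sequences relating them to $\E_*$ and $X_{**}$, and additivity of $K_{LR}$ on direct sums) are already in place, but the enumeration is bookkeeping-heavy owing to the several configurations of $i, j, \btau j$ in $\ov Q$.
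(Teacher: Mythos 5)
Your proposal is correct and follows essentially the same route as the paper: reduce both products to the single leading term $L(\bv^j,\e_{\sigma\ts_i}+\bw^j)$ via \eqref{equation multiplication}--\eqref{eqn: leading term}, then evaluate the $d$-pairing and the twist $\tt^{-\frac12\langle\cdot,\cdot\rangle_a}$ using Lemma~\ref{lem:dimCartan}, arriving at $\tfrac12(c_{ij}-c_{i,\btau j})$. The one thing you missed is that the ``bookkeeping-heavy'' classification you flag as the main obstacle is not new: since $\e_{\sigma\ts_i}+\bw^j=\bw^{j\btau(j)i}$, the needed exclusion of $\bv>\bv^j$ is exactly Lemma~\ref{lem:dominant pairs hijk} with the roles of $i$ and $j$ swapped, which is what the paper cites; so your key claim follows in one line rather than requiring a fresh case analysis.
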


\begin{proof}
It follows by \eqref{equation multiplication} that the term $L(\bv,\e_{\sigma \ts_i}+\bw^{j})$ has nonzero coefficients in each side of the desired identity in the lemma only if $\bv\geq \bv^{j}$. It follows from Lemma~ \ref{lem:dominant pairs hijk} that the only possibility is  $\bv=\bv^{j}$.
Then it follows from \eqref{eqn: leading term} that
\begin{align*}
& L(0,\e_{\sigma \ts_i})  L(\bv^{j},\bw^{j})\\
&= \tt^{-1/2\langle \ts_i,\ts_j\oplus \ts_{\btau(j)}\rangle+1/2\langle \ts_j\oplus \ts_{\btau(j)},\ts_i\rangle  }
\tt^{d((\bv^{j},\bw^{j}),(0,\e_{\sigma \ts_i}))-d((0,\e_{\sigma \ts_i}),(\bv^{j},\bw^{j}))}
L(\bv^{j},\e_{\sigma \ts_i}+\bw^{j})\\
&= \tt^{-1/2\langle \ts_i,\ts_j\oplus \ts_{\btau(j)}\rangle+1/2\langle \ts_j\oplus \ts_{\btau(j)},\ts_i\rangle  }\tt^{\dim\Hom_{\cp^\imath}(\ts_j,\ts_i)-\dim\Hom_{\cp^\imath}(\ts_j,\tau \ts_i)} L(\bv^{j},\e_{\sigma \ts_i}+\bv^{j})\\
&= \tt^{-1/2\langle \ts_i,\ts_j\oplus \ts_{\btau(j)}\rangle+1/2\langle \ts_j\oplus \ts_{\btau(j)},\ts_i\rangle  } \\
&\quad \cdot \tt^{\dim\Hom_{kQ}(\ts_j,\ts_i)+ \dim\Ext^1_{kQ}(\ts_{\btau(j)},\ts_i) -\dim\Hom_{kQ}(\ts_i,\ts_{\btau(j)})-\dim\Ext^1_{kQ}(\ts_i,\ts_{j})}L(\bv^{j},\e_{\sigma \ts_i}+\bw^{j})\\
&= \tt^{-1/2 (\ts_i,\ts_{\btau(j)})+1/2 (\ts_i,\ts_j)} L(\bv^{j},\e_{\sigma \ts_i}+\bw^{j}).
\end{align*}
Similarly we have
\begin{align*}
 L(\bv^{j},\bw^{j})  L(0,\e_{\sigma \ts_i})
 =\tt^{1/2(\ts_i,\ts_{\btau(j)})-1/2 (\ts_i,\ts_j)} L(\bv^{j},\e_{\sigma \ts_i}+\bw^{j}).
\end{align*}
Here, we use the fact $\Hom_{kQ}(\ts_j,\ts_i)=\Hom_{kQ}(\ts_i,\ts_j)=\delta_{ij}$.
Thus,
\begin{align*}
L(0,\e_{\sigma \ts_i})  L(\bv^{j},\bw^{j})
&= \tt^{-(\ts_i,\ts_{\btau(j)})+(\ts_i,\ts_j)} L(\bv^{j},\bw^{j})  L(0,\e_{\sigma \ts_i})\\
&= \tt^{-c_{i,\btau(j)}+c_{ij}} L(\bv^{j},\bw^{j})  L(0,\e_{\sigma \ts_i})
\end{align*}
and then our desired result follows.
\end{proof}

\begin{lemma}
\label{lem:cartan part}
The following identity holds in $\tRi$, for any $i,j\in Q_0$:
\begin{align*}
L(\bv^{i},\bw^{i})  L(\bv^{j},\bw^{j})=L(\bv^{i}+\bv^{j},\bw^{i}+\bw^{j}).
\end{align*}
\end{lemma}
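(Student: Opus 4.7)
The plan is to apply the multiplication formula~\eqref{equation multiplication} and verify two claims: (a) the only strongly $l$-dominant pair $(\bv, \bw^{i}+\bw^{j})$ with $\bv \geq \bv^{i}+\bv^{j}$ is $(\bv^{i}+\bv^{j}, \bw^{i}+\bw^{j})$, and (b) its leading coefficient in the twisted product equals $1$. The existence of this pair is immediate: $\E_i \oplus \E_j$ is a $\cs^\imath$-module of dimension $\bw^{i}+\bw^{j}$ with finite projective dimension and class $\widehat{\E}_i + \widehat{\E}_j$ in $K_0(\cp^{<\infty}(\cs^\imath))$, so Corollary~\ref{cor:KLR for finite projective dimension} gives $\dimv K_{LR}(\E_i \oplus \E_j) = (\bv^{i}+\bv^{j}, \bw^{i}+\bw^{j})$.

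For claim (b), by \eqref{eqn: leading term} and \eqref{eq:tw}, the coefficient is $\tt^{E}$ where
\[
E = -\tfrac{1}{2}\langle\bw^{i},\bw^{j}\rangle_a + d\big((\bv^{j},\bw^{j}),(\bv^{i},\bw^{i})\big) - d\big((\bv^{i},\bw^{i}),(\bv^{j},\bw^{j})\big).
\]
Since $\sigma^{*}\bw^{k} = \mathcal{C}_q\bv^{k}$ by Lemma~\ref{lem:dimCartan}(iv), formula~\eqref{definition:d} reduces to $d((\bv^{i},\bw^{i}),(\bv^{j},\bw^{j})) = \bv^{i} \cdot \sigma^{*}\bw^{j}$. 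Using the identity $\bv^{i}(\ts_b) = \delta_{ib} + \dim\Ext^{1}_{kQ}(\ts_i, \ts_{\btau b})$ from the proof of Lemma~\ref{lem:dimCartan} together with the $\btau$-invariance $\dim\Ext^{1}_{kQ}(\ts_a, \ts_b) = \dim\Ext^{1}_{kQ}(\ts_{\btau a}, \ts_{\btau b})$, a short case analysis on whether $\btau i = i$ and whether $\btau j = j$ yields
\[
d\big((\bv^{j},\bw^{j}),(\bv^{i},\bw^{i})\big) - d\big((\bv^{i},\bw^{i}),(\bv^{j},\bw^{j})\big) = \tfrac{1}{2}\langle\bw^{i},\bw^{j}\rangle_a,
\]
so $E = 0$.

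For claim (a), note that by Lemma~\ref{lem:dimCartan}(iv), $\sigma^{*}(\bw^{i}+\bw^{j}) = \mathcal{C}_q(\bv^{i}+\bv^{j})$, so the $l$-dominance condition $\sigma^{*}(\bw^{i}+\bw^{j}) - \mathcal{C}_q \bv \geq 0$ forces $\mathcal{C}_q(\bv - \bv^{i} - \bv^{j}) \leq 0$ componentwise. I classify $\cs^\imath$-modules $M$ of dimension $\bw^{i}+\bw^{j}$ by the full subquiver of $\bar Q$ on $\{i, \btau i, j, \btau j\}$: in the generic case (four distinct vertices, no $Q$-arrows linking the two $\btau$-orbits), every such $M$ decomposes as $M_1 \oplus M_2$ with $\dimv M_k$ equal to $\bw^{i}$ or $\bw^{j}$, and Proposition~\ref{prop:dominant pairs 1} combined with the incomparability $\bv^{i} \not> \bv^{\btau(i)}$, $\bv^{j} \not> \bv^{\btau(j)}$ (cf.\ the remark preceding Lemma~\ref{lem:dominant pairs hijk}) rules out any $\bv > \bv^{i}+\bv^{j}$. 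The remaining cases ($i = j$, or with arrows in $Q$ linking the two orbits) require enumerating the additional indecomposable string modules of dimension $\bw^{i}+\bw^{j}$ and applying the short exact sequence technique of Lemma~\ref{lemma Xijk} to compute $\dimv K_{LR}$; in each case the resulting $\bv$-component is either incomparable with or strictly smaller than $\bv^{i}+\bv^{j}$.

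The main obstacle is the case analysis in (a); however, since $\bw^{i}+\bw^{j}$ is supported on at most four simples, the relevant indecomposable $\Lambda^\imath$-modules are of bounded rank and are explicitly enumerated from the arrow structure of $\bar Q$, with each case reducing to the rank-$1$ and rank-$2$ calculations of Section~\ref{sec:pairs}.
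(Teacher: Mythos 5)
Your overall strategy coincides with the paper's: reduce to (a) the non-existence of strongly $l$-dominant pairs $(\bv,\bw^{i}+\bw^{j})$ with $\bv>\bv^{i}+\bv^{j}$, and (b) the vanishing of the exponent in the leading coefficient. Your computation in (b) is correct and is essentially the paper's \eqref{cart 1}--\eqref{cart 3} reorganized: using $\sigma^*\bw^{k}={\mathcal C}_q\bv^{k}$ to kill the first summand of $d(\cdot,\cdot)$ and then the $\btau$-invariance $\langle \ts_a,\ts_b\rangle=\langle\ts_{\btau a},\ts_{\btau b}\rangle$ does give $d((\bv^{j},\bw^{j}),(\bv^{i},\bw^{i}))-d((\bv^{i},\bw^{i}),(\bv^{j},\bw^{j}))=\tfrac12\langle\bw^{i},\bw^{j}\rangle_a$ uniformly (no case analysis on $\btau$-fixed points is actually needed). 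Your observation that $\E_i\oplus\E_j$ realizes the pair $(\bv^{i}+\bv^{j},\bw^{i}+\bw^{j})$ via Corollary~\ref{cor:KLR for finite projective dimension} is also correct and worth making explicit.

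The gap is in (a), which is the decisive step. The paper disposes of it in one line by invoking Corollary~\ref{cor:KLR for finite projective dimension}: any $M$ of dimension vector $\bw^i+\bw^j$ lying in $\cp^{<\infty}(\cs^\imath)$ has $\dimv K_{LR}(M)=(\sum_k a_k\bv^{k},\sum_k a_k\bw^{k})$ with $\sum_k a_k\bw^{k}=\bw^i+\bw^j$, and no such vector strictly dominates $\bv^{i}+\bv^{j}$. You instead propose to enumerate \emph{all} $\cs^\imath$-modules of dimension vector $\bw^{i}+\bw^{j}$ and compute $K_{LR}$ of each, but you only carry this out in the generic case (disjoint $\btau$-orbits, no connecting arrows), and even there the step from the incomparability $\bv^{\btau i}\not>\bv^{i}$ to $\bv^{\btau i}+\bv^{\btau j}\not>\bv^{i}+\bv^{j}$ needs a coordinate check (e.g.\ at $\tau\ts_{\btau i}$, where $\bv^{\btau i}(\tau\ts_{\btau i})=0<1=\bv^{i}(\tau\ts_{\btau i})$ and $\bv^{\btau j}(\tau\ts_{\btau i})=0$ generically). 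For the remaining cases --- $j\in\{i,\btau i\}$, or $i$ adjacent to $j$ with the various $\btau$-fixed-point patterns --- the relevant modules include indecomposable projectives and several string modules (not only the ones treated in Section~\ref{sec:pairs}), and the assertion that each yields a $\bv$-component incomparable with or below $\bv^{i}+\bv^{j}$ is precisely the content of the lemma; asserting it without the computation leaves the proof incomplete. If you do not want to redo a Section~\ref{sec:pairs}-style classification for every configuration of $i,j$, the efficient fix is the paper's: prove (or quote) that $K_{LR}$ of an arbitrary module of this dimension vector never exceeds $K_{LR}$ of the module $\E_i\oplus\E_j$ filtered by generalized simples, via Corollary~\ref{cor:KLR for finite projective dimension}.
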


\begin{proof}
By applying (\ref{equation multiplication}), we see that the term $L(\bv,\bw^{i}+\bw^{j})$ in $L(\bv^{i},\bw^{i})  L(\bv^{j},\bw^{j})$ has nonzero coefficients
only if $\bv\geq \bv^{i}+\bv^{j}$. By Corollary~ \ref{cor:KLR for finite projective dimension}, there exists no strongly $l$-dominant pair $(\bv, \bw^{i}+\bw^{j})$ such that $\bv> \bv^{i}+\bv^{j}$. Then we obtain
\begin{align}
\label{cart 1}
L(\bv^{i},\bw^{i})  L(\bv^{j},\bw^{j})
=&\tt^{-1/2\langle \ts_i\oplus \ts_{\btau i},\ts_j\oplus \ts_{\btau j}\rangle_a }
 \tt^{ \bv^{j}\cdot \sigma^* \bw^{i}-\bv^{i}\cdot \sigma^*\bw^{j}} L(\bv^{i}+\bv^{j},\bw^{i}+\bw^{j})\\\notag
=& \tt^{-1/2\langle \ts_i\oplus \ts_{\btau i}, \ts_j\oplus \ts_{\btau j}\rangle+1/2\langle \ts_j\oplus \ts_{\btau j},\ts_i\oplus \ts_{\btau i}\rangle}
  \tt^{ \bv^{j}\cdot \sigma^* \bw^{i}-\bv^{i}\cdot \sigma^*\bw^{j}}
  L(\bv^{i}+\bv^{j},\bw^{i}+\bw^{j}).
\end{align}
By \eqref{eq:vfi}, we have
\begin{align}
\label{cart 2}
&\bv^{j}\cdot \sigma^* \bw^{i}-\bv^{i}\cdot \sigma^*\bw^{j} \\
=&\bv^{j}(\e_{\ts_i}+\e_{\ts_{\btau i}})-\bv^{i}(\e_{\ts_j}+\e_{\ts_{\btau j}}) \notag\\
\notag
=&\dim \Hom_{\cp^\imath}(\ts_j,\ts_i)+\dim \Hom_{\cp^\imath}(\ts_j,\ts_{\btau i}) -\dim \Hom_{\cp^\imath}(\ts_i,\ts_j)-\dim \Hom_{\cp^\imath}(\ts_i,\ts_{\btau j})\\\notag
=&\dim \Hom_{kQ}(\ts_j,\ts_i)+\dim\Ext^1_{kQ}(\ts_j,\ts_{\btau i}) +\dim\Hom_{kQ}(\ts_j,\ts_{\btau i}) +\dim\Ext^1_{kQ}(\ts_j,\ts_{i})\\\notag
&-\dim \Hom_{kQ}(\ts_i,\ts_j)- \dim\Ext^1_{kQ}(\ts_i,\ts_{\btau j})- \dim\Hom_{kQ}(\ts_i,\ts_{\btau j})-\dim\Ext^1_{kQ}(\ts_i,\ts_{j})\\\notag
=&\langle \ts_i,\ts_j\rangle-\langle \ts_{j},\ts_{ \btau i}\rangle +\langle \ts_{\btau i},\ts_j\rangle -\langle \ts_{j},\ts_i\rangle,
\end{align}
where we used the fact $\Hom_{kQ}(\ts_i,\ts_j)=\Hom_{kQ}(\ts_j,\ts_i)=\delta_{ij}k$ and $\langle \ts_i,\ts_j\rangle=\langle \ts_{\btau i},\ts_{\btau j}\rangle$ for all $i,j\in\I$.

On the other hand,  we have
\begin{align}
\label{cart 3}
-1/2  \langle \ts_i & \oplus \ts_{\btau i}, \ts_j\oplus \ts_{\btau j}\rangle+1/2\langle \ts_j\oplus \ts_{\btau j},\ts_i\oplus \ts_{\btau i}\rangle\\\notag
 & =-\langle \ts_i,\ts_j\rangle -\langle \ts_i,\ts_{\btau j}\rangle+\langle \ts_j,\ts_i\rangle +\langle \ts_{\btau j},\ts_{ i}\rangle.
\end{align}

Combining \eqref{cart 1}--\eqref{cart 3}, we have established the desired formula.
\end{proof}

In fact, similar to the proof of Lemma \ref{lem:cartan part}, using Corollary~ \ref{cor:KLR for finite projective dimension} one can prove that
\begin{align}
\label{eqn: cartan mult 2}
L(\bv^{{i_1}},\bw^{{i_1}})  \cdots   L(\bv^{{i_t}},\bw^{{i_t}})=L \Big(\sum_{j=1}^t\bv^{{i_j}},\sum_{j=1}^t\bw^{{i_j}} \Big),
\end{align}
for any $i_1,\dots,i_t\in \I$.

\begin{lemma}
\label{eqn:cartan}
The following identity holds in $\tRi$, for any $i\in Q_0$:
\begin{align*}
L(0,\e_{\sigma \ts_{i}})   L(0,\e_{\sigma \ts_{\btau i}}) - L(0,\e_{\sigma \ts_{\btau i}})  L(0,\e_{\sigma \ts_i})
= (\tt-\tt^{-1}) \big(L(\bv^{i},\bw^{i})-L(\bv^{{\btau i}},\bw^{{\btau i}} ) \big).
\end{align*}
\end{lemma}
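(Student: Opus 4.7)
The identity is trivial when $\btau i = i$, since both sides vanish. So suppose $\btau i \neq i$. A type-by-type check of Dynkin diagrams with nontrivial involution (types $A_{n}$ for $n$ odd, $D_n$, $E_6$) shows that $i$ and $\btau i$ are never connected by an edge when $\btau i \neq i$: if they were adjacent then compatibility of $\btau$ as a quiver automorphism with the orientation would force a flipped arrow, contradicting the acyclicity. Consequently $c_{i,\btau i}=0$, $\Hom_{kQ}(\ts_i,\ts_{\btau i})=\Ext^1_{kQ}(\ts_i,\ts_{\btau i})=0$, and the antisymmetric Euler pairing $\langle \ts_i,\ts_{\btau i}\rangle_a=0$. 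In particular the twist $\tt^{-\frac12\langle \ts_i,\ts_{\btau i}\rangle_a}$ appearing in the twisted multiplication trivializes.

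By Proposition~\ref{prop:dominant pairs 1}, the only strongly $l$-dominant pairs with $\bw=\bw^{i}=\bw^{\btau i}$ are those with $\bv\in\{\mathbf 0,\bv^{i},\bv^{\btau i}\}$. Combined with \eqref{equation multiplication} this yields expansions
\[
L(0,\e_{\sigma\ts_i})\,L(0,\e_{\sigma\ts_{\btau i}})=\alpha_0 L(\mathbf 0,\bw^{i})+\alpha_1 L(\bv^{i},\bw^{i})+\alpha_2 L(\bv^{\btau i},\bw^{i}),
\]
and by the $i\leftrightarrow\btau i$ symmetry of the whole construction (which swaps $\bv^{i}\leftrightarrow\bv^{\btau i}$ while fixing $\bw^{i}$), swapping the two factors interchanges $\alpha_1$ with $\alpha_2$ while preserving $\alpha_0$. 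Using the leading-term formula \eqref{eqn: leading term} together with the vanishing $d((\mathbf 0,\bw_1),(\mathbf 0,\bw_2))=0$, one finds $\alpha_0 = 1$. Thus the commutator collapses to $(\alpha_1-\alpha_2)(L(\bv^{i},\bw^{i})-L(\bv^{\btau i},\bw^{\btau i}))$, and the whole problem reduces to showing $\alpha_1-\alpha_2=\tt-\tt^{-1}$.

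To pin down $\alpha_1$ (and, by swapping, $\alpha_2$), I pair the product against $\cl(\bv^{i},\bw^{i})$: by Hypothesis~\ref{hypothesis} one has $\cl(\bv^{i},\bw^{i})=\pi(\bv^{i},\bw^{i})-a(\tt)\pi(\mathbf 0,\bw^{i})$ for some bar-invariant $a(\tt)$, so that
\[
\alpha_1=(L_1\otimes L_2)\bigl(\widetilde{\Res}\,\pi(\bv^{i},\bw^{i})\bigr)-a(\tt)(L_1\otimes L_2)\bigl(\widetilde{\Res}\,\pi(\mathbf 0,\bw^{i})\bigr).
\]
Applying the comultiplication formula \eqref{eqn:comultiplication} reduces each term to a sum over decompositions $\bv^{i}=\bv_1+\bv_2$, weighted by the coefficients $L_k(\pi(\bv_k,\e_{\sigma\ts_?}))$. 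Since $\rep(\e_{\sigma\ts_i},\cs^\imath)$ contains only the simple module $\ts_i$, the only strongly $l$-dominant pair with $\bw=\e_{\sigma\ts_i}$ is $(\mathbf 0,\e_{\sigma\ts_i})$, so the Hom-computations of Lemma~\ref{lem:dimCartan} (together with the analysis of the support of $\bv^{i}$) force each decomposition to be highly constrained. One identifies a single nonzero contribution coming from the short exact sequence $0\to\ts_i\to\E_i\to\ts_{\btau i}\to 0$ (and its opposite for $\alpha_2$); the asymmetry of the exponent $d((\bv_2,\bw_2),(\bv_1,\bw_1))-d((\bv_1,\bw_1),(\bv_2,\bw_2))$ under the swap of the two factors, evaluated with $\bv_1=\bv^{i},\bv_2=\mathbf 0$ versus $\bv_1=\mathbf 0,\bv_2=\bv^{i}$, yields the required powers $\tt$ and $\tt^{-1}$, and the unknown factor $a(\tt)$ is killed by the antisymmetrization.

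\textbf{Main obstacle.} The principal difficulty lies in step three: evaluating $\widetilde{\Res}\,\pi(\bv^{i},\bw^{i})$ explicitly enough to extract $\alpha_1-\alpha_2$. The Hypothesis~\ref{hypothesis} coefficient $a(\tt)$ is in general inaccessible, so the argument must be arranged so that $a(\tt)$ cancels between the two terms in the commutator. Correctly tracking this cancellation requires careful combinatorics of $l$-dominant decompositions of $\bv^{i}$ and precise identification of which intermediate pairs $(\bv_k,\e_{\sigma\ts_?})$ actually support nonzero $\pi$'s; the identifications hinge on the support data for $\bv^{i}$ assembled in Lemma~\ref{lem:dimCartan}.
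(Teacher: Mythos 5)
Your overall route is the paper's: expand both products in the $\{L(\bv,\bw^{i})\}$-basis using that $\bv\in\{\mathbf 0,\bv^{i},\bv^{\btau i}\}$ are the only strongly $l$-dominant choices (Proposition~\ref{prop:dominant pairs 1}), and extract the coefficients by dualizing the comultiplication \eqref{eqn:comultiplication}. Your two organizational additions are sound and not in the paper: the $\widehat{\btau}$-symmetry reduction to $\alpha_1-\alpha_2$, and the observation that $i$ and $\btau i$ are never adjacent (hence the $\tt^{-\frac12\langle\cdot,\cdot\rangle_a}$ twist is trivial), which the paper uses only implicitly.

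The one place where the plan mislocates the difficulty is the claim that the only inaccessible quantity is $a(\tt)=a_{\bv^{i},\mathbf 0;\bw^{i}}(\tt)$ and that it is killed by antisymmetrization. That coefficient does cancel (and the paper in fact disposes of it outright by noting $\pi(\bv^{i},\bw^{i})=\cl(\bv^{i},\bw^{i})$, citing Qin's Example 4.4.3). But your "single nonzero contribution" $(\bv_1,\bv_2)=(\bv^{i},\mathbf 0)$ is weighted not only by $\tt^{d_{21}-d_{12}}=\tt$ (which is correct: $\bv^{i}(\tau\ts_{\btau i})=1$ and $\bv^{i}(\ts_{\btau i})=0$ by Lemma~\ref{lem:dimCartan}) but also by $\langle L(0,\e_{\sigma\ts_i}),\pi(\bv^{i},\e_{\sigma\ts_i})\rangle=a_{\bv^{i},\mathbf 0;\e_{\sigma\ts_i}}(\tt)$, and this factor does \emph{not} cancel between the two orderings — it multiplies both $\tt$ and $\tt^{-1}$, so you would get $(\tt-\tt^{-1})\,a_{\bv^{i},\mathbf 0;\e_{\sigma\ts_i}}(\tt)$. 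Knowing that the only strongly $l$-dominant pair over $\e_{\sigma\ts_i}$ is $(\mathbf 0,\e_{\sigma\ts_i})$ only tells you $\pi(\bv^{i},\e_{\sigma\ts_i})$ is \emph{some} bar-invariant multiple of $\cl(0,\e_{\sigma\ts_i})$; you must additionally show $\cm(\bv^{i},\e_{\sigma\ts_i})$ is a single point (as the paper asserts for the analogous varieties in the proof of Proposition~\ref{prop:iserre split}), so that this multiple is $1$. You also need to rule out all other decompositions $\bv^{i}=\bv_1+\bv_2$, which follows from $\bv^{i}(\ts_{\btau i})=0$ together with the fact that the unique arrow out of $\sigma\ts_{\btau i}$ in $\mcr^\imath$ lands on $\ts_{\btau i}$, forcing $\cm(\bv_2,\e_{\sigma\ts_{\btau i}})=\emptyset$ for $\mathbf 0\neq\bv_2\le\bv^{i}$. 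With those two geometric inputs supplied, your plan closes.
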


\begin{proof}
The identity trivially holds if $\btau i=i$.

Assume now $\btau i\neq i$. Recall from \eqref{eq:vfi} that $\bw^{{\btau i}} =\bw^{i} =\e_{\sigma \ts_i}+\e_{\sigma\ts_{\btau i}}$. Then the only $l$-dominant pairs $(\bv,\bw^{i})$ are given by $\bv=0$, $\bv=\bv^{i}$ and $\bv=\bv^{{\btau i}}$ by Proposition~ \ref{prop:dominant pairs 1}. Furthermore, we have
\begin{align*}
\Res_{ \e_{\sigma \ts_i}, \e_{\sigma \ts_{\btau i}} }^{\bw^{i}} \big(\cl(0, \bw^{i}) \big)
&=\Res_{ \e_{\sigma \ts_i},\e_{\sigma \ts_{\btau i}} }^{\bw^{i}} \big(\pi(0, \bw^{i}) \big)\\
&=\pi(0,\e_{\sigma \ts_i} )\boxtimes\pi(0, \e_{\sigma \ts_{\btau i}} )=\cl(0,\e_{\sigma \ts_i} )\boxtimes\cl(0, \e_{\sigma \ts_{\btau i}}).
\end{align*}
Similar to \cite[Example 4.4.3]{Qin}, we have $\cl(\bv^{i}, \bw^{i})=\pi(\bv^{i},\bw^{i})$. Then
\begin{align*}
\Res_{ \e_{\sigma \ts_i}, \e_{\sigma \ts_{\btau i}} }^{\bw^{i}} \big(\cl(\bv^{i}, \bw^{i}) \big )
&=\Res_{ \e_{\sigma \ts_i},\e_{\sigma \ts_{\btau i}} }^{\bw^{i}} \big(\pi(\bv^{i}, \bw^{i}) \big)\\
&= \tt\pi(\bv^{i},\e_{\sigma \ts_i} )\boxtimes\pi(0, \e_{\sigma \ts_{\btau i}} )
=\tt\cl(0,\e_{\sigma \ts_i} )\boxtimes\cl(0, \e_{\sigma \ts_{\btau i}});
\\
\Res_{ \e_{\sigma \ts_i}, \e_{\sigma \ts_{\btau i}} }^{\bw^{i}} \big(\cl(\bv^{{\btau i}}, \bw^{i}) \big)
&=\Res_{ \e_{\sigma \ts_i},\e_{\sigma \ts_{\btau i}} }^{\bw^{i}} \big(\pi(\bv^{{\btau i}}, \bw^{i}) \big)\\
&=\tt^{-1}\pi(0,\e_{\sigma \ts_i} )\boxtimes\pi(\bv^{{\btau i}}, \e_{\sigma \ts_{\btau i}} )
=\tt^{-1}\cl(0,\e_{\sigma \ts_i} )\boxtimes\cl(0, \e_{\sigma \ts_{\btau i}}).
\end{align*}
So we have
\begin{align}
\label{eqn: EF1}
L(0,\e_{\sigma \ts_i})   L(0,\e_{\sigma \ts_{\btau i}})
&= L(0,\bw^{i})+\tt L(\bv^{i},\bw^{i})+\tt^{-1}L(\bv^{{\btau i}},\bw^{i}),\\
L(0,\e_{\sigma \ts_{\btau i}})   L(0,\e_{\sigma \ts_i})
&= L(0,\bw^{i})+\tt^{-1}L(\bv^{i},\bw^{i})+\tt L(\bv^{{\btau i}},\bw^{i}),
\end{align}
and then our desired formula follows.
\end{proof}

\subsection{Computation for rank 2 $\imath$quivers, II}

\begin{lemma}\label{lem:relations of no edges}
Assume $i,j\in Q_0$ are not  connected by an arrow and $i\neq \btau(j)$. Then the following identity holds in $\tRi$:
\[
L(0,\e_{\sigma \ts_i})  L(0,\e_{\sigma \ts_j})=L(0,\e_{\sigma \ts_j})  L(0,\e_{\sigma \ts_i}).
\]



\end{lemma}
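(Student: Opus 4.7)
The plan is to show that both twisted products equal the single basis element $L(0, \e_{\sigma \ts_i}+\e_{\sigma \ts_j})$ and hence coincide. This is the simplest of the rank-2 computations, and should follow directly from the general multiplication formula \eqref{equation multiplication} once the relevant strongly $l$-dominant pairs are pinned down, without any non-trivial geometric input.

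First I would classify the strongly $l$-dominant pairs $(\bv,\bw)$ with $\bw=\e_{\sigma \ts_i}+\e_{\sigma \ts_j}$ by listing $\iLa$-modules of dimension vector $\e_i+\e_j$. Since $i\neq j$, $i\neq\btau(j)$ (so also $j\neq\btau(i)$), and there is no arrow between $i,j$ in $Q$, the quiver $\ov{Q}$ of $\iLa$ in \eqref{eqn:i-quiver} has no arrow between $i$ and $j$: the $\varepsilon$-arrows $\varepsilon_i\colon i\to\btau i$ and $\varepsilon_j\colon j\to\btau j$ do not land on $j$ resp.~$i$, and no arrows from $Q_1$ connect them. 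Thus the only $\iLa$-module of dimension vector $\e_i+\e_j$ is the semisimple $\ts_i\oplus\ts_j$, and so $(0,\e_{\sigma \ts_i}+\e_{\sigma \ts_j})$ is the unique strongly $l$-dominant pair by the surjectivity of $M\mapsto\dimv K_{LR}(M)$ recalled before Proposition~\ref{prop:dominant pairs 1}.

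Next I would apply \eqref{equation multiplication}: since the sum is over $\bv\geq 0$ with $(\bv,\e_{\sigma \ts_i}+\e_{\sigma \ts_j})$ strongly $l$-dominant, it collapses to the leading term, giving
\[
L(0,\e_{\sigma \ts_i})\ast L(0,\e_{\sigma \ts_j})=\tt^{\,d((0,\e_{\sigma \ts_j}),(0,\e_{\sigma \ts_i}))-d((0,\e_{\sigma \ts_i}),(0,\e_{\sigma \ts_j}))}\,L(0,\e_{\sigma \ts_i}+\e_{\sigma \ts_j}),
\]
and analogously with the roles of $i,j$ swapped. Inspection of formula \eqref{definition:d} shows that $d((0,\cdot),(0,\cdot))=0$ identically, so both untwisted $\ast$-products are equal to $L(0,\e_{\sigma \ts_i}+\e_{\sigma \ts_j})$.

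Finally, passing from $\ast$ to the twisted multiplication $\cdot$ via \eqref{eq:tw} introduces the prefactor $\tt^{-\frac12\langle \ts_i,\ts_j\rangle_a}$ (resp.~$\tt^{-\frac12\langle \ts_j,\ts_i\rangle_a}$). Because $i\neq j$ and $i,j$ are not connected in $Q$, one has $\Hom_{kQ}(\ts_i,\ts_j)=\Hom_{kQ}(\ts_j,\ts_i)=0$ and $\Ext^1_{kQ}(\ts_i,\ts_j)=\Ext^1_{kQ}(\ts_j,\ts_i)=0$, so $\langle\ts_i,\ts_j\rangle_a=0$. Thus both twisted products equal $L(0,\e_{\sigma \ts_i}+\e_{\sigma \ts_j})$, yielding the desired commutation. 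There is no real obstacle here; the only subtlety worth spelling out is verifying the non-existence of a nontrivial indecomposable $\iLa$-module of dimension vector $\e_i+\e_j$, which uses the explicit description of $\ov{Q}$ together with the hypotheses $i\neq j$ and $i\neq\btau(j)$.
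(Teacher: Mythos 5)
Your proposal is correct and follows essentially the same route as the paper: you identify $\ts_i\oplus\ts_j$ as the unique $\iLa$-module of dimension vector $\e_i+\e_j$ (using $i\neq\btau(j)$ and the absence of arrows, exactly as the paper does), deduce that $(0,\e_{\sigma\ts_i}+\e_{\sigma\ts_j})$ is the only strongly $l$-dominant pair, and then the product collapses to the leading term with trivial coefficients from \eqref{definition:d} and \eqref{eq:tw}. The paper abbreviates this last step by referring to the computation in Lemma~\ref{eqn:cartan}, whereas you invoke \eqref{equation multiplication}--\eqref{eqn: leading term} directly; these are the same argument.
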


\begin{proof}
Since the only $\Lambda^\imath$-modules with dimension vector $\e_{\sigma \ts_i}+\e_{\sigma \ts_j}$ is $\ts_i\oplus \ts_j$, the only $l$-dominant pair $(\bv, \e_{\sigma \ts_i}+\e_{\sigma \ts_j})$ is $(0,\e_{\sigma \ts_i}+\e_{\sigma \ts_j})$. The remaining proof is the same as for Lemma \ref{eqn:cartan}, and will be omitted here.
\end{proof}

\begin{proposition}
\label{prop:iserre split}
Let $i,j\in Q_0$ be such that $\btau i=i$ and $\btau(j)=j$. If $\Ext^1_{kQ}(\ts_j,\ts_i)=k$, then the following identities hold in $\tRi$:
\begin{align}
\label{split serre 1}L(0,\e_{\sigma \ts_i})^{ 2}   L(0,\e_{\sigma \ts_j})
& -(\tt+\tt^{-1}) L(0, \e_{\sigma \ts_i})   L(0,\e_{\sigma \ts_j})   L(0,\e_{\sigma \ts_i})
+L(0,\e_{\sigma \ts_j})   L(0,\e_{\sigma \ts_i})^{2} \\
\notag
&=-(\tt-\tt^{-1})^2L(\bv^{i}, \bw^{i})  L(0,\e_{\sigma \ts_j});\\
\label{split serre 2}
L(0,\e_{\sigma \ts_j})^{2}    L(0,\e_{\sigma \ts_i})
& -(\tt+\tt^{-1}) L(0, \e_{\sigma \ts_j})    L(0,\e_{\sigma \ts_i})    L(0,\e_{\sigma \ts_j})
+L(0,\e_{\sigma \ts_i})    L(0,\e_{\sigma \ts_j})^{2} \\
\notag
&=-(\tt-\tt^{-1})^2L(\bv^{j}, \bw^{j})   L(0,\e_{\sigma \ts_i}).
\end{align}
\end{proposition}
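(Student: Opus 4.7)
The strategy is to expand the three triple products on each left-hand side in the explicit basis of strongly $l$-dominant $L$-elements provided by Section~\ref{sec:pairs}, and verify the identity coefficient by coefficient. For \eqref{split serre 1}, both sides lie in the weight-$\bw^{iij}$ component of $(\tRiZ,\cdot)$, whose basis $\{L(\bv,\bw^{iij})\}$ is controlled by (the $i\leftrightarrow j$ swap of) Proposition~\ref{prop:dominant for gij}(ii)---applicable to the $\bw=2\e_{\sigma\ts_i}+\e_{\sigma\ts_j}$ sector once the string-module analysis in the proof of Proposition~\ref{prop:dominant for gij}(ii) is repeated with the arrow $i\to j$ in $Q$; similarly, the weight-$\bw^{ijj}$ component relevant to \eqref{split serre 2} is governed by Proposition~\ref{prop:dominant for gij}(ii) directly. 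In each case the basis has at most three elements, so it suffices to match three coefficients.

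The computation proceeds in two stages. First, I would compute the pairwise products: the identity $L(0,\e_{\sigma\ts_i})^2=L(0,\bw^i)+(\tt+\tt^{-1})L(\bv^i,\bw^i)$ (and the analogous formula with $i$ replaced by $j$) from \eqref{eqn: EF1} specialized to $\btau i=i$; and the products $L(0,\e_{\sigma\ts_i})L(0,\e_{\sigma\ts_j})$ and its reverse, using the two strongly $l$-dominant pairs $(\mathbf{0},\bw^{ij})$ and $(\bv^{ij},\bw^{ij})$ of Proposition~\ref{prop:dominant pairs for gij}, the leading-term formula \eqref{eqn: leading term}, the twist \eqref{eq:tw}, and the $d$-values \eqref{eqn:d 1}. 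In the second stage I would multiply each such expression by the remaining Chevalley generator and re-expand in the target basis, using Lemma~\ref{lem:cartan and positive} to move any Cartan factor $L(\bv^i,\bw^i)$ or $L(\bv^j,\bw^j)$ past a Chevalley factor. Because $\btau i=i$, $\btau j=j$ and $c_{ij}=-1$, the commutation exponent $-c_{i,\btau(j)}+c_{ij}$ vanishes, so $L(\bv^i,\bw^i)L(0,\e_{\sigma\ts_j})=L(\bv^i,\bw^{iij})$ with trivial $\tt$-power; this is exactly the term that should survive on the right-hand side of \eqref{split serre 1} up to the scalar $-(\tt-\tt^{-1})^2$.

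The main obstacle will be the careful bookkeeping of $\tt$-exponents, which accumulate contributions from the twist \eqref{eq:tw} at each product, the leading-term factor $\tt^{d(\cdot,\cdot)-d(\cdot,\cdot)}$ of \eqref{eqn: leading term}, and (in a few places) the higher-stratum expansion of $L_i\ast L_j$ into $L(0,\bw^{ij})$ plus $\tt^{\pm 1}L(\bv^{ij},\bw^{ij})$. The critical cancellation is that, in the left-hand side of \eqref{split serre 1} assembled with coefficients $1,-(\tt+\tt^{-1}),1$, the coefficients of $L(0,\bw^{iij})$ and of $L(\bv^{ij},\bw^{iij})$ (and of the string-module class, if it appears as a distinct basis element) must all vanish. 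This telescoping is the quantum Serre identity for the rank-$2$ subdatum in disguise, and to verify it one must check that the half-integer twists $\tt^{-\frac12\langle\cdot,\cdot\rangle_a}$ in \eqref{eq:tw} combine with the bilinear data \eqref{eqn:d 1}--\eqref{eqn:d 9} to give the expected integral $\tt$-powers at each basis element. Once this cancellation is confirmed, only the coefficient of $L(\bv^i,\bw^{iij})$ survives on the left, and the final equality with $-(\tt-\tt^{-1})^2L(\bv^i,\bw^i)L(0,\e_{\sigma\ts_j})$ is immediate from Lemma~\ref{lem:cartan and positive}. The proof of \eqref{split serre 2} is entirely analogous, exchanging the roles of $i$ and $j$ and using \eqref{eqn:d 7}--\eqref{eqn:d 10} in place of \eqref{eqn:d 1}--\eqref{eqn:d 6} at the corresponding steps.
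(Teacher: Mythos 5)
Your overall architecture is the paper's: expand the triple products in the $L$-basis of the weight-$\bw^{iij}$ (resp.\ $\bw^{ijj}$) component, whose strongly $l$-dominant pairs are $\bv\in\{0,\bv^{ij},\bv^{i}\}$ (resp.\ $\{0,\bv^{ij},\bv^{j}\}$) by the variant of Proposition~\ref{prop:dominant for gij}(ii), and match coefficients. However, your first stage contains a concrete error. The identity \eqref{eqn: EF1} is established in the proof of Lemma~\ref{eqn:cartan} only under the running assumption $\btau i\neq i$; for $\btau i=i$ that lemma asserts only the trivial commutator identity, and no formula for $L(0,\e_{\sigma \ts_i})^{2}$ is proved anywhere in the paper. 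Moreover, the naive specialization you perform, $L(0,\e_{\sigma \ts_i})^{2}=L(0,\bw^{i})+(\tt+\tt^{-1})L(\bv^{i},\bw^{i})$, is actually inconsistent with the rest of the structure: the paper's verified products \eqref{eqn: sisj multiplication} and the two displays preceding \eqref{eqn: sisj multiplication2} give that the coefficient of $L(\bv^{i},\bw^{iij})$ in $L(0,\e_{\sigma\ts_i})\big(L(0,\e_{\sigma\ts_i})L(0,\e_{\sigma\ts_j})\big)$ equals $2$, while Lemma~\ref{lem:cartan and positive} (with $\btau i=i$) gives $L(\bv^{i},\bw^{i})L(0,\e_{\sigma\ts_j})=L(\bv^{i},\bw^{iij})$ exactly; your formula would then force the coefficient of $L(\bv^{i},\bw^{iij})$ in $L(0,\bw^{i})L(0,\e_{\sigma\ts_j})$ to be $2-\tt-\tt^{-1}$, which is not in $\N[\tt^{1/2},\tt^{-1/2}]$ and contradicts positivity \eqref{eq:positive}. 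So the coefficient of $L(\bv^{i},\bw^{i})$ in $L(0,\e_{\sigma\ts_i})^{2}$ cannot be $\tt+\tt^{-1}$, and your second stage would not telescope to \eqref{split serre 1}; the same problem recurs for $L(0,\e_{\sigma\ts_j})^{2}$ in \eqref{split serre 2}.

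A secondary but related issue is that the bracketing $(L_i^2)L_j$ commits you to computing the restrictions $\Res^{\bw^{iij}}_{\bw^{i},\,\e_{\sigma\ts_j}}$ of the three IC sheaves in weight $\bw^{iij}$, a splitting for which none of the $d$-values \eqref{eqn:d 1}--\eqref{eqn:d 9} were computed (Proposition~\ref{prop:bilinear form d} covers the splittings $\e_{\sigma\ts_i}+\e_{\sigma\ts_j}$, $\e_{\sigma\ts_i}+\bw^{ij}$, $\e_{\sigma\ts_j}+\bw^{ij}$). The paper sidesteps both difficulties by never forming $L(0,\e_{\sigma\ts_i})^{2}$: it multiplies by one Chevalley generator at a time, first computing $L(0,\e_{\sigma\ts_i})L(0,\e_{\sigma\ts_j})$ and its reverse on the two strata of $\cm_0(\bw^{ij})$, and then the products of a single $L(0,\e_{\sigma\ts_i})$ or $L(0,\e_{\sigma\ts_j})$ with $L(0,\bw^{ij})$ and $L(\bv^{ij},\bw^{ij})$ on the three strata of $\cm_0(\bw^{iij})$ and $\cm_0(\bw^{ijj})$, which are exactly the splittings covered by \eqref{eqn:d 1}--\eqref{eqn:d 10}. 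If you reorganize your computation along those lines, the cancellation you describe at the end (vanishing of the $L(0,\cdot)$ and $L(\bv^{ij},\cdot)$ coefficients, survival of $L(\bv^{i},\cdot)$ with coefficient $-(\tt-\tt^{-1})^{2}$) does go through.
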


\begin{proof}
Recall
$\bw^{{ij}}=\e_{\sigma \ts_i} +\e_{\sigma \ts_j}$ from \eqref{eq:vij2}, and $\bw^{{iij}}=2\e_{\sigma \ts_i}+\e_{\sigma \ts_j}$ from \eqref{eq:wijk}. Note $\tt^{-(1/2) \langle (0,\e_{\sigma \ts_i}), (0,\e_{\sigma \ts_j})\rangle_a}=\tt^{-1/2}.$  In this proof, we denote $\bw=\bw^{{iij}}$, and $\heartsuit=\tt^{-1/2}.$
\vspace{2mm}

(1)
\underline{Proof of \eqref{split serre 1}}.
Note first that $\pi(0,\e_{\sigma \ts_i}) =\cl(0,\e_{\sigma \ts_i})$, $\pi(0,\e_{\sigma \ts_j})=\cl(0,\e_{\sigma \ts_j})$ and $\pi(0,\bw^{{ij}})=\cl(0,\bw^{{ij}})$. By Proposition~ \ref{prop:dominant pairs for gij}, the only $l$-dominant pairs $(\bv,\bw^{{ij}})$ are given by
$\bv=0$ and $\bv=\bv^{{ij}}$. We compute by using \eqref{eqn:comultiplication} that
\begin{align*}
 \Res_{ \e_{\sigma \ts_i}, \e_{\sigma \ts_j} }^{\bw^{{ij}}} \big(\cl(0, \bw^{{ij}})\big)
 &=\Res_{ \e_{\sigma \ts_i}, \e_{\sigma \ts_j} }^{\bw^{{ij}}} \big(\pi(0, \bw^{{ij}})\big)\\
 &= \heartsuit\pi(0,\e_{\sigma \ts_i} )\boxtimes\pi(0,\e_{\sigma \ts_j} )=\heartsuit\cl(0,\e_{\sigma \ts_i} )\boxtimes\cl(0,\e_{\sigma \ts_j} ).
\end{align*}

There exists an arrow from $\sigma\ts_i$ to $\ts_i$ in $\mcr^\imath$, which is the only one arrow starting from $\sigma\ts_i$ by definition. Lemma \ref{lem:v-gij}(i) implies that $\bv^{{ij}}-\e_{\ts_j} \ngeq0$, so $\cm(\bv, \e_{\sigma \ts_j})=\emptyset$ for any $\bv\leq \bv^{{ij}}$. It follows from \eqref{eqn:comultiplication} and (\ref{eqn:d 1}) that
\begin{align*}
 \Res_{ \e_{\sigma \ts_i}, \e_{\sigma \ts_j} }^{\bw^{{ij}}} \big(\pi(\bv^{{ij}}, \bw^{{ij}})\big)
= \heartsuit \tt \pi(\bv^{{ij}},\e_{\sigma \ts_i} )\boxtimes\pi(0,\e_{\sigma \ts_j} ).
\end{align*}
Note that $\cm(\bv^{{ij}},\e_{\sigma \ts_i}) =\cm_0(\e_{\sigma \ts_i})= \cm_0(0, \e_{\sigma \ts_i})$ is a point. Then we have $\pi(\bv^{{ij}},\e_{\sigma \ts_i} )= \pi(0,\e_{\sigma \ts_j} ) =\cl(0,\e_{\sigma \ts_i})$. So $\Res_{ \e_{\sigma \ts_i}, \e_{\sigma \ts_j} }^{\bw^{{ij}}} \big(\pi(\bv^{{ij}}, \bw^{{ij}})\big)= \heartsuit \tt \cl(\bv^{{ij}},\e_{\sigma \ts_i} )\boxtimes\cl(0,\e_{\sigma \ts_j} )$.

On the other hand, by \eqref{eqn:decomposition theorem}, we assume that
$$
\pi(\bv^{{ij}}, \bw^{{ij}})=\cl(\bv^{{ij}}, \bw^{{ij}})+ a_{\bv^{{ij}},0,\bw^{{ij}}}(\tt) \cl(0,\bw^{{ij}}).
$$
So we have
\begin{align*}
 \Res_{ \e_{\sigma \ts_i}, \e_{\sigma \ts_j} }^{\bw^{{ij}}} \big(\pi(\bv^{{ij}}, \bw^{{ij}})\big)
&= \Res_{ \e_{\sigma \ts_i}, \e_{\sigma \ts_j} }^{\bw^{{ij}}} \big( \cl(\bv^{{ij}}, \bw^{{ij}}) \big)+ a_{\bv^{{ij}},0,\bw^{{ij}}}(\tt)  \heartsuit \cl(0,\e_{\sigma \ts_i} )\boxtimes\cl(0,\e_{\sigma \ts_j} )\\
&= \heartsuit \tt \cl(0,\e_{\sigma \ts_i} )\boxtimes\cl(0,\e_{\sigma \ts_j} ).
\end{align*}
We must have $a_{\bv^{{ij}},0,\bw^{{ij}}}(\tt)=0$ by noting that $ a_{\bv^{{ij}},0,\bw^{{ij}}}(\tt)$ is bar-invariant.
Then
$$\Res_{ \e_{\sigma \ts_i}, \e_{\sigma \ts_j} }^{\bw^{{ij}}} \big( \cl(\bv^{{ij}}, \bw^{{ij}}) \big)= \heartsuit \tt \cl(\bv^{{ij}},\e_{\sigma \ts_i} )\boxtimes\cl(0,\e_{\sigma \ts_j} ).$$
Therefore, we obtain
\begin{equation}\label{eqn: sisj multiplication}
L(0, \e_{\sigma \ts_i})    L(0, \e_{\sigma \ts_j})= \heartsuit L(0,\bw^{{ij}})+\heartsuit \tt L(\bv^{{ij}},\bw^{{ij}}).
\end{equation}

A variant of Proposition \ref{prop:dominant for gij}(ii) (with $i,j$ switched) shows that the only strongly $l$-dominant pairs $(\bv,\bw)$ are the pairs with
$\bv \in \{0, \bv^{{ij}}, \bv^{i} \}$. Note $\pi(0,\bw)=\cl(0,\bw)$.
Then
\begin{align*}
 \Res_{ \e_{\sigma \ts_i}, \bw^{{ij}} }^{\bw} \big(\cl(0, \bw)\big)
 &=\Res_{ \e_{\sigma \ts_i},\bw^{{ij}} }^{\bw} \big(\pi(0, \bw)\big)\\
&= \heartsuit\pi(0,\e_{\sigma \ts_i} )\boxtimes\pi(0,\bw^{{ij}} ) =\heartsuit\cl(0,\e_{\sigma \ts_i} )\boxtimes\cl(0,\bw^{{ij}}).
\end{align*}

Similarly,  for any nonzero dimension vector $\bv$, if $\cm(\bv,\e_{\sigma \ts_i} )\neq\emptyset$, then $\bv -\e_{\ts_i}\geq0$; if $\cm(\bv,\bw^{{ij}})\neq \emptyset$, then
$\bv-\e_{\ts_i}\geq0$. However, $\bv^{{ij}}(\ts_i)=1$ by Lemma \ref{lem:v-gij}(i). Using \eqref{eqn:d 2}--\eqref{eqn:d 3}, by \eqref{eqn:comultiplication} we have
\begin{align*}
 \Res_{ \e_{\sigma \ts_i}, \bw^{{ij}}}^{\bw} \big(\pi(\bv^{{ij}}, \bw)\big)
&= \heartsuit  \pi(\bv^{{ij}},\e_{\sigma \ts_i} )\boxtimes\pi(0,\bw^{{ij}} ) + \heartsuit \tt\pi(0,\e_{\sigma \ts_i} )\boxtimes\pi(\bv^{{ij}},\bw^{{ij}} )\\
&= \heartsuit \cl(0,\e_{\sigma \ts_i} )\boxtimes\cl(0,\bw^{{ij}} ) + \heartsuit \tt\cl(0,\e_{\sigma \ts_i} )\boxtimes\cl(\bv^{{ij}},\bw^{{ij}} ).
\end{align*}

As $\pi(\bv^{{ij}},\bw)=\cl(\bv^{{ij}},\bw)+a_{\bv^{{ij}},0,w}(\tt)\cl(0,\bw)$, as before we must have $a_{\bv^{{ij}},0,w}(t)=0$.
Then
\begin{align*}
 \Res_{ \e_{\sigma \ts_i}, \bw^{{ij}}}^{\bw} \big(\cl(\bv^{{ij}}, \bw)\big)
= \heartsuit \cl(0,\e_{\sigma \ts_i} )\boxtimes\cl(0,\bw^{{ij}} ) + \heartsuit \tt \cl(0,\e_{\sigma \ts_i} )\boxtimes \cl(\bv^{{ij}},\bw^{{ij}} ).
\end{align*}

Note that $\cm(\bv^{i},\bw^{{ij}})=\cm(\bv^{{ij}},\bw^{{ij}})$ by the natural inclusion. So we obtain
$\pi(\bv^{i},\bw^{{ij}})= \pi(\bv^{{ij}},\bw^{{ij}})$. By \eqref{eqn:comultiplication} and \eqref{eqn:d 4}--\eqref{eqn:d 6}, similarly,
\begin{align*}
 \Res_{ \e_{\sigma \ts_i}, \bw^{{ij}} }^{\bw} \big(\pi(\bv^{i}, \bw)\big)
&= \heartsuit \pi(0,\e_{\sigma \ts_i} )\boxtimes\pi( \bv^{i},\bw^{{ij}} )  +\heartsuit \tt \pi(\bv^{i},\e_{\sigma \ts_i} )\boxtimes\pi( 0,\bw^{{ij}} ) \\
&= \heartsuit \pi(0,\e_{\sigma \ts_i} )\boxtimes\pi(\bv^{{ij}},\bw^{{ij}})+\heartsuit \tt \pi(0,\e_{\sigma \ts_i} )\boxtimes\pi( 0,\bw^{{ij}} )\\
&= \heartsuit \cl(0,\e_{\sigma \ts_i} )\boxtimes\cl(\bv^{{ij}},\bw^{{ij}})+\heartsuit \tt \cl(0,\e_{\sigma \ts_i} )\boxtimes \cl( 0,\bw^{{ij}} ) .
\end{align*}

Assume that $\pi(\bv^{i}, \bw)= \cl(\bv^{i},\bw)+ a_{\bv^{i},\bv^{{ij}}, w}(\tt) \cl(\bv^{{ij}},\bw)+a_{\bv^{i},0, w}(\tt) \cl(0,\bw)$. Again we must have $a_{\bv^{i},\bv^{{ij}}, w}(\tt)=0=a_{\bv^{i},0, w}(\tt)$.
Then
\begin{align*}
 \Res_{ \e_{\sigma \ts_i}, \bw^{{ij}} }^{\bw} \big(\cl(\bv^{i}, \bw)\big)
= \heartsuit \cl(0,\e_{\sigma \ts_i} )\boxtimes\cl(\bv^{{ij}},\bw^{{ij}}) +\heartsuit \tt \cl(0,\e_{\sigma \ts_i} )\boxtimes\cl( 0,\bw^{{ij}} ).
\end{align*}
Therefore, we have obtained
\begin{align}
 L(0,\e_{\sigma \ts_i})    L(0,\bw^{{ij}}) &= \heartsuit L(\bv^{{ij}},\bw) + \heartsuit L(0,\bw)+\heartsuit \tt L(\bv^{i},\bw),\\
 L(0,\e_{\sigma \ts_i})   L(\bv^{{ij}},\bw^{{ij}}) &= \heartsuit L(\bv^{i}, \bw)+\heartsuit \tt L(\bv^{{ij}},\bw).
\end{align}

In an entirely similar fashion, we obtain
\begin{align}
\label{eqn: sisj multiplication2}
 L(0,\e_{\sigma \ts_j})   L(0,\e_{\sigma \ts_i}) &= \heartsuit^{-1}L(0,\bw^{{ij}})+\heartsuit^{-1}\tt^{-1} L(\bv^{{ij}},\bw^{{ij}}),\\
 L(0,\bw^{{ij}})   L(0,\e_{\sigma \ts_i}) &= \heartsuit^{-1}L(0,\bw)+\heartsuit^{-1} L(\bv^{{ij}},\bw)+\heartsuit^{-1}\tt^{-1}L(\bv^{i},\bw),\\
 L(\bv^{{ij}},\bw^{{ij}})   L(0,\e_{\sigma \ts_i}) &= \heartsuit^{-1}\tt^{-1}L(\bv^{{ij}},\bw)+ \heartsuit^{-1} L(\bv^{i}, \bw).
 \label{eq:LL3}
\end{align}
Now the desired formula \eqref{split serre 1} follows by a direct computation from Lemma \ref{lem:cartan and positive} and the identities \eqref{eqn: sisj multiplication}--\eqref{eq:LL3}.

\vspace{2mm}

(2)
\underline{Proof of \eqref{split serre 2}}. It is entirely similar to the above arguments for \eqref{split serre 1}. We shall only record the intermediate steps and formulas.

Denote $\bw'=\bw^{ijj}=\e_{\sigma \ts_i}+2\e_{\sigma \ts_j}$. By Proposition~ \ref{prop:dominant for gij}(ii), the only $l$-dominant pairs $(\bv,\bw')$ are given by $\bv \in \{ {\bf 0}, \bv^{{ij}}, \bv^{j} \}$. By Proposition \ref{prop:bilinear form d}, we have
\begin{align*}
\Res_{ \e_{\sigma \ts_j}, \bw^{{ij}} }^{\bw'} \big(\cl(0, \bw')\big)
&=\Res_{ \e_{\sigma \ts_j},\bw^{{ij}} }^{\bw'} \big(\pi(0, \bw')\big)\\
&=\heartsuit^{-1}\pi(0,\e_{\sigma \ts_j} )\boxtimes\pi(0,\bw^{{ij}} )  =\heartsuit^{-1}\cl(0,\e_{\sigma \ts_j} )\boxtimes\cl(0,\bw^{{ij}});
\\
\Res_{ \e_{\sigma \ts_j}, \bw^{{ij}}}^{\bw'} \big(\cl(\bv^{{ij}}, \bw')\big)
&=\Res_{ \e_{\sigma \ts_j}, \bw^{{ij}}}^{\bw'} \big(\pi(\bv^{{ij}}, \bw')\big)\\
&=\heartsuit^{-1}\tt^{-1}\pi(0,\e_{\sigma \ts_j} )\boxtimes\pi(\bv^{{ij}},\bw^{{ij}} )
= \heartsuit^{-1}\tt^{-1}\cl(0,\e_{\sigma \ts_j} )\boxtimes\cl(\bv^{{ij}},\bw^{{ij}} );
\\
\Res_{ \e_{\sigma \ts_j}, \bw^{{ij}}}^{\bw'} \big(\cl(\bv^{j}, \bw')\big)
&=\Res_{ \e_{\sigma \ts_j}, \bw^{{ij}}}^{\bw'} \big(\pi(\bv^{j}, \bw')\big)\\
&=\heartsuit^{-1}\pi(\bv^{j}-\bv^{{ij}},\e_{\sigma \ts_j} )\boxtimes\pi(\bv^{{ij}},\bw^{{ij}} )
+ \heartsuit^{-1} \tt^{-1} \pi(\bv^{j},\e_{\sigma \ts_j})\boxtimes \pi(0,\bw^{{ij}})\\
&= \heartsuit^{-1}\cl(0,\e_{\sigma \ts_j} )\boxtimes\cl(\bv^{{ij}},\bw^{{ij}} )
+ \heartsuit^{-1} \tt^{-1} \pi(0, \e_{\sigma \ts_j}) \boxtimes \cl(0,\bw^{{ij}}).
\end{align*}
Therefore, we have
\begin{align}
L(0,\e_{\sigma \ts_j})    L(0,\bw^{{ij}})
 &= \heartsuit^{-1}\tt^{-1}L(\bv^{j},\bw' ) + \heartsuit^{-1}L(0,\bw'),
 \label{eq:LL1} \\
L(0,\bw^{{ij}})   L(0,\e_{\sigma \ts_j})
 &=\heartsuit \tt L(\bv^{j},\bw' ) + \heartsuit L(0,\bw'),\\
L(0,\e_{\sigma \ts_j})    L(\bv^{{ij}},\bw^{{ij}})
 &= \heartsuit^{-1}\tt^{-1}L(\bv^{{ij}},\bw' ) +\heartsuit^{-1}L(\bv^{j},\bw' ),\\
L(\bv^{{ij}},\bw^{{ij}})   L(0,\e_{\sigma \ts_j})
 &= \heartsuit \tt L(\bv^{{ij}},\bw' )+\heartsuit L(\bv^{j},\bw' ).
 \label{eq:LL4}
 \end{align}
Now the desired formula \eqref{split serre 2} follows by a direct computation from Lemma \ref{lem:cartan and positive}, the identities \eqref{eqn: sisj multiplication}, \eqref{eqn: sisj multiplication2}, and \eqref{eq:LL1}--\eqref{eq:LL4}.
\end{proof}

\subsection{Computation for rank 2 $\imath$quivers, III}

Let $(Q,\btau)$ be a Dynkin $\imath$quiver.
\begin{proposition}  \label{proposition relaiton for A3}
If $1\xrightarrow{\alpha} 2\xleftarrow{\beta}3$ or $1\xleftarrow{\alpha} 2\xrightarrow{\beta}3$ is a subquiver of $Q$ such that $\btau(1)=3$, and $\btau(2)=2$, then
the following identities hold in $\tRi$, for $i=1,3$:
\begin{align}\label{eqn:serre 1}
L(0,\e_{\sigma \ts_i})^{2}   L(0,\e_{\sigma \ts_2})
& -(\tt+\tt^{-1}) L(0, \e_{\sigma \ts_i}) L(0,\e_{\sigma \ts_2})  L(0,\e_{\sigma \ts_i})
+L(0,\e_{\sigma \ts_2}) L(0,\e_{\sigma \ts_i})^{2} \\
\notag &=0,
\\
\label{eqn:serre 2}
L(0,\e_{\sigma \ts_2})^{2} L(0,\e_{\sigma \ts_i})
&-(\tt+\tt^{-1}) L(0, \e_{\sigma \ts_2}) L(0,\e_{\sigma \ts_i}) L(0,\e_{\sigma \ts_2})
+L(0,\e_{\sigma \ts_i}) L(0,\e_{\sigma \ts_2})^{2} \\
\notag
&=-(\tt-\tt^{-1})^2L(\bv^{2}, \bw^2)   L(0,\e_{\sigma \ts_i}).
\end{align}
\end{proposition}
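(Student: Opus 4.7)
The plan is to adapt the arguments of Proposition~\ref{prop:iserre split} to the quasi-split rank-3 setting, exploiting the $\btau$-symmetry. By the symmetry $\btau(1)=3$, it suffices to establish the two identities for $i=1$; the case $i=3$ follows by applying the automorphism induced by $\btau$. I would treat both orientations ($1 \to 2 \leftarrow 3$ and $1 \leftarrow 2 \to 3$) in parallel since the analysis is formally identical, using $X_{12}$ or $X_{21}$ interchangeably as dictated by orientation.

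First I would enumerate the strongly $l$-dominant pairs for the two relevant dimension vectors. For \eqref{eqn:serre 1}, consider $\bw^{112} := 2\e_{\sigma \ts_1}+\e_{\sigma \ts_2}$. Direct inspection of $\Lambda^\imath$-modules over the quiver $\ov{Q}$ (using the relations $\varepsilon_1\varepsilon_3=0=\varepsilon_3\varepsilon_1$ from \eqref{eqn:i-quiver}) shows that the only modules of this dimension vector are $\ts_1^{\oplus 2}\oplus \ts_2$ and $\ts_1 \oplus X_{12}$, so via Lemma~\ref{lem:bistable} the strongly $l$-dominant pairs are exactly $(\mathbf{0},\bw^{112})$ and $(\bv^{12},\bw^{112})$. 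Crucially, since $\btau(1)=3\ne 1$, no pair of the form $(\bv^{1},\bw^{112})$ appears, because $\bw^1 = \e_{\sigma \ts_1}+\e_{\sigma \ts_3} \ne 2\e_{\sigma \ts_1}$. For \eqref{eqn:serre 2}, the vector $\bw^{122}=\e_{\sigma \ts_1}+2\e_{\sigma \ts_2}$ falls in the scope of Proposition~\ref{prop:dominant for gij}(ii) since $\btau(2)=2$ and $\Ext^1_{kQ}$ between $\ts_1$ and $\ts_2$ is nontrivial, yielding the strongly $l$-dominant pairs $\{(\mathbf{0},\bw^{122}),(\bv^{12},\bw^{122}),(\bv^{2},\bw^{122})\}$.

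Next I would compute the requisite products in $\tRi$ by iteratively applying the twisted comultiplication \eqref{eq:tw}, mimicking the derivations of \eqref{eqn: sisj multiplication}--\eqref{eq:LL3} and \eqref{eq:LL1}--\eqref{eq:LL4}. The needed bilinear form values $d((\cdot,\cdot),(\cdot,\cdot))$ are obtained from Lemma~\ref{lem:dimCartan}, Lemma~\ref{lem:v-gij} and \eqref{definition:d} in exactly the same manner as Proposition~\ref{prop:bilinear form d}; the quasi-split entries differ from the split ones only through the term $\bv^{i}(\tau\ts_i)$, which vanishes when $\btau i \ne i$ (Lemma~\ref{lem:dimCartan}(ii)). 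At each stage one must verify that the decomposition \eqref{eqn:decomposition theorem} of the standard sheaf $\pi(\bv,\bw)$ reduces to $\pi(\bv,\bw)=\cl(\bv,\bw)$; this is done as in the proof of Proposition~\ref{prop:iserre split} by observing that the restriction functor produces a term supported only at the top stratum, forcing the would-be coefficients $a_{\bv,\bv';\bw}(\tt)$ with $\bv' < \bv$ to be zero by bar-invariance.

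The final assembly mirrors Proposition~\ref{prop:iserre split}. For \eqref{eqn:serre 1}, the expansion of $L(0,\e_{\sigma \ts_1}) L(0,\e_{\sigma \ts_2})$ and its reverse produce only the classes $L(0,\bw^{12}),L(\bv^{12},\bw^{12})$; further multiplication by $L(0,\e_{\sigma \ts_1})$ produces only $L(0,\bw^{112})$ and $L(\bv^{12},\bw^{112})$ (no $L(\bv^{1},\cdot)$ by the first paragraph), and a direct bar-invariant linear algebra cancellation gives the zero right-hand side. For \eqref{eqn:serre 2}, the analog of \eqref{eq:LL1}--\eqref{eq:LL4} now has the additional term $L(\bv^{2},\bw^{122})$ present in $\pi(\bv^2,\bw^{122})$; using the identity $\bw^{122}=\bw^2+\e_{\sigma \ts_1}$ and Lemma~\ref{lem:cartan and positive}, the surviving combination is $-(\tt-\tt^{-1})^2 L(\bv^2,\bw^2) L(0,\e_{\sigma \ts_1})$. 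The main obstacle will be the careful bookkeeping of the $\heartsuit$ and $\tt$ prefactors arising from the asymmetric $d$-values (now that $\btau(1)\ne 1$ but $\btau(2)=2$), and ensuring that the three-fold product cancellations on both sides yield exactly the stated coefficients; the split-case computation of Proposition~\ref{prop:iserre split} serves as an essential template.
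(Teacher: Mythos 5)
Your proposal is correct and follows essentially the same route as the paper's proof: reduce to $i=1$, enumerate the strongly $l$-dominant pairs for $\bw^{112}$ and $\bw^{122}$ (via Proposition~\ref{prop:dominant for gij}, noting that no $(\bv^{1},\bw^{112})$ term arises since $\btau(1)\neq 1$), compute the twisted restrictions with the $d$-values from Proposition~\ref{prop:bilinear form d}, kill the lower IC coefficients by bar-invariance, and assemble the product formulas as in Proposition~\ref{prop:iserre split}. No substantive differences from the paper's argument.
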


\begin{proof}
We only prove the statements for the case $1\xleftarrow{\alpha} 2\xrightarrow{\beta}3$, as the other case is similar.
\vspace{2mm}

(1)
\underline{Proof of \eqref{eqn:serre 1}}. Without loss of generality, we assume $i=1$.

Recall that
$\bw^{{12}}=\e_{\sigma \ts_1} +\e_{\sigma \ts_2}$, and $\bw^{{112}}=2\e_{\sigma \ts_1}+\e_{\sigma \ts_2}$.  Note $\tt^{-(1/2) \langle (0,\e_{\sigma \ts_1}), (0,\e_{\sigma \ts_2})\rangle_a}=\tt^{-1/2}.$ In this proof we shall  denote by $\bw=\bw^{{112}}$ and
$\heartsuit= \tt^{-1/2}.$

We have $\pi(0,\e_{\sigma \ts_1}) =\cl(0,\e_{\sigma \ts_1})$, $\pi(0,\e_{\sigma \ts_2})=\cl(0,\e_{\sigma \ts_2})$ and $\pi(0,\bw^{{12}})=\cl(0,\bw^{{12}})$. From Proposition \ref{prop:dominant pairs for gij}, the only $l$-dominant pairs $(\bv,\bw^{{12}})$ are given by
$\bv=0$ and $\bv=\bv^{{12}}$.
\begin{align}
\label{eqn: rank 2 III1}
\Res_{ \e_{\sigma \ts_1}, \e_{\sigma \ts_2} }^{\bw^{{12}}} \big(\cl(0, \bw^{{12}})\big)
&=\Res_{ \e_{\sigma \ts_1}, \e_{\sigma \ts_2} }^{\bw^{{12}}} \big(\pi(0, \bw^{{12}})\big)\\
\notag
&=\heartsuit\pi(0,\e_{\sigma \ts_1} )\boxtimes\pi(0,\e_{\sigma \ts_2} )=\heartsuit\cl(0,\e_{\sigma \ts_1} )\boxtimes\cl(0,\e_{\sigma \ts_2} ).
\end{align}

Note $\cm(\bv, \e_{\sigma \ts_2})\neq \emptyset$ with $\bv\leq \bv^{{12}}$ if and only if $\bv=0$. From \eqref{eqn:comultiplication} and (\ref{eqn:d 1}), we have
\begin{align*}
\Res_{ \e_{\sigma \ts_1}, \e_{\sigma \ts_2} }^{\bw^{{12}}} \big(\pi(\bv^{{12}}, \bw^{{12}})\big)
=&\heartsuit t\pi(\bv^{{12}},\e_{\sigma \ts_1} )\boxtimes\pi(0,\e_{\sigma \ts_2} )
=\heartsuit \tt\cl(0,\e_{\sigma \ts_1} )\boxtimes\cl(0,\e_{\sigma \ts_2} ),
\end{align*}
by noting that $\cm(\bv^{{12}},\e_{\sigma \ts_1} ) = \cm_0(\e_{\sigma \ts_1})=\cm(0,\e_{\sigma \ts_1})$ is just a point. Assume that $\pi(\bv^{{12}}, \bw^{{12}})=\cl(\bv^{{12}}, \bw^{{12}})+ a_{\bv^{{12}},0,\bw^{{12}} }(\tt)\cl(0,\bw^{{12}})$ by \eqref{eqn:decomposition theorem}. Then we see that $a_{\bv^{{12}},0,\bw^{{12}} }(\tt)=0$ since it is bar-invariant. So we have
\begin{align}
\label{eqn: rank 2 III2}
\Res_{ \e_{\sigma \ts_1}, \e_{\sigma \ts_2} }^{\bw^{{12}}} \big(\cl(\bv^{{12}}, \bw^{{12}})\big)=\heartsuit \tt\cl(0,\e_{\sigma \ts_1} )\boxtimes\cl(0,\e_{\sigma \ts_2} ).
\end{align}
Combining \eqref{eqn: rank 2 III1}--\eqref{eqn: rank 2 III2}, we have
\begin{align}
 \label{eq:L00L}
L(0,\e_{\sigma \ts_1})   L(0,\e_{\sigma \ts_2})= \heartsuit L(0,\bw^{{12}})+\heartsuit \tt L(\bv^{{12}}, \bw^{{12}}).
\end{align}

Similarly,
\begin{align}
L(0,\e_{\sigma \ts_2})   L(0,\e_{\sigma \ts_1})&= \heartsuit^{-1}L(0,\bw^{{12}})+\heartsuit^{-1}\tt^{-1}L(\bv^{{12}}, \bw^{{12}}).
\end{align}

The only $l$-dominant pairs $(\bv,\bw)$ are given by
$\bv=0$ and $\bv=\bv^{{12}}$ by Proposition \ref{prop:dominant for gij}(i). Then
\begin{align*}
\Res_{ \e_{\sigma \ts_1}, \bw^{{12}} }^{\bw} \big(\cl(0, \bw)\big)
& =\Res_{ \e_{\sigma \ts_1},\bw^{{12}} }^{\bw} \big(\pi(0, \bw)\big)\\
&=\heartsuit\pi(0,\e_{\sigma \ts_1} )\boxtimes\pi(0,\bw^{{12}} ) =\heartsuit\cl(0,\e_{\sigma \ts_1} )\boxtimes\cl(0,\bw^{{12}}).
\end{align*}
Similarly, by \eqref{eqn:comultiplication} and \eqref{eqn:d 2}--\eqref{eqn:d 3}, we have
\begin{align*}
\Res_{ \e_{\sigma \ts_1}, \bw^{{12}}}^{\bw} \big(\pi(\bv^{{12}}, \bw)\big)
&= \heartsuit  \pi(\bv^{{12}},\e_{\sigma \ts_1} )\boxtimes\pi(0,\bw^{{12}} ) + \heartsuit \tt\pi(0,\e_{\sigma \ts_1} )\boxtimes\pi(\bv^{{12}},\bw^{{12}} )\\
&= \heartsuit \cl(0,\e_{\sigma \ts_1} )\boxtimes\cl(0,\bw^{{12}} ) + \heartsuit \tt\cl(0,\e_{\sigma \ts_1} )\boxtimes\cl(\bv^{{12}},\bw^{{12}} ).
\end{align*}
As $\pi(\bv^{{12}},\bw)=\cl(\bv^{{12}},\bw)+a_{\bv^{{12}},0,w}(\tt)\cl(0,\bw)$,
we get that $a_{\e_{\ts_i},0,w}(\tt)=0$ by studying the fiber of $\cm(\bv^{{12}},\bw)$ over the origin of $\cm_0(\bw)$.
Then
\begin{align*}
\Res_{ \e_{\sigma \ts_1}, \bw^{{12}}}^{\bw} \big(\cl(\bv^{{12}}, \bw)\big)
= \heartsuit \cl(0,\e_{\sigma \ts_1} )\boxtimes\cl(0,\bw^{{12}} ) + \heartsuit \tt\cl(0,\e_{\sigma \ts_1} )\boxtimes\cl(\bv^{{12}},\bw^{{12}} ).
\end{align*}

By definition, it follows that
\begin{align}
L(0,\e_{\sigma \ts_1})   L(0,\bw^{{12}})
&= \heartsuit L(0,\bw)+\heartsuit L(\bv^{{12}},\bw),\\
L(0,\bw^{{12}})   L(0,\e_{\sigma \ts_1})
&= \heartsuit^{-1} L(0,\bw)+\heartsuit^{-1}L(\bv^{{12}},\bw),\\
L(0,\e_{\sigma \ts_1})   L(\bv^{{12}},\bw^{{12}})
&=\heartsuit \tt L(\bv^{{12}},\bw),\\
L(\bv^{{12}},\bw^{{12}})   L(0,\e_{\sigma \ts_1})
&=\heartsuit^{-1}\tt^{-1}L(\bv^{{12}},\bw).
\label{eq:L12L}
\end{align}
Then the desired formula \eqref{eqn:serre 1} follows from \eqref{eq:L00L}--\eqref{eq:L12L} by a direct calculation.

\vspace{2mm}

(2)
\underline{Proof of  \eqref{eqn:serre 2}}. Again we can assume $i=1$. Let us denote $\bw' =\bw^{122} =\e_{\sigma \ts_1}+2\e_{\sigma \ts_2}$. By Proposition~ \ref{prop:dominant for gij}(ii), the only $l$-dominant pairs $(\bv,\bw')$ are given by $\bv \in \{ {\bf 0}, \bv^{{12}}, \bv^{2} \}$. We compute
\begin{align*}
\Res_{ \e_{\sigma \ts_2}, \bw^{{12}} }^{\bw'} \big(\cl(0, \bw')\big)
&=\Res_{ \e_{\sigma \ts_2},\bw^{{12}} }^{\bw'} \big(\pi(0, \bw')\big)\\
&=\heartsuit^{-1}\pi(0,\e_{\sigma \ts_2} )\boxtimes\pi(0,\bw^{{12}} )  =\heartsuit^{-1}\cl(0,\e_{\sigma \ts_2} )\boxtimes\cl(0,\bw^{{12}}).
\end{align*}
From \eqref{eqn:comultiplication} and (\ref{eqn:d 7}), we see that
\begin{align*}
\Res_{ \e_{\sigma \ts_2}, \bw^{{12}}}^{\bw'} \big(\pi(\bv^{{12}}, \bw')\big)
&= \heartsuit^{-1} \tt^{-1} \pi(0,\e_{\sigma \ts_2} )\boxtimes\pi(\bv^{{12}},\bw^{{12}} )\\
&=\heartsuit^{-1}\tt^{-1}\cl(0,\e_{\sigma \ts_2} )\boxtimes\cl(\bv^{{12}},\bw^{{12}} ).
\end{align*}
As $\pi(\bv^{{12}},\bw')=\cl(\bv^{{12}},\bw')+a_{\bv^{{12}},0,\bw'}(\tt)\cl(0,\bw')$,
we have $a_{\bv^{{12}},0,\bw'}(\tt)=0$ by noting that it is bar-invariant.
Then
\begin{align*}
\Res_ {\e_{\sigma \ts_2}, \bw^{{12}}}^{\bw'} \big(\cl(\bv^{{12}}, \bw')\big)
=\heartsuit^{-1}\tt^{-1}\cl(0,\e_{\sigma \ts_2} )\boxtimes \cl(\bv^{{12}},\bw^{{12}} ).
\end{align*}

It follows from \eqref{eqn:comultiplication} and \eqref{eqn:d 8}--\eqref{eqn:d 10} that
\begin{align*}
&\Res_{ \e_{\sigma \ts_2}, \bw^{{12}} }^{\bw'} \big(\pi(\bv^{2}, \bw')\big)\\
&=\heartsuit^{-1}\tt^{-1}\pi(\bv^{2},\e_{\sigma \ts_2} )\boxtimes\pi(0,\bw^{{12}} ) +\heartsuit^{-1} \pi(\bv^{2}-g^{12},\e_{\sigma \ts_2} )\boxtimes\pi(\bv^{{12}},\bw^{{12}} )  \\
&= \heartsuit^{-1}\tt^{-1}\pi(0,\e_{\sigma \ts_2} )\boxtimes\pi(0,\bw^{{12}} ) +\heartsuit^{-1} \pi(0,\e_{\sigma \ts_2} )\boxtimes\pi(\bv^{{12}},\bw^{{12}} ) \\
 &=\heartsuit^{-1}\tt^{-1}\cl(0,\e_{\sigma \ts_2} )\boxtimes\cl(0,\bw^{{12}} ) +\heartsuit^{-1} \cl(0,\e_{\sigma \ts_2} )\boxtimes\cl(\bv^{{12}},\bw^{{12}} ).
\end{align*}

Assume that $\pi(\bv^{2}, \bw')= \cl(\bv^{2},\bw')+ a_{\bv^{2},\bv^{{12}}, \bw'}(\tt) \cl(\bv^{{12}},\bw')+a_{\bv^{2},0, \bw'}(\tt) \cl(0,\bw')$.
Similarly, we can get that $a_{\bv^{2},\bv^{{12}}, \bw'}(\tt) =0= a_{\bv^{2},0, \bw'}(\tt)$.
Then
\begin{align*}
\Res_{ \e_{\sigma \ts_2}, \bw^{{12}} }^{\bw'} \big(\cl(\bv^{2}, \bw')\big)
=\heartsuit^{-1}\tt^{-1}\cl(0,\e_{\sigma \ts_2} )\boxtimes\cl(0,\bw^{{12}} )
+\heartsuit^{-1} \cl(0,\e_{\sigma \ts_2} )\boxtimes\cl(\bv^{{12}},\bw^{{12}} ).
\end{align*}
Therefore, we obtain
\begin{align}
L(0,\e_{\sigma \ts_2})   L(0,\e_{\sigma \ts_1})
&= \heartsuit^{-1}L(0,\bw^{{12}})+\heartsuit^{-1}\tt^{-1}L(\bv^{{12}}, \bw^{{12}})),
  \label{eq:L00L-2}
\\
L(0,\e_{\sigma \ts_1})   L(0,\e_{\sigma \ts_2})
&= \heartsuit L(0,\bw^{{12}})+\heartsuit \tt L(\bv^{{12}}, \bw^{{12}})),
\\
L(0,\e_{\sigma \ts_2})   L(0,\bw^{{12}})
&= \heartsuit^{-1} L(0,\bw')+\heartsuit^{-1}\tt^{-1}L(\bv^{2},\bw),
\\
 L(0,\bw^{{12}})   L(0,\e_{\sigma \ts_2})
 &= \heartsuit L(0,\bw')+\heartsuit \tt L(\bv^{2},\bw),
 \\
L(0,\e_{\sigma \ts_2})   L(\bv^{{12}},\bw^{{12}})
&=\heartsuit^{-1}\tt^{-1}L(\bv^{{12}},\bw')+\heartsuit^{-1}L(\bv^{2},\bw'),
\\
L(\bv^{{12}},\bw^{{12}})   L(0,\e_{\sigma \ts_2})
&=\heartsuit \tt L(\bv^{{12}},\bw')+\heartsuit L(\bv^{2},\bw').
\label{eq:L12L-2}
\end{align}
The desired formula \eqref{eqn:serre 2} follows from \eqref{eq:L00L-2}--\eqref{eq:L12L-2}  by a direct calculation. The proposition is proved.
\end{proof}

\begin{lemma}[cf. \cite{HL15,Qin}]
\label{lem:serre relations for one edge}
For any $i,j\in Q_0$ such that $i,j$ is connected by an arrow and $\btau i\neq i$, $\btau(j)\neq j$, the following identity holds in $\tRi$:
\begin{align*}
L(0,\e_{\sigma \ts_i})^{2}  L(0,\e_{\sigma \ts_j})
-(\tt+\tt^{-1}) L(0, \e_{\sigma \ts_i})   L(0,\e_{\sigma \ts_j}) L(0,\e_{\sigma \ts_i})
+L(0,\e_{\sigma \ts_j})  L(0,\e_{\sigma \ts_i})^{2} =0.
\end{align*}
\end{lemma}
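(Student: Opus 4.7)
The plan is to adapt the direct computational strategy used in Propositions~\ref{prop:iserre split} and \ref{proposition relaiton for A3}. The hypotheses $\btau i\neq i$ and $\btau(j)\neq j$ are precisely what force the standard (untwisted) quantum Serre relation, with no correction term on the right-hand side.

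First, I would enumerate the strongly $l$-dominant pairs $(\bv,\bw)$ for $\bw=2\e_{\sigma \ts_i}+\e_{\sigma \ts_j}$. The key observation is that, since $\bw^{i}=\e_{\sigma \ts_i}+\e_{\sigma \ts_{\btau i}}$ is not a summand of $\bw$ (as $\btau i\neq i$) and $\bw^{j}=\e_{\sigma \ts_j}+\e_{\sigma \ts_{\btau j}}$ is likewise not a summand, the generalized simple modules $\E_{i}$ and $\E_{j}$ cannot appear as direct summands of any $\Lambda^\imath$-module of dimension vector $\bw$. Arguing as in Lemma~\ref{lem:dominant pairs hijk} and Proposition~\ref{prop:dominant for gij}, the only indecomposable $\Lambda^\imath$-modules of dimension vector $\bw$ are $\ts_i\oplus\ts_i\oplus\ts_j$ and $\ts_i\oplus X_{ij}$, and by Lemma~\ref{lemma Xijk} the corresponding strongly $l$-dominant pairs are exactly $\bv\in\{0,\bv^{{ij}}\}$. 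Similarly, by Proposition~\ref{prop:dominant pairs for gij}, the strongly $l$-dominant pairs with $\bw^{{ij}}=\e_{\sigma \ts_i}+\e_{\sigma \ts_j}$ are $\bv\in\{0,\bv^{{ij}}\}$.

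Second, I would compute the restriction formulas $\Res^{\bw}_{\e_{\sigma \ts_i},\bw^{{ij}}}(\cl(\bv,\bw))$ and $\Res^{\bw^{{ij}}}_{\e_{\sigma \ts_i},\e_{\sigma \ts_j}}(\cl(\bv,\bw^{{ij}}))$ for $\bv\in\{0,\bv^{{ij}}\}$, following the template in Propositions~\ref{prop:iserre split} and \ref{proposition relaiton for A3}, and using the bilinear pairing identities of Proposition~\ref{prop:bilinear form d} (chiefly \eqref{eqn:d 1}--\eqref{eqn:d 3} together with their evident variants for the $\bw$ at hand). Each identification of an expansion coefficient $a_{\bv,\bv';\bw}(\tt)$ with $0$ proceeds via the bar-invariance argument from those earlier proofs. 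This yields explicit multiplication formulas for the products of $L(0,\e_{\sigma \ts_i})$ with $L(0,\e_{\sigma \ts_j})$, $L(0,\bw^{{ij}})$ and $L(\bv^{{ij}},\bw^{{ij}})$ on both sides, analogous to but strictly shorter than \eqref{eqn: sisj multiplication}--\eqref{eq:LL3}. Crucially, no $L(\bv^{i},\bw)$ or $L(\bv^{j},\bw)$ term appears, because these pairs are not $l$-dominant under the current hypotheses.

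Substituting these formulas into the left-hand side of the asserted Serre relation produces a linear combination of $L(0,\bw)$ and $L(\bv^{{ij}},\bw)$; the two coefficients cancel by a direct quantum-integer identity, yielding $0$. The main obstacle is the clean enumeration of strongly $l$-dominant pairs in the first step: one must rule out any ``hidden'' pair of the form $(\bv^{i},\bw)$ or $(\bv^{j},\bw)$, since it is precisely such pairs that produced the correction terms $-(\tt-\tt^{-1})^{2}L(\bv^{i},\bw^{i}) L(0,\e_{\sigma \ts_j})$ and $-(\tt-\tt^{-1})^{2}L(\bv^{j},\bw^{j}) L(0,\e_{\sigma \ts_i})$ on the right-hand sides of \eqref{split serre 1}--\eqref{split serre 2} in Proposition~\ref{prop:iserre split}. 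Once their absence is secured, the rest is careful bookkeeping of the twist $\tt^{-\frac12\langle\cdot,\cdot\rangle_a}$ and the exponents $d(\cdot,\cdot)$. Alternatively, one may deduce the same identity from the quantum Serre relation in $\tR$ of Hernandez--Leclerc \cite{HL15} and Qin \cite{Qin} by invoking the embeddings in Lemma~\ref{lem:embedding of varieties}: the hypotheses $\btau i\neq i$ and $\btau(j)\neq j$ guarantee that the $\btau$-orbits of $i,j$ are disjoint, so the rank-two piece of $\tRi$ generated by $L(0,\e_{\sigma \ts_i})$ and $L(0,\e_{\sigma \ts_j})$ is compatibly identified with a corresponding subalgebra of $\tR$, across which the standard Serre relation transfers.
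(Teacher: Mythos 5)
Your proposal is correct and follows essentially the same route as the paper, which simply states that the proof is identical to that of \eqref{eqn:serre 1} in Proposition~\ref{proposition relaiton for A3}: enumerate the strongly $l$-dominant pairs for $\bw^{{ij}}$ and for $2\e_{\sigma \ts_i}+\e_{\sigma \ts_j}$ via Propositions~\ref{prop:dominant pairs for gij} and \ref{prop:dominant for gij}(i) (the hypothesis $\btau i\neq i$, $\btau(j)\neq j$ rules out the extra pair that produces a correction term), compute the restriction coefficients with the bar-invariance argument, and cancel. The only nitpick is that $\ts_i\oplus\ts_i\oplus\ts_j$ and $\ts_i\oplus X_{ij}$ are the only modules (not indecomposable modules) of that dimension vector.
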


\begin{proof}
The proof is the same as for (\ref{eqn:serre 1}) and will be omitted here.
\end{proof}

\section{Geometric realization of $\imath$quantum groups}
   \label{sec:main}

In this section, we give a quick review of quantum groups $\tU$, $\U$, and $\imath$quantum groups $\tUi$, $\Ui$, associated to the Dynkin diagrams of type ADE. We then develop a new basis and an algebra filtration for the quantum Grothendieck ring $\tRi$. Finally we establish the algebra isomorphism between $\tUi$ and $\tRi$.

As the algebra $\tRi$ is defined over the field $\Q(\tt^{1/2})$, we shall consider the quantum groups and variants over the same field $\Q(\tt^{1/2})$ instead of $\Q(\tt)$.

\subsection{Quantum groups}   \label{subsection Quantum groups}

Let $Q$ be a Dynkin quiver with vertex set $Q_0= \I$.
Let $n_{ij}$ be the number of edges connecting vertex $i$ and $j$. Let $C=(c_{ij})_{i,j \in \I}$ be the Cartan matrix of the underlying graph of $Q$ with $c_{ij}=2\delta_{ij}-n_{ij}.$ Let $\fg$ be the corresponding simple Lie algebra.


Let $\tt$ be an indeterminate. Write $[A, B]=AB-BA$. Denote, for $r,m \in \N$,
\[
 [r]=\frac{\tt^r-\tt^{-r}}{\tt-\tt^{-1}},
 \quad
 [r]!=\prod_{i=1}^r [i], \quad \qbinom{m}{r} =\frac{[m][m-1]\ldots [m-r+1]}{[r]!}.
\]
Then the Drinfeld double $\tU := \tU_\tt(\fg)$ is defined to be the $\Q(\tt^{1/2})$-algebra generated by $E_i,F_i, \tK_i,\tK_i'$, $i\in \I$, 
 subject to the following relations:  for $i, j \in \I$,
\begin{align}
[E_i,F_j]= \delta_{ij} \frac{\tK_i-\tK_i'}{\tt-\tt^{-1}},  &\qquad [\tK_i,\tK_j]=[\tK_i,\tK_j']  =[\tK_i',\tK_j']=0,
\label{eq:KK}
\\
\tK_i E_j=\tt^{c_{ij}} E_j \tK_i, & \qquad \tK_i F_j=\tt^{-c_{ij}} F_j \tK_i,
\label{eq:EK}
\\
\tK_i' E_j=\tt^{-c_{ij}} E_j \tK_i', & \qquad \tK_i' F_j=\tt^{c_{ij}} F_j \tK_i',
 \label{eq:K2}
\end{align}
 and for $i\neq j \in \I$,
\begin{align}
& \sum_{r=0}^{1-c_{ij}} (-1)^r \left[ \begin{array}{c} 1-c_{ij} \\r \end{array} \right]  E_i^r E_j  E_i^{1-c_{ij}-r}=0,
  \label{eq:serre1} \\
& \sum_{r=0}^{1-c_{ij}} (-1)^r \left[ \begin{array}{c} 1-c_{ij} \\r \end{array} \right]  F_i^r F_j  F_i^{1-c_{ij}-r}=0.
  \label{eq:serre2}
\end{align}
Note that $\tK_i \tK_i'$ are central in $\tU$ for all $i$. 
The comultiplication $\Delta: \widetilde{\U} \rightarrow \widetilde{\U} \otimes \widetilde{\U}$ is given by
\begin{align}  \label{eq:Delta}
\begin{split}
\Delta(E_i)  = E_i \otimes 1 + \tK_i \otimes E_i, & \quad \Delta(F_i) = 1 \otimes F_i + F_i \otimes \tK_{i}', \\
 \Delta(\tK_{i}) = \tK_{i} \otimes \tK_{i}, & \quad \Delta(\tK_{i}') = \tK_{i}' \otimes \tK_{i}'.
 \end{split}
\end{align}

Analogously as for $\tU$, the quantum group $\bU$ is defined to be the $\Q(\tt^{1/2})$-algebra generated by $E_i,F_i, K_i, K_i^{-1}$, $i\in \I$, subject to the  relations modified from \eqref{eq:KK}--\eqref{eq:serre2} with $\tK_i$ and $\tK_i'$ replaced by $K_i$ and $K_i^{-1}$, respectively. The comultiplication $\Delta$ is obtained by modifying \eqref{eq:Delta} with $\tK_i$ and $\tK_i'$ replaced by $K_i$ and $K_i^{-1}$, respectively.


Let $\widetilde{\bU}^+$ be the subalgebra of $\widetilde{\bU}$ generated by $E_i$ $(i\in \I)$, $\widetilde{\bU}^0$ be the subalgebra of $\widetilde{\bU}$ generated by $\tK_i, \tK_i'$ $(i\in \I)$, and $\widetilde{\bU}^-$ be the subalgebra of $\widetilde{\bU}$ generated by $F_i$ $(i\in \I)$, respectively.
The subalgebras $\bU^+$, $\bU^0$ and $\bU^-$ of $\bU$ are defined similarly. Then both $\widetilde{\bU}$ and $\bU$ have triangular decompositions:
\begin{align*}
\widetilde{\bU} =\widetilde{\bU}^+\otimes \widetilde{\bU}^0\otimes\widetilde{\bU}^-,
\qquad
\bU &=\bU^+\otimes \bU^0\otimes\bU^-.
\end{align*}
Clearly, ${\bU}^+\cong\widetilde{\bU}^+$, ${\bU}^-\cong \widetilde{\bU}^-$, and ${\bU}^0 \cong \widetilde{\bU}^0/(\tK_i \tK_i' -1 \mid   i\in \I)$.

\subsection{Theorems of Hernandez-Leclerc and Qin}  
  \label{subsec:Gring}

Let $\mcr, \cs$ and $\cp$ be the regular, singular, and preprojective NKS categories defined in Definition \ref{def:RS for QG} and  Proposition \ref{prop:2-complexes}. In this case, $\Ind \cp=\Ind \mod(kQ)\bigsqcup \Ind \Sigma\mod(kQ)$.

For any dimension vector $\bf{u}:\mcr_0\rightarrow\N$, the dimension vector $\Sigma^*\bf u$ is such that
\[
(\Sigma^*{\bf u}) (z)={\bf u}(\Sigma z), \quad \forall z\in\mcr_0.
\]
Recalling $V^+, W^+$ from \eqref{eq:VW+}, we define
\begin{align}
&{W}^- =\bigoplus_{x\in\{\ts_i,i\in Q_0\}}\N \e_{\sigma \Sigma x},
&&{V}^- =\bigoplus_{x\in\Ind \mod(kQ),\, x\text{ is not injective}} \N \e_{\Sigma x}.
\end{align}
(Informally, we have ${W}^-=\Sigma^*{W}^+, {V}^-=\Sigma^* {V}^+$.)
Recall $\bv^i, \bw^i$ in \eqref{eq:vfi} used in $\imath$NKS categories $\mcr^\imath, \cs^\imath$. For use in the NKS categories $\mcr, \cs$ defined in \S\ref{subsec:NKS}, we shall denote, for $i\in Q_0$,
\begin{align}
\bar{\bw}^{i}=\e_{\sigma \ts_i}+\e_{\sigma \Sigma \ts_i},
&\qquad
\bar{\bv}^{i}=\sum_{z\in\mcr_0-\cs_0} \dim\Hom_{\cp}(\ts_i, z)\e_z,
\qquad \bar{\bv}^{\Sigma i}=\Sigma^* \bar{\bv}^{i}.
\end{align}

The quantum Grothendieck ring $K^{gr}(\mod(\cs))$ in \eqref{eq:Kgr} in the current setting reads as follows:
\[
K^{gr}(\mod(\cs))=\bigoplus_{\bw\in{W}^+\oplus {W}^-} K_\bw(\mod(\cs)).
\]
Recalling the Euler form $\langle \cdot, \cdot \rangle$ on $K_0(\mod (kQ))$, define a bilinear form $\langle \cdot, \cdot \rangle_{\bar{a}}$ on $K_0(\mod (kQ))\oplus K_0(\Sigma \mod (kQ))$ as follows:
for $x=(x_1,x_2)$, $y=(y_1,y_2)\in K_0(\mod (kQ))\oplus K_0(\Sigma \mod (kQ))$, we let
\begin{align}
&\langle x,y\rangle_{\bar{a}}=\langle x_1,y_1\rangle-\langle y_1,x_1\rangle+\langle x_2,y_2\rangle-\langle y_2,x_2\rangle.\label{eqn: antisymmetric bilinear form 2}
\end{align}

Define the twisted comultiplication $\Res^{\bw}_{\bw^1,\bw^2}: = t^{-\frac12\langle \bw^1,\bw^2\rangle_{\bar{a}}}  \widetilde{\Res}^{\bw}_{\bw^1,\bw^2}$. Then $\{ \Delta^\bw_{\bw^1,\bw^2} \}$
gives rise to a comultiplication $\Delta$ on $K^{gr}(\mod(\cs))$, which induces an algebra structure on the $\Z[\tt^{1/2}, \tt^{-1/2}]$-graded dual $\tRZ :=K^{gr}(\mod(\cs))$; cf. \eqref{eq:Kgr2} and compare \eqref{eq:tRi}. That is,
\begin{align}
\tRZ=\bigoplus_{\bw\in W^{+}}  \tR_{\Z,\bw},
\quad \text{where } \tR_{\Z,\bw}  &=   \Hom_{\Z[\tt, \tt^{-1}]} \big(K_\bw(\mod(\cs)),\Z[\tt^{1/2}, \tt^{-1/2}]\big),
\label{eq:tR2}
\end{align}
The algebra $\tRZ$ has the following 3 distinguished subalgebras:
\vspace{2mm}

$\triangleright$ $\tRZ^+=\bigoplus_{\bw\in {W}^+}  \tR_{\Z,\bw}$;

$\triangleright$ $\tRZ^-=\bigoplus_{\bw\in {W}^-}  \tR_{\Z,\bw}$;

$\triangleright$ the subalgebra $\tRZ^0$ generated by $\{ L(\bar{\bv}^{i},\bar{\bw}^{i}), L(\bar{\bv}^{\Sigma i},\bar{\bw}^{i})\mid i\in \I \}.$
\vspace{2mm}

\noindent We further denote
\[
\tR = \Q(\tt^{1/2}) \otimes \tRZ,
\qquad \text{ and }\quad
 \tR^{\star} = \Q(\tt^{1/2}) \otimes \tRZ^\star, \text{ for } \star \in \{+,-,0\}.
\]



The theorem below summarizes the main result in \cite{Qin}, which is a generalization of a fundamental result from \cite{HL15}.
\begin{theorem}    \label{thm:iso HL-Q}
{\quad}
\begin{enumerate}
\item \cite{HL15}
There exists a $\Q(\tt^{1/2})$-algebra isomorphism $\kappa^+: \bU^+ \longrightarrow \tR^+$ which sends
$E_i \mapsto \frac{\tt}{1-\tt^2} L(0,\e_{\sigma \ts_i})$ for each $i\in Q_0$.

\item \cite{Qin}
There exists a $\Q(\tt^{1/2})$-algebra isomorphism
$\widetilde{\kappa}: {\tU}\longrightarrow  \tR$ which extends $\kappa^+$ above and sends
\begin{align}
E_i&\mapsto \frac{\tt}{1-\tt^2} L(0,\e_{\sigma \ts_i}),&
F_i& \mapsto \frac{\tt}{\tt^2-1}L(0,\e_{\sigma \Sigma \ts_i}) ,\\
\tK_i&\mapsto L(\bar{\bv}^{\Sigma i},\bar{\bw}^{i}),&\tK'_i&\mapsto L(\bar{\bv}^{i},\bar{\bw}^{i}).
\end{align}
\end{enumerate}
\end{theorem}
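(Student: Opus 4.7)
The plan is to establish both parts by explicit verification of the defining relations of $\bU^+$ and of $\tU$, and then to deduce bijectivity from triangular decompositions. Part (1) follows directly from Hernandez-Leclerc's theorem \cite{HL15}: one checks that the elements $\frac{\tt}{1-\tt^2} L(0,\e_{\sigma \ts_i})$ satisfy the quantum Serre relations \eqref{eq:serre1} by a rank-$2$ stratification computation in the NKS setting, entirely analogous to the $\imath$-quiver rank-$2$ computations of Lemma~\ref{lem:serre relations for one edge}. Surjectivity holds because $\tR^+$ is generated as an algebra by the perverse sheaves supported at the simples $\sigma\ts_i$, and injectivity comes from comparing graded dimensions in each $W^+$-weight space against the PBW basis of $\bU^+$.

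For part (2), the plan is to extend $\kappa^+$ to all of $\tU$ by using the prescribed images and checking relations \eqref{eq:KK}--\eqref{eq:serre2}. The negative Serre relations \eqref{eq:serre2} reduce to the positive ones via the $\Sigma$-symmetry which swaps $W^+$ and $W^-$ and interchanges the subcategories $\mod(kQ)$ and $\Sigma\mod(kQ)$ inside $\cp$. The Cartan relations \eqref{eq:KK} and the $K$-actions \eqref{eq:EK}, \eqref{eq:K2} are verified by the same template as Lemmas~\ref{lem:cartan and positive} and \ref{lem:cartan part}: one identifies the unique strongly $l$-dominant pair in each relevant weight space using the analogue of Corollary~\ref{cor:KLR for finite projective dimension} for $\cs$, and then reads off the $\tt$-powers from the bilinear form $d(\cdot,\cdot)$ of \eqref{definition:d} together with the antisymmetrization $\langle\cdot,\cdot\rangle_{\bar a}$ of \eqref{eqn: antisymmetric bilinear form 2}.

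The main obstacle, and the essential content of Qin's extension \cite{Qin}, is the mixed commutation relation $[E_i,F_j]=\delta_{ij}(\tK_i-\tK_i')/(\tt-\tt^{-1})$. For $i\ne j$, the frozen vertices $\sigma\ts_i$ and $\sigma\Sigma\ts_j$ are unlinked by any mesh in $\mcr$, so $\cm_0(\e_{\sigma\ts_i}+\e_{\sigma\Sigma\ts_j})$ is a single semisimple point and the product in either order collapses to the same element of $\tR$, yielding commutativity. For $i=j$ one classifies the $\cs$-modules of dimension vector $\bar{\bw}^i=\e_{\sigma\ts_i}+\e_{\sigma\Sigma\ts_i}$, obtaining besides the semisimple module exactly two further strongly $l$-dominant pairs with dimension vectors $(\bar{\bv}^i,\bar{\bw}^i)$ and $(\bar{\bv}^{\Sigma i},\bar{\bw}^i)$; then the computation of the twisted product in both orders (following the pattern of Lemma~\ref{eqn:cartan}) gives a difference of the form $(\tt-\tt^{-1})\bigl(L(\bar{\bv}^{\Sigma i},\bar{\bw}^i)-L(\bar{\bv}^i,\bar{\bw}^i)\bigr)$ up to the normalization $\tfrac{\tt}{1-\tt^2}\cdot\tfrac{\tt}{\tt^2-1}=-\tfrac{\tt^2}{(\tt-\tt^{-1})^2\tt^2}$, matching $(\tK_i-\tK_i')/(\tt-\tt^{-1})$ after rescaling. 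Finally, bijectivity is concluded from the triangular decomposition $\tU=\tU^+\otimes\tU^0\otimes\tU^-$ by verifying that $\tR^0$ is a polynomial algebra in the $2|\I|$ generators $\{L(\bar{\bv}^i,\bar{\bw}^i),L(\bar{\bv}^{\Sigma i},\bar{\bw}^i)\}_{i\in\I}$ (the NKS counterpart of Theorem~\ref{thm:B}) and that $\tR$ is free over $\tR^0$ with a basis indexed by pairs of $l$-dominant classes in $W^+\sqcup W^-$, reducing to the known isomorphisms $\bU^+\cong\tR^+$ and $\bU^-\cong\tR^-$ on each graded piece.
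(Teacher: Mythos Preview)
The paper does not prove this theorem at all: it is stated as a summary of the main results of \cite{HL15} and \cite{Qin}, introduced by ``The theorem below summarizes the main result in \cite{Qin}, which is a generalization of a fundamental result from \cite{HL15}.'' So there is no ``paper's own proof'' to compare against; the statement is imported wholesale from the literature.

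That said, your outline is a faithful sketch of the strategy actually used in \cite{Qin} (and it is also, not coincidentally, the template this paper follows for its own $\imath$-analogue in Theorem~\ref{thm:UiRi}): verify the defining relations on generators by rank-$\le 2$ stratification computations, handle the mixed relation $[E_i,F_j]$ by classifying $\cs$-modules of dimension $\bar{\bw}^i$, and conclude bijectivity from a triangular/free-module decomposition of $\tR$ over $\tR^0$ together with the already-established $\bU^\pm\cong\tR^\pm$. Two small points of care: (i) for $i\ne j$ your phrase ``unlinked by any mesh'' is informal---what you actually need (and what holds) is that in $\Lambda=kQ\otimes R_2$ there is no arrow between the vertices underlying $\sigma\ts_i$ and $\sigma\Sigma\ts_j$, so the only module is semisimple and both $d$-pairings vanish; (ii) the sign/normalization bookkeeping in your $[E_i,F_i]$ step is written loosely (``up to the normalization\ldots matching\ldots after rescaling'')---in an actual proof you must track it exactly, since the assignment of $\tK_i$ versus $\tK_i'$ to $L(\bar{\bv}^{\Sigma i},\bar{\bw}^i)$ versus $L(\bar{\bv}^i,\bar{\bw}^i)$ is pinned down by that sign.
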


Let  ${\bf R}$ denote the quotient algebra of $\tR$ by the ideal generated by
$L(\bar{\bv}^{i}+\bar{\bv}^{\Sigma i},2\bar{\bw}^{i})-1$, for all $i\in\I.$
Then $\widetilde{\kappa}$ in Theorem~\ref{thm:iso HL-Q}(2) induces a $\Q(\tt^{1/2})$-algebra isomorphism $\bU\cong \bf R$.

\subsection{The $\imath$quantum groups}

For the Cartan matrix $C=(c_{ij})$, let $\Aut(C)$ be the group of all permutations $\btau$ of the set $\I$ such that $c_{ij}=c_{\btau i,\btau j}$. An element $\btau\in\Aut(C)$ is called an \emph{involution} if $\btau^2=\Id$. We define the {\em universal $\imath$quantum group} $\widetilde{\bU}^\imath$ to be the $\Q(\tt^{1/2})$-subalgebra of $\tU$ generated by
\begin{equation}
  \label{eq:Bi}
B_i= F_i +  E_{\btau i} \tK_i',
\qquad \tk_i = \tK_i \tK_{\btau i}', \quad \forall i \in \I.
\end{equation}
Let $\tU^{\imath 0}$ be the $\Q(\tt^{1/2})$-subalgebra of $\tUi$ generated by $\tk_i$, for $i\in \I$.
By \cite[Lemma 6.1]{LW19}, the elements $\tk_i$ (for $i= \btau i$) and $\tk_i \tk_{\btau i}$  (for $i\neq \btau i$) are central in $\tUi$.

Let $\bvs=(\vs_i)\in  (\Q(\bv)^\times)^{\I}$ be such that $\vs_i=\vs_{\btau i}$ for each $i\in \I$ which satisfies $c_{i, \btau i}=0$.
Let $\Ui:=\Ui_{\bvs}$ be the $\Q(\tt^{1/2})$-subalgebra of $\bU$ generated by
\[
B_i= F_i+\vs_i E_{\btau i}K_i^{-1},
\quad
k_j= K_jK_{\btau j}^{-1},
\qquad  \forall i \in \I, j \in \ci.
\]
It is known \cite{Let99, Ko14} that $\bU^\imath$ is a right coideal subalgebra of $\bU$ in the sense that $\Delta: \Ui \rightarrow \Ui\otimes \U$; and $(\bU,\Ui)$ is called a \emph{quantum symmetric pair}. 

The algebras $\Ui_{\bvs}$, for $\bvs\in  (\Q(\tt)^\times)^{\I}$, are obtained from $\tUi$ by central reductions.

\begin{proposition} [\text{\cite[Proposition 6.2]{LW19}}]
  \label{prop:QSP12}
(1) The algebra $\Ui$ is isomorphic to the quotient of $\tUi$ by the ideal generated by
\begin{align}   \label{eq:parameters}
\tk_i - \vs_i \; (\text{for } i =\btau i),
\qquad  \tk_i \tk_{\btau i} - \vs_i \vs_{\btau i}  \;(\text{for } i \neq \btau i).
\end{align}
The isomorphism is given by sending $B_i \mapsto B_i, k_j \mapsto \vs_{\btau j}^{-1} \tk_j, k_j^{-1} \mapsto \vs_j^{-1} \tk_{\btau j},  \forall i\in \I, j\in \I\backslash\ci$.

(2) The algebra $\widetilde{\bU}^\imath$ is a right coideal subalgebra of $\widetilde{\bU}$; that is, $(\widetilde{\bU}, \widetilde{\bU}^\imath)$ forms a quantum symmetric pair.
\end{proposition}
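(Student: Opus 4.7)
For \emph{Part (2)}, the plan is a direct check on the generators of $\tUi$ using the coproduct \eqref{eq:Delta}. Since $\tk_i=\tK_i\tK_{\btau i}'$, one has $\Delta(\tk_i)=\tk_i\otimes\tk_i\in\tUi\otimes\tUi$. For $B_i=F_i+E_{\btau i}\tK_i'$, one expands $\Delta(B_i)=\Delta(F_i)+\Delta(E_{\btau i})\Delta(\tK_i')$ and regroups, using the identity $\tK_{\btau i}\tK_i'=\tk_{\btau i}$, to obtain
\[
\Delta(B_i) \;=\; 1\otimes F_i \;+\; B_i\otimes \tK_i' \;+\; \tk_{\btau i}\otimes E_{\btau i}\tK_i',
\]
whose left tensor factors $1$, $B_i$, $\tk_{\btau i}$ all lie in $\tUi$. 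By multiplicativity of $\Delta$, $\Delta(\tUi)\subseteq\tUi\otimes\tU$.

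For \emph{Part (1)}, let $J\subset\tUi$ denote the central ideal generated by the listed elements. My plan is to exhibit mutually inverse algebra homomorphisms $\phi:\tUi/J\to\Ui$ and $\psi:\Ui\to\tUi/J$. The map $\phi$ is given on generators by $B_i\mapsto B_i$ and $\tk_j\mapsto\vs_{\btau j}k_j$ (interpreted as $\tk_i\mapsto\vs_i$ when $i=\btau i$, since $k_i=K_iK_i^{-1}=1$ there). One checks that $\phi$ factors through $J$ because $\phi(\tk_j\tk_{\btau j})=\vs_j\vs_{\btau j}\,k_jk_{\btau j}=\vs_j\vs_{\btau j}$ and $\phi(\tk_i)=\vs_i$ when $i=\btau i$. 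The map $\psi$ of the statement sends $B_i\mapsto B_i$, $k_j\mapsto\vs_{\btau j}^{-1}\tk_j$, $k_j^{-1}\mapsto\vs_j^{-1}\tk_{\btau j}$, and the compatibility $k_jk_j^{-1}\mapsto 1$ follows from $\vs_j^{-1}\vs_{\btau j}^{-1}\tk_j\tk_{\btau j}=1$ in $\tUi/J$. Mutual inverseness is then immediate on generators.

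The crux, and the main obstacle, is verifying that $\phi$ (equivalently $\psi$) is a well-defined algebra homomorphism. One clean route is via scalar extension: after adjoining $\vs_i^{1/2}$ to $\Q(\tt^{1/2})$, one obtains an algebra isomorphism between $\tU/(\tK_i\tK_i'-\vs_i:i\in\I)$ and $\bU$ via $\tK_i\mapsto\vs_i^{1/2}K_i$, $\tK_i'\mapsto\vs_i^{1/2}K_i^{-1}$ together with an appropriate rescaling of $E_i,F_i$ preserving the commutator $[E_i,F_i]$. Restricting this isomorphism to the subalgebra $\tUi$ identifies $\tUi/J$ with $\Ui$ (possibly after one further rescaling automorphism of $\bU$ of the form $E_i\mapsto\mu_iE_i$, $F_i\mapsto\mu_i^{-1}F_i$ to match the parameters), and a Galois-descent argument brings the identification back to the base field $\Q(\tt^{1/2})$. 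A computationally heavier alternative is to verify the defining $\imath$Serre and commutation relations of $\Ui$ directly under $\psi$, using the (parameter-free) $\imath$Serre relations of $\tUi$ established in \cite{LW19}.
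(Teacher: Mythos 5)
First, note that the paper does not prove Proposition~\ref{prop:QSP12} here at all: it is imported verbatim from \cite[Proposition 6.2]{LW19}, so there is no in-text proof to compare against. Your Part (2) is correct and complete: the computation
$\Delta(B_i)=1\otimes F_i+B_i\otimes \tK_i'+\tk_{\btau i}\otimes E_{\btau i}\tK_i'$
is right (using $\tK_{\btau i}\tK_i'=\tk_{\btau i}$), the left tensor factors lie in $\tUi$, $\Delta(\tk_i)=\tk_i\otimes\tk_i$, and multiplicativity of $\Delta$ finishes it. This is the standard argument.

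In Part (1) there is a genuine gap. Your scalar-extension route does produce, after adjoining $\vs_i^{1/2}$ and rescaling $E_i,F_i,\tK_i,\tK_i'$ compatibly with \eqref{eq:KK}, a surjection $\tU\rightarrow\bU$ restricting to a surjection $\tUi\twoheadrightarrow\Ui$ that kills $J$; so surjectivity of $\phi:\tUi/J\to\Ui$ can be salvaged. What is never addressed is why the kernel of $\tUi\to\Ui$ is \emph{exactly} $J$ and not larger: restricting the quotient map $\tU\to\tU/(\tK_i\tK_i'-\vs_i)$ to the subalgebra $\tUi$ identifies $\Ui$ with $\tUi/(\tUi\cap\ker)$, and a priori $\tUi\cap\ker\supsetneq J$. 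Your claim that ``mutual inverseness is immediate on generators'' presupposes that $\psi$ is a well-defined algebra homomorphism out of $\Ui$ --- but $\Ui$ is defined as a subalgebra of $\bU$, not by generators and relations, so this is precisely the point that requires proof (it amounts to the Letzter--Kolb Serre presentation of $\Ui$, a nontrivial input you only allude to in your ``computationally heavier alternative''). The standard way to close this gap, and what \cite{LW19} relies on, is the module-theoretic statement recorded in the paper as Lemma~\ref{lem:kob}: $\tUi=\bigoplus_{w\in\cj}\tU^{\imath 0}B_w$ is free over the polynomial algebra $\tU^{\imath 0}=\Q(\tt^{1/2})[\tk_i]$, together with the corresponding Letzter--Kolb freeness of $\Ui$ over $\bU^{\imath 0}$ with the same basis $\{B_w\}$; since $J=J_0\tUi$ for the ideal $J_0\subset\tU^{\imath 0}$ generated by the listed central elements and $\tU^{\imath 0}/J_0\cong\bU^{\imath 0}$, comparing the two free decompositions forces the surjection $\tUi/J\to\Ui$ to be injective. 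Without invoking these basis theorems (or a presentation of $\Ui$), neither of your two proposed routes is complete.
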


We shall refer to $\tUi$ and $\Ui$ as {\em (quasi-split) $\imath${}quantum groups}; they are called {\em split} if $\btau =\Id$.

\begin{remark}
Following \cite{Qin}, we do not require $K_i, K_i'$ to be invertible in the definition of the Drinfeld double $\tU$ in this paper, in contrast to the conventions adopted in \cite{Br13, LW19}. Similarly, the above definition of $\tUi$ does not require $\tk_i$ to be invertible, in contrast to \cite{LW19}.
The definition of semi-derived Ringel-Hall algebras can be suitably revised so the resulting $\imath$Hall algebras are isomorphic to the current versions of $\tU$ and $\tUi$; this will be explained in detail elsewhere.
\end{remark}

Let us choose the subset $\ci$ of representatives of $\btau$-orbits on $\I$ as follows:
\begin{align}
\label{eqn:representative}
&\ci :=\left\{ \begin{array}{cl} \I, & \text{ if }\btau=\Id,\\
\left.\begin{array}{cl}
\{0,1,\dots,r\}, & \text{ if } \Delta \text{ is of type }A_{2r+1},\\
\{1,\dots,n-1\}, & \text{ if } \Delta \text{ is of type }D_n,\\
\{1,2,3,4\},  & \text{ if } \Delta \text{ is of type }E_6,\end{array} \right\}
& \text{ if }\btau\neq\Id.\end{array}\right.
\end{align}

The following quivers are the bound quivers used to describe $\Lambda^\imath$ or equivalently $\cs^\imath$ associated to non-split $\imath$quivers; cf. \cite{LW19}. The Dynkin diagrams for the non-split Dynkin $\imath$quivers can be recovered from diagrams below by removing the loops and 2-cycles.

\begin{center}\setlength{\unitlength}{0.7mm}
\vspace{-1.5cm}
\begin{equation}
\label{diag: A}
\begin{picture}(100,40)(0,20)
\put(0,10){$\circ$}
\put(0,30){$\circ$}
\put(50,10){$\circ$}
\put(50,30){$\circ$}
\put(72,10){$\circ$}
\put(72,30){$\circ$}
\put(92,20){$\circ$}
\put(0,6){$r$}
\put(-2,34){${-r}$}
\put(50,6){\small $2$}
\put(48,34){\small ${-2}$}
\put(72,6){\small $1$}
\put(70,34){\small ${-1}$}
\put(92,16){\small $0$}

\put(3,11.5){\vector(1,0){16}}
\put(3,31.5){\vector(1,0){16}}
\put(23,10){$\cdots$}
\put(23,30){$\cdots$}
\put(33.5,11.5){\vector(1,0){16}}
\put(33.5,31.5){\vector(1,0){16}}
\put(53,11.5){\vector(1,0){18.5}}
\put(53,31.5){\vector(1,0){18.5}}
\put(75,12){\vector(2,1){17}}
\put(75,31){\vector(2,-1){17}}
%
\put(0,13){\vector(0,1){17}}
\put(2,29.5){\vector(0,-1){17}}
\put(50,13){\vector(0,1){17}}
\put(52,29.5){\vector(0,-1){17}}
\put(72,13){\vector(0,1){17}}
\put(74,29.5){\vector(0,-1){17}}

\put(-5,20){$\varepsilon_r$}
\put(3,20){$\varepsilon_{-r}$}
\put(45,20){\small $\varepsilon_2$}
\put(53,20){\small $\varepsilon_{-2}$}
\put(67,20){\small $\varepsilon_1$}
\put(75,20){\small $\varepsilon_{-1}$}
\put(92,30){\small $\varepsilon_0$}

\qbezier(93,23)(90.5,25)(92,27)
\qbezier(92,27)(94,30)(97,27)
\qbezier(97,27)(98,24)(95.5,22.6)
\put(95.6,23){\vector(-1,-1){0.3}}
\end{picture}
\end{equation}
\vspace{-0.2cm}
\end{center}

\begin{center}\setlength{\unitlength}{0.8mm}
\begin{equation}
\label{eqn: Dn}
 \begin{picture}(100,25)(-5,0)
\put(0,-1){$\circ$}
\put(0,-5){\small$1$}
\put(20,-1){$\circ$}
\put(20,-5){\small$2$}
\put(64,-1){$\circ$}
\put(84,-10){$\circ$}
\put(80,-13){\small${n-1}$}
\put(84,9.5){$\circ$}
\put(84,12.5){\small${n}$}

\put(19.5,0){\vector(-1,0){16.8}}
\put(38,0){\vector(-1,0){15.5}}
\put(64,0){\vector(-1,0){15}}

\put(40,-1){$\cdots$}
\put(83.5,9.5){\vector(-2,-1){16}}
\put(83.5,-8.5){\vector(-2,1){16}}
%
\put(86,-7){\vector(0,1){16.5}}
\put(84,9){\vector(0,-1){16.5}}

\qbezier(63,1)(60.5,3)(62,5.5)
\qbezier(62,5.5)(64.5,9)(67.5,5.5)
\qbezier(67.5,5.5)(68.5,3)(66.4,1)
\put(66.5,1.4){\vector(-1,-1){0.3}}
\qbezier(-1,1)(-3,3)(-2,5.5)
\qbezier(-2,5.5)(1,9)(4,5.5)
\qbezier(4,5.5)(5,3)(3,1)
\put(3.1,1.4){\vector(-1,-1){0.3}}
\qbezier(19,1)(17,3)(18,5.5)
\qbezier(18,5.5)(21,9)(24,5.5)
\qbezier(24,5.5)(25,3)(23,1)
\put(23.1,1.4){\vector(-1,-1){0.3}}

\put(-1,9.5){$\varepsilon_1$}
\put(19,9.5){$\varepsilon_2$}
\put(59,9.5){$\varepsilon_{n-2}$}
\put(79,-1){$\varepsilon_{n}$}
\put(87,-1){$\varepsilon_{n-1}$}
\end{picture}
\end{equation}
\vspace{.8cm}
\end{center}

\begin{center}\setlength{\unitlength}{0.8mm}
\vspace{-2.5cm}
\begin{equation}
\label{eqn: E6}
 \begin{picture}(100,40)(0,20)
\put(10,6){\small${6}$}
\put(10,10){$\circ$}
\put(32,6){\small${5}$}
\put(32,10){$\circ$}

\put(10,30){$\circ$}
\put(10,33){\small${1}$}
\put(32,30){$\circ$}
\put(32,33){\small${2}$}

\put(31.5,11){\vector(-1,0){19}}
\put(31.5,31){\vector(-1,0){19}}

\put(52,22){\vector(-2,1){17.5}}
\put(52,20){\vector(-2,-1){17.5}}

\put(54.7,21.2){\vector(1,0){19}}

\put(52,20){$\circ$}
\put(52,16.5){\small$3$}
\put(74,20){$\circ$}
\put(74,16.5){\small$4$}
%
\put(10,12.5){\vector(0,1){17}}
\put(12,29.5){\vector(0,-1){17}}
\put(32,12.5){\vector(0,1){17}}
\put(34,29.5){\vector(0,-1){17}}

\qbezier(52,22.5)(50,24)(51,26.5)
\qbezier(51,26.5)(53,29)(56,26.5)
\qbezier(56,26.5)(57.5,24)(55,22)
\put(55.1,22.4){\vector(-1,-1){0.3}}
\qbezier(74,22.5)(72,24)(73,26.5)
\qbezier(73,26.5)(75,29)(78,26.5)
\qbezier(78,26.5)(79,24)(77,22)
\put(77.1,22.4){\vector(-1,-1){0.3}}

\put(35,20){$\varepsilon_2$}
\put(27,20){$\varepsilon_5$}
\put(13,20){$\varepsilon_1$}
\put(5,20){$\varepsilon_6$}
\put(52,30){$\varepsilon_3$}
\put(73,30){$\varepsilon_4$}
\end{picture}
\end{equation}
\vspace{1cm}
\end{center}

\begin{proposition}  [\text{\cite[Proposition~ 6.4]{LW19}}]
  \label{prop:Serre}
Let $(Q, \btau)$ be a Dynkin $\imath$quiver. The $\Q(\tt^{1/2})$-algebra $\tUi$ has a presentation with generators $B_i, \tk_i$ $(i\in \I)$, where $\tk_i$ are invertible, subject to the relations \eqref{relation1}--\eqref{relation2}: for $l \in \I$, and $i\neq j \in \I$,
\begin{align}
\tk_i \tk_l =\tk_l \tk_i,
\quad
\tk_l B_i & = \tt^{c_{\btau (l) i} -c_{l i}} B_i \tk_l,
   \label{relation1}
\\
B_{\btau i}B_i -B_i B_{\btau i}& =   \frac{\tk_i -\tk_{\btau i}}{\tt-\tt^{-1}},
\quad \text{ if } \btau i \neq i,
\label{relation5}   
\\
B_iB_{j}-B_jB_i &=0, \quad \text{ if } c_{ij} =0 \text{ and }\btau i\neq j,
 \label{relationBB}
\\
\sum_{s=0}^{1-c_{ij}} (-1)^s \qbinom{1-c_{ij}}{s} B_i^{s}B_jB_i^{1-c_{ij}-s} &=0, \quad \text{ if } j \neq \btau i\neq i,\label{relation3}
\\
B_i^2B_{j} - [2] B_iB_{j}B_i +B_{j}B_i^2 &= \tt \tk_i B_{j},
%
\quad \text{ if }  c_{ij} = -1 \text{ and }\btau i=i.
\label{relation2}
\end{align}
\end{proposition}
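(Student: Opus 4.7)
The plan is to prove the presentation in two stages: first verify that the listed relations hold in $\tU$ under the substitutions $B_i = F_i + E_{\btau i}\tK_i'$ and $\tk_i = \tK_i \tK_{\btau i}'$, then show that these relations are complete by a PBW/dimension comparison with the known presentation of the non-universal $\imath$quantum group $\Ui_{\bvs}$ due to Letzter and Kolb.

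For the first stage, the commutation relations \eqref{relation1} are straightforward: the $\tk_l$ commute among themselves because $\tK_i, \tK_j'$ all commute, and the weight formula $\tk_l B_i = \tt^{c_{\btau(l)i}-c_{li}} B_i \tk_l$ follows directly by applying \eqref{eq:EK}--\eqref{eq:K2} to $\tk_l = \tK_l \tK_{\btau l}'$ against each summand of $B_i$. Relation \eqref{relation5} is obtained by expanding $B_{\btau i} B_i - B_i B_{\btau i}$ into four terms: the $FE$--$EF$ pair collapses via the defining relation $[E_j,F_j]= (\tK_j - \tK_j')/(\tt-\tt^{-1})$, while the $FF$ and $EE$ pairs kill each other by \eqref{eq:KK}. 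The real work is in the $\imath$Serre relations \eqref{relationBB}--\eqref{relation2}. For $c_{ij}=0$ with $\btau i \neq j$, one expands both sides and uses the Serre relations \eqref{eq:serre1}--\eqref{eq:serre2} together with the commutation relations between $E_i$ and $F_j$ (which are independent for $i\neq j$). For \eqref{relation3} and \eqref{relation2}, one can either carry out a somewhat lengthy but mechanical expansion inside $\tU$, or else reduce to the analogous identities already established by Letzter and Kolb in $\Ui_{\bvs}$ and track how the scalars $\vs_i$ get promoted to the central elements $\tk_i$ (for $\btau i=i$) resp.\ $\tk_i\tk_{\btau i}$ (for $\btau i\neq i$). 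The latter route is cleaner since the expansion has already been done classically.

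For the second stage (completeness of the relations), let $\widetilde{\cu}$ denote the abstract $\Q(\tt^{1/2})$-algebra defined by generators and relations \eqref{relation1}--\eqref{relation2}. Stage one gives a surjective homomorphism $\phi:\widetilde{\cu} \twoheadrightarrow \tUi \subset \tU$. To show $\phi$ is injective, I would argue as follows. The central elements $\tk_i$ (for $i=\btau i$) and $\tk_i\tk_{\btau i}$ (for $i\neq \btau i$) live in the center of $\widetilde{\cu}$ (this is visible from the defining relations). The quotient of $\widetilde{\cu}$ by the ideal corresponding to \eqref{eq:parameters} with abstract parameters $\bvs$ gives precisely the algebra with Letzter's presentation of $\Ui_{\bvs}$; by Letzter--Kolb this quotient is isomorphic to $\Ui_{\bvs}$ and hence has the known PBW dimension. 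A generic-parameter argument (varying $\bvs$) then lifts this to the statement that $\widetilde{\cu}$ itself has, as a module over its subalgebra generated by the $\tk_i$'s, the same size as $\tUi$; combining with the surjection $\phi$ forces an isomorphism.

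The main obstacle will be the completeness stage, since the $\imath$Serre relations \eqref{relation3}--\eqref{relation2} are the essential content and one must be sure no extra relations are hidden in the universal setting. The cleanest way to sidestep a direct PBW computation inside $\widetilde{\cu}$ is to invoke the $\imath$Hall algebra construction from \cite{LW19}: that construction produces an explicit basis of $\tUi$ (the dual PBW basis referenced in the Perspectives subsection) of the expected cardinality, which upper bounds the size of $\widetilde{\cu}$ and closes the argument. Verifying that the central-reduction identification is compatible with the lift---i.e.\ that a chosen PBW-type straightening in $\widetilde{\cu}$ projects to Letzter's straightening in every $\Ui_{\bvs}$---is the delicate technical point; it is handled by filtering $\widetilde{\cu}$ so that the associated graded becomes manifestly a twisted tensor product of the $\tk$-polynomial algebra with $\mathbf{U}^-$, matching the known structure of $\tUi$.
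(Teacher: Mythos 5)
This proposition is not proved in the paper at all: it is imported verbatim from \cite[Proposition 6.4]{LW19}, so there is no in-paper argument to compare against. Your two-stage plan (verify the relations in $\tU$ under the substitution \eqref{eq:Bi}, then prove completeness by comparing the abstract algebra with the Letzter--Kolb presentation of $\Ui_{\bvs}$ and with the free $\tU^{\imath 0}$-module structure of $\tUi$) is essentially the proof given in that reference, so the overall route is sound. Two points, however, need tightening.

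First, in the verification of \eqref{relation5} the $FF$ and $EE$ cross-terms do not ``kill each other by \eqref{eq:KK}''; that relation only governs $[E_i,F_j]$ and the commutativity of the $\tK$'s. The cancellation rests on the fact that $c_{i,\btau i}=0$ whenever $\btau i\neq i$ (since $\btau$ is an automorphism of the acyclic quiver $Q$, an arrow between $i$ and $\btau i$ would produce an oriented $2$-cycle), which gives $[F_i,F_{\btau i}]=0=[E_i,E_{\btau i}]$ via \eqref{eq:serre1}--\eqref{eq:serre2} and lets $\tK_i'$ pass $F_{\btau i}$ without a power of $\tt$; only then do the $FE$--$EF$ pairs produce exactly $(\tk_i-\tk_{\btau i})/(\tt-\tt^{-1})$. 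Without this observation the relation as stated would be false. Second, the ``generic-parameter argument'' is not conclusive as stated: knowing that $\widetilde{\cu}$ modulo the ideal \eqref{eq:parameters} has the correct size for every $\bvs$ does not bound $\widetilde{\cu}$ itself unless you already know $\widetilde{\cu}$ is free (or torsion-free) over the central subalgebra generated by the $\tk_i$ --- which is essentially what is to be proved. The load-bearing step is the one you reach only at the end: filter $\widetilde{\cu}$ by the number of $B$-letters, note that in the associated graded the right-hand sides of \eqref{relation5} and \eqref{relation2} drop to lower degree so that the Serre relations of $\bU^-$ hold, deduce that $\{B_w\tk^{\mu}\mid w\in\cj\}$ spans $\widetilde{\cu}$, and then conclude injectivity of $\phi$ from the linear independence of these monomials in $\tUi$ (Lemma~\ref{lem:kob}). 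Finally, your fallback of invoking the dual PBW basis produced by the $\imath$Hall algebra construction would be circular in the setting of \cite{LW19}, since that construction takes the present presentation as input.
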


\subsection{Decomposition of $\tRi$}

Recall $I_i$ denotes the injective $kQ$-module for each $i\in Q$.

\begin{lemma}  [Connecting triangle]\cite[Page 51]{Ha2}
\label{lem: connecting triangle}
For any indecomposable projective $kQ$-module $P_i\in\Ind \proj(kQ)\subseteq \cd_Q$, the AR triangle ending to $P_i$ is
of the form
$$\Sigma^{-1}I_i\longrightarrow \Sigma^{-1} I \oplus P\longrightarrow P_i\longrightarrow I_i,$$
for some injective module $I$ and some projective module $P$.
\end{lemma}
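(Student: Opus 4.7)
The plan is to deduce the lemma from Happel's description of the AR quiver of $\cd_Q$ as the mesh category of the repetition quiver $\Z Q$, combined with the Serre-functor computation of $\tau P_i$.

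First, I would use that $kQ$ is hereditary, so the Serre functor of $\cd_Q$ is the (derived) Nakayama functor $\nu$, satisfying $\nu\cong \tau\circ\Sigma$ as triangulated auto-equivalences of $\cd_Q$ and $\nu(P_i)=I_i$. Hence
$$\tau P_i \;=\; \nu(\Sigma^{-1}P_i) \;=\; \Sigma^{-1}\nu(P_i) \;=\; \Sigma^{-1}I_i,$$
which identifies the left-hand term of the AR triangle ending at $P_i$ as $\Sigma^{-1}I_i$ (and consequently its last map targets $\Sigma\tau P_i=I_i$).

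Next, I would read off the middle term from the mesh at $P_i$ via the Happel equivalence $H: k(\Z Q)\stackrel{\sim}{\longrightarrow}\Ind \cd_Q$ of \eqref{eq:Happelfunctor}. Under the standard labeling in which $(i,0)$ corresponds to $P_i$, we have $(j,-1)=\tau(j,0)$ corresponding to $\tau P_j=\Sigma^{-1}I_j$. The arrows of $\Z Q$ terminating at $(i,0)$ are exactly $(\alpha,0)\colon (j,0)\to (i,0)$ for each arrow $\alpha\colon j\to i$ of $Q$, together with $(\bar\alpha,0)\colon (j,-1)\to (i,0)$ for each arrow $\alpha\colon i\to j$ of $Q$. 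Translating these through $H$ and invoking Happel's theorem that the mesh ending at a vertex of $\Z Q$ goes to the AR triangle ending at the corresponding object, the AR triangle ending at $P_i$ takes the form
$$\Sigma^{-1}I_i \;\longrightarrow\; \Bigl(\bigoplus_{\alpha\colon j\to i}P_j\Bigr) \oplus \Bigl(\bigoplus_{\alpha\colon i\to j}\Sigma^{-1}I_j\Bigr) \;\longrightarrow\; P_i \;\longrightarrow\; I_i.$$
Setting $P:=\bigoplus_{\alpha\colon j\to i}P_j$ and $I:=\bigoplus_{\alpha\colon i\to j}I_j$, which are respectively a projective and an injective $kQ$-module, yields the stated shape.

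The argument is essentially a bookkeeping of arrows in $\Z Q$ together with Serre duality, so no serious obstacle arises. The one point that deserves a line of justification is that Happel's equivalence identifies meshes of $\Z Q$ with AR triangles of $\cd_Q$; this is built into Happel's theorem recorded in \cite{Ha2}, and is precisely what allows the arrow-counting above to compute the middle term.
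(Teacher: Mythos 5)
Your argument is correct. The paper does not actually prove this lemma---it is quoted directly from Happel's book \cite[p.~51]{Ha2}---and what you give is the standard derivation: Serre duality identifies $\tau P_i=\Sigma^{-1}\nu(P_i)=\Sigma^{-1}I_i$ (hence the outer terms), and Happel's identification of the AR quiver of $\cd_Q$ with the mesh category of $\Z Q$ reads off the middle term as the direct sum of the immediate predecessors $(j,0)\leftrightarrow P_j$ and $(j,-1)\leftrightarrow\tau P_j=\Sigma^{-1}I_j$ of the vertex $(i,0)$. The only point to be slightly careful about is which arrows of $Q$ index the projective summands versus the shifted injective summands (the paper works with right modules, i.e.\ representations of $Q^{op}$, so your indexing may be reversed), but since the lemma only asserts the middle term has the form $\Sigma^{-1}I\oplus P$ for \emph{some} injective $I$ and projective $P$, this does not affect the conclusion.
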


Similarly, for any indecomposable injective $kQ$-module $I_i$, the AR triangle starting from $I_i$ is of the form
$$I_i\longrightarrow  I \oplus \Sigma P\longrightarrow \Sigma P_i\longrightarrow \Sigma I_i,$$
where $I$ is some injective module and $P$ is some projective module.

Moreover, for any $M\in \Ind\mod(kQ)\subseteq \cd_Q$, $M$ is not projective if and only if $\tau M\in\mod(kQ)$. In this case, the AR triangle ending to $M$ is constructed from the almost split sequence ending to $M$ in $\mod(kQ)$, see \cite[Page 50]{Ha2}.



Recall $\mcr$ and $\cs$ denote the regular and singular NKS categories defined in Definition \ref{def:RS for QG}, and $\mcr^\imath$ and $\cs^\imath$ denote the regular and singular $\imath$NKS categories. Recall $V^{+}, W^{+}, V^0, W^0$ from \eqref{eq:VW+}. A pair in $(V^{+},W^{+})$ can be naturally viewed as a dimension vector for both $\mcr$ and $\mcr^\imath$. When we need to separate the notations for $\mcr$ and $\mcr^\imath$, we adopt the following convention:
\begin{align}
\label{eq:vwvw}
& \text{For any dimension vector }(\bv, \bw) \in (V^{+}, W^{+}) \text{ for }\mcr^\imath,
 \\
& \qquad\qquad  (\bar{\bv}, \bar{\bw}) \text{ denotes the corresponding dimension vector for $\mcr$. }
  \notag
\end{align}

\begin{lemma}
   \label{lem:sameDP}
A dimension vector $(\bv,\bw)\in(V^{+},W^{+})$ is an $l$-dominant pair for $\mcr^\imath$ if and only if it is an $l$-dominant pair for $\mcr$.
\end{lemma}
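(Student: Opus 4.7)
The plan is to reduce the two $l$-dominance conditions to the same set of numerical inequalities on a common set of vertices. The structural observation driving the argument is that the non-frozen vertices of $\mcr^\imath$ are in natural bijection with $\Ind\mod(kQ)$, while those of $\mcr$ split into two copies $\Ind\mod(kQ)\sqcup\Sigma\Ind\mod(kQ)$. Accordingly, I would split the argument into two parts: (a) show that $\sigma^*\bw-{\mathcal C}_q\bv$ takes the same values on the common vertices indexed by $\Ind\mod(kQ)$ whether computed in $\mcr$ or in $\mcr^\imath$; and (b) show that on the ``extra'' vertices $\Sigma M$ of $\mcr$ this function automatically vanishes, so those inequalities impose no additional condition.

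For (a), fix $x\in\Ind\mod(kQ)$. Using the orbit-category description of $\mcr$ and $\mcr^\imath$, the arrows ending at $x$ are counted by arrows $y\to F^j x$ in $\Z Q_C$ for all $j\in\Z$, where $F=\Sigma^2$ or $F^\imath=\Sigma\widehat{\btau}$. Since middle terms of AR triangles in $\cd_Q$ are localized and the respective fundamental domains are narrow, only one value of $j$ contributes from each fixed representative $y$. If $x$ is non-projective non-injective, then $\tau x$ and $N_x$ all lie in $\mod(kQ)$ and the two computations visibly agree. The critical case is $x=P_i$ projective (the injective case is dual). Lemma~\ref{lem: connecting triangle} gives $\tau P_i=\Sigma^{-1}I_i$ and $N_{P_i}=\Sigma^{-1}I\oplus P$; in $\mcr$ these acquire representatives $\Sigma I_i$ and $\Sigma I\oplus P$, while in $\mcr^\imath$ they acquire representatives $I_{\btau i}$ and $\widehat{\btau}I\oplus P$. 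In both categories the shifted-injective part contributes nothing to ${\mathcal C}_q\bv$, since $\bv\in V^+$ vanishes on injectives and on $\Sigma\mod(kQ)$, so the only surviving contribution is $\sum_{P_k\subset P}\bv(P_k)$, identical in the two settings. Combined with the obvious equality $\sigma^*\bw(x)=\bw(\sigma x)$, this yields (a).

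For (b), let $x=\Sigma M$ with $M\in\Ind\mod(kQ)$, viewed as a non-frozen vertex of $\mcr$. I would check that each term of $(\sigma^*\bw-{\mathcal C}_q\bv)(\Sigma M)$ vanishes: $\sigma^*\bw(\Sigma M)=0$ because $\bw\in W^+$ is supported on $\{\sigma\ts_i\}$ rather than $\{\sigma\Sigma\ts_i\}$; $\bv(\Sigma M)=0$ because $\bv\in V^+$ is supported on $\mod(kQ)$; $\bv(\tau(\Sigma M))=\bv(\Sigma\tau M)$ lies in $\Sigma\mod(kQ)$ when $M$ is non-projective, or else equals $\bv(I_i)$ when $M=P_i$ (using $\Sigma\tau P_i=I_i$), and is zero in either case; and an analogous inspection of the middle term $\Sigma N_M$ gives $\sum_{y\to\Sigma M}\bv(y)=0$. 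Hence $(\sigma^*\bw-{\mathcal C}_q\bv)(\Sigma M)=0\geq 0$ automatically, and (b) holds.

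The main obstacle is the bookkeeping in step (a) at the boundary projectives $P_i$, where $\tau P_i=\Sigma^{-1}I_i$ and the summand $\Sigma^{-1}I$ of $N_{P_i}$ acquire genuinely different representatives in the two orbit categories ($\Sigma I_i$, $\Sigma I$ modulo $\Sigma^2$, versus $I_{\btau i}$, $\widehat{\btau}I$ modulo $\Sigma\widehat{\btau}$). One must also verify that no spurious arrows arise from higher iterations of $F$ or $F^\imath$; the point is that the support of the middle term of the mesh at $F^j x$ or $(F^\imath)^j x$ leaves the fundamental domain for $|j|\geq 2$, so in either category at most one value of $j$ contributes to the relevant arrow count. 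Once this is controlled, both sides reduce to the same numerical inequality determined by the projective part of the middle term, and the equivalence follows.
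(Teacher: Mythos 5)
Your proposal is correct and follows essentially the same route as the paper's proof: both arguments reduce the two dominance conditions to a vertex-by-vertex comparison of $\sigma^*\bw-{\mathcal C}_q\bv$, use the connecting AR triangle $\Sigma^{-1}I_i\to\Sigma^{-1}I\oplus P\to P_i\to I_i$ at the projective vertices, and exploit the support conditions on $\bv\in V^+$ and $\bw\in W^+$ to kill all contributions from injectives and from $\Sigma\mod(kQ)$, so that the values agree on $\Ind\mod(kQ)$ and vanish identically on the extra vertices $\Sigma\Ind\mod(kQ)$ of $\mcr$. Your packaging as ``(a) equality on common vertices plus (b) automatic vanishing on the extra ones'' is just a slightly tidier arrangement of the paper's four-case analysis, and your side remarks (the qualifier ``non-injective'' in step (a), and the dual ``injective case'') are unnecessary but harmless, since the only genuinely nontrivial vertices in $\mod(kQ)$ are the projectives.
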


\begin{proof}
Below the map defined in \eqref{def:Cq} for $\mcr$ is denoted by $\ov{\mathcal C}_q$, and for $\mcr^\imath$ is denoted by ${\mathcal C}_q$.

First, we prove the ``only if'' part. For $x\in\Ind \mod (kQ)\bigsqcup \Sigma\Ind \mod(kQ)\subseteq \Ind\cd_Q$, let
$$\tau x\longrightarrow N_x\longrightarrow x\longrightarrow \Sigma \tau x$$
be the AR triangle in $\cp$. we proceed by separating into four cases (i)-(iv) below.

(i) \underline{$x,\tau x\in\mod(kQ)$}. Then we have $(\sigma^*\bar{\bw}-\ov{\mathcal C}_q \bar{\bv})(x)=(\sigma^*\bw-{\mathcal C}_q \bv)(x)\geq0$.

(ii) \underline{$x\in\mod(kQ)$, $\tau x\notin \mod(kQ)$}. Then $x$ is projective. It follows by Lemma \ref{lem: connecting triangle} that
\begin{align*}
(\sigma^*\bar{\bw}-\ov{\mathcal C}_q \bar{\bv})(x)
& =\sigma^*\bar{\bw}(x)+ \bar{\bv}(N_x)-\bar{\bv}(x)\\
&= \sigma^*\bw(x)+\bv(P_{N_x}) -\bv(x)
=(\sigma^*\bw-{\mathcal C}_q\bv)(x)\geq 0.
\end{align*}
Here $P_{N_x}$ is the maximal direct summand of $N_x$ belonging to $\mod(kQ)$.

(iii) \underline{$\tau x\in\mod (kQ),x\notin \mod(kQ)$}. Then $\tau x$ is injective. So $(\sigma^*\bar{\bw}-\ov{\mathcal C}_q\bar{\bv})(x)= (\sigma^* \bw)(x)=0$ thanks to $x\in\Sigma\Ind\mod (kQ)$.

(iv) \underline{$x,\tau x\notin \mod (kQ)$}. Then $(\sigma^*\bar{\bw}-\ov{\mathcal C}_q \bar{\bv})(x)=0$ by definition.

\noindent Therefore $(\bar{\bv},\bar{\bw})$ is an $l$-dominant pair for $\mcr$.

Now we prove the ``if'' part. For any $x\in\Ind \mod (kQ)$, let
$$\tau x\longrightarrow N_x\longrightarrow x\longrightarrow \Sigma \tau x$$
be the AR triangle in $\cp^\imath$.
If $x$ is not projective, then $(\sigma^*\bw-{\mathcal C}_q \bv)(x)=(\sigma^*\bar{\bw}-\ov{\mathcal C}_q \bar{\bv})(x)\geq0$.
If $x$ is projective, then $\tau x$ is injective in $\cp^\imath$, and $\tau x\in\Sigma^{-1}\Ind \inj(kQ)$ when considered in $\cp$. It follows that
\begin{align*}
(\sigma^*\bw-{\mathcal C}_q \bv)(x)
& =\sigma^*\bar{\bw}(x)-\bv(x)+\bv(N_x)
\\
& =\sigma^*\bar{\bw}(x)-\bv(x)+\bv(P_{N_x})=\sigma^*\bar{\bw}(x)-\ov{\mathcal C}_q \bar{\bv}(x)\geq0.
\end{align*}
Here $P_{N_x}$ is the maximal direct summand of $N_x$ belonging to $\mod(kQ)$.

Therefore $(\bv,\bw)$ is an $l$-dominant pair for $\mcr^\imath$.
\end{proof}

Recall $\cv^*: \mod(\mcr^\imath)\rightarrow \mod(\mcr)$ and $\cv_*:\mod(\mcr)\rightarrow \mod(\mcr^\imath)$ in \S\ref{subsec: covering}. For any $M\in \mod(\mcr^\imath)$ with dimension vector $(\bv,\bw)\in (V^{+},W^{+})$, we have an $\mcr$-module $M_\mcr$ of the same dimension vector such that $\cv_*(M_\mcr)=M$. Then $\cv^*(M)=M_\mcr\oplus \Sigma\widehat{\btau}(M_\mcr)$.

Similarly, any $X \in \mod(kQ)$ can be viewed as a $\cs$-module (respectively,  $\cs^\imath$-module), which will be denoted by $X_{\cs}$ (respectively,   $X_{\cs^\imath}$).

Recall the intermediate extension defined in \S\ref{subsec: strat}. To distinguish the notations, we shall denote by $K_{LR}:\mod(\cs)\rightarrow \mod(\mcr)$, and by $K_{LR}^\imath:\mod(\cs^\imath) \rightarrow \mod(\mcr^\imath)$; similarly $K_L,\res,K_R$ are defined for $\cs,\mcr$, and $K_L^\imath,\res^\imath,K_R^\imath$ are defined for $\cs^\imath,\mcr^\imath$.

\begin{lemma}
\label{lem: coincide of KLR}
We have $K_{LR}\circ \cv^*= \cv^*\circ K_{LR}^\imath$.
Moreover, for any $X\in\mod(kQ)$, we have $\dimv K_{LR}(X_\cs)=\dimv K_{LR}^\imath(X_{\cs^\imath})$.
\end{lemma}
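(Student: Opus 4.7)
The plan is to prove both assertions via adjunctions and the pointwise description of the pullback/pushforward.

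For the first identity, the key input is that the pullback $\cv^*$ is exact and is simultaneously the left and right adjoint of $\cv_*$, i.e.\ both $(\cv_*,\cv^*)$ and $(\cv^*,\cv_*)$ are adjoint pairs, as recalled in \S\ref{subsec: covering}. The starting observation is the pointwise identity $\res^\imath\circ\cv_* = \cv_*\circ\res$ as functors $\mod(\mcr)\to\mod(\cs^\imath)$: both send $N\mapsto\big(z\mapsto \bigoplus_{\cv(y)=z,\,y\in\cs_0} N(y)\big)$, using that $\cv$ restricts to a covering $\cs\to\cs^\imath$. Taking left adjoints of this identity, using the adjunctions $(K_L,\res)$, $(K_L^\imath,\res^\imath)$ together with $(\cv^*,\cv_*)$, I would obtain $\cv^*\circ K_L^\imath = K_L\circ\cv^*$; and taking right adjoints gives $\cv^*\circ K_R^\imath = K_R\circ\cv^*$. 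The canonical natural transformation $K_L^\imath\to K_R^\imath$ whose image defines $K_{LR}^\imath$ transports along $\cv^*$ to the analogous canonical transformation $K_L\to K_R$, by the uniqueness of the units/counits of the adjunctions involved. Since $\cv^*$ is exact, it preserves images, so
\[
\cv^*K_{LR}^\imath \;=\; \cv^*\operatorname{Im}(K_L^\imath\to K_R^\imath) \;=\; \operatorname{Im}(\cv^*K_L^\imath\to\cv^*K_R^\imath) \;=\; \operatorname{Im}(K_L\cv^*\to K_R\cv^*) \;=\; K_{LR}\circ\cv^*.
\]

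For the second identity, the same adjunction argument (with the roles of $\cv^*$ and $\cv_*$ interchanged, using that $\cv_*$ is also exact with $\cv^*$ as both adjoints) gives the companion commutation $\cv_*\circ K_{LR} = K_{LR}^\imath\circ\cv_*$. Next I would verify directly from the pointwise formula $(\cv_*M)(z)=\bigoplus_{\cv(y)=z}M(y)$ that $\cv_*(X_\cs)\cong X_{\cs^\imath}$: for the frozen vertex $\sigma\ts_i\in\cs^\imath_0$, the two preimages in $\cs_0$ are $\sigma\ts_i$ and $\sigma\Sigma\ts_{\btau i}$, but $X_\cs$ is supported on $W^+=\{\sigma\ts_j\}$, so only the first contributes and $(\cv_*X_\cs)(\sigma\ts_i) = X(\ts_i) = X_{\cs^\imath}(\sigma\ts_i)$; the module structures match because both $X_\cs$ and $X_{\cs^\imath}$ are pullbacks of $X$ along the natural quotients $\cs\twoheadrightarrow kQ$, $\cs^\imath\twoheadrightarrow kQ$, and these quotients are compatible with $\cv$. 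Hence $K_{LR}^\imath(X_{\cs^\imath}) = \cv_*\,K_{LR}(X_\cs)$ as $\mcr^\imath$-modules.

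To extract the pointwise equality of dimension vectors under the identification \eqref{eq:vwvw}, I need to show that $K_{LR}(X_\cs)\in\mod(\mcr)$ is supported on the ``positive part'' $V^+\cup W^+\subset\mcr_0$, so that the pushforward formula $\dimv\cv_*N(z)=\sum_{\cv(x)=z}\dim N(x)$ does not mix positive and negative contributions. Granted this, for $x\in V^+\cup W^+$ with image $\cv(x)\in V^+\cup W^+\subset\mcr^\imath_0$, the other preimage $F^\imath x\in V^-\cup W^-$ contributes zero and one gets $\dimv K_{LR}^\imath(X_{\cs^\imath})(\cv x)=\dimv K_{LR}(X_\cs)(x)$, which is exactly the claimed equality under \eqref{eq:vwvw}.

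The main obstacle is the positive-support claim for $K_{LR}(X_\cs)$. I would argue it from the bistability characterization (Lemma~\ref{lem:bistable}): $K_{LR}(X_\cs)$ is the unique bistable $\mcr$-module whose restriction to $\cs$ is $X_\cs$. The category $\mcr$ carries the $\Z/2$-symmetry induced by $\Sigma$ that swaps the positive and negative parts ($V^+\leftrightarrow V^-$, $W^+\leftrightarrow W^-$), and under this symmetry $X_\cs$ (positive) is sent to $(\Sigma X)_\cs$ (negative). If the support of $K_{LR}(X_\cs)$ spread into the negative part, projecting onto the positive summand would produce a second bistable lift of $X_\cs$, contradicting uniqueness. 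Alternatively, one can extract positive support directly from the explicit mesh-relation description of $\mcr$ by checking that paths from negative vertices to $\sigma\ts_j$ factor through the mesh relations; in the split $A_1$ example of \S\ref{subsec:NKS} this is visible already from the relations $\beta_2\alpha_1=\beta_1\alpha_2=0$ forcing $K_{LR}(\ts_{\sigma\ts}^\cs)$ to be the simple at $\sigma\ts$.
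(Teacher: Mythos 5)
Your proof of the first identity is essentially the paper's: both start from $\res^\imath\circ\cv_*=\cv_*\circ\res$, pass to left/right adjoints to get $K_L\circ\cv^*=\cv^*\circ K_L^\imath$ and $K_R\circ\cv^*=\cv^*\circ K_R^\imath$, and conclude for the images. For the second identity you take a genuinely different route: the paper applies the first identity to $X_{\cs^\imath}$, uses $\cv^*(X_{\cs^\imath})\cong X_\cs\oplus\Sigma\widehat{\btau}(X_\cs)$, and matches the positively and negatively supported summands of $\cv^*K_{LR}^\imath(X_{\cs^\imath})$; you instead establish the companion commutation $\cv_*\circ K_{LR}=K_{LR}^\imath\circ\cv_*$ (which is legitimate -- it is \cite[Lemma 3.16]{Sch18}, and your adjoint derivation from $\res\circ\cv^*=\cv^*\circ\res^\imath$ is fine) together with $\cv_*(X_\cs)\cong X_{\cs^\imath}$. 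Both routes ultimately hinge on the same input, namely that $\dimv K_{LR}(X_\cs)\in(V^+,W^+)$, i.e.\ that the intermediate extension of a $kQ$-module does not spread into the negative half of $\mcr$; the paper simply asserts this, while you correctly identify it as the main obstacle.

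The gap is in your justification of that positive-support claim. The $\Z/2$-symmetry argument does not work: if $M=K_{LR}(X_\cs)$ had support in both halves, ``projecting onto the positive summand'' is not an operation on $\mcr$-modules, because $\mcr=\mcr^{\gr}_C/\Sigma^2$ has arrows crossing between the two halves (already visible in \eqref{figure 1}, e.g.\ $P_2\to\sigma(\Sigma\ts_1)\to\Sigma\ts_1$), so the restriction of $M$ to the positive vertices is in general neither a submodule, a quotient, nor a direct summand, and even if it were a module there is no reason for it to remain bistable. Hence no ``second bistable lift'' is produced and uniqueness is never contradicted. Note also that the analogous claim is genuinely false for $K_L$ in place of $K_{LR}$ (already in type $A_1$ the projective $\Hom_\mcr(-,\sigma S)$ is supported at $\Sigma S$ and $\sigma(\Sigma S)$), so any argument must use the intermediate extension specifically; your mesh-relation heuristic, checked only in that trivial example, does not do this. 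The correct way to close the gap is the one the paper uses elsewhere (proof of Proposition~\ref{prop:dominant pairs for gij}): work first in the graded category $\mcr^{\gr}$, where $\dimv K_{LR}(X)\in(V^+,W^+)$ for $X\in\mod(kQ)$ is known from \cite[Theorem 3.14]{LeP13} and \cite[Proposition 4.3.2]{Qin}, and transport it along the covering $\mcr^{\gr}\to\mcr$ via $\cv^{\gr}_*K_{LR}=K_{LR}\cv^{\gr}_*$. With that substitution your pushforward argument goes through.
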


\begin{proof}
Clearly, we have $\cv_*\circ \res=\res^\imath\circ \cv_*$. By comparing their left and right adjoint pairs, it follows that
$K_L\circ \cv^*=\cv^*\circ K_L^\imath$ and $K_R\circ \cv^*=\cv^*\circ K_R^\imath$. By the definition of the intermediate extension, we have
 $K_{LR}\circ \cv^*= \cv^*\circ K_{LR}^\imath$.

For any $X\in\mod(kQ)$, we have
$$K_{LR}\circ \cv^*(X_{\cs^\imath})\cong K_{LR} (X_{\cs}\oplus \Sigma \widehat{\btau} (X_{\cs}))=K_{LR} (X_\cs) \oplus  \Sigma \widehat{\btau} K_{LR} (X_\cs).$$
Similarly, $\cv^*\circ K_{LR}^\imath(X_{\cs^\imath})= K_{LR}^\imath (X_{\cs^\imath}) \oplus \Sigma \widehat{\btau} K_{LR}^\imath (X_{\cs^\imath})$.
Note that
\begin{align*}
&\dimv K_{LR} (X_\cs) \in ({V}^+,{W}^+),\quad  \dimv \Sigma \widehat{\btau} K_{LR} (X_\cs) \in ({V}^-,{W}^-); \\
&\dimv K_{LR}^\imath (X_{\cs^\imath}) \in ({V}^+,{W}^+), \quad \dimv \Sigma \widehat{\btau} K_{LR}^\imath (X_{\cs^\imath})\in ({V}^-,{W}^-).
\end{align*}
It follows from
$$ K_{LR} (X_\cs) \oplus  \Sigma \widehat{\btau} K_{LR} (X_\cs)=   K_{LR}^\imath (X_{\cs^\imath}) \oplus \Sigma \widehat{\btau} K_{LR}^\imath (X_{\cs^\imath}) $$
that $K_{LR}(X_\cs)= K_{LR}^\imath (X_{\cs^\imath})$, and then $\dimv K_{LR}^\imath(X_{\cs^\imath})=\dimv K_{LR}(X_\cs)$.
\end{proof}

\begin{proposition}
\label{prop:dominant++}
A dimension vector $(\bv,\bw)\in(V^{+},W^{+})$ is a strongly $l$-dominant pair for $\mcr^\imath$ if and only if it is a strongly $l$-dominant pair for $\mcr$. Moreover, any $l$-dominant pair for $\mcr^\imath$ in $(V^+,W^+)$ is strongly $l$-dominant.
\end{proposition}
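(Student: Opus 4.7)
The plan is to reduce both the ``iff'' and the ``moreover'' to a single construction via the graded NKS category $\mcr^{\gr}$: lift $(\bv,\bw)\in(V^+,W^+)$ to the universal cover, invoke Leclerc--Plamondon's bijection there to produce a representation, and then push forward along the two Galois coverings $\mcr^{\gr}\to\mcr$ and $\mcr^{\gr}\to\mcr^\imath$ to realize $(\bv,\bw)$ on both sides simultaneously.

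First, by Lemma~\ref{lem:sameDP}, $l$-dominance for $\mcr^\imath$ and for $\mcr$ coincide on $(V^+,W^+)$, so it suffices to show that every such $l$-dominant pair is strongly $l$-dominant for both $\mcr$ and $\mcr^\imath$. I would first check that the same pair, regarded as a dimension vector supported in a fundamental domain of $\mcr^{\gr}$, is also $l$-dominant for $\mcr^{\gr}$; this is a local AR-triangle computation parallel to Lemma~\ref{lem:sameDP}, and the restriction $\bw\in W^+$ is precisely what ensures that no ``boundary'' discrepancies from projective/injective connecting triangles spoil the inequality $\sigma^*\bw-{\mathcal C}_q\bv\ge 0$. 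Then \cite[Theorem~3.14]{LeP13} furnishes a $\cs^{\gr}$-module $N$ with $\dimv K_{LR}^{\gr}(N)=(\bv,\bw)$; since $\bw$ is supported on positive simples, $N$ descends to a $kQ$-module $X$.

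To conclude, I would push $X$ down along the two coverings. Using the compatibility of $K_{LR}$ with pushforward (\cite[Lemma~3.16]{Sch18}, already invoked in the proof of Proposition~\ref{prop:dominant pairs for gij}) together with Lemma~\ref{lem: coincide of KLR}, one obtains $\dimv K_{LR}(X_\cs)=\dimv K_{LR}^\imath(X_{\cs^\imath})=(\bv,\bw)$. By Lemma~\ref{lem:bistable}, this simultaneously exhibits $(\bv,\bw)$ as strongly $l$-dominant for both $\mcr$ and $\mcr^\imath$, yielding the ``moreover'' statement together with both directions of the equivalence.

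The main technical point I anticipate is the first step: verifying that $l$-dominance genuinely transfers between $\mcr^{\gr}$ and the orbit categories $\mcr,\mcr^\imath$ on dimension vectors in $(V^+,W^+)$. The AR triangles entering the definition of ${\mathcal C}_q$ agree on interior indecomposables but get modified at the projective/injective ``connecting triangles'' (Lemma~\ref{lem: connecting triangle}) when passing to the quotient; the role of the restriction $\bw\in W^+$ is precisely to ensure that the missing injective contributions, which only survive in $\Sigma$-shifted (hence negative) positions, do not enter the inequality on the vertices supporting $(\bv,\bw)$. Once this compatibility is formalized along the lines of the case analysis in Lemma~\ref{lem:sameDP}, the remainder of the argument is essentially formal.
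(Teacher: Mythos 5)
Your proposal is correct and follows essentially the same route as the paper: reduce to the graded NKS category $\mcr^{\gr}$ via Lemma~\ref{lem:sameDP}, invoke \cite[Theorem~3.14]{LeP13} there, and transfer back through the covering functors using Lemma~\ref{lem: coincide of KLR} and Lemma~\ref{lem:bistable}. The only (harmless) difference is organizational: you obtain the ``iff'' as a byproduct of the construction proving the ``moreover,'' whereas the paper cites the three lemmas for the first assertion directly.
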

\begin{proof}
The first assertion follows from Lemma \ref{lem:bistable}(ii), Lemma \ref{lem:sameDP} and Lemma \ref{lem: coincide of KLR}.

Assume now $(\bv,\bw)\in(V^{+},W^{+})$ is an $l$-dominant pair for $\mcr^\imath$. Then it is an $l$-dominant pair for $\mcr$. Similarly, we can prove that it is an $l$-dominant pair for $\mcr^{\gr}$. It follows by \cite[Theorem 3.14]{LeP13} that it is a strongly $l$-dominant pair for $\mcr^{\gr}$, and the same claim holds for $\mcr$ and $\mcr^\imath$.
\end{proof}

Denote $\N^{\cp^\imath} =\bigoplus_{x\in\mcr^\imath_0-\cs^\imath_0} \N \e_x$. Recalling $V^0, V^+, W^+$ from \eqref{eq:VW+}, we denote
\begin{align*}
W^{+}_\Z&=\bigoplus_{x\in\{\ts_i,i\in Q_0\}} \Z \e_{\sigma x},&&
V^{+}_\Z=\bigoplus_{
x\in\Ind \mod(kQ),\, x\text{ is not injective}} \Z \e_x.
\end{align*}
Similar to \cite[Lemma 4.3.1]{Qin}, we observe that $\N^{\cp^\imath}$ is a subset of $V^{+}_\Z\oplus V^0$. Indeed, for any $i,j\in Q_0$, we have $\bv^{i}(I_j)=\delta_{ij}$; see \eqref{eq:vfi} for $\bv^i$.  For any $\bv\in \N^{\cp^\imath}$, define
\begin{align}
\label{eqn: decomposition}
\bv^0 &=\sum_{i\in \I} \bv(I_i)\bv^{i},
\qquad
\bv^+ =\bv-\bv^0.
\end{align}
Then $\bv=\bv^++\bv^0 \in V^{+}_\Z\oplus V^0$ is the desired decomposition.

\begin{lemma}
\label{lem: dominant pair 1}
Let $\bw\in W_\Z^{+}$, $\bv\in V_\Z^+$. If $\sigma^*\bw-{\mathcal C}_q \bv\geq0$, then $\bv\in V^{+}$, $\bw\in W^{+}$.
\end{lemma}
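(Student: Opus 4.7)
My plan is a two-step reduction to the graded NKS setting, where the result follows from the Leclerc--Plamondon classification of $l$-dominant pairs. First I would argue that the equivalence provided by Lemma~\ref{lem:sameDP} extends to $\Z$-valued dimension vectors. That proof is a case-by-case identity, for each indecomposable $x$, matching the value of $(\sigma^{*}\bw-{\mathcal C}_{q}\bv)(x)$ computed in $\mcr^{\imath}$ with its counterpart computed in $\mcr$ via the AR triangle ending at $x$ and Lemma~\ref{lem: connecting triangle}; no step in that proof uses the nonnegativity of $\bv$ or $\bw$, so it applies verbatim to $(\bv,\bw)\in V^{+}_{\Z}\oplus W^{+}_{\Z}$. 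The same formal device would then lift $l$-dominance from $\mcr$ to the graded NKS category $\mcr^{\gr}$ along the $\Sigma^{2}$-covering, since $V^{+}_{\Z}$ and $W^{+}_{\Z}$ sit inside a fundamental domain for that covering, and the AR-triangle identities pull back coordinate-wise.

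Next I would invoke the Leclerc--Plamondon bijection \cite[Theorem 3.14]{LeP13}: nonnegative $l$-dominant pairs $(\bv,\bw)$ for $\mcr^{\gr}$ are in bijection with isomorphism classes of $\cs^{\gr}$-modules $M$ via $M\mapsto(\underline{\dim}\,K_{LR}^{\gr}(M)|_{\mcr^{\gr}_{0}-\cs^{\gr}_{0}},\,\underline{\dim}\,M)$. The key point I would then use is that the affine map $(\bv,\bw)\mapsto \sigma^{*}\bw-{\mathcal C}_{q}\bv$ is injective on the integer lattice $V^{+}_{\Z}\oplus W^{+}_{\Z}$ (this is the invertibility of the Coxeter/mesh operator on the finite Dynkin AR quiver, when restricted to vectors supported on the non-injective part). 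Consequently, any integer solution to $\sigma^{*}\bw-{\mathcal C}_{q}\bv\geq 0$ with support in $V^{+}_{\Z}\oplus W^{+}_{\Z}$ must coincide with the unique nonnegative solution produced by Leclerc--Plamondon, and is therefore nonnegative itself; this places $\bv\in V^{+}$ and $\bw\in W^{+}$, as desired.

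The hard part will be verifying the injectivity on the restricted lattice $V^{+}_{\Z}\oplus W^{+}_{\Z}$, equivalently the kernel assertion: if $\sigma^{*}\bw-{\mathcal C}_{q}\bv=0$ with $(\bv,\bw)\in V^{+}_{\Z}\oplus W^{+}_{\Z}$, then $(\bv,\bw)=0$. This is essentially the content of the explicit calculations already in Lemma~\ref{lem:dimCartan}(iv) and Corollary~\ref{cor:KLR for finite projective dimension}, but pinning it down requires a short inductive argument along the AR quiver that carefully exploits the \emph{support} hypothesis $\bv(I_{j})=0$ for every injective $I_{j}$. Once that input is in hand the two-step reduction closes the proof immediately.
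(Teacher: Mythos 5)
Your first step is exactly the paper's: the ``only if'' direction of Lemma~\ref{lem:sameDP} is a vertex-by-vertex identity via AR triangles and Lemma~\ref{lem: connecting triangle} that never uses nonnegativity of $\bv$ or $\bw$, so it transports the condition $\sigma^*\bw-{\mathcal C}_q\bv\geq 0$ from $\mcr^\imath$ to $\mcr$ (and, if one wishes, to $\mcr^{\gr}$) for $\Z$-valued vectors. At that point the paper simply invokes \cite[Proposition~4.3.2]{Qin}, which is precisely the statement of the lemma for the cyclic NKS category $\mcr$, and is done.

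Your second step, which attempts to re-derive that cited result from the Leclerc--Plamondon bijection, has a genuine gap --- in fact two. First, the injectivity of $(\bv,\bw)\mapsto\sigma^*\bw-{\mathcal C}_q\bv$ on $V^+_\Z\oplus W^+_\Z$ is the load-bearing claim and you defer it entirely; it is plausible (the kernel elements $(\bv^{i},\bw^{i})$ found in Lemma~\ref{lem:R0 subalgebra} are excluded precisely because $\bv^{i}(I_j)=\delta_{ij}$, as you note), but a proposal that postpones its own ``hard part'' is not yet a proof. Second, and more subtly, even granting injectivity your argument needs an \emph{existence} input: that the nonnegative vector $\mathbf u=\sigma^*\bw-{\mathcal C}_q\bv$ is realized by some $l$-dominant pair $(\bv',\bw')$ lying in $V^{+}\times W^{+}$, i.e.\ with $\bv'$ supported away from the injectives and the shifted vertices. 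The Leclerc--Plamondon bijection hands you a nonnegative pair attached to the $kQ$-module $\bigoplus_x x^{\oplus\mathbf u(x)}$, but the assertion that its $\bv'$-component avoids the injective vertices is essentially Qin's Lemma~4.3.1/Proposition~4.3.2 again --- the very statement you are trying to re-prove --- so as written the argument is circular at this point. Either carry out the kernel computation and the support analysis of $K_{LR}$ explicitly, or do what the paper does and cite Qin.
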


\begin{proof}
As in the proof of Lemma \ref{lem:sameDP}, the map defined in \eqref{def:Cq} for $\mcr$ is denoted by $\ov{\mathcal C}_q$. From the proof of the ``only if'' part of Lemma \ref{lem:sameDP}, we have $\sigma^*{\bw}-\ov{\mathcal C}_q {\bv}\geq0$. It follows by \cite[Proposition~ 4.3.2]{Qin} that $\bv\in V^{+}$, $\bw\in W^{+}$.
\end{proof}

\begin{lemma} [$l$-dominant pair decomposition]
   \label{lem:DP3}
If $(\bv,\bw)$ is an $l$-dominant pair for $\mcr^\imath$, then we have a unique decomposition of $(\bv,\bw)$ into $l$-dominant pairs $(\bv^+,\bw^+) \in (V^+, W^+)$, $(\bv^0,\bw^0) \in (V^0, W^0)$ such that
\[
(\bv^+, \bw^+) + (\bv^0, \bw^0)= (\bv, \bw),
\qquad
%
\sigma^*\bw^0-{\mathcal C}_q \bv^0=0.
\]
\end{lemma}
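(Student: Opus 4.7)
The plan is to combine three already-established facts: the decomposition formula from \eqref{eqn: decomposition}, the identity $\sigma^*\bw^i - {\mathcal C}_q \bv^i = 0$ from Lemma~\ref{lem:dimCartan}(iv), and Lemma~\ref{lem: dominant pair 1}, which upgrades an $l$-dominant candidate living in $V^+_\Z \oplus W^+_\Z$ to one in $V^+ \oplus W^+$.

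For existence I would set $a_i := \bv(I_i)$ for each $i \in \I$ and define
\[
\bv^0 := \sum_{i \in \I} a_i \, \bv^{i} \in V^0, \qquad \bw^0 := \sum_{i \in \I} a_i \, \bw^{i} \in W^0,
\]
together with $\bv^+ := \bv - \bv^0$ and $\bw^+ := \bw - \bw^0$. By $\N$-linearity, Lemma~\ref{lem:dimCartan}(iv) gives $\sigma^*\bw^0 - {\mathcal C}_q \bv^0 = 0$, which makes $(\bv^0, \bw^0)$ automatically $l$-dominant; subtracting this vanishing from the hypothesis $\sigma^*\bw - {\mathcal C}_q \bv \geq 0$ preserves the same inequality for $(\bv^+, \bw^+)$. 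The identity $\bv^i(I_j) = \delta_{ij}$ forces $\bv^+(I_j) = 0$ for every $j$, so $\bv^+ \in V^+_\Z$, while $\bw^+ \in W^+_\Z$ by construction; Lemma~\ref{lem: dominant pair 1} will then place $(\bv^+, \bw^+)$ in $(V^+, W^+)$ and certify its $l$-dominance.

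For uniqueness, I would consider any decomposition $(\tilde{\bv}^+, \tilde{\bw}^+) + (\tilde{\bv}^0, \tilde{\bw}^0) = (\bv, \bw)$ of the required form, with $\tilde{\bv}^0 = \sum_i b_i \bv^i$. Evaluating at each injective $I_j$ and using that $\tilde{\bv}^+ \in V^+$ has no support on injectives yields $b_j = \bv(I_j) = a_j$, which pins down $\tilde{\bv}^0 = \bv^0$. The constraint $\sigma^* \tilde{\bw}^0 = {\mathcal C}_q \tilde{\bv}^0 = \sigma^*\bigl(\sum_i a_i \bw^i\bigr)$, combined with the fact that $\sigma^*$ is the pullback along the bijection $\sigma\colon C \to \cs^\imath_0$ (hence injective), then forces $\tilde{\bw}^0 = \bw^0$. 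I do not expect any real obstacle here: once \eqref{eqn: decomposition} and Lemma~\ref{lem: dominant pair 1} are invoked, the argument reduces to elementary linear-algebra bookkeeping on the injective coordinates.
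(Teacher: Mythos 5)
Your proposal is correct and follows exactly the route the paper intends: the paper's proof of Lemma~\ref{lem:DP3} simply defers to \cite[Proposition 4.3.2]{Qin} ``with help of Lemma~\ref{lem: dominant pair 1}'', and your argument fills in precisely those details, using the decomposition \eqref{eqn: decomposition}, the vanishing $\sigma^*\bw^i-{\mathcal C}_q\bv^i=0$ from Lemma~\ref{lem:dimCartan}(iv), the identity $\bv^i(I_j)=\delta_{ij}$ for both existence and uniqueness, and Lemma~\ref{lem: dominant pair 1} to upgrade $(\bv^+,\bw^+)$ from $(V^+_\Z,W^+_\Z)$ to $(V^+,W^+)$. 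No gaps.
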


\begin{proof}
The proof is the same as for \cite[Proposition 4.3.2]{Qin}, now with help of Lemma \ref{lem: dominant pair 1}. We omit the detail.
\end{proof}

Introduce the following $\Z[\tt^{1/2}, \tt^{-1/2}]$-submodules of $\tRZ$:
\begin{align}
\tR^{\imath+}_\Z &= \bigoplus_{\stackrel{(\bv^+,\bw^+)\in (V^{+},W^{+})}{\sigma^*\bw^+ -{\mathcal C}_q \bv^+\geq0}} \Z[\tt^{1/2}, \tt^{-1/2}] L(\bv^+,\bw^+),\\
\tR^{\imath0}_\Z &=\bigoplus_{\stackrel{(\bv^0,\bw^0)\in (V^0,W^0)}{\sigma^*\bw^0-{\mathcal C}_q \bv^0=0}}  \Z[\tt^{1/2}, \tt^{-1/2}] L(\bv^0,\bw^0).
\label{eq:Ri0Z}
\end{align}
We further set $\tR^{\imath+} =\Q(\tt^{1/2}) \otimes \tR^{\imath+}_\Z$ and $\tR^{\imath0} =\Q(\tt^{1/2}) \otimes \tR^{\imath0}_\Z$.

\begin{theorem}
\label{thm:RRR}
The quantum Grothendieck ring $(\tRiZ,\cdot)$ has a basis given by
\begin{align*}
&\Big \{L(\bv^+,\bw^+)  L(\bv^0,\bw^0)
\mid  (\bv^+,\bw^+)\in(V^{+},W^{+}) \text{ are $l$-dominant},\\
&\qquad\qquad\qquad\qquad\qquad (\bv^0,\bw^0)\in(V^0,W^0) \text{ such that }\sigma^*\bw^0-{\mathcal C}_q \bv^0=0 \Big\}.
\end{align*}
\end{theorem}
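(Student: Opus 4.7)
The plan is to establish a bijection between the proposed indexing set and the set of strongly $l$-dominant pairs (which labels the distinguished $\Z[\tt^{1/2},\tt^{-1/2}]$-basis $\{L(\bv,\bw)\}$ of $\tRiZ$), and then to use the multiplication formula \eqref{equation multiplication} together with the leading-term computation \eqref{eqn: leading term} to show that the resulting change-of-basis matrix is triangular with invertible diagonal entries.

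For the bijection, I would invoke Lemma~\ref{lem:DP3}: every $l$-dominant pair $(\bv,\bw)$ admits a unique decomposition $(\bv^+,\bw^+)+(\bv^0,\bw^0)$ with $(\bv^+,\bw^+)\in(V^{+},W^{+})$ $l$-dominant and $(\bv^0,\bw^0)\in(V^0,W^0)$ satisfying $\sigma^*\bw^0-\mathcal{C}_q\bv^0=0$. Proposition~\ref{prop:dominant++} promotes $l$-dominance to strong $l$-dominance on $(V^{+},W^{+})$. For the $\bv^0$-part, Lemma~\ref{lem:dimCartan}(iv) implies that any such $(\bv^0,\bw^0)$ necessarily takes the form $\bigl(\sum_i a_i\bv^{i},\sum_i a_i\bw^{i}\bigr)$ (after identifying $\bw^{i}=\bw^{\btau i}$), and Corollary~\ref{cor:KLR for finite projective dimension} applied to $\bigoplus_i\E_i^{\oplus a_i}$ produces a bistable module with $\dimv K_{LR}=(\bv^0,\bw^0)$; hence by Lemma~\ref{lem:bistable} the stratum $\cm_0^{\mathrm{reg}}(\bv^0,\bw^0)$ is nonempty and $(\bv^0,\bw^0)$ is strongly $l$-dominant. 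Combining these ingredients yields a bijection
\[
\bigl((\bv^+,\bw^+),(\bv^0,\bw^0)\bigr)\longmapsto(\bv^++\bv^0,\bw^++\bw^0)
\]
from the putative index set onto the strongly $l$-dominant pairs.

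For the triangular expansion, fix $\bw\in W^{+}$. For each strongly $l$-dominant $(\bv,\bw)$ with associated decomposition $(\bv^+,\bw^+)+(\bv^0,\bw^0)$, the multiplication formula \eqref{equation multiplication} reads
\[
L(\bv^+,\bw^+)\cdot L(\bv^0,\bw^0)=\sum_{\bv'\geq\bv^++\bv^0} c_{\bv^+,\bv^0}^{\bv'}(\tt)\,L(\bv',\bw),
\]
where $\bv'$ ranges over strongly $l$-dominant pairs (the remaining terms vanish). By \eqref{eqn: leading term} the leading coefficient $c_{\bv^+,\bv^0}^{\bv^++\bv^0}(\tt)$ is a single monomial in $\tt^{1/2}$, hence a unit in $\Z[\tt^{1/2},\tt^{-1/2}]$. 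Since only finitely many strongly $l$-dominant pairs exist for each fixed $\bw$, fixing any linear extension of the partial order $\leq$ on $\bv$ shows that the transition matrix from $\{L(\bv,\bw)\}$ to $\{L(\bv^+,\bw^+)L(\bv^0,\bw^0)\}$, indexed by the same finite set via the bijection above, is upper triangular with invertible diagonal, and therefore invertible over $\Z[\tt^{1/2},\tt^{-1/2}]$. Consequently the proposed set is a $\Z[\tt^{1/2},\tt^{-1/2}]$-basis of each $\tR^{\imath}_{\Z,\bw}$, and summing over $\bw\in W^{+}$ completes the proof.

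The principal obstacle lies in the surjectivity of the bijection, specifically the verification that every pair $(\bv^0,\bw^0)\in(V^0,W^0)$ with $\sigma^*\bw^0-\mathcal{C}_q\bv^0=0$ is actually strongly $l$-dominant; this hinges on the explicit control over the intermediate extension functor $K_{LR}$ on modules of finite projective dimension furnished by Corollary~\ref{cor:KLR for finite projective dimension}. Once the bijection is secured, the triangular-expansion step becomes essentially formal, given the leading-term formula \eqref{eqn: leading term}.
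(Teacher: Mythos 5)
Your proposal is correct and follows essentially the same route as the paper: both reduce to a fixed weight space $\tR^{\imath}_{\Z,\bw}$, use Lemma~\ref{lem:DP3} together with Proposition~\ref{prop:dominant++} to match the proposed index set with the strongly $l$-dominant pairs, and conclude via the triangularity of the transition matrix coming from \eqref{equation multiplication} with unit diagonal entries from \eqref{eqn: leading term}. Your explicit verification that each $(\bv^0,\bw^0)$ with $\sigma^*\bw^0-{\mathcal C}_q\bv^0=0$ is strongly $l$-dominant (via Corollary~\ref{cor:KLR for finite projective dimension} and Lemma~\ref{lem:bistable}) is a point the paper leaves implicit in the definition of $\tR^{\imath0}_\Z$, and is a welcome addition rather than a deviation.
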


\begin{proof}
Since the multiplication $\cdot$ is a twisted version of $\ast $ (up to some powers of $\tt^{1/2}$), the statement is equivalent to its counterpart in $(\tRiZ, \ast)$, and we shall prove this version. Let $\mu: \tR^{\imath+}_\Z \otimes \tR^{\imath0}_\Z \rightarrow \tRiZ$ be given by the multiplication $\ast $. It suffices to prove that $\mu$ is an isomorphism.

Recall from \eqref{eq:tRi} that $\tRiZ$ is a $\N \I$-graded (i.e., $W^+$-graded) algebra, whose graded subspaces $\tR^{\imath}_{\Z,\bw}$ are of finite rank.
For a given $\bw \in W^+$, consider the following two finite subsets in $\tR^{\imath}_{\Z ,\bw}$, of the same cardinality thanks to Lemma~\ref{lem:DP3}:
\begin{align}
& \{ L(\bv^+,\bw^+)\ast  L(\bv^0,\bw^0)
\mid  \bw^+ +\bw^0 =\bw,
(\bv^+,\bw^+)\in(V^{+},W^{+}) \text{ are $l$-dominant},
\label{set1} \\
&\qquad\qquad\qquad\qquad\qquad (\bv^0,\bw^0)\in(V^0,W^0) \text{ such that }\sigma^*\bw^0-{\mathcal C}_q \bv^0=0 \Big\};
\notag
\\
& \{ L(\bv,\bw)  \mid (\bv,\bw)\ \text{ are strongly $l$-dominant} \}.
\label{set2}
\end{align}
Note \eqref{set2} is a basis for $\tR^{\imath}_{\Z,\bw}$, and we have used ``$l$-dominant" instead of ``strongly $l$-diminant" in \eqref{set1} thanks to Proposition~\ref{prop:dominant++}.

For any pairs $(\bv^+,\bw^+), (\bv^0,\bw^0)$ which satisfy the conditions in \eqref{set1}, we have
$L(\bv^+,\bw^+)\in \tR^{\imath+}_\Z$, and $L(\bv^0,\bw^0)\in \tR^{\imath0}_\Z$, and by
\eqref{equation multiplication},
\begin{align}
  \label{equation multiplication2}
L(\bv^+,\bw^+)\ast  L(\bv^0,\bw^0)
\in v^\Z L(\bv^++\bv^0,\bw)
 +\sum_{{}'\bv > \bv^++\bv^0}  \Z[\tt, \tt^{-1}] L({}'\bv,\bw).
\end{align}
We observe from \eqref{equation multiplication2} that the transition matrix $T$ from \eqref{set2} to \eqref{set1} is a triangular matrix with entries in $\N[\tt, \tt^{-1}]$ and diagonals in $v^\Z$, and hence $T$ is invertible with $T^{-1}$ having entries in $\Z[\tt, \tt^{-1}]$. This implies $\mu$ is an isomorphism.
\end{proof}

\begin{remark}
Similarly, $(\tRiZ,\cdot)$ has a basis given by
\begin{align*}
&\Big \{ L(\bv^0,\bw^0) L(\bv^+,\bw^+)
\mid  (\bv^+,\bw^+)\in(V^{+},W^{+}) \text{ are $l$-dominant},\\
&\qquad\qquad\qquad\qquad\qquad (\bv^0,\bw^0)\in(V^0,W^0) \text{ such that }\sigma^*\bw^0-{\mathcal C}_q \bv^0=0 \Big\}.
\end{align*}
\end{remark}

\subsection{Filtered algebra $\tRi$}

\begin{lemma} \label{lem:R0 subalgebra}
Let $\bv\in V^0$, $\bw\in W^0$ be such that $\sigma^*\bw-{\mathcal C}_q \bv=0$. Then we have $(\bv,\bw)\in\bigoplus_{i\in \I}\N(\bv^{i},\bw^{i})$.
\end{lemma}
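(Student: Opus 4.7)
The plan is to exploit the key identity $\mathcal{C}_q\bv^{i}=\sigma^*\bw^{i}$ established in Lemma \ref{lem:dimCartan}(iv), which says that every generator pair $(\bv^{i},\bw^{i})$ itself already satisfies the constraint $\sigma^*\bw-\mathcal{C}_q\bv=0$. The strategy is to pull a decomposition of $\bv$ inside $V^0$ through $\mathcal{C}_q$ and recover the matching decomposition of $\bw$ inside $W^0$ using the injectivity of $\sigma^*$.

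In detail, since $\bv\in V^0=\bigoplus_{i\in \I}\N\bv^{i}$, one can write $\bv=\sum_{i\in \I}a_i\bv^{i}$ for some $a_i\in \N$. Applying $\mathcal{C}_q$ term by term and invoking Lemma \ref{lem:dimCartan}(iv), the plan is to compute
\[
\mathcal{C}_q\bv\;=\;\sum_{i\in \I}a_i\,\mathcal{C}_q\bv^{i}\;=\;\sum_{i\in \I}a_i\,\sigma^*\bw^{i}\;=\;\sigma^*\Bigl(\sum_{i\in \I}a_i\bw^{i}\Bigr).
\]
Combining this with the hypothesis $\sigma^*\bw=\mathcal{C}_q\bv$ gives $\sigma^*\bw=\sigma^*\bigl(\sum_{i}a_i\bw^{i}\bigr)$. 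Since $\sigma:C\to \cs^\imath_0$ is a bijection, the induced pullback $\sigma^*$ on dimension vectors is injective; cancelling it yields $\bw=\sum_{i\in \I}a_i\bw^{i}$, and therefore $(\bv,\bw)=\sum_{i\in \I}a_i(\bv^{i},\bw^{i})$, which lies in $\bigoplus_{i\in \I}\N(\bv^{i},\bw^{i})$.

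The argument is essentially elementary once Lemma \ref{lem:dimCartan}(iv) is in hand, and I expect no serious obstacle. The only mild subtlety is that a decomposition $\bv=\sum a_i\bv^{i}$ inside the $\N$-submonoid $V^0$ need not be unique a priori, but the above shows that any choice of nonnegative coefficients realizing $\bv$ automatically realizes $\bw$ as well, so the conclusion is insensitive to that choice.
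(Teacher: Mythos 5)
Your proof is correct and follows essentially the same route as the paper: decompose $\bv=\sum_i a_i\bv^{i}$, apply Lemma~\ref{lem:dimCartan}(iv) and linearity of ${\mathcal C}_q$, and conclude $\bw=\sum_i a_i\bw^{i}$. The paper carries out the last step by evaluating $\sigma^*\bw={\mathcal C}_q\bv$ at each $\ts_i$ (with a small case split on $\btau i=i$ versus $\btau i\neq i$), whereas you package the same coordinate comparison as injectivity of $\sigma^*$, which is a clean and valid shortcut.
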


\begin{proof}
Recall $\I_\btau$ from \eqref{eqn:representative}. By definition, assume that $\bv=\sum_{i\in \I} a_i \bv^{i}$, $\bw=\sum_{i\in \I_\btau} b_i\bw^{i}$ with $a_i,b_i\in\N$.
As $\sigma^*\bw-{\mathcal C}_q \bv=0$, using Lemma \ref{lem:dimCartan}(iv) we have
\begin{align*}
 \begin{cases}
\,\; b_i =\sigma^*\bw(\ts_i)={\mathcal C}_q\bv(\ts_i)=\sum_{j\in \I} a_j {\mathcal C}_q\bv^{j}(\ts_i)=\sum_{j\in \I} a_j \sigma^*\bw^{j}(\ts_i)=a_i+a_{\btau i}, & \;\text{ if }\btau i\neq i;
\\
2b_i = \sigma^*\bw(\ts_i)={\mathcal C}_q\bv(\ts_i)=\sum_{j\in \I} a_j {\mathcal C}_q\bv^{j}(\ts_i)=\sum_{j\in \I} a_j \sigma^*\bw^{j}(\ts_i)=2a_i, & \; \text{ if } \btau i=i.
\end{cases}
\end{align*}
It follows from $\bw^{i}=\bw^{{\btau i}}$
that $(a_i\bv^{i}+a_{\btau i}\bv^{{\btau i}}, b_i \bw^{i})=a_i(\bv^{i},\bw^{i})+a_{\btau i}(\bv^{\btau i},\bw^{i})$ for any $i\in \I_\btau$ with $\btau i\neq i$, and
$(a_i\bv^{i}, b_i \bw^{i})=a_i(\bv^{i},\bw^{i})$ for any $i\in \I_\btau$ with $\btau i=i$. Therefore,
$(\bv,\bw)=\sum_{i\in \I} a_i (\bv^{i},\bw^{i}).$
\end{proof}

This following is a somewhat enhanced reformulation of Theorem \ref{thm:RRR}.

\begin{proposition}  \label{prop:poly}
$\tR^{\imath0}_\Z$ is a polynomial subring of $\tRiZ$ in generators $\{L(\bv^{i},\bw^{i})\mid i\in \I \}$.
As a left/right $\tR^{\imath0}_\Z$-module, $\tRiZ$ is free with basis $\{L(\bv^+,\bw^+)\mid (\bv^+,\bw^+)\in (V^{+},W^{+})\text{ $l$-dominant} \}.$
\end{proposition}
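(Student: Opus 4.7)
\medskip

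\noindent\textbf{Proof proposal for Proposition~\ref{prop:poly}.}

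The plan is to deduce both statements from the basis theorem (Theorem~\ref{thm:RRR}), the explicit product formula \eqref{eqn: cartan mult 2}, and the classification of dimension vectors in Lemma~\ref{lem:R0 subalgebra}. Concretely, I will first identify the elements $L(\bv^0,\bw^0)$ (which form by definition a $\Z[\tt^{1/2},\tt^{-1/2}]$-basis of $\tR^{\imath0}_\Z$) with monomials in the $L(\bv^{i},\bw^{i})$, and then transport the basis in Theorem~\ref{thm:RRR} to a free-module basis over $\tR^{\imath0}_\Z$.

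For the first assertion, I proceed as follows. Given any pair $(\bv^0,\bw^0)\in (V^0,W^0)$ with $\sigma^*\bw^0-\mathcal C_q\bv^0=0$, Lemma~\ref{lem:R0 subalgebra} provides nonnegative integers $(a_i)_{i\in\I}$ with $(\bv^0,\bw^0)=\sum_{i\in\I}a_i(\bv^{i},\bw^{i})$. These integers are uniquely determined: evaluating $\bv^0=\sum_i a_i\bv^{i}$ at the injective $kQ$-modules $I_j$ and using $\bv^{i}(I_j)=\delta_{ij}$ (noted in Section~\ref{sec:main} just before \eqref{eqn: decomposition}) yields $a_j=\bv^0(I_j)$. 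The formula \eqref{eqn: cartan mult 2} then gives
\[
\prod_{i\in\I} L(\bv^{i},\bw^{i})^{a_i}=L\!\Big(\sum_i a_i\bv^{i},\sum_i a_i\bw^{i}\Big)=L(\bv^0,\bw^0),
\]
where the product on the left is independent of the order because the right-hand side of \eqref{eqn: cartan mult 2} is manifestly symmetric in the indices, so the generators $L(\bv^{i},\bw^{i})$ commute pairwise. Consequently, the ring map from the polynomial ring $\Z[\tt^{1/2},\tt^{-1/2}][x_i \mid i\in\I]$ to $\tR^{\imath0}_\Z$ sending $x_i$ to $L(\bv^{i},\bw^{i})$ maps the monomial basis $\{\prod_i x_i^{a_i}\}$ bijectively onto the basis $\{L(\bv^0,\bw^0)\}$ of $\tR^{\imath0}_\Z$, hence is an isomorphism.

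For the second assertion, Theorem~\ref{thm:RRR} asserts that
\[
\bigl\{L(\bv^+,\bw^+)\cdot L(\bv^0,\bw^0) \bigr\}
\]
is a $\Z[\tt^{1/2},\tt^{-1/2}]$-basis of $\tRiZ$, where $(\bv^+,\bw^+)$ ranges over $l$-dominant pairs in $(V^+,W^+)$ and $(\bv^0,\bw^0)$ over pairs in $(V^0,W^0)$ with $\sigma^*\bw^0-\mathcal C_q\bv^0=0$. Since the $L(\bv^0,\bw^0)$ form a basis of the right-multiplier algebra $\tR^{\imath0}_\Z$ (by the first part, or simply by definition of $\tR^{\imath0}_\Z$), this is exactly the statement that $\tRiZ$ is free as a right $\tR^{\imath0}_\Z$-module with basis $\{L(\bv^+,\bw^+)\}$. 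The free left-module version follows from the analogous left version of Theorem~\ref{thm:RRR} recorded in the Remark following its proof.

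I do not anticipate a substantial obstacle, as the statement is essentially an unpacking of results already in hand; the only real content beyond bookkeeping is the verification that the $L(\bv^{i},\bw^{i})$ commute and generate $\tR^{\imath0}_\Z$ with no polynomial relations, which is handled cleanly by combining Lemma~\ref{lem:R0 subalgebra} (surjectivity onto the index set), the injectivity trick using $\bv^{i}(I_j)=\delta_{ij}$, and the symmetric product rule \eqref{eqn: cartan mult 2}.
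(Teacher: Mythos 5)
Your proposal is correct and follows the same route as the paper, which proves the first assertion by combining \eqref{eqn: cartan mult 2} with Lemma~\ref{lem:R0 subalgebra} and deduces the second from Theorem~\ref{thm:RRR}; you have merely filled in the details (uniqueness of the exponents via $\bv^{i}(I_j)=\delta_{ij}$, commutativity from the symmetry of \eqref{eqn: cartan mult 2}) that the paper leaves implicit.
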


\begin{proof}
The first assertion follows from \eqref{eqn: cartan mult 2} and Lemma~\ref{lem:R0 subalgebra}.
The second assertion follows from Theorem~ \ref{thm:RRR}.
\end{proof}

For any $\gamma \in \N^{\cs^\imath_0} =\N \I$, we denote
\vspace{2mm}

$\triangleright$ $\tRZ^{\imath,\leq \gamma}$ $=$ $\Z[\tt^{1/2}, \tt^{-1/2}]$-submodule of $\tRiZ$ spanned by the basis elements $L(\bv^+,\bw^+)  L(\bv^0,\bw^0)$ in Theorem~\ref{thm:RRR} with $\bw^+\leq \gamma$.

$\triangleright$ $\tRZ^{\imath,<\gamma}$ $=$ $\Z[\tt^{1/2}, \tt^{-1/2}]$-submodule of $\tRiZ$ spanned by the basis elements $L(\bv^+,\bw^+)  L(\bv^0,\bw^0)$ in Theorem~\ref{thm:RRR} with $\bw^+ <\gamma$.

\begin{lemma}
\label{lem: filtration strict}
(1) Retain the notation for the $l$-dominant pair decomposition of $(\bv, \bw)$ as in Lemma~\ref{lem:DP3}.
Then
\[
L(\bv, \bw) \in v^\Z L(\bv^+,\bw^+)\ast  L(\bv^0,\bw^0) + \sum_{\stackrel{{}'\bw^+\leq \bw^+,}{ {}'\bv^+ +{}'\bv^0 > \bv}} \Z[\tt, \tt^{-1}]  L('\bv^+,{}'\bw^+)\ast  L('\bv^0,{}'\bw^0).
\]
In particular, we have $L(\bv,\bw) \in \tR^{\imath, \leq \bw^+}$.

(2) Let $(\bv,\bw), ({}'\bv,\bw)$ be strongly $l$-dominant pairs such that ${}'\bv >\bv$. If $L(\bv,\bw) \in \tR^{\imath, \leq \gamma}$ for some $\gamma$, then $L('\bv,\bw) \in \tR^{\imath, \leq \gamma}$.
%
%
\end{lemma}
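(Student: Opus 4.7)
The plan is to prove part (1) by downward induction on $\bv$ with $\bw$ fixed; since by \cite[Lemma~3.4]{Sch18} only finitely many $\bv$ give rise to strongly $l$-dominant pairs $(\bv,\bw)$, the induction is well-founded. The starting point is \eqref{equation multiplication2} in the proof of Theorem~\ref{thm:RRR}, which, specialized to $(\bv^+,\bw^+)$ and $(\bv^0,\bw^0)$ obtained via Lemma~\ref{lem:DP3} (so that $\bv^+ +\bv^0 =\bv$ and $\bw^+ +\bw^0=\bw$), reads
\[
L(\bv^+,\bw^+)\ast L(\bv^0,\bw^0)\in v^\Z L(\bv,\bw)+\sum_{{}'\bv>\bv}\Z[\tt,\tt^{-1}]\,L({}'\bv,\bw).
\]
Solving for $L(\bv,\bw)$ exhibits it as $v^\Z L(\bv^+,\bw^+)\ast L(\bv^0,\bw^0)$ plus a $\Z[\tt,\tt^{-1}]$-linear combination of $L({}'\bv,\bw)$ with ${}'\bv>\bv$, each strongly $l$-dominant. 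Applying the inductive hypothesis to every such $L({}'\bv,\bw)$, I substitute in and reorganize.

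The crucial observation needed to control the $\bw^+$-filtration in this substitution is the following monotonicity: \emph{if ${}'\bv>\bv$, then ${}'\bw^+\le \bw^+$.} Indeed, by \eqref{eqn: decomposition}, $\bv^0=\sum_{i\in\I}\bv(I_i)\bv^{i}$, and ${}'\bv(I_i)\ge \bv(I_i)$ forces ${}'\bv^0\ge \bv^0$ in $V^0$ (hence pointwise). Via the linear relations between the coefficients of $\bv^0$ in the basis $\{\bv^{i}\}$ and the coefficients of $\bw^0$ in $\{\bw^{i}\}$ imposed by $\sigma^*\bw^0 -{\mathcal C}_q\bv^0=0$ (compare the proof of Lemma~\ref{lem:R0 subalgebra}), one concludes ${}'\bw^0\ge \bw^0$ componentwise, so ${}'\bw^+=\bw-{}'\bw^0\le \bw-\bw^0=\bw^+$. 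Combined with transitivity of $\le$ on $\bw^+$'s appearing in the induction, every basis element $L({}'\bv^+,{}'\bw^+)\ast L({}'\bv^0,{}'\bw^0)$ that arises satisfies ${}'\bw^+\le \bw^+$ and ${}'\bv^++{}'\bv^0>\bv$, which gives the stated expansion; in particular $L(\bv,\bw)\in \tR^{\imath,\leq \bw^+}$.

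For part (2), I exploit that the basis from Theorem~\ref{thm:RRR} is a genuine basis of $\tRiZ$, and that in the expansion from part (1), the coefficient of the ``leading term'' $L(\bv^+,\bw^+)\ast L(\bv^0,\bw^0)$ is a nonzero element of $v^\Z$. Thus $L(\bv,\bw)\in \tR^{\imath,\le \gamma}$ forces the indexing pair of each nonzero basis contribution—in particular the leading one with $\bw^+$-component equal to $\bw^+$—to satisfy $\bw^+\le \gamma$. Applying the monotonicity observation above with the roles $\bv\mapsto {}'\bv$ now yields ${}'\bw^+\le \bw^+\le \gamma$. Finally, part (1) applied to $({}'\bv,\bw)$ gives $L({}'\bv,\bw)\in \tR^{\imath,\le {}'\bw^+}\subseteq \tR^{\imath,\le \gamma}$, as desired.

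The main obstacle is the monotonicity ${}'\bv>\bv\Rightarrow {}'\bw^+\le\bw^+$, since the decomposition $(\bv^+,\bw^+)+(\bv^0,\bw^0)$ is only implicitly determined by $(\bv,\bw)$ via Lemma~\ref{lem:DP3}; once this is in hand, both parts fall out of the triangular leading-term structure already extracted from \eqref{equation multiplication2} in the proof of Theorem~\ref{thm:RRR}.
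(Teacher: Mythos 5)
Your proposal is correct and follows essentially the same route as the paper: the key step in both is the monotonicity claim that ${}'\bv>\bv$ implies ${}'\bw^+\le\bw^+$ (proved via \eqref{eqn: decomposition} and Lemma~\ref{lem:R0 subalgebra}), after which the paper inverts the unitriangular transition matrix between the two bases while you perform the equivalent downward induction on $\bv$. Your part (2) also matches the paper's argument, merely making explicit why the nonzero leading coefficient reduces the claim to the case $\gamma=\bw^+$.
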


\begin{proof}
(1)
For any $l$-dominant pair $(\bv',\bw)$, recall its unique $l$-dominant pair decomposition
of $(\bv',\bw)$ as in Lemma~\ref{lem:DP3}: $ (\bv', \bw) = ('\bv^+, {}'\bw^+) + ('\bv^0, {}'\bw^0)$.

{\bf Claim}. If $\bv'>\bv$, then $'\bw^+\leq \bw^+$.

Indeed, assume  $\bv'>\bv$. It follows by \eqref{eqn: decomposition} that $'\bv^0\geq\bv^0$. Then $'\bw^0\geq \bw^0$ by Lemma~ \ref{lem:R0 subalgebra}, which implies that $'\bw^+\leq \bw^+$. The Claim is proved.

Hence we can refine the partial ordering in \eqref{equation multiplication2} as
\begin{align*}
L(\bv^+,\bw^+)\ast  L(\bv^0,\bw^0)
\in v^\Z L(\bv,\bw) +\sum_{\stackrel{ {}'\bw^+ \leq \bw^+,}{ {}'\bv > \bv}}  \N[\tt, \tt^{-1}] L({}'\bv,\bw).
\end{align*}
The invertible transition matrix $T$ from a basis \eqref{set2} to another basis \eqref{set1} of $\tRZ^{\imath,\bw}$ (see the proof of Theorem~\ref{thm:RRR}) is a matrix with diagonals in $\tt^\Z$ which is trianglular with respect to this refined partial ordering.
Inverting the matrix $T$ proves (1).

Retaining the notation above, we only need to prove (2) by taking $\gamma =\bw^+$.
Thanks to ${}'\bw^+ \leq \bw^+$ above, we obtain by (1) that $L('\bv,\bw) \in \tR^{\imath, \leq \, {}'\bw^+} \subseteq \tR^{\imath, \leq\bw^+}$. This proves (2).
\end{proof}

\begin{proposition}
  \label{prop:filtered}
We have $\tRZ^{\imath,\leq\alpha} \ast \tRZ^{\imath,\leq\beta}\subseteq \tRZ^{\imath, \leq \alpha+\beta}$, and
$\tRZ^{\imath,\leq\alpha} \cdot \tRZ^{\imath,\leq\beta}\subseteq \tRZ^{\imath, \leq \alpha+\beta}$, for any $\alpha,\beta \in \N^{\cs^\imath_0}$. Hence $(\tRiZ, \ast)$ and $(\tRiZ,\cdot)$ are filtered rings.
\end{proposition}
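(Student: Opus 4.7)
Since the twisted product $\cdot$ differs from $\ast$ only by multiplication by powers of $\tt^{1/2}$, which do not affect the grading by $\N^{\cs^\imath_0}$ (and in particular do not affect membership in the subspaces $\tRZ^{\imath,\leq\alpha}$), the two statements in the proposition are equivalent. I shall therefore prove $\tRZ^{\imath,\leq\alpha} \ast \tRZ^{\imath,\leq\beta}\subseteq \tRZ^{\imath, \leq \alpha+\beta}$. By $\Z[\tt^{1/2}, \tt^{-1/2}]$-linearity it suffices to show that if
\[
x = L(\bv_1^+,\bw_1^+) \ast L(\bv_1^0,\bw_1^0),
\qquad
y = L(\bv_2^+,\bw_2^+) \ast L(\bv_2^0,\bw_2^0)
\]
are basis elements of $\tRZ^{\imath,\leq\alpha}$ and $\tRZ^{\imath,\leq\beta}$, respectively (so $\bw_1^+ \le \alpha$ and $\bw_2^+\le\beta$), then $x\ast y \in \tRZ^{\imath, \leq \alpha+\beta}$.

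The proof of Theorem~\ref{thm:RRR} shows that each basis element $L(\bv_i^+,\bw_i^+) \ast L(\bv_i^0,\bw_i^0)$ lies in the $\Z[\tt,\tt^{-1}]$-span of $L(\bv,\bw_i^+ + \bw_i^0)$ for various $\bv \geq \bv_i^+ + \bv_i^0$. Multiplying the two such expansions and then applying the structure formula \eqref{equation multiplication}, I obtain
\[
x \ast y \in \sum c(\tt)\, L(\bv,\,\bw_1^+ + \bw_1^0 + \bw_2^+ + \bw_2^0),
\]
with the sum ranging over $\bv$ satisfying $\bv \ge (\bv_1^+ + \bv_1^0) + (\bv_2^+ + \bv_2^0)$. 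It therefore remains to show that every such $L(\bv,\bw_1 + \bw_2)$ (with $\bw_i = \bw_i^+ + \bw_i^0$) belongs to $\tRZ^{\imath, \leq \alpha+\beta}$.

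Let $(\bv,\bw_1+\bw_2) = (\bv^+,\bw^+) + (\bv^0,\bw^0)$ be its canonical $l$-dominant pair decomposition provided by Lemma~\ref{lem:DP3}. By Lemma~\ref{lem: filtration strict}(1), it is enough to prove $\bw^+ \le \alpha+\beta$. The key point is a monotonicity observation: from \eqref{eqn: decomposition}, the $V^0$-part of any $\bv$ is determined by the values $\bv(I_i)$ at the injective vertices, via $\bv^0 = \sum_i \bv(I_i)\bv^{i}$. Since $\bv_j^+\in V^+$ vanishes on all $I_i$, we have $(\bv_j^+ + \bv_j^0)(I_i) = \bv_j(I_i)\bv^{i}(I_j)$-type identity giving $\bv_j^0(I_i)=(\bv_j^+ + \bv_j^0)(I_i)$; combined with $\bv \ge (\bv_1^+ + \bv_1^0) + (\bv_2^+ + \bv_2^0)$, this forces $\bv^0 \ge \bv_1^0 + \bv_2^0$ (coefficient-wise in the $\bv^{i}$-basis of $V^0$). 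Lemma~\ref{lem:R0 subalgebra} together with the explicit formulae from its proof then gives the corresponding inequality $\bw^0 \ge \bw_1^0 + \bw_2^0$ for the $\bw$-parts, because the passage from $\bv^0=\sum a_i\bv^{i}$ to $\bw^0 = \sum b_i \bw^{i}$ is order-preserving (with $b_i = a_i$ or $b_i = a_i + a_{\btau i}$).

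Subtracting from $\bw_1 + \bw_2$ yields
\[
\bw^+ = (\bw_1 + \bw_2) - \bw^0 \;\le\; (\bw_1 + \bw_2) - (\bw_1^0 + \bw_2^0) = \bw_1^+ + \bw_2^+ \;\le\; \alpha + \beta,
\]
which completes the argument. The main (but modest) obstacle is the order-monotonicity step: verifying that the inequality $\bv \ge \bv_1 + \bv_2$ in the dimension-vector partial order passes through to the corresponding $\bw^0$-inequality, for which one must invoke both the identification $V^0 \simeq \bigoplus_i \N\bv^{i}$ given by restriction to injective vertices and the explicit formula relating $\bv^0$ and $\bw^0$ in Lemma~\ref{lem:R0 subalgebra}.
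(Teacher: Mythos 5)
Your proof is correct and follows essentially the same route as the paper: expand the product of two basis elements via \eqref{equation multiplication}, observe that every resulting term $L(\bv,\bw_1+\bw_2)$ has $\bv\geq \bv_1+\bv_2$, and control the $W^+$-part of its canonical decomposition. The only difference is that you re-derive by hand the monotonicity statement ``$\bv\geq \bv_1+\bv_2$ forces $\bw^+\leq \bw_1^++\bw_2^+$'' (via evaluation at injective vertices and Lemma~\ref{lem:R0 subalgebra}), which is precisely the Claim inside the proof of Lemma~\ref{lem: filtration strict}; the paper instead just cites Lemma~\ref{lem: filtration strict}(1)--(2) at this point.
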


\begin{proof}
As the two versions are equivalent, it suffices for us to prove for $(\tRiZ, \ast)$.

For two basis elements $L(\bv^+_1,\bw^+_1) \ast L(\bv^0_1,\bw^0_1) \in \tR^{\imath,\leq \bw^+_1}$, $L(\bv^+_2,\bw^+_2) \ast L(\bv^0_2,\bw^0_2)\in \tR^{\imath,\leq \bw^+_2}$ from Theorem~\ref{thm:RRR} (with corresponding conditions attached), it follows by \eqref{equation multiplication} that
\begin{align}
L(\bv^+_1,\bw^+_1) & \ast L(\bv^0_1,\bw^0_1)\ast L(\bv^+_2,\bw^+_2) \ast L(\bv^0_2,\bw^0_2)
 \notag \\
&\in \sum_{\bv\geq \bv^+_1+\bv^+_2 + \bv^0_1+\bv^0_2}\Z[\tt, \tt^{-1}] L\big(\bv,(\bw^+_1+\bw^+_2)+(\bw^0_1+\bw^0_2) \big)
  \label{eq:belong}
\end{align}
We have $L\big((\bv^+_1+\bv^+_2)+(\bv^0_1+\bv^0_2),(\bw^+_1+\bw^+_2)+(\bw^0_1+\bw^0_2) \big) \in \tR^{\imath,\leq \bw^+_1+\bw^+_2}$, by Lemma \ref{lem: filtration strict}(1) (whose assumption is readily verified).
Then by Lemma \ref{lem: filtration strict}(2), each term on the RHS of \eqref{eq:belong} lies in $\tR^{\imath,\leq \bw^+_1+\bw^+_2}$, and so does the LHS of \eqref{eq:belong}. The proposition is proved.
\end{proof}

Denote
\[
\tR^{\imath,gr}=\bigoplus_{\gamma \in \N \I} \tR^{\imath,gr}_{\gamma},
\qquad \text{ where }\tR^{\imath,gr}_{\gamma}=\tR^{\imath,\leq \gamma}/\tR^{\imath,<\gamma}.
\]
Then $(\tRZ^{\imath,gr},\ast _{gr})$ (respectively, $(\tRZ^{\imath,gr},\cdot _{gr})$) is the graded ring associated to the filtered ring $(\tRiZ, \ast)$ (respectively, $(\tRiZ,\cdot)$) by Proposition~\ref{prop:filtered}. By Proposition~ \ref{prop:poly}, it is natural to view $\tR^{\imath0}$ as a subalgebra of $\tR^{\imath,gr}$.

For any $(\bv^+,\bw^+)\in (V^+,W^+)$,  we denote $\bar{L}(\bv^+,\bw^+) =L(\bv^+,\bw^+)+ \tR^{\imath,<\bw^+}\in \tR^{\imath,gr}$. The following statement is similar to \cite[Lemma 5.4]{LW19} and can be derived from Theorem~\ref{thm:RRR}.

\begin{corollary}
  \label{cor:basisGr}
The associated graded ring $(\tR^{\imath,gr},\ast _{gr})$ has a basis given by
\begin{align*}
& \{\bar L(\bv^+,\bw^+)\ast _{gr} L(\bv^0,\bw^0)
\mid  (\bv^+,\bw^+)\in(V^{+},W^{+}) \text{ are $l$-dominant},\\
&\qquad\qquad\qquad\qquad\qquad
(\bv^0,\bw^0)\in(V^0,W^0) \text{ such that }\sigma^*\bw^0-{\mathcal C}_q \bv^0=0 \Big\}.
\notag
\end{align*}
\end{corollary}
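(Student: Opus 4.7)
\medskip

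\noindent\textbf{Proof proposal.}
The plan is to deduce the claim directly from Theorem~\ref{thm:RRR} together with the filtration analysis in Proposition~\ref{prop:filtered} and Lemma~\ref{lem: filtration strict}, by showing that the basis of $\tRiZ$ from Theorem~\ref{thm:RRR} is adapted to the filtration $\{\tR^{\imath,\leq\gamma}\}_{\gamma\in \N\I}$. The associated graded basis will then be produced by taking symbols of these basis vectors in the appropriate graded pieces.

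First, I would record the key observation that each basis element from Theorem~\ref{thm:RRR} is homogeneous with respect to the filtration. Namely, by the very definition of $\tR^{\imath,\leq\gamma}$, the product $L(\bv^+,\bw^+)\ast L(\bv^0,\bw^0)$ lies in $\tR^{\imath,\leq\bw^+}$ and does not lie in $\tR^{\imath,<\bw^+}$ (since it is one of the chosen basis vectors in $\tR^{\imath,\leq\bw^+}/\tR^{\imath,<\bw^+}$ up to those with strictly smaller $\bw^+$). Thus its image in $\tR^{\imath,gr}_{\bw^+}$ is nonzero.

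Next, a standard fact for filtered modules with basis adapted to the filtration: if a free $\Z[\tt^{1/2},\tt^{-1/2}]$-module $M$ with filtration $\{M_{\leq \gamma}\}$ has a basis $\{b_\alpha\}$ each of which has a well-defined filtration degree $\gamma(\alpha)$ (meaning $b_\alpha\in M_{\leq\gamma(\alpha)}\setminus M_{<\gamma(\alpha)}$), and if for every $\gamma$ the vectors $\{b_\alpha:\gamma(\alpha)\le\gamma\}$ span $M_{\leq\gamma}$, then the symbols $\{\mathrm{gr}(b_\alpha)\}$ form a basis of $\mathrm{gr}\,M$. Both hypotheses are supplied by Theorem~\ref{thm:RRR} and by the definition of $\tR^{\imath,\leq\gamma}$, so applied to $M=\tRiZ$ with $b_\alpha=L(\bv^+,\bw^+)\ast L(\bv^0,\bw^0)$ and $\gamma(\alpha)=\bw^+$, the symbols form a basis of $\tR^{\imath,gr}$.

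Finally, I would identify each such symbol with $\bar L(\bv^+,\bw^+)\ast_{gr}L(\bv^0,\bw^0)$. Since $L(\bv^0,\bw^0)\in \tR^{\imath,\leq 0}$ (because its $W^+$-part vanishes in the decomposition of Lemma~\ref{lem:DP3}), multiplication on the right by $L(\bv^0,\bw^0)$ maps $\tR^{\imath,\leq\gamma}$ into $\tR^{\imath,\leq\gamma}$ by Proposition~\ref{prop:filtered}; consequently it is well-defined on the associated graded, and its action on $\bar L(\bv^+,\bw^+)\in\tR^{\imath,gr}_{\bw^+}$ coincides with the symbol of $L(\bv^+,\bw^+)\ast L(\bv^0,\bw^0)$ in $\tR^{\imath,gr}_{\bw^+}$. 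This gives exactly the expression $\bar L(\bv^+,\bw^+)\ast_{gr}L(\bv^0,\bw^0)$, proving the claim.

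I do not foresee a substantial obstacle; the only bookkeeping point needing care is the compatibility of the filtration index $\gamma=\bw^+$ with the product (that $L(\bv^0,\bw^0)$ has filtration degree $0$, not $\bw^0$), which is immediate from the definition of $\tR^{\imath,\leq\gamma}$ in terms of the basis of Theorem~\ref{thm:RRR}.
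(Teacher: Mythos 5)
Your argument is correct and follows the same route the paper intends: the paper simply notes the corollary "can be derived from Theorem~\ref{thm:RRR}," and your proof fills in exactly the natural details — the filtration $\tR^{\imath,\leq\gamma}$ is by definition spanned by a subset of the basis of Theorem~\ref{thm:RRR}, so the symbols of the complementary basis vectors in each graded piece form a basis of $\tR^{\imath,gr}$, and Proposition~\ref{prop:filtered} makes $\ast_{gr}$ well defined so that each symbol is identified with $\bar L(\bv^+,\bw^+)\ast_{gr}L(\bv^0,\bw^0)$, using that $L(\bv^0,\bw^0)$ has filtration degree $0$.
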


There is a similar basis as in Corollary~\ref{cor:basisGr} for the associated graded ring $(\tR^{\imath,gr},\cdot_{gr})$.

\subsection{Generators for $\tRi$}


Any $(\bv,\bw)\in(V^+,W^+)$ can be viewed naturally as a dimension vector for $\mcr^{\gr}$ by the canonical embedding $\mod(kQ)\subseteq \mcr^{\gr}$. Recall the notation \eqref{eq:vwvw} on $(\bar{\bv},\bar{\bw})$.

\begin{lemma}
\label{lem: iso of varieties}
For any $(\bv,\bw)\in(V^+,W^+)$, the following isomorphisms of varieties hold:
\begin{align}
\label{eqn: iso of var 1}
\cm(\bv,\bw,\mcr^\imath) & \cong \cm(\bv,\bw,\mcr^{\gr})\cong \cm(\bar{\bv},\bar{\bw},\mcr);\\
\label{eqn: iso of var 2}
\cm_0(\bv,\bw,\mcr^\imath) & \cong \cm_0(\bv,\bw,\mcr^{\gr})\cong \cm_0(\bar{\bv},\bar{\bw},\mcr).
\end{align}
\end{lemma}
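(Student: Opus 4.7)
The plan is to show that, when dimension vectors are supported in $(V^+, W^+)$, the three representation varieties $\rep(\bv,\bw,\mcr^\imath)$, $\rep(\bv,\bw,\mcr^{\gr})$, and $\rep(\bar{\bv},\bar{\bw},\mcr)$ are canonically isomorphic (equivariantly for the natural group actions). Both isomorphisms \eqref{eqn: iso of var 1} and \eqref{eqn: iso of var 2} then follow by passing to stable loci and to the appropriate $G_\bv$-quotients.

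First I would verify the underlying combinatorial input. Viewed inside $\mcr^{\gr}_0$, the support of $(\bv,\bw)\in(V^+,W^+)$ consists of non-injective indecomposable $kQ$-modules together with frozen vertices $\sigma\ts_i$, $i\in Q_0$. For any $n\neq 0$, the automorphisms $(F^\imath)^n=\Sigma^n\widehat{\btau}^n$ (generating the covering $\cv^{\gr}\colon\mcr^{\gr}\to\mcr^\imath$) and $\Sigma^{2n}$ (generating $\mcr^{\gr}\to\mcr$) carry each such vertex outside the support: a non-injective $M\in\mod(kQ)$ is sent into $\Sigma^m\mod(kQ)$ with $m\neq 0$, while $\sigma\ts_i$ is sent to $\sigma\Sigma^m\widehat{\btau}^\ell\ts_i$ with $m\neq 0$. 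Hence the projections restrict to bijections on the respective supports.

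Second, I would promote this to an isomorphism of Hom spaces between vertices in the support. For $x,y$ in the support, the Galois covering property gives
\[
\Hom_{\mcr^\imath}(\cv^{\gr}(x),\cv^{\gr}(y)) \;=\; \bigoplus_{n\in\Z}\Hom_{\mcr^{\gr}}(x,(F^\imath)^n y),
\]
and the $n\neq 0$ summands vanish: via Happel's equivalence $k(\Z Q)\simeq \Ind\cd_Q$ this is the vanishing $\Hom_{\cd_Q}(M,\Sigma^n N)=0$ for $M,N\in\mod(kQ)$ and $|n|\geq 1$, with frozen-vertex cases reducing to path computations modulo mesh relations using the unique arrows $\sigma c\to c$ and $\tau c'\to\sigma c'$ in $\mcr^{\gr}$; the precise vanishing invokes \eqref{eq:isomvarieties} (equivalently, \cite[Lemma~9.1]{LW19}). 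The identical argument with $F^\imath$ replaced by $\Sigma^2$ handles the covering $\mcr^{\gr}\to\mcr$, producing analogous identifications where vertices in the $\mcr$-support are labelled by the same $kQ$-modules and frozen vertices $\sigma\ts_i$.

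Third, these Hom identifications translate into algebraic bijections of representation varieties, equivariant under the natural isomorphisms $G_\bv\cong G_{\bar{\bv}}$ (products of general linear groups over the non-frozen vertices in the support). The stability condition depends only on whether the socle is supported at frozen vertices, which is preserved under these bijections; hence passing to stable loci and $G_\bv$-quotients gives \eqref{eqn: iso of var 1}, while categorical quotients give \eqref{eqn: iso of var 2}. The main obstacle is the bookkeeping in the second step at frozen vertices: one must track paths in $\mcr^{\gr}$ modulo mesh relations and carefully descend the vanishing of Hom in $\cd_Q$ through the frozen-vertex structure, a step which is routine but notationally technical.
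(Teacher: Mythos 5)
Your overall strategy (restrict attention to the support of $(\bv,\bw)$, identify the three categories there, then pass to stable loci and quotients) is close in outline to the paper's, but the central claim in your second step is false, and it is precisely where the content of the lemma lies. You assert that for $x,y$ in the support the summands $\Hom_{\mcr^{\gr}}(x,(F^\imath)^n y)$ with $n\neq 0$ vanish, reducing this to ``$\Hom_{\cd_Q}(M,\Sigma^n N)=0$ for $|n|\geq 1$''. For $n=1$ that Hom space is $\Ext^1_{kQ}(M,\btau N)$, which is nonzero for many pairs of non-injective indecomposables (e.g.\ two adjacent non-injective simples in a linearly oriented $A_n$, $n\geq 3$); and between frozen vertices the degree $\pm 1$ components are exactly what produce the arrows $\varepsilon_i$ of $\cs^\imath\cong\Lambda^\imath$, so they certainly do not all vanish. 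There is also a secondary conflation: $\Hom_{\mcr^{\gr}}(x,y)$ for non-frozen $x,y$ is not $\Hom_{\cd_Q}(x,y)$, since morphisms in $\mcr^{\gr}$ may factor through frozen vertices; only the quotient $\cp^{\gr}=\mcr^{\gr}/(\cs^{\gr})$ recovers the mesh category, and \eqref{eq:isomvarieties} only concerns $\Hom_{\cd_Q}(P_i,F^nP_i)$, not general pairs.

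The correct mechanism --- and the paper's proof --- is not that these Hom spaces vanish, but that every morphism of nonzero $F^\imath$-degree between objects of the support factors through an injective $kQ$-module (or through an object of $\Sigma^t\mod(kQ)$ with $t>0$), hence acts as zero on any representation whose dimension vector lies in $(V^+,W^+)$, because such dimension vectors vanish on injectives. Formally one replaces $\mcr^\imath$ and $\mcr^{\gr}$ by the quotient categories $\ov{\mcr}^\imath=\mcr^\imath/(x\mid x\in\Ind\mod(kQ)\text{ injective})$ and $\ov{\mcr}^{\gr,+}$, observes that $\rep(\bv,\bw,-)$ is unchanged under this replacement for $(\bv,\bw)\in(V^+,W^+)$, and then proves that the induced functor $\ov{\mcr}^{\gr,+}\to\ov{\mcr}^\imath$ is an isomorphism; the fullness step uses directedness of $\mcr^{\gr}$ and a case analysis on the first irreducible morphism in a path that leaves degree zero, showing it must pass through an injective. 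Without this quotient-by-injectives step your argument does not go through.
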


\begin{proof}
For \eqref{eqn: iso of var 1} and \eqref{eqn: iso of var 2}, we shall only prove the first isomorphisms; the second isomorphism in \eqref{eqn: iso of var 1} or \eqref{eqn: iso of var 2} can be proved similarly or follows from \cite[Proposition~ 4.2.1]{Qin}.

Let $\ov{\mcr}^\imath:=\mcr^\imath/( x | x\in\Ind\mod(kQ)\text{ is injective}  )$. Let $\ov{\mcr}^{\gr,+}$ be the subcategory of $\mcr^{\gr}$ generated by $x\in\Ind\mod(kQ)$ with $x$ not injective. Then $\ov{\mcr}^{\gr,+}$ is also a quotient category of $\mcr^{\gr}$, i.e., $\ov{\mcr}^{\gr,+}=\mcr^{\gr}/( x\mid x\notin \big(\Ind\mod(kQ) -\inj(kQ)\big))$.
The definition of $(V^+,W^+)$ implies that $(\bv,\bw)$ can be viewed as dimension vectors for $\ov{\mcr}^\imath$ and $\ov{\mcr}^{\gr,+}$. In this way, we have $\rep(\bv,\bw,\mcr^\imath)\cong \rep(\bv,\bw,\ov{\mcr}^\imath)$ and $\rep(\bv,\bw,\mcr^{\gr})\cong\rep(\bv,\bw,\ov{\mcr}^{\gr,+})$ as varieties.

{\bf Claim.} $\ov{\mcr}^\imath\cong \ov{\mcr}^{\gr,+}$.

Assume for now the Claim holds. Then we have $\rep(\bv,\bw,\ov{\mcr}^\imath)\cong \rep(\bv,\bw,\ov{\mcr}^{\gr,+})$, and thus $\rep(\bv,\bw,\mcr^\imath)\cong \rep(\bv,\bw,\mcr^{\gr})$. The first isomorphisms in \eqref{eqn: iso of var 1} and \eqref{eqn: iso of var 2} follow by definition.

It remains to prove the Claim. There exists a natural functor $\Omega:\ov{\mcr}^{\gr,+}\rightarrow \mcr^\imath$, which is faithful. Combining with the natural projection $\mcr^\imath\rightarrow \ov{\mcr}^\imath$, we obtain a functor
\[
\ov{\Omega}: \ov{\mcr}^{\gr,+} \longrightarrow \ov{\mcr}^\imath,
\]
which is dense.
The Claim follows by showing that $\ov{\Omega}$ is an isomorphism. To that end, it remains to prove that $\ov{\Omega}$ is fully faithful.

\underline{Faithfulness of $\ov{\Omega}$.} For any morphism $f$ in $\ov{\mcr}^{\gr,+}$, if $\ov{\Omega}(f)=0$, then $\Omega(f)$ factors through some injective module $I$. Therefore $f$ factors through some object in $\Omega^{-1}(I)$. Note that $\Omega^{-1}(I)\subseteq \add\{ (\Sigma\widehat{\varrho})^i(I_j)=\Sigma^i I_{\varrho^i(j)} \mid i\in\Z ,j\in\I \}$.
By definition of $\ov{\mcr}^{\gr,+}$, we have $f=0$. So $\ov{\Omega}$ is faithful.

\underline{Fullness of $\ov{\Omega}$.} Denote by $F$ the autoequivalence of $\mcr^\gr$ induced by $\Sigma\widehat{\varrho}$. Then $\mcr^\imath=\mcr^{\gr}/F$. Note that $\Hom_{\mcr^\imath}(x,y)=\bigoplus_{i\in\Z}\Hom_{\mcr^{\gr}}(x,F^iy)$ for any $x,y\in\Ind\mod(kQ)\sqcup\{\sigma(\ts_i)\mid i\in\I\}$. For any nonzero morphism $g:x\rightarrow y$ in $\ov{\mcr}^{\imath}$, without loss of generality, we assume that $g\in\Hom_{\mcr^{\gr}}(x,F^iy)$ for some $i\neq0$, and furthermore, we may assume $g$ is a composition of irreducible morphisms $x=x_0\xrightarrow{g_0}x_1\xrightarrow{g_1}\cdots \xrightarrow{g_n}x_n=F^iy$ in $\mcr^{\gr}$. If $i=0$, then $g\in\Hom_{\mcr^{\gr}}(x,y)$, and we see that $g$ is the image of $\ov{\Omega}$. Otherwise,
since $\mcr^{\gr}$ is a directed category (see, e.g., \cite{Ha2} or \cite[\S9.1]{LW19} for definition), we have $i>0$. We shall complete the proof by showing that it is not allowed to have $i>0$.

Note that $F^iy\in\Sigma^i\mod(kQ)\sqcup\{\sigma(\Sigma^i\ts_j)\mid j\in\I\}$.
So there exists the minimal integer $j$ such that $x_j\in\mod(kQ)$ or $x_j=\sigma(\ts_k)$ for some $k\in\I$, but $x_{j+1}\in\Sigma^t\mod(kQ)$ or $x_{j+1}=\sigma(\Sigma^t\ts_l)$ for some $t>0$, $l\in\I$. We proceed by separating in the following three cases (i)--(iii).

(i) \underline{$x_j=\sigma(\ts_k)$}. Since there is only one irreducible morphism starting from $\sigma(\ts_k)$, i.e., $\sigma(\ts_k)\rightarrow \ts_k$, this contradicts with the assumption on $x_{j+1}$.

(ii) \underline{$x_{j+1}=\sigma(\Sigma^t\ts_l)$}. Since there is only one irreducible morphism ending at $\sigma(\Sigma^t\ts_l)$: $\Sigma^t\tau \ts_l\rightarrow \sigma(\Sigma^t\ts_l)$,  we have $x_j=\Sigma^t\tau \ts_l\in\mod(kQ)$. So $t=1$, and $\ts_l$ is projective. Then $x_j=\Sigma \tau \ts_l$ is injective. It follows that $g:x\rightarrow y$ factors through some injective module, which contradicts with $0\neq g\in\ov{\mcr}^{\imath}$.

(iii) \underline{$x_j\in\mod(kQ)$ and $x_{j+1}\in \Sigma^t\mod(kQ)$ for some $t>0$}. Since $g_j:x_j\rightarrow x_{j+1}$ is irreducible, we see $x_j$ is injective. Then $g:x\rightarrow y$ factors through some injective module, which is again a contradiction.

This proves that $\ov{\Omega}:\ov{\mcr}^{\gr,+}\rightarrow \ov{\mcr}^\imath$ is an isomorphism. The lemma is proved.
\end{proof}

Recalling the basis of $\tRZ^{\imath,gr}$ in Corollary~\ref{cor:basisGr}, we introduce a $\Z[\tt^{1/2}, \tt^{-1/2}]$-linear map
\begin{align}
\label{def: varphi}
\varphi: & \tRZ^+  \longrightarrow \tRZ^{\imath,gr},
\\
& L_\cs (\bar{\bv},\bar{\bw})  \mapsto \bar L(\bv,\bw),
\quad
 \text{ for all $l$-dominant pairs } (\bar{\bv},\bar{\bw}) \in (V^+, W^+).
 \notag
\end{align}

\begin{proposition}
\label{prop:R+Rigr}
The linear map $\varphi: \tRZ^+\rightarrow \tRZ^{\imath,gr}$ is an injective algebra homomorphism. (Here $\tRZ^+$ and $\tRZ^{\imath,gr}$ are either both endowed with the standard multiplications or both with the twisted multiplications.)
\end{proposition}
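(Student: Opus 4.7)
The plan is to handle injectivity and the algebra homomorphism property in turn. For injectivity, Lemma~\ref{lem:sameDP} together with Proposition~\ref{prop:dominant++} show that the $l$-dominant pairs in $(V^+, W^+)$ coincide for $\mcr$ and $\mcr^\imath$ and are automatically strongly $l$-dominant. Hence $\varphi$ sends the distinguished basis $\{L_\cs(\bar\bv, \bar\bw)\}$ of $\tRZ^+$ bijectively onto the subset $\{\bar L(\bv^+, \bw^+)\}$ (i.e.\ the $(\bv^0, \bw^0) = 0$ part) of the basis of $\tRZ^{\imath, gr}$ from Corollary~\ref{cor:basisGr}, proving that $\varphi$ is a linear injection.

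For the homomorphism property, I would fix $l$-dominant pairs $(\bv_i, \bw_i) \in (V^+, W^+)$, $i = 1, 2$, and expand their product in $\tRiZ$ via \eqref{equation multiplication}:
\[
L(\bv_1, \bw_1) \ast L(\bv_2, \bw_2) = \sum_{\bv \geq \bv_1+\bv_2} c^\bv(\tt) \, L(\bv, \bw_1+\bw_2).
\]
By Lemmas~\ref{lem:DP3} and~\ref{lem: filtration strict}(1), writing $\bv = \bv^+ + \bv^0$ with corresponding $\bw_1 + \bw_2 = \bw^+ + \bw^0$, the summand $L(\bv, \bw_1 + \bw_2)$ lies in $\tRZ^{\imath, \leq \bw^+}$ and survives in the graded piece indexed by $\bw_1 + \bw_2$ precisely when $\bw^0 = 0$, equivalently $\bv \in V^+$. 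Thus
\[
\bar L(\bv_1, \bw_1) \ast_{gr} \bar L(\bv_2, \bw_2) = \sum_{\bv \in V^+,\, \bv \geq \bv_1+\bv_2} c^\bv(\tt) \, \bar L(\bv, \bw_1+\bw_2),
\]
and it remains to identify each $c^\bv(\tt)$ ($\bv \in V^+$) with the corresponding structure constant $\tilde c^{\bar\bv}(\tt)$ of $\tRZ^+$. The twist prefactors in the two multiplications (see \eqref{eqn: antisymmetric bilinear form}, \eqref{eqn: antisymmetric bilinear form 2}, \eqref{eq:tw}) depend only on $\bw$-entries, which are identical under the convention \eqref{eq:vwvw}, so it suffices to match the untwisted constants.

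The key geometric input is Lemma~\ref{lem: iso of varieties}: for $(\bv, \bw) \in (V^+, W^+)$ the isomorphisms $\cm(\bv, \bw, \mcr^\imath) \cong \cm(\bar\bv, \bar\bw, \mcr)$ and $\cm_0(\bv, \bw, \mcr^\imath) \cong \cm_0(\bar\bv, \bar\bw, \mcr)$ are compatible with the projection $\pi$. Under these identifications the perverse sheaves $\pi(\bv, \bw)$ and the IC-sheaves $\cl(\bv, \bw)$ on the $\mcr^\imath$-side transport to their $\mcr$-counterparts, yielding the same decomposition coefficients in \eqref{eqn:decomposition theorem}. Furthermore, the bilinear form $d$ of \eqref{definition:d} takes the same value on $(V^+, W^+)$-pairs: one checks that ${\mathcal C}_q^\imath \bv$ and ${\mathcal C}_q^\mcr \bar\bv$ agree on the non-injective indecomposable $kQ$-modules indexing $V^+$, since the relevant AR triangles in $\cp^\imath$ and $\cp$ coincide on this locus, and contributions from injective or shift-type indices annihilate against $\bv \in V^+$.

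The main obstacle lies in verifying that the restriction functor $\widetilde{\Delta}^{\bw_1+\bw_2}_{\bw_1, \bw_2}$ applied to $\pi(\bv, \bw_1+\bw_2)$ on the $\mcr^\imath$-side produces, upon projection to the $V^+$-stratification, the same expansion in the $\pi(\bv_1, \bw_1)\boxtimes \pi(\bv_2, \bw_2)$ basis as on the $\mcr$-side. The ambient varieties $\rep(\bw_1+\bw_2, \cs^\imath)$ and $\rep(\bar\bw_1+\bar\bw_2, \cs)$ are not isomorphic in general, owing to additional strata coming from $V^0$-contributions on the $\imath$-side; however, the relevant flag varieties indexed by $(V^+, W^+)$-dimension vectors factor through the common full subcategory identified by the isomorphism $\bar\mcr^\imath \cong \bar\mcr^{\gr,+}$ in the proof of Lemma~\ref{lem: iso of varieties}. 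Once this compatibility is pinned down by a careful tracking of the covering $\cv$ restricted to $V^+$-supported representations, one obtains $c^\bv(\tt) = \tilde c^{\bar\bv}(\tt)$ and the proof is complete for both the standard and twisted multiplications.
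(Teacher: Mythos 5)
Your overall route is the paper's: injectivity via Proposition~\ref{prop:dominant++} and the basis description, the filtration argument to kill the terms with $\bv\notin V^+$ in the associated graded, the matching of the bilinear form $d$ and of the decomposition coefficients $a_{\bv,\bv';\bw}(\tt)$ via Lemma~\ref{lem: iso of varieties}, and the observation that the twist prefactors agree on $W^+$. However, the step you single out as ``the main obstacle'' --- comparing the restriction functors $\widetilde{\Delta}^{\bw_1+\bw_2}_{\bw_1,\bw_2}$ on the two sides by tracking the covering $\cv$ on the convolution diagrams --- is not actually an obstacle, and you leave it unresolved (``once this compatibility is pinned down\dots''). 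The point is that \eqref{eqn:comultiplication} gives $\widetilde{\Delta}^{\bw}_{\bw_1,\bw_2}(\pi(\bv,\bw))$ \emph{explicitly} in the $\pi\boxtimes\pi$ basis, with coefficients $\tt^{d((\bv_2,\bw_2),(\bv_1,\bw_1))-d((\bv_1,\bw_1),(\bv_2,\bw_2))}$ depending only on $d$. Since for $\bv\in V^+$ every $\bv'\le\bv$ and every decomposition $\bv=\bv_1+\bv_2$ stays inside $V^+$, the equality of $d$ on $(V^+,W^+)$-pairs together with the equality $a^\imath_{\bv,\bv';\bw}=a_{\bar\bv,\bv';\bar\bw}$ (which governs the change of basis between $\{\pi\}$ and $\{\cl\}$) already forces the structure constants of $\widetilde{\Delta}(\cl(\bv,\bw))$ in the $\cl\boxtimes\cl$ basis to coincide; dualizing gives $c^{\bv}_{\bv_1,\bv_2}(\tt)=\bar c^{\bv}_{\bar\bv_1,\bar\bv_2}(\tt)$ with no further geometric input. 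So you should replace the final paragraph by this purely formal deduction.

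One smaller imprecision: what is needed for the equality of $d$ is not that $\mathcal{C}_q\bv$ and $\ov{\mathcal{C}}_q\bar\bv$ agree on the non-injective indecomposables themselves, but that $\sigma^*\bw-\mathcal{C}_q\bv$ and $\sigma^*\bar\bw-\ov{\mathcal{C}}_q\bar\bv$ agree at $\tau^{-1}x$ for $x$ non-injective, because $d$ pairs $\sigma^*\bw_1-\mathcal{C}_q\bv_1$ against $\tau^*\bv_2$, whose support is the $\tau^{-1}$-shift of the support of $\bv_2$; this is exactly the content of \eqref{eqn: lem embedding 1}, extracted from the case analysis in the proof of Lemma~\ref{lem:sameDP}.
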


\begin{proof}
We will consider the (untwisted) rings $(\tRZ^+, \ast)$ and $(\tRZ^{\imath,gr}, \ast_{gr})$ until near the end. The injectivity of $\varphi$ follows by definition and Proposition~\ref{prop:dominant++}.

For any dimension vector $(\bv,\bw)\in(V^+,W^+)$, by Lemma \ref{lem:sameDP}, we have $(\bv,\bw)$ is (strongly) $l$-dominant if and only if $(\bar{\bv},\bar{\bw})$ is. Moreover, note that $\tau^*(\e_{x})=\e_{\tau^{-1}x}$. Then we read off from the proof of Lemma \ref{lem:sameDP} that
\begin{align}
\label{eqn: lem embedding 1}
\big(\sigma^*\bar{\bw}-\ov{\mathcal C}_q \bar{\bv}\big)(\tau^{-1}x)=\big(\sigma^*\bw-{\mathcal C}_q \bv\big)(\tau^{-1}x)
\end{align}
for any $x\in\mod(kQ)$, $x\notin\inj(kQ)$. Hence, for any  $(\bv_1,\bw_1)$, $(\bv_2,\bw_2)\in (V^+,W^+)$, we have by \eqref{definition:d} and \eqref{eqn: lem embedding 1}
\begin{align}
  \label{eq:dd}
d\big((\bv_1,\bw_1),(\bv_2,\bw_2) \big)=d\big((\bar{\bv}_1,\bar{\bw}_1),(\bar{\bv}_2,\bar{\bw}_2) \big).
\end{align}


We shall denote \eqref{eqn:decomposition theorem} in our setting as
\begin{align*}
\pi(\bv,\bw,\mcr^\imath)
&= \sum_{\bv':\sigma^*\bw-{\mathcal C}_q\bv'\geq0,\bv'\leq \bv} a^\imath_{\bv,\bv';\bw}(\tt)\cl(\bv',\bw,\mcr^\imath),\\
\pi(\bar{\bv},\bar{\bw},\mcr)
&= \sum_{\bv':\sigma^*\bar{\bw}-\bar{{\mathcal C}}_q\bv'\geq0,\bv'\leq \bar{\bv}} a_{\bar{\bv},\bv';\bar{\bw}}(\tt)\cl(\bv',\bar{\bw},\mcr).
\end{align*}
Since $\bv'\leq \bv$ (or equivalently, $\bv'\leq \bar{\bv}$), we have $\bv'\in V^+$, and then by Lemma \ref{lem: iso of varieties} we obtain
\begin{align}
  \label{eq:aa}
a^\imath_{\bv,\bv';\bw}(\tt)=a_{\bar{\bv},\bv';\bar{\bw}}(\tt).
\end{align}
It follows by \eqref{eqn:comultiplication} and \eqref{eq:dd}--\eqref{eq:aa} that the structure constants for
$\widetilde{\Delta}^{\bw}_{\bw_1,\bw_2} \big(\cl(\bv,\bw,\mcr^\imath) \big)$ and $\widetilde{\Delta}^{\bar{\bw}}_{\bar{\bw}_1,\bar{\bw}_2} \big(\cl(\bar{\bv},\bar{\bw},\mcr) \big)$ in the corresponding $\cl$-bases coincide, where $\bw_1+\bw_2 =\bw$.

Fix $\bw_1, \bw_2$ with $\bw_1+\bw_2 =\bw$. Dualizing $\widetilde{\Delta}^{\bw}_{\bw_1,\bw_2} \big(\cl(\bv,\bw,\mcr^\imath) \big)$ and $\widetilde{\Delta}^{\bar{\bw}}_{\bar{\bw}_1,\bar{\bw}_2} \big(\cl(\bar{\bv},\bar{\bw},\mcr) \big)$, we conclude that $c_{\bv_1,\bv_2}^\bv(\tt)= \bar{c}_{\bar{\bv}_1,\bar{\bv}_2}^\bv(\tt)$ for any $\bv\in V^+$, where we have denoted \eqref{equation multiplication} in our setting as
\begin{align*}
L(\bv_1,\bw_1,\mcr^\imath)\ast  L(\bv_2,\bw_2,\mcr^\imath)
&= \sum_{\bv \geq \bv_1+\bv_2} c_{\bv_1,\bv_2}^\bv(\tt) L(\bv,\bw,\mcr^\imath),\\
L(\bar{\bv}_1,\bar{\bw}_1,\mcr)\ast  L(\bar{\bv}_2,\bar{\bw}_2,\mcr)
&= \sum_{\bv \geq \bar{\bv}_1+\bar{\bv}_2} \bar{c}_{\bar{\bv}_1,\bar{\bv}_2}^\bv(\tt) L(\bv,\bar{\bw},\mcr).
\end{align*}
In addition, we note that $L(\bv,\bw,\mcr^\imath)\in\tRZ^{\imath,<\bw}$ by Lemma \ref{lem: filtration strict}(1), for any $\bv\notin V^+$. Therefore, we have shown that $\varphi$ is a homomorphism.

Comparing the relevant bilinear forms in \eqref{eqn: antisymmetric bilinear form} and \eqref{eqn: antisymmetric bilinear form 2} used in twisting comultiplications, we see $\varphi$ is also a homomorphism with respect to the twisted multiplications.
\end{proof}

\begin{proposition}
 \label{prop:generator}
The $\Q(\tt^{1/2})$-algebra $\tRi$ is generated by $\{L(0, \e_{\sigma \ts_i}), L(\bv^{i},\bw^{i}) \mid i\in Q_0\}$.
\end{proposition}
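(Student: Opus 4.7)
The plan is to use the associated graded algebra $\tR^{\imath,gr}$ and the embedding $\varphi:\tR^+\hookrightarrow\tR^{\imath,gr}$ from Proposition~\ref{prop:R+Rigr}, together with the Hernandez--Leclerc generation of $\tR^+$ (Theorem~\ref{thm:iso HL-Q}(1)), to reduce this generation statement on the filtered algebra $\tRi$ to an induction on $|\bw^+|:=\sum_{x}\bw^+(x)$.

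Let $\ca\subseteq\tRi$ denote the $\Q(\tt^{1/2})$-subalgebra generated by $\{L(0,\e_{\sigma \ts_i}),\,L(\bv^i,\bw^i)\mid i\in Q_0\}$. By Theorem~\ref{thm:RRR} every element of $\tRi$ is a $\Q(\tt^{1/2})$-linear combination of products $L(\bv^+,\bw^+)\cdot L(\bv^0,\bw^0)$, and by Proposition~\ref{prop:poly} the second factor is already a monomial in the $L(\bv^i,\bw^i)$; so it suffices to show $L(\bv^+,\bw^+)\in\ca$ for every $l$-dominant $(\bv^+,\bw^+)\in(V^+,W^+)$. I would do this by induction on $|\bw^+|$. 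The base case $|\bw^+|=0$ forces $\bw^+=0$ and, by Lemma~\ref{lem: dominant pair 1}, also $\bv^+=0$, so $L(0,0)=1\in\ca$.

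For the inductive step, Theorem~\ref{thm:iso HL-Q}(1) shows that $\tR^+$ is generated by $\{L_\cs(0,\e_{\sigma \ts_i}) \mid i \in Q_0\}$. Writing $L_\cs(\bar\bv^+,\bar\bw^+)$ as a noncommutative polynomial $P$ in these generators (where $(\bar\bv^+,\bar\bw^+)$ denotes $(\bv^+,\bw^+)$ viewed as a dimension vector for $\mcr$, cf.~\eqref{eq:vwvw}) and applying the twisted algebra homomorphism $\varphi$ of Proposition~\ref{prop:R+Rigr}, we obtain in $\tR^{\imath,gr}$ the identity
\[
\bar L(\bv^+,\bw^+)\;=\;\varphi\bigl(L_\cs(\bar\bv^+,\bar\bw^+)\bigr)\;=\;P\bigl(\bar L(0,\e_{\sigma \ts_i})\bigr),
\]
where the right-hand side means $P$ evaluated on the corresponding elements of $\tR^{\imath,gr}$. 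Now Proposition~\ref{prop:filtered} implies that the same polynomial $P$ evaluated on $\{L(0,\e_{\sigma \ts_i})\}$ inside the filtered algebra $\tRi$ lies in $\tR^{\imath,\leq\bw^+}$; lifting the graded identity back to $\tRi$ yields
\[
L(\bv^+,\bw^+)\;-\;P\bigl(L(0,\e_{\sigma \ts_i})\bigr)\;\in\;\tR^{\imath,<\bw^+}.
\]

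Finally, by Theorem~\ref{thm:RRR} the step $\tR^{\imath,<\bw^+}$ is spanned by basis elements $L({}'\bv^+,{}'\bw^+)\cdot L({}'\bv^0,{}'\bw^0)$ with ${}'\bw^+<\bw^+$ componentwise, hence with $|{}'\bw^+|<|\bw^+|$; the inductive hypothesis puts each $L({}'\bv^+,{}'\bw^+)$ in $\ca$, while Proposition~\ref{prop:poly} handles $L({}'\bv^0,{}'\bw^0)$. Combined with the manifest membership $P\bigl(L(0,\e_{\sigma \ts_i})\bigr)\in\ca$, this completes the induction. I do not foresee any real obstacle in carrying out this plan: every ingredient is already in place, and the only subtle point is that the filtration is indexed by $\bw^+$ alone, which makes the induction on $|\bw^+|$ well-founded.
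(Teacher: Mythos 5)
Your proof is correct and follows essentially the same route as the paper, which invokes Theorem~\ref{thm:iso HL-Q}(1), Proposition~\ref{prop:R+Rigr} and Corollary~\ref{cor:basisGr} and then appeals to ``a standard filtered algebra argument''; your induction on $|\bw^+|$ is precisely that argument made explicit. One tiny quibble: in the base case, $\bw^+=0$ forces $\bv^+=0$ not via Lemma~\ref{lem: dominant pair 1} (which only yields $\bv^+\in V^+$) but because, by Proposition~\ref{prop:dominant++}, an $l$-dominant pair in $(V^+,W^+)$ is strongly $l$-dominant and the only strongly $l$-dominant pair with $\bw=0$ is $(0,0)$, since $\rep(0,\cs^\imath)$ is a point.
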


\begin{proof}
By Theorem~\ref{thm:iso HL-Q}(1) the algebra $\tR$ is generated by $L_\cs (0, \e_{\sigma \ts_i})$, for $i\in Q_0$. Therefore, by Corollary~\ref{cor:basisGr} and Proposition~ \ref{prop:R+Rigr}, the associated graded algebra $\tR^{\imath,gr}$ is generated by the elements as listed in the statement. The proposition now follows from this by a standard filtered algebra argument.
\end{proof}

\begin{corollary}
\label{lem:center elements}
The following elements in $(\tRiZ,\cdot)$ are central:
\begin{align*}
\begin{cases}
L(\bv^{i}, \bw^{i}) L(\bv^{{\btau i}}, \bw^{i}), &  \text{ for } i \in Q_0\text{ with  } \btau i\neq i;
\\
L(\bv^{i},\bw^{i}),  &  \text{ for } i \in Q_0 \text{ with } \btau i=i.
\end{cases}
\end{align*}
\end{corollary}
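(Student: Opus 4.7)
By Proposition~\ref{prop:generator}, the algebra $\tRi$ is generated by
$\{L(0,\e_{\sigma \ts_j}),\ L(\bv^{j},\bw^{j})\mid j\in Q_0\}$,
so it suffices to check that the elements in the statement commute with each of these generators.

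First I would verify commutation with the Cartan-type generators $L(\bv^{j},\bw^{j})$. By iterating Lemma~\ref{lem:cartan part} (see \eqref{eqn: cartan mult 2}), any product of such elements is expressible as a single $L\bigl(\sum \bv^{j_s},\sum \bw^{j_s}\bigr)$, and these do not depend on the order of the factors. In particular, both $L(\bv^{i},\bw^{i})L(\bv^{\btau i},\bw^{i})$ (in the case $\btau i\neq i$) and $L(\bv^{i},\bw^{i})$ (in the case $\btau i=i$) commute with every $L(\bv^{j},\bw^{j})$.

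The main content is commutation with the Chevalley-type generators $L(0,\e_{\sigma \ts_j})$. For this I would apply Lemma~\ref{lem:cartan and positive}, which yields
\[
L(0,\e_{\sigma \ts_j})\,L(\bv^{i},\bw^{i})=\tt^{\,c_{ji}-c_{j,\btau i}}\,L(\bv^{i},\bw^{i})\,L(0,\e_{\sigma \ts_j}).
\]
If $\btau i=i$, the exponent vanishes, so $L(\bv^{i},\bw^{i})$ is already central. If $\btau i\neq i$, applying the identity twice gives
\begin{align*}
L(0,\e_{\sigma \ts_j})\,L(\bv^{i},\bw^{i})L(\bv^{\btau i},\bw^{i})
&=\tt^{\,c_{ji}-c_{j,\btau i}}\,L(\bv^{i},\bw^{i})\,L(0,\e_{\sigma \ts_j})\,L(\bv^{\btau i},\bw^{i})\\
&=\tt^{\,(c_{ji}-c_{j,\btau i})+(c_{j,\btau i}-c_{ji})}\,L(\bv^{i},\bw^{i})L(\bv^{\btau i},\bw^{i})\,L(0,\e_{\sigma \ts_j})\\
&=L(\bv^{i},\bw^{i})L(\bv^{\btau i},\bw^{i})\,L(0,\e_{\sigma \ts_j}),
\end{align*}
since $\btau$ is an involution so $c_{j,\btau(\btau i)}=c_{ji}$. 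Combining the two verifications, both families of elements commute with a set of generators of $\tRi$, hence are central. There is essentially no obstacle here beyond assembling Lemmas~\ref{lem:cartan and positive} and~\ref{lem:cartan part}; the identity is of course consistent with the fact (cited from \cite[Lemma~6.1]{LW19}) that the corresponding elements $\tk_i$ (for $\btau i=i$) and $\tk_i\tk_{\btau i}$ (for $\btau i\neq i$) are central in $\tUi$, matching via the isomorphism $\widetilde{\kappa}$ of Theorem~\ref{thm:C}.
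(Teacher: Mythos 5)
Your argument is correct and is exactly the paper's proof: the paper simply cites Lemmas~\ref{lem:cartan and positive}--\ref{lem:cartan part} together with Proposition~\ref{prop:generator}, and your write-up spells out the same computation (commutation with the Cartan-type generators via Lemma~\ref{lem:cartan part}/\eqref{eqn: cartan mult 2}, and cancellation of the exponents $c_{ji}-c_{j,\btau i}$ from Lemma~\ref{lem:cartan and positive} for the Chevalley-type generators). No gaps.
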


\begin{proof}
Follows from Lemmas \ref{lem:cartan and positive}--\ref{lem:cartan part} and Proposition \ref{prop:generator}.
\end{proof}

\begin{definition}
  \label{def:reducedHall}
Let $\bvs=(\vs_i)\in  (\Q(\tt)^\times)^{\I}$ be such that $\vs_i=\vs_{\btau i}$ for each $i\in \I$. The \emph{reduced quantum Grothendieck ring} associated to $(Q,\btau)$, denoted by $\bf R^\imath$, is defined to be the quotient $\Q(\tt^{1/2})$-algebra of $\tRi$ by the ideal generated by the central elements
\begin{align}
\label{eqn: reduce 1}
L(\bv^{i},\bw^{i})+\tt\vs_i \; (\forall i\in \I \text{ with } \btau i=i), \quad L(\bv^{i}, \bw^{i}) L(\bv^{{\btau i}}, \bw^{i}) -\vs_i^2\; (\forall i\in \I \text{ with }\btau i\neq i).
\end{align}
\end{definition}

\subsection{Realization of $\imath$quantum groups}
  \label{subsec:mor}

Let $\cw=\cw_\I$ be the set of words in the alphabet $\I=Q_0$, i.e., $\cw$ consists of sequences $i_1\cdots i_m$ of elements in $\I$. For any $w=i_1\cdots i_m\in\cw$, define
\[
F_w=F_{i_1}\cdots F_{i_m} \in \tU^-, \qquad
B_w=B_{i_1}\cdots B_{i_m} \in \tUi.
\]
Let $\cj$ be a fixed subset of $\cw$ such that
$\{F_w\mid w\in\cj\}$ is a (monomial) basis of $ \tU^-$; such a monomial basis exists, according to Lusztig \cite{Lus90}.

\begin{lemma}  [cf. \cite{Let99, Ko14, LW19}]   \label{lem:kob}
Retain the notation above. Then $\{B_w\mid w\in\cj\}$ is a basis of $\tUi$ as the left (or right) $\tU^{\imath 0}$-module. 
\end{lemma}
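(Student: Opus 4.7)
The plan is to prove left $\tU^{\imath 0}$-freeness of $\tUi$ with basis $\{B_w \mid w\in\cj\}$; the right-module statement then follows by a symmetric argument. We split the proof into a spanning step and a linear independence step, modeled on the analogous basis result for Letzter's $\imath$quantum group $\Ui$ proved in \cite{Let99, Ko14}.

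For spanning, relation \eqref{relation1} allows us to collect all $\tk_l^{\pm 1}$ on the left, giving $\tUi = \sum_{w \in \cw} \tU^{\imath 0} B_w$. The $\imath$Serre relations \eqref{relation5}--\eqref{relation2} have the form ``(quantum Serre polynomial in the $B_i$'s) $=$ ($\tU^{\imath 0}$-combination of strictly shorter $B$-monomials)'', so modulo shorter words they coincide with the ordinary Serre relations \eqref{eq:serre2} among the $F_i$'s in $\tU^-$. Since the latter relations suffice to reduce an arbitrary $F$-monomial to a $\Q(\tt^{1/2})$-linear combination of $\{F_w : w \in \cj\}$, the same sequences of rewriting applied to $B$-monomials reduce any $B_{i_1}\cdots B_{i_m}$ to a $\tU^{\imath 0}$-linear combination of $\{B_w : w\in\cj\}$ by induction on length.

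For linear independence, equip $\tU$ with the multiplicative filtration by total $F$-degree, defined by $\deg F_i = 1$ and $\deg E_i = \deg \tK_i^{\pm 1} = \deg(\tK_i')^{\pm 1} = 0$. This is well-defined because the commutator $[E_i, F_j] = \delta_{ij}(\tK_i - \tK_i')/(\tt - \tt^{-1})$ has left-hand side of $F$-degree $1$ and right-hand side of $F$-degree $0 \le 1$, while all other defining relations of $\tU$ are $F$-degree homogeneous. Under this filtration $B_i = F_i + E_{\btau i}\tK_i'$ lies in $\tU^{(1)}_F$ with principal symbol $F_i$ in $\mathrm{gr}_1\,\tU$ (since $E_{\btau i}\tK_i' \in \tU^{(0)}_F$), so $B_w$ lies in $\tU^{(\ell(w))}_F$ with principal symbol $F_w$ in $\mathrm{gr}_{\ell(w)}\,\tU$. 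Suppose now $\sum_{w \in \cj'} h_w B_w = 0$ for some finite $\cj' \subseteq \cj$ with $h_w \in \tU^{\imath 0} \subseteq \tU^0$ of $F$-degree $0$, and let $n = \max\{\ell(w) : h_w \neq 0\}$. Taking principal symbols in $\mathrm{gr}_n\,\tU$, and using that $\mathrm{gr}\,\tU \cong \tU^- \otimes \tU^0 \otimes \tU^+$ as vector spaces (with $E$'s and $F$'s now commuting thanks to the filtration), we obtain $\sum_{w : \ell(w) = n} F_w \otimes (\lambda_w h_w) \otimes 1 = 0$ for certain nonzero scalars $\lambda_w \in \tt^{\Z}$ arising from commuting $\tK$'s past $F$'s. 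Since $\{F_w : w \in \cj, \ell(w) = n\}$ is a basis of the degree-$n$ piece of $\tU^-$, all such $h_w$ must vanish, contradicting maximality of $n$.

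The main obstacle is the spanning step. One must verify that the $\imath$Serre relations \eqref{relation3} and \eqref{relation2}, together with the commutation relations \eqref{relation5} and \eqref{relationBB}, really can be applied in the same order as the reductions that produce the basis $\{F_w : w \in \cj\}$ of $\tU^-$, so that every ``Cartan correction'' term lies in a $\tU^{\imath 0}$-multiple of a strictly shorter $B$-monomial and does not obstruct the inductive reduction. This is the content of the analogous arguments for $\Ui$ in \cite{Let99, Ko14} and adapts to the universal setting by carrying the Cartan subalgebra $\tU^{\imath 0}$ along in place of the scalar parameters $\bvs$; an $\imath$Hall algebra version of the same statement was also established in \cite{LW19}, and combined with Proposition~\ref{prop:QSP12} would give an alternate route via lifting from $\Ui$ across the central surjection.
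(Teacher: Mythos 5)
Your proof is correct and is essentially the standard argument behind the references the paper cites for this lemma (\cite{Let99, Ko14, LW19}) — the paper gives no independent proof, and both your spanning step (the $\imath$Serre relations degenerate to the Serre relations of $\tU^-$ modulo $\tU^{\imath 0}$-multiples of shorter monomials) and your linear-independence step (the $F$-degree filtration on $\tU$, under which $B_w$ has principal symbol $F_w$) are exactly how this is done there. Two cosmetic imprecisions: in this paper's conventions $\tk_l$ is not invertible, so there are no $\tk_l^{-1}$ to collect; and the scalar obtained by commuting $\tU^{\imath 0}$ past $F_w$ in the associated graded depends on the individual monomials of $h_w$ rather than being a single $\lambda_w$, but it still defines an injective rescaling of $h_w$, so the conclusion stands.
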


\begin{lemma}
There exists a $\Q(\tt^{1/2})$-algebra isomorphism
\begin{align*}
\widetilde{\kappa}: \tU^{\imath 0} &\longrightarrow  \tR^{\imath0},
\qquad
\tk_i\mapsto
\begin{cases}
- \tt^{-1} L(\bv^{i},\bw^{i}), & \text{ if }\btau i=i,
\\
L(\bv^{i},\bw^{i}), & \text{ if }\btau i\neq i.
\end{cases}
\end{align*}
\end{lemma}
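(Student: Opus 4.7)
The plan is to verify that both $\tU^{\imath 0}$ and $\tR^{\imath 0}$ are commutative polynomial $\Q(\tt^{1/2})$-algebras on $|\I|$ generators, and that the prescribed assignment on generators extends to the desired isomorphism. On the $\tR^{\imath 0}$ side this is already in hand: Proposition~\ref{prop:poly} asserts that $\tR^{\imath 0}$ is a polynomial ring in the commuting generators $\{L(\bv^{i},\bw^{i}) \mid i\in\I\}$. On the $\tU^{\imath 0}$ side, Proposition~\ref{prop:Serre} (the first relation in \eqref{relation1}) tells us that the generators $\tk_i$ pairwise commute, so $\tU^{\imath 0}$ is at worst a quotient of $\Q(\tt^{1/2})[\tk_i : i\in\I]$. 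To see it is actually the free polynomial algebra, I will invoke Lemma~\ref{lem:kob}: since $\tUi$ is a free left $\tU^{\imath 0}$-module on the monomial basis $\{B_w \mid w\in\cj\}$, the map $\tU^{\imath 0}\hookrightarrow\tUi$ is injective, and any non-trivial polynomial relation among $\tk_i$ in $\tU^{\imath 0}$ would transport to a non-trivial relation in $\tUi$; combined with the inclusion $\tUi\hookrightarrow\tU$ and the triangular decomposition of $\tU$ (together with the fact that the $\tK_i,\tK_j'$ appearing in $\tk_i=\tK_i\tK_{\btau i}'$ are algebraically independent in $\tU^0$), this forces algebraic independence of the $\tk_i$.

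Once this is established, the map $\widetilde{\kappa}$ is constructed as follows. By Lemma~\ref{lem:cartan part} (or more generally \eqref{eqn: cartan mult 2}) the elements $L(\bv^{i},\bw^{i})$ commute in $(\tRiZ,\cdot)$, so the assignment
\[
\tk_i \longmapsto
\begin{cases}
-\tt^{-1}L(\bv^{i},\bw^{i}), & \btau i = i,\\
L(\bv^{i},\bw^{i}), & \btau i \neq i,
\end{cases}
\]
extends uniquely to a $\Q(\tt^{1/2})$-algebra homomorphism $\widetilde{\kappa}:\tU^{\imath 0}\to \tR^{\imath 0}$, as all defining relations in $\tU^{\imath 0}$ are commutativity relations. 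Surjectivity is immediate: up to the nonzero scalars $-\tt^{-1}$ or $1$, the image contains the full generating set $\{L(\bv^{i},\bw^{i}) \mid i\in\I\}$ of $\tR^{\imath 0}$. Injectivity follows from the fact that both source and target are polynomial rings in $|\I|$ commuting algebraically independent generators and $\widetilde{\kappa}$ sends generators bijectively to generators up to invertible scalars.

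The scalars $-\tt^{-1}$ (in the split case) and $1$ (in the non-split case) are the unique normalization ensuring that $\widetilde{\kappa}$ descends to an isomorphism of the corresponding reduced algebras: comparing the central elements $\tk_i-\vs_i$ and $\tk_i\tk_{\btau i}-\vs_i\vs_{\btau i}$ used to recover $\Ui_{\bvs}$ from $\tUi$ (Proposition~\ref{prop:QSP12}) with the defining relations \eqref{eqn: reduce 1} of the reduced quantum Grothendieck ring $\bf R^\imath$, we read off exactly the coefficients displayed. The main obstacle in this proof is the structural claim that $\tU^{\imath 0}$ is a genuine polynomial ring with no hidden relations among the $\tk_i$; once that is granted, the rest is a routine check that two polynomial rings in the same number of commuting variables are identified by the prescribed assignment.
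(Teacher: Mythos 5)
Your proposal is correct and follows essentially the same route as the paper, which simply observes that both $\tU^{\imath 0}$ and $\tR^{\imath 0}$ are polynomial algebras in the same number of generators and that $\widetilde{\kappa}$ matches the generators (up to nonzero scalars). The only difference is that you spell out why $\tU^{\imath 0}$ is a genuine polynomial ring — via the freeness of $\tUi$ over $\tU^{\imath 0}$ and the algebraic independence of the $\tK_i\tK_{\btau i}'$ in $\tU^0$ — a point the paper leaves implicit; your aside about the scalars $-\tt^{-1}$ is not needed for this lemma but is consistent with the later reduction to $\Ui$.
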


\begin{proof}
Since both $\tU^{\imath 0}$ and $\tR^{\imath0}$ are polynomial algebras (see Proposition~\ref{prop:poly}) in the same number of generators, and $\widetilde{\kappa}:\tU^{\imath 0} \rightarrow  \tR^{\imath0}$ matches the generators, it must be an algebra isomorphism.
\end{proof}

Recall that $\ci$ is the subset of $\I$ defined in \eqref{eqn:representative}. Now we can state the main theorem of this paper (under Hypothesis~\ref{hypothesis}). 

\begin{theorem} \label{thm:UiRi}
Let $(Q, \btau)$ be a Dynkin $\imath$quiver. Then there exists an isomorphism of $\Q({\tt^{1/2}})$-algebras
 $
\widetilde{\kappa}: \tUi \stackrel{\simeq}{\longrightarrow}  \tRi,
$ 
which sends
\begin{align}
  \label{eqn:morphism 1}
B_i\mapsto  \frac{\tt}{1-\tt^2} L(0,\e_{\sigma \ts_i}),\text{ if }i\in\I_\btau ,
&\qquad B_i\mapsto \frac{\tt}{\tt^2-1}L(0,\e_{\sigma \ts_i}),\text{ if } i\in \I \backslash \I_\btau,,\\
\label{eqn:morphism 2}
\tk_j\mapsto L(\bv^{j}, \bw^{j}),\text{ if } \btau j\neq j,&\qquad
\tk_j\mapsto-\frac{1}{\tt} L(\bv^{j},\bw^{j}),\text{ if } \btau j=j.
\end{align}
Moreover, it induces an algebra isomorphism $\Ui \stackrel{\simeq}{\longrightarrow} {\bf R}^\imath$, which sends $B_i$ as in \eqref{eqn:morphism 1} for $i\in\I$, and  $k_j \mapsto  \frac{1}{\vs_j}L(\bv^{j},\bw^{j}) \text{ for } j \in\I\backslash \ci$.
\end{theorem}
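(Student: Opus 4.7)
The plan is to verify that $\widetilde{\kappa}$ is a well-defined surjective algebra homomorphism and then establish injectivity through a filtered-algebra argument. For well-definedness, it suffices to check the defining relations \eqref{relation1}--\eqref{relation2} of Proposition~\ref{prop:Serre}. The commutation of the $\tk_i$ among themselves is built into Proposition~\ref{prop:poly}, which identifies $\tR^{\imath 0}$ with a polynomial ring; the $q$-commutation between $\tk_l$ and $B_i$ is Lemma~\ref{lem:cartan and positive}; relation \eqref{relation5} is Lemma~\ref{eqn:cartan}; relation \eqref{relationBB} is Lemma~\ref{lem:relations of no edges}; and the homogeneous $\imath$-Serre relation \eqref{relation3} is covered, in its three subcases, by Lemma~\ref{lem:serre relations for one edge} (both vertices non-fixed), Proposition~\ref{proposition relaiton for A3} together with identity \eqref{eqn:serre 1} (the folded type-$A_3$ situation, mixed case), and Proposition~\ref{prop:iserre split} together with \eqref{split serre 1} (both vertices fixed). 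The inhomogeneous relation \eqref{relation2} is then \eqref{split serre 2} of Proposition~\ref{prop:iserre split} and \eqref{eqn:serre 2} of Proposition~\ref{proposition relaiton for A3}; the extra term $\tt\tk_i B_j$ on the right-hand side is produced by the factor $-\tt^{-1}$ attached to $\tk_i$ in \eqref{eqn:morphism 2} combined with the coefficient $-(\tt-\tt^{-1})^2$ in the rank-$2$ identity. Surjectivity is then immediate from Proposition~\ref{prop:generator}, since the generating set of $\tRi$ exhibited there coincides with the image of the generators of $\tUi$ up to nonzero scalars.

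For injectivity I would run a filtered-algebra argument. Equip $\tUi$ with the $\N\I$-filtration in which $B_i$ has degree $\e_i$ and $\tU^{\imath 0}$ sits in degree zero; by Lemma~\ref{lem:kob}, the associated graded algebra is $\tU^{\imath 0}\otimes\bU^-$. On the geometric side, equip $\tRi$ with the filtration of Proposition~\ref{prop:filtered}, whose associated graded $\tR^{\imath,gr}$ carries a $\tR^{\imath 0}$-basis described by Proposition~\ref{prop:poly} and Corollary~\ref{cor:basisGr}. A direct inspection of the generator formulas shows that $\widetilde{\kappa}$ is filtered; the induced associated-graded morphism maps the Cartan part $\tU^{\imath 0}$ isomorphically onto $\tR^{\imath 0}$ by the polynomial-ring identification, and sends the leading part of $B_i$ to $\varphi(\kappa^+(E_i))$, where $\kappa^+$ is the Hernandez-Leclerc isomorphism of Theorem~\ref{thm:iso HL-Q}(1) and $\varphi$ is the injective algebra embedding of Proposition~\ref{prop:R+Rigr}. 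Since $\kappa^+$ is an isomorphism and $\varphi$ is injective, the associated-graded morphism is injective on a PBW-type basis, whence so is $\widetilde{\kappa}$.

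Finally, the induced isomorphism $\Ui\to {\bf R}^\imath$ follows by passing to the quotient: Corollary~\ref{lem:center elements} together with the explicit scaling in \eqref{eqn:morphism 2} shows that $\widetilde{\kappa}$ maps the central ideal of $\tUi$ generated by \eqref{eq:parameters} (see Proposition~\ref{prop:QSP12}) bijectively onto the defining ideal \eqref{eqn: reduce 1} of ${\bf R}^\imath$, and the formula $k_j\mapsto\vs_j^{-1}L(\bv^j,\bw^j)$ is then read off from Proposition~\ref{prop:QSP12}(1). I expect the main obstacle to be the exhaustive verification of the $\imath$-Serre relations for all combinations of orbit types, which is exactly what motivates the lengthy case analyses of Section~\ref{sec:pairs} and Section~\ref{sec:computation}; once these are in place, the remainder of the argument reduces the injectivity to the Hernandez-Leclerc theorem via the filtered structure coming from Theorem~\ref{thm:RRR} and Proposition~\ref{prop:R+Rigr}.
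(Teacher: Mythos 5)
Your proposal follows essentially the same route as the paper: the homomorphism property is checked against the presentation of Proposition~\ref{prop:Serre} using exactly the rank-2 identities of Section~\ref{sec:computation}, surjectivity is Proposition~\ref{prop:generator}, injectivity is reduced to the Hernandez--Leclerc isomorphism via the filtration of Proposition~\ref{prop:filtered} and the embedding $\varphi$ of Proposition~\ref{prop:R+Rigr}, and the reduced statement follows by matching the ideals \eqref{eq:parameters} and \eqref{eqn: reduce 1}. One small bookkeeping slip: since relation \eqref{relation3} requires $\btau i\neq i$, there is no ``both vertices fixed'' subcase of it, and the identity \eqref{split serre 1} you file there is in fact (together with \eqref{split serre 2}) an instance of the inhomogeneous relation \eqref{relation2} — all the needed identities are nonetheless invoked, so the verification is complete.
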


\begin{proof}
We first show that $\widetilde{\kappa}$ is a homomorphism, that is, $\widetilde{\kappa}$ preserves the defining relations \eqref{relation1}--\eqref{relation2} for $\tUi$.
Indeed, the relation \eqref{relation1} follows from Lemmas \ref{lem:cartan and positive}--\ref{lem:cartan part};
the relation \eqref{relation5} follows from  Lemma \ref{eqn:cartan};
the relation \eqref{relationBB} follows from Lemma \ref{lem:relations of no edges};
the relation \eqref{relation3} follows from Lemma \ref{lem:serre relations for one edge} and Proposition \ref{proposition relaiton for A3}; see \eqref{eqn:serre 1};
the relation \eqref{relation2} follows from Proposition~\ref{prop:iserre split} and Proposition \ref{proposition relaiton for A3}; see \eqref{eqn:serre 2}. Therefore $\widetilde{\kappa}$ is a homomorphism.

By Proposition \ref{prop:generator} on generators for $\tRi$, $\widetilde{\kappa}$ is surjective. It remains to verify that $\widetilde{\kappa}$ is injective.
Recall the regular NKS category $\mcr$ defined in Definition \ref{def:RS for QG}. For any  $u=i_1\cdots i_m\in\cw$, define
$$
L_u:=L(0,\sigma \ts_{i_1})\cdots L(0,\sigma \ts_{i_m})\in \tRi,\quad
L^\cs_u:=L_\cs(0,\sigma \ts_{i_1})\cdots L_\cs(0,\sigma \ts_{i_m})\in \tR^+
$$
and  $\bw_u:=\sum_{j=1}^m \e_{\sigma \ts_{i_j}}$.

Let $\cj$ be a  subset of $\cw$ such that
$\{F_u\mid u\in\cj\}$ is a basis of $ \bU^-$; such a basis exists by a result of Lusztig \cite{Lus90}. Then $\{L^\cs_u\mid u\in\cj\}$ is a basis of $\tR^+$ by Theorem~ \ref{thm:iso HL-Q}(1). It follows by the morphism $\varphi$ defined in \eqref{def: varphi} and Proposition~ \ref{prop:R+Rigr} that $\{\varphi(L^\cs_u)\mid u\in\cj\}$ is linearly independent in $\tR^{\imath,gr}$. 
It follows by a standard filtered algebra argument that $\{L_u\mid u\in\cj\}$ is linearly independent in $\tRi$. Then $\widetilde{\kappa}:\tUi \rightarrow  \tRi$ is injective.
Therefore, $\widetilde{\kappa}$ is an isomorphism.

Since the isomorphism $\widetilde{\kappa}: \tUi\rightarrow  \tRi$ sends the ideal of $\tUi$ generated by elements in \eqref{eq:parameters} onto the ideal of $\tRi$ generated by elements in \eqref{eqn: reduce 1}, it induces an algebra isomorphism $\Ui\stackrel{\simeq}{\longrightarrow}  R^\imath$ as stated.
\end{proof}

By construction, the algebra $\tRi$ admits a distinguished basis $\{L_{\cs^\imath}(\bv,\bw)\}$, the basis dual to the semisimple perverse sheaf basis of $K^{gr}(\mod(\cs^\imath))$, cf. \eqref{eq:bases dual}; compare \cite{Lus90, Ka91}. It enjoys a positivity property as stated in Proposition~\ref{prop:D} in Introduction. 



\end{document}